\documentclass[reqno]{amsart}
\usepackage{fullpage}
\usepackage{amsmath}
\usepackage{amssymb}
\usepackage{amsaddr}
\usepackage{graphicx}
\newcommand{\myi}{\mathrm{i}}
\newcommand{\eps}{\epsilon}
\newcommand{\dbar}{\overline{\partial}}
\newcommand{\kr}{k_\mathrm{r}}
\newcommand{\ki}{k_\mathrm{i}}
\newcommand{\sgn}{\mathrm{sgn}}
\newcommand\xqed[1]{%
  \leavevmode\unskip\penalty9999 \hbox{}\nobreak\hfill
  \quad\hbox{#1}}
\newcommand\myendrmk{\xqed{$\triangleright$}}

\theoremstyle{plain}
\newtheorem{theorem}{Theorem}
\newtheorem{lemma}{Lemma}
\newtheorem{proposition}{Proposition}
\newtheorem{corollary}{Corollary}

\theoremstyle{definition}
\newtheorem{definition}{Definition}
\newtheorem{assumption}{Assumption}
\newtheorem{rhp}{Riemann-Hilbert Problem}
\newtheorem{dbarprob}[rhp]{$\dbar$ Problem}
\newtheorem{rhpdbar}[rhp]{Riemann-Hilbert-$\dbar$ Problem}
\newtheorem{remark}{$\triangleleft$ Remark}

\numberwithin{equation}{section}

\begin{document}
\title{Initial-boundary value problems for the defocusing nonlinear Schr\"odinger equation in the semiclassical limit}
\author{Peter D. Miller}
\address{Department of Mathematics, University of Michigan, East Hall, 530 Church St., Ann Arbor, MI 48109}
\email{millerpd@umich.edu}
\author{Zhenyun Qin}
\address{School of Mathematics and Key Laboratory of Mathematics for Nonlinear Science, Fudan University, Shanghai 200433, PR China}
\email{zyqin@fudan.edu.cn}
\date{\today}
\begin{abstract}
Initial-boundary value problems for integrable nonlinear partial differential equations have become tractable in recent years due to the development of so-called unified transform techniques.
The main obstruction to applying these methods in practice is that calculation of the spectral transforms of the initial and boundary data requires knowledge of too many boundary conditions, more than are required make the problem well-posed.  The elimination of the unknown boundary values is frequently addressed in the spectral domain via the so-called global relation, and types of boundary conditions for which the global relation can be solved are called \emph{linearizable}.  For the defocusing nonlinear Schr\"odinger equation, the global relation is only known to be explicitly solvable in rather restrictive situations, namely homogeneous boundary conditions of Dirichlet, Neumann, and Robin (mixed) type.  General nonhomogeneous boundary conditions are not known to be linearizable.  In this paper, we propose an explicit approximation for the nonlinear Dirichlet-to-Neumann map supplied by the defocusing nonlinear Schr\"odinger equation and use it to provide approximate solutions of general nonhomogeneous boundary value problems for this equation posed as an initial-boundary value problem on the half-line.  Our method sidesteps entirely the solution of the global relation.  The accuracy of our method is proven in the semiclassical limit, and we provide explicit asymptotics for the solution in the interior of the quarter-plane space-time domain.
\end{abstract}

\maketitle

\section{Introduction}
\label{sec:Introduction}
Consider the following initial-boundary value problem for the defocusing nonlinear Schr\"odinger equation on the positive half-line
\begin{equation}
\myi\eps\frac{\partial q}{\partial t} +\eps^2\frac{\partial^2q}{\partial x^2}-2|q|^2q=0,\quad q=q(x,t),\quad
x>0,\quad t>0,
\label{eq:NLS}
\end{equation}
with given initial data:
\begin{equation}
q(x,0)=q_0(x),\quad x>0,
\end{equation}
and with a given (generally nonhomogeneous) Dirichlet boundary condition at $x=0$:
\begin{equation}
q(0,t)=Q^\mathrm{D}(t),\quad t>0.
\end{equation}
Here $\epsilon>0$ is an arbitrary parameter.  Assuming that $q_0\in H^2(\mathbb{R}_+)$, $Q^\mathrm{D}\in C^2(\mathbb{R}_+)$, and that the compatibility condition $q_0(0)=Q^\mathrm{D}(0)$ holds, Carroll and Bu
\cite{CarrollB91} have established the existence of a unique classical global solution of this problem that is a continuously differentiable map from $t\in\mathbb{R}_+$ to $q\in L^2(\mathbb{R}_+)$ and that is a continuous map from $t\in \mathbb{R}_+$ to $q\in H^2(\mathbb{R}_+)$.

The defocusing nonlinear Schr\"odinger equation \eqref{eq:NLS} is an integrable equation, being the compatibility condition for the existence of a simultaneous general solution $\psi$ of the equation
\begin{equation}
\eps\frac{\partial\psi}{\partial x}=\mathbf{U}\psi,\quad \mathbf{U}:=\begin{bmatrix}-\myi k & q\\q^* & \myi k\end{bmatrix}
\label{eq:xproblem}
\end{equation}
and also of the equation
\begin{equation}
\eps\frac{\partial\psi}{\partial t}=\mathbf{V}\psi,\quad \mathbf{V}:=\begin{bmatrix}-2\myi k^2-\myi |q|^2 & 2kq + \myi\eps q_x\\
2kq^* - \myi\eps q^*_x & 2\myi k^2+\myi|q|^2\end{bmatrix},\quad q_x:=\frac{\partial q}{\partial x}.
\label{eq:tproblem}
\end{equation}
Here $k$ is a complex spectral parameter, and the compatibility condition is independent of $k$.
These two linear equations for $\psi$ comprise the \emph{Lax pair} for \eqref{eq:NLS}.
One of the earliest applications of the Lax pair representation of integrable equations was
the development of a transform technique based on the spectral theory of the spatial equation \eqref{eq:xproblem} of the Lax pair, the \emph{inverse-scattering transform}, for solving
initial-value problems posed for $x\in\mathbb{R}$ with initial data given at $t=0$; see
\cite{FaddeevT07} for a pedagogical description.  More recently, a \emph{unified transform method} has been developed involving the simultaneous use of both equations of the Lax pair to study mixed initial-boundary value problems of various types.  As a general reference for these methods that includes the specific details we will need in this paper, we refer to \cite{Fokas08}; there is also a website \cite{FokasWeb} that summarizes the salient features of the technique and has links to many original references.

For the defocusing nonlinear Schr\"odinger equation \eqref{eq:NLS} on the half-line $x>0$, the
unified transform method first advanced in \cite{FokasIS05} and also described in \cite{Fokas08} amounts to the following algorithm.  Recall the Pauli spin matrices
\begin{equation}
\sigma_1:=\begin{bmatrix}0 & 1\\1 & 0\end{bmatrix},\quad
\sigma_2:=\begin{bmatrix}0 & -\myi\\\myi & 0\end{bmatrix},\quad\text{and}\quad
\sigma_3:=\begin{bmatrix}1 & 0\\0 & -1\end{bmatrix},
\end{equation}
and let $Q^\mathrm{N}(t):=\eps q_x(0,t)$.
Firstly, define the following special solutions of the Lax pair:
\begin{equation}
\label{eq:X-problem}
\eps\frac{d\mathbf{X}}{dx}(x;k)=\begin{bmatrix}-\myi k & q_0(x)\\q_0(x)^* & \myi k\end{bmatrix}\mathbf{X}(x;k),\quad
\lim_{x\to +\infty}\mathbf{X}(x;k)e^{\myi kx\sigma_3/\eps}=\mathbb{I},\end{equation}
and
\begin{equation}
\label{eq:T-problem}
\eps\frac{d\mathbf{T}}{dt}(t;k)=\begin{bmatrix} -2\myi k^2-\myi|Q^\mathrm{D}(t)|^2 & 2kQ^\mathrm{D}(t)+\myi Q^\mathrm{N}(t)\\
2kQ^\mathrm{D}(t)^*-\myi Q^\mathrm{N}(t)^* & 2\myi k^2+\myi|Q^\mathrm{D}(t)|^2\end{bmatrix}\mathbf{T}(t;k),\quad
\lim_{t\to +\infty}\mathbf{T}(t;k)e^{2\myi k^2 t\sigma_3/\eps}=\mathbb{I}.
\end{equation}
The spectral transforms of $q_0$, $Q^\mathrm{D}$, and $Q^\mathrm{N}$ are then given by
\begin{equation}
a(k):=X_{22}(0;k),\quad b(k):=X_{12}(0;k),\quad A(k):=T_{22}(0;k),\quad B(k):=T_{12}(0;k),
\label{eq:basic-transforms}
\end{equation}
and by elementary symmetries one also has that
\begin{equation}
a(k^*)^*=X_{11}(0;k),\quad b(k^*)^*=X_{21}(0;k),\quad A(k^*)^*=T_{11}(0;k),\quad
B(k^*)^*=T_{21}(0;k).
\label{eq:conjugate-transforms}
\end{equation}
The second column of $\mathbf{X}(x;k)$ is analytic and bounded in $k$ for $\Im\{k\}>0$ whenever $x\ge 0$ (and hence the same is true of $a(k)$ and $b(k)$).  The second column of $\mathbf{T}(t;k)$ is analytic and bounded in $k$ for $\Im\{k^2\}>0$ whenever $t\ge 0$ (and hence the same is true of $A(k)$ and $B(k)$).

Next, given these functions of $k$, one formulates a Riemann-Hilbert problem.  Let $\Sigma$ denote the contour $\Im\{k^2\}=0$ with each of the four half-line arcs of $\Sigma\setminus\{0\}$ assigned an orientation such that the domain $\Im\{k^2\}>0$ lies on the left.  On each of the four arcs we define a \emph{jump matrix} as follows:
\begin{equation}
\mathbf{J}(k):=
\begin{bmatrix}1-|\gamma(k)|^2 & \gamma(k)e^{-2\myi\theta(k;x,t)/\eps}\\-\gamma(k)^*e^{2\myi\theta(k;x,t)/\eps} & 1\end{bmatrix},\quad \arg(k)=0,
\label{eq:exact-jump-positive}
\end{equation}
\begin{equation}
\mathbf{J}(k):=
\begin{bmatrix}1 & 0\\-\Gamma(k)e^{2\myi\theta(k;x,t)/\eps} & 1\end{bmatrix},\quad 
\arg(k)=\pi/2,
\label{eq:exact-jump-positive-imaginary}
\end{equation}
\begin{equation}
\mathbf{J}(k):=
\begin{bmatrix}1 & \Gamma(k^*)^*e^{-2\myi\theta(k;x,t)/\eps}\\0 & 1\end{bmatrix},\quad
\arg(k)=-\pi/2,\quad \text{and}
\label{eq:exact-jump-negative-imaginary}
\end{equation}
\begin{equation}
\mathbf{J}(k):=\begin{bmatrix} 1 & (\Gamma(k)^*-\gamma(k))e^{-2\myi\theta(k;x,t)/\eps}\\
(\gamma(k)^*-\Gamma(k))e^{2\myi\theta(k;x,t)/\eps} & 1-|\gamma(k)|^2-|\Gamma(k)|^2+\Gamma(k)\gamma(k)+\Gamma(k)^*\gamma(k)^*\end{bmatrix},\quad \arg(-k)=0.
\label{eq:exact-jump-negative}
\end{equation}
Here, the spectral coefficients in the jump matrix are
\begin{equation}\label{gamma}
\gamma(k):=\frac{b(k)}{a(k)^*}\quad\text{and}\quad\Gamma(k):=\frac{B(k^*)^*}{a(k)d(k)},\quad \text{where}\quad d(k):=a(k)A(k^*)^*-b(k)B(k^*)^*,
\end{equation}
and all of the dependence on $x$ and $t$ appears explicitly through the function
\begin{equation}
\theta(k;x,t):=kx+2k^2t.
\label{eq:theta-define}
\end{equation}
The Riemann-Hilbert problem is then the following.
\begin{rhp}
Find a $2\times 2$ matrix $\mathbf{M}(k)$ with the following properties:
\begin{itemize}
\item[]\textbf{Analyticity:}  $\mathbf{M}$ is analytic and uniformly bounded for $k\in\mathbb{C}\setminus\Sigma$, taking boundary values $\mathbf{M}_\pm(k)$ on each of the four rays of $\Sigma$ from the domain where $\pm\Im\{k^2\}>0$.  
\item[]\textbf{Jump Condition:}  The boundary values are related on each ray of $\Sigma$ by the jump condition 
\begin{equation}
\mathbf{M}_+(k)=\mathbf{M}_-(k)\mathbf{J}(k),\quad k\in\Sigma\setminus\{0\},
\end{equation}
where $\mathbf{J}:\Sigma\setminus\{0\}\to\mathrm{SL}(2,\mathbb{C})$ is defined by \eqref{eq:exact-jump-positive}--\eqref{eq:exact-jump-negative}.
\item[]\textbf{Normalization:}  $\mathbf{M}(k)\to\mathbb{I}$ as $k\to\infty$.
\end{itemize}
\label{rhp-original}
\end{rhp}
From the solution of Riemann-Hilbert Problem~\ref{rhp-original}, which depends parametrically on $x$, $t$, and $\eps$,  one obtains a solution of the defocusing nonlinear Schr\"odinger equation by taking the limit
\begin{equation}
q(x,t)=2\myi\lim_{k\to\infty}kM_{12}(k).
\label{eq:q-from-M}
\end{equation}

This procedure is derived assuming the existence of a solution $q(x,t)$ satisfying the initial and boundary conditions in addition to some other technical assumptions.  It produces the solution to the initial-boundary value problem under two conditions:
\begin{itemize}
\item The given boundary data $(Q^\mathrm{D},Q^\mathrm{N})$ used to compute the spectral transforms from \eqref{eq:X-problem}--\eqref{eq:conjugate-transforms} are consistent.  That is, $Q^\mathrm{N}(t)$ must agree with ($\eps$ times) the Neumann boundary value of the solution of the Dirichlet problem whose well-posedness was established by Carroll and Bu \cite{CarrollB91}.
\item The function $d(k)$ must have no zeros in the closed second quadrant of the complex $k$-plane.  This is a technical condition as otherwise Riemann-Hilbert Problem~\ref{rhp-original} must be formulated differently to allow $\mathbf{M}(k)$ to have poles at these points and their complex conjugates, with prescribed residue relations.  It is conjectured that in fact $d(k)$ is nonvanishing for consistent boundary data, but to our knowledge there is no proof\footnote{After this paper was accepted for publication, a preprint \cite{LenellsPreprint} was made public that evidently contains a proof of this conjecture.} of this in the literature.
\end{itemize}
Of course the problem is that if the boundary data functions $Q^\mathrm{D}$ and $Q^\mathrm{N}$ are both independently specified as is required to calculate the spectral transforms and hence the jump matrices, then the initial-boundary value problem is overdetermined and the solution of the equation produced by the method cannot generally satisfy the initial and boundary conditions (although it will solve the differential equation in the interior of the domain).  On the other hand, the procedure is sadly incomplete if only the Dirichlet data (the function $Q^\mathrm{D}$) is specified in which case the jump matrices \eqref{eq:exact-jump-positive}--\eqref{eq:exact-jump-negative} are indeterminate as $\Gamma(k)$ cannot be calculated at all from \eqref{eq:T-problem}--\eqref{eq:conjugate-transforms} and \eqref{gamma}.

A central role in the unified transform theory is therefore played by the \emph{global relation}, an identity satisfied by the spectral transforms of consistent boundary data $(Q^\mathrm{D},Q^\mathrm{N})$ that encodes  in the transform domain the Dirichlet-to-Neumann map giving $\eps q_x(0,t)$ in terms of $q(0,t)$ and $q(x,0)$.  Under certain conditions on the Dirichlet data, the global relation can be effectively solved, and hence the unknown Neumann data is eliminated.  The class of boundary conditions for which the global relation can be solved by symmetries in the complex $k$-plane is called the class of \emph{linearizable} boundary conditions.  Unfortunately, the only type of Dirichlet boundary condition known to be linearizable is the homogeneous boundary condition $Q^\mathrm{D}(t)\equiv 0$.  Of course, this special case can also be handled via the usual inverse scattering transform on the whole line $x\in\mathbb{R}$ simply by extending the initial data $q_0(x)$ to $x<0$ as an odd function.

Another approach to general nonhomogeneous Dirichlet boundary conditions that avoids the global relation entirely
may be
based on the observation that under mild conditions, given the spectral transforms $\{a(k),b(k),A(k),B(k)\}$,  Riemann-Hilbert Problem~\ref{rhp-original} 
has a unique solution for almost all $(x,t)\in\mathbb{R}^2$ by analytic Fredholm theory (see \cite[Proposition 4.3]{Zhou89}) combined with steepest descent asymptotics for large $x$ and $t$.  The exceptional set is the zero locus of an entire scalar function of $(x,t)\in\mathbb{C}^2$ that does not vanish identically, i.e., a complex curve in $\mathbb{C}^2$ that may or may not have real points but that would be at worst a closed and nowhere-dense union of analytic arcs in the real $(x,t)$-plane.
This in turn implies (by the standard arguments of the \emph{dressing method}, see also the proof of Proposition~\ref{prop:solution-exists} below) that 
for those $(x,t)$ for which a solution exists
the function $q(x,t)$ produced by taking the limit \eqref{eq:q-from-M} is necessarily \emph{some} solution of the defocusing nonlinear Schr\"odinger equation \eqref{eq:NLS}.  The question is whether this solution satisfies also the (three in total) initial and boundary conditions
that were used to generate the spectral transforms $\{a(k),b(k),A(k),B(k)\}$ in the first place.  This line of reasoning suggests an iteration procedure for solving the Dirichlet initial-boundary value problem for the defocusing nonlinear Schr\"odinger equation on the half-line with general nonhomogeneous data:  begin by making an initial guess for the (unknown) Neumann boundary data, say $Q^\mathrm{N}_0(t)$ for $t>0$.  Set $n=0$ and then:
\begin{enumerate}
\item[1.] Take $Q^\mathrm{N}(t)=Q^\mathrm{N}_{n}(t)$, and with the given Dirichlet data $Q^\mathrm{D}(t)$ and $q_0(x)$
calculate the spectral transforms $\{a(k),b(k),A(k),B(k)\}=\{a(k),b(k),A_{n}(k),B_{n}(k)\}$ from \eqref{eq:X-problem}--\eqref{eq:conjugate-transforms}.
\item[2.] Formulate Riemann-Hilbert Problem~\ref{rhp-original} with these spectral transforms and solve it.  Denote the function obtained from \eqref{eq:q-from-M} as $q_{n}(x,t)$.  It is necessarily a solution of the defocusing nonlinear Schr\"odinger equation \eqref{eq:NLS} on the quarter plane $x>0$ and $t>0$.
\item[3.] Define $Q^\mathrm{N}_{n+1}(t):=\eps \partial_x q_{n}(0,t)$ for $t>0$.
\item[4.] Set $n:=n+1$ and go to step 1.
\end{enumerate}
No doubt the reader can imagine various other iterative approaches like this one.  It is not the 
purpose of this paper to study the convergence of this algorithm, but in the spirit of the principle that finite truncations of a convergent iteration (or infinite series) can often provide accuracy in various asymptotic limits, we wish to explore
the possibility of using \emph{just one iteration} of the algorithm (actually a slightly modified version of the first iteration, see \S\ref{sec:Formulation} for details)
to provide an asymptotic approximation of the solution of the Dirichlet initial-boundary value problem in the \emph{semiclassical limit} $\epsilon\downarrow 0$.  The key to the success of this procedure is to make a very good initial guess for the unknown Neumann data, one that is asymptotically accurate in the semiclassical limit (as we will rigorously prove after the fact, see Theorem~\ref{theorem:boundary-condition-recover}).  That is, what we need is an explicit approximation of the Dirichlet-to-Neumann map for \eqref{eq:NLS}.

\subsection{The semiclassical Dirichlet-to-Neumann map}
Let us now explain the approximation of the Dirichlet-to-Neumann map for the defocusing nonlinear 
Schr\"odinger equation \eqref{eq:NLS} that we plan to study in this article.  Without loss of generality, we represent the complex field $q(x,t)$ in real phase-amplitude form:
\begin{equation}
q(x,t)=\eta(x,t)e^{\myi\sigma(x,t)/\eps},\quad \eta(x,t):=|q(x,t)|.
\label{eq:plane-wave-form}
\end{equation}
Substituting into \eqref{eq:NLS}, dividing by the common factor $e^{\myi\sigma(x,t)/\eps}$, and separating real and imaginary parts yields the following system of equations:
\begin{equation}
\begin{split}
\frac{\partial \eta}{\partial t} + 2\frac{\partial \sigma}{\partial x}\frac{\partial \eta}{\partial x} + \eta\frac{\partial^2\sigma}{\partial x^2}&=0\\
\frac{\partial \sigma}{\partial t} +\left(\frac{\partial\sigma}{\partial x}\right)^2+2\eta^2&=\frac{\eps^2}{\eta}\frac{\partial^2\eta}{\partial x^2}.
\end{split}
\label{eq:NLSWKB}
\end{equation}
This coupled system is equivalent to \eqref{eq:NLS}.  It is useful to intoduce notation for the phase gradient:
\begin{equation}
u(x,t):=\frac{\partial\sigma}{\partial x}(x,t).
\end{equation}
Now, in terms of $\eta$ and $u$, the exact ratio between the unknown Neumann data and the given Dirichlet data at $x=0$ takes the form
\begin{equation}
\begin{split}
-\myi\frac{Q^\mathrm{N}(t)}{Q^\mathrm{D}(t)}&=\frac{-\myi\eps}{q(0,t)}\frac{\partial q}{\partial x}(0,t)\\ {}&=u(0,t) -\frac{\myi\eps}{\eta(0,t)}
\frac{\partial\eta}{\partial x}(0,t).
\end{split}
\label{eq:NeumannWKB}
\end{equation}
Consider the second equation of the system \eqref{eq:NLSWKB} at $x=0$ along with  \eqref{eq:NeumannWKB} in the formal semiclassical limit $\epsilon\to 0$, assuming that $\eta(0,t)\neq 0$.  This means that we simply neglect the terms explicitly proportional to $\eps$ or $\eps^2$ in each case, yielding the formal approximations:
\begin{equation}
\frac{\partial\sigma}{\partial t}(0,t) + u(0,t)^2 + 2\eta(0,t)^2\approx 0\quad\text{and}\quad
-\myi\frac{Q^\mathrm{N}(t)}{Q^\mathrm{D}(t)}\approx u(0,t).
\label{eq:formalapproximations}
\end{equation}
Our approach is to assume that the known Dirichlet boundary data is specified in the form
\begin{equation}
Q^\mathrm{D}(t):=H(t)e^{\myi S(t)/\eps}
\label{eq:DirichletDataForm}
\end{equation}
with $H(\cdot)>0$ and $S(\cdot)$ are given real-valued functions independent of $\eps$.
Obviously we then have $\eta(0,t)=H(t)$ and $\sigma_t(0,t)=S'(t)$, so we may rewrite the approximate relations
\eqref{eq:formalapproximations} as
\begin{equation}
S'(t)+u(0,t)^2 + 2H(t)^2\approx 0\quad\text{and}\quad
-\myi\frac{Q^\mathrm{N}(t)}{Q^\mathrm{D}(t)}\approx u(0,t).
\label{eq:formalapproximationsII}
\end{equation}
Assuming further that
\begin{equation}
S'(t)<-2H(t)^2,\quad t>0,
\label{eq:Sprimebound}
\end{equation}
we solve the first of these relations for the unknown phase derivative $u(0,t)$ at the boundary:
\begin{equation}
u(0,t)\approx U(t):=\sqrt{-S'(t)-2H(t)^2}>0,
\label{eq:Udefine}
\end{equation}
that is, $U(t)$ is the (real valued) formal semiclassical approximation of the exact phase derivative $u(0,t)$.
Finally,
for Dirichlet boundary data \eqref{eq:DirichletDataForm} satisfying the condition \eqref{eq:Sprimebound} we use the second equation of \eqref{eq:formalapproximationsII} to 
approximate the Dirichlet-to-Neumann map as follows.
\begin{definition}
Suppose that the Dirichlet boundary data $Q^\mathrm{D}(t)$ of the form \eqref{eq:DirichletDataForm} satisfies $S'(t)+2H(t)^2<0$.  The \emph{semiclassical approximation of the Dirichlet-to-Neumann map} is defined by
\begin{equation}\label{5}
Q^\mathrm{N}_0(t):=\myi U(t)Q^\mathrm{D}(t),\quad t>0,
\end{equation}
where $U(t)$ is given in terms of the phase and amplitude of the known Dirichlet data by \eqref{eq:Udefine}.
\label{definition:D-to-N}
\end{definition}
The key point of our approach is that by neglecting the formally small dispersive terms in \eqref{eq:NLSWKB} we obtain a system that is first-order in $x$ and hence allows the unknown Neumann data to be \emph{explicitly eliminated} in favor of $t$-derivatives that may be computed along the boundary from the given Dirichlet data.  This approximation is a purely local relation between the two boundary values, and in particular the approximate Dirichlet-to-Neumann map is independent of initial data $q_0$.

We have selected the positive square root in \eqref{eq:Udefine} for a specific reason, which we now explain.
Differentiation of the second equation of \eqref{eq:NLSWKB} with respect to $x$ produces the equivalent system
\begin{equation}
\frac{\partial}{\partial t}\begin{bmatrix}\eta\\ u\end{bmatrix}+\mathbf{C}(\eta,u)
\frac{\partial}{\partial x}\begin{bmatrix}\eta\\ u\end{bmatrix} = \epsilon^2\frac{\partial}{\partial x}
\begin{bmatrix}0\\\eta^{-1}\eta_{xx}\end{bmatrix},\quad
\mathbf{C}(\eta,u):=\begin{bmatrix}2u &\eta\\4\eta & 2u\end{bmatrix}.
\label{eq:NLSWKB2}
\end{equation}
Obviously, \eqref{eq:NLSWKB2} is a formally small perturbation of a quasilinear system obtained by simply setting $\epsilon$ to zero.
The \emph{characteristic velocities} of the limiting system are the eigenvalues $c(\eta,u)$ of the coefficient matrix  $\mathbf{C}(\eta,u)$:
\begin{equation}
c(\eta,u):=2u\pm 2\eta.
\end{equation}
As the characteristic velocities are real and distinct (for $\eta\neq 0$), the limiting quasilinear system is of hyperbolic type.
Causality and local well-posedness for the Cauchy problem of the hyperbolic approximating system in the quarter plane $x>0$ and $t>0$ requires that the boundary $x=0$ be a space-like curve.  In other words, we require \emph{both} characteristic velocities to be strictly positive at the boundary.  This means that we will require that $U(t)>H(t)$ for all $t>0$.  
Since $H(t)\ge 0$ for $t>0$ it is clear that well-posedness of the limiting hyperbolic boundary-value problem requires in particular $U(t)>0$.  In fact, we will ensure the condition $U(t)>H(t)$ by imposing the stronger condition $U(t)>2H(t)$; the latter condition appears to be necessary to recover the Dirichlet boundary data at $x=0$ for all $t>0$ (see Remark~\ref{remark:kb-negative} below).

\subsection{Outline of the paper.  Description of main results}
For convenience we restrict our attention to the already nontrivial and physically interesting case of zero initial data:  $q_0(x)=0$ for all $x>0$.  However, we fix rather general nonhomogeneous Dirichlet boundary data of the form \eqref{eq:DirichletDataForm} for $t>0$ (and satisfying several additional conditions allowing our procedure to succeed, see Assumption~\ref{assumption:data} below), and attempt to solve the corresponding initial-boundary value problem.  The first step is the calculation of the spectral transforms $A_0(k)$ and $B_0(k)$ corresponding to the Dirichlet data $Q^\mathrm{D}(t)$ given by \eqref{eq:DirichletDataForm} and the formally approximate Neumann data $Q^\mathrm{N}_0(t)$ given by Definition~\ref{definition:D-to-N}.  The direct spectral analysis is made possible in practice because the parameter $\eps>0$ is presumed small, so the equations of the Lax pair become singularly perturbed differential equations that may be studied by classical methods.  The results of this analysis are summarized in \S\ref{sec:spectral-analysis}, with the corresponding proofs appearing in two appendices.  Despite the rigor of these results, there are certain difficulties that remain with directly formulating the inverse problem for
the exact scattering data, so
rather than calculate the solution of the defocusing nonlinear Schr\"odinger equation corresponding to the exact spectral transforms of the (generally incompatible) Dirichlet-Neumann pair $(Q^\mathrm{D},Q^\mathrm{N}_0)$ we modify the spectral functions in an ad-hoc fashion, but one inspired by the rigorous direct spectral analysis of the temporal problem of the Lax pair.  This allows us to formulate a simpler and completely explicit version of  Riemann-Hilbert Problem~\ref{rhp-original} for a matrix $\tilde{\mathbf{M}}(k)$; see \S\ref{sec:Formulation}.

The simpler Riemann-Hilbert problem explicitly encodes the given Dirichlet boundary data \eqref{eq:DirichletDataForm} through two integral transforms denoted $\tau$ and $\Phi$ (these are really the semiclassical analogues of the amplitude and phase of the spectral function $\Gamma$; see \eqref{eq:tau-define-1}--\eqref{eq:Phi-define}), and its solution produces, for each $\eps>0$, a solution $q=\tilde{q}^\eps(x,t)$ of the defocusing nonlinear Schr\"odinger equation \eqref{eq:NLS}.  The rest of the paper is concerned with analyzing this solution, paying particular attention to the semiclassical asymptotic behavior of $\tilde{q}^\eps(x,t)$  at the initial time $t=0$ for $x>0$ and at the boundary $x=0$ for $t>0$.  Our first result
is the following.
\begin{theorem}[approximation of the initial condition]
The solution $q=\tilde{q}^\eps(x,t)$ of the defocusing nonlinear Schr\"odinger equation \eqref{eq:NLS} obtained from Riemann-Hilbert problem~\ref{rhp-M-tilde} satisfies
\begin{equation}
\tilde{q}^\eps(x,0)=\mathcal{O}((\log(\eps^{-1}))^{-1/2}) \quad x>0,
\end{equation}
where the error term is uniform on $x\ge x_0$ for each $x_0>0$.
\label{theorem:initial-condition-small}
\end{theorem}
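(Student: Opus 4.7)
The plan is to exploit the explicit structure of Riemann-Hilbert problem~\ref{rhp-M-tilde} at the initial time $t=0$, where the controlling phase $\theta(k;x,t)=kx+2k^2t$ reduces to $\theta=kx$. On each imaginary ray of the jump contour the off-diagonal entries of the jump matrix carry a factor $e^{\pm 2\myi\theta/\eps}=e^{-2|\Im k|x/\eps}$ that is exponentially small in $\eps^{-1}$ whenever $|k|$ is bounded below. Because the initial datum vanishes identically, the spectral coefficients derived from $q_0$ are trivial ($a(k)\equiv 1$, $b(k)\equiv 0$, so $\gamma(k)\equiv 0$), and the jumps on the real rays degenerate accordingly. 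The only part of the contour on which the jump is not already exponentially close to the identity is a shrinking neighborhood of the origin $k=0$, and the whole theorem is an estimate of the contribution from that neighborhood.

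Concretely, I would first write the recovery formula via the Beals--Coifman integral representation,
\begin{equation*}
\tilde q^\eps(x,0) = 2\myi\lim_{k\to\infty}k\tilde M_{12}(k) = -\frac{1}{\pi}\int \bigl(\tilde{\mathbf M}_-(s)(\tilde{\mathbf J}(s)-\mathbb{I})\bigr)_{12}\,ds,
\end{equation*}
and verify by a standard small-norm argument that the Cauchy operator $\mathbf{I}-\mathcal{C}_{\tilde{\mathbf J}-\mathbb{I}}$ is invertible and that $\tilde{\mathbf M}_-$ stays uniformly bounded; both use only the exponential smallness of $\tilde{\mathbf J}-\mathbb{I}$ outside any fixed neighborhood of $k=0$, together with the decay of $\Gamma$ (equivalently, $\tau$) at infinity supplied by the regularity of $Q^{\mathrm D}$. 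The contribution from $|s|\geq \delta_0>0$ is then immediately of order $e^{-2\delta_0 x_0/\eps}$, uniformly in $x\geq x_0$.

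The heart of the proof is the estimate of the remaining contribution from $|s|<\delta_0$. Here I would apply a Cauchy--Schwarz inequality, bounding the integrand by a weighted $L^2$-norm of the semiclassical amplitude $\tau$ over a window whose size is matched to the scale $|k|\sim\eps$ on which the exponential factor $e^{-2|\Im k|x/\eps}$ ceases to provide decay. The stated rate $(\log\eps^{-1})^{-1/2}$ arises from a logarithmically weighted form of this inequality: the $L^2(d\log|k|)$-norm of $\tau$ truncated to $|k|\in[c\eps/x_0,\delta_0]$ grows at worst like $(\log\eps^{-1})^{1/2}$, and this factor combines with a compensating $(\log\eps^{-1})^{-1}$ arising from the normalization of the Cauchy kernel on the same logarithmic scale to produce the half-power. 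Uniformity in $x\geq x_0$ is automatic since the exponent $e^{-2|\Im k|x/\eps}$ is monotone in $x$, and after the rescaling $k\mapsto \eps\zeta/x$ all constants depend on $x$ only through $x_0$.

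The main obstacle is that at $t=0$ the phase $\theta=kx$ has no interior stationary point on any ray of $\tilde\Sigma$, so the usual nonlinear steepest-descent / $g$-function machinery employed elsewhere in the paper is unavailable. The proof must rely entirely on a local estimate near $k=0$, and the slow logarithmic rate is the price paid for the delicate competition near the origin between the exponential decay of the jump on the scale $|k|\sim\eps$ and the singular structure of the semiclassical data $\tau$ and $\Phi$ at $k=0$ established in \S\ref{sec:Formulation}. Pinning down the precise local behavior of $\tau$ and $\Phi$ and verifying that the logarithmically weighted estimate is sharp (rather than losing an additional log) is where the technical work resides; once this is in hand, the global bound is assembled from the pieces just described.
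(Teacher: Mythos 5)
Your proposal is built on a misreading of which Riemann--Hilbert problem is being solved, and this derails the argument from the first step. Riemann--Hilbert Problem~\ref{rhp-M-tilde} has \emph{no} imaginary rays (those were discarded in \S\ref{sec:Formulation}); its jump differs from $\mathbb{I}$ only on the compact negative real interval $[k_\mathfrak{a},k_\mathfrak{b}]$, which is bounded away from $k=0$ since $k_\mathfrak{b}<0$. On that interval the jump is \emph{not} small in any sense: $|\tilde{\Gamma}(k)|=Y^\eps(k)=\sqrt{1-e^{-2\tau(k)/\eps}}$ is exponentially close to $1$ in the interior because $\tau(k)>0$ there, so the factor $e^{-2\myi\Phi(k)/\eps}$ is a unimodular, rapidly oscillating quantity and $\tilde{\mathbf{J}}-\mathbb{I}=\mathcal{O}(1)$. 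Consequently your opening move --- inverting $\mathbf{I}-\mathcal{C}_{\tilde{\mathbf{J}}-\mathbb{I}}$ by a ``standard small-norm argument'' using ``exponential smallness of $\tilde{\mathbf{J}}-\mathbb{I}$ outside any fixed neighborhood of $k=0$'' --- fails immediately; there is no neighborhood of the origin to localize to, and no smallness to exploit without first deforming the contour. The correct mechanism is a steepest-descent lens opening about all of $(k_\mathfrak{a},k_\mathfrak{b})$ using the upper--lower factorization of the jump, which works at $t=0$, $x>0$ precisely because $\theta'(k;x,0)-\Phi'(k)=x-\Phi'(k)\ge x>0$ (Lemma~\ref{lemma-Phi-interior} gives $\Phi'\le 0$), so that the off-diagonal factors decay into the appropriate half-planes. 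The absence of a stationary point, which you cite as the ``main obstacle,'' is in fact the reason the problem reduces to a small-norm one after deformation.

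Two further ingredients that your plan does not anticipate are essential. First, $\Phi$ is only $C^3$ (not analytic) at the interior point $k=k_0\in(k_\mathfrak{a},k_\mathfrak{b})$, so the lens opening cannot be done by analytic continuation there; the paper uses a quadratic $\dbar$-extension $\hat{\Phi}$ with $\dbar\hat{\Phi}=\mathcal{O}(\ki^2)$ and solves a hybrid Riemann--Hilbert--$\dbar$ problem, the $\dbar$ part contributing only $\mathcal{O}(\eps)$. Second, and decisively for the stated rate, the $(\log(\eps^{-1}))^{-1/2}$ does not arise from any logarithmically weighted Cauchy--Schwarz estimate near $k=0$: it is produced at the support endpoints $k_\mathfrak{a}$ and $k_\mathfrak{b}$, where $\tau$ vanishes linearly (so $Y^\eps(k)=\mathcal{O}(\eps^{-1/2}|k-k_\mathfrak{a}|^{1/2})$) while $\Phi(k)-\Phi(k_\mathfrak{a})\sim C_\mathfrak{a}(k-k_\mathfrak{a})\log(k-k_\mathfrak{a})$ so that the off-axis decay of $e^{\pm 2\myi(\theta-\Phi)/\eps}$ degenerates there. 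Maximizing $\eps^{-1/2}x^{1/2}e^{x\log(x)/\eps}$ over $x\in(0,1)$ (the content of Lemma~\ref{lemma-tilde-Gamma-endpoints}) yields the half-power of the logarithm. Without identifying the endpoints as the source of the error and the interval $[k_\mathfrak{a},k_\mathfrak{b}]$ as the locus of the analysis, the proof cannot be assembled along the lines you describe.
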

Thus the function $\tilde{q}^\eps(x,t)$ nearly satisfies the given homogeneous initial condition $q_0(x)= 0$ for $x>0$.  Our rigorous proof of this result is based on the 
steepest descent method for Riemann-Hilbert problems, combined with a natural generalization  of the method involving $\dbar$-problems \cite{McLaughlinM08}.  After establishing some preliminary results in \S\ref{sec:preliminary-results}, we give the proof of Theorem~\ref{theorem:initial-condition-small} in \S\ref{sec:initial-condition-small-proof}.

Our next main result is the following.  The points $t_\mathfrak{a}$ and $t_\mathfrak{b}$ are defined as part of Assumption~\ref{assumption:data} below.
\begin{theorem}[approximation of boundary conditions]
Suppose that $t>0$ and $t\neq t_\mathfrak{a}$, $t\neq t_\mathfrak{b}$.  The solution $q=\tilde{q}^\eps(x,t)$ of the defocusing nonlinear Schr\"odinger equation \eqref{eq:NLS} obtained from Riemann-Hilbert problem~\ref{rhp-M-tilde} satisfies
\begin{equation}
\tilde{q}^\eps(0,t)=H(t)e^{iS(t)/\eps}+\mathcal{O}((\log(\eps^{-1}))^{-1/2}),
\label{eq:Dirichlet-Data-Recover}
\end{equation}
and
\begin{equation}
\eps\tilde{q}^\eps_x(0,t)=\myi U(t)H(t)e^{\myi S(t)/\eps} +\mathcal{O}((\log(\eps^{-1}))^{-1/2}),
\label{eq:Neumann-Data-Recover}
\end{equation}
where the error terms are uniform for $t$ in compact subintervals of $(0,+\infty)\setminus\{t_\mathfrak{a},t_\mathfrak{b}\}$.
\label{theorem:boundary-condition-recover}
\end{theorem}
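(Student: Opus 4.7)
The plan is to prove Theorem~\ref{theorem:boundary-condition-recover} by the same nonlinear steepest-descent / $\dbar$ machinery used for Theorem~\ref{theorem:initial-condition-small}, but now specialized to $x=0$ rather than $t=0$. Starting from Riemann-Hilbert Problem~\ref{rhp-M-tilde} for $\tilde{\mathbf{M}}(k)$, I fix $t>0$ with $t\notin\{t_\mathfrak{a},t_\mathfrak{b}\}$ and set $x=0$, so the phase reduces to $\theta(k;0,t)=2k^2 t$ and the signature $\Re(2\myi\theta/\eps)=-4t\Im\{k^2\}/\eps$ is pinned to the zero set $\Sigma=\{\Im\{k^2\}=0\}$ itself; the exponential factor $e^{\pm 2\myi\theta/\eps}$ decays into the first and third (respectively second and fourth) quadrants. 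I would first transport to this $x=0$ setting the structural properties of $\tau$ and $\Phi$ established in \S\ref{sec:preliminary-results} (analytic extension off the imaginary axis, the nondegeneracy bound on $1+\tau$, and the way $\tau,\Phi$ encode $(H,S,U)$), which are the essential inputs for what follows.

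To recover the Dirichlet value I would introduce a scalar $g$-function $g(k;t)$ tailored to this temporal regime and conjugate $\tilde{\mathbf{M}}$ by $e^{-\myi g(k)\sigma_3/\eps}$ so that the jumps on the four rays of $\Sigma$ factorize into upper/lower triangular products whose off-diagonal entries carry exponentially decaying factors. Opening lenses and $\dbar$-extending $\tau,\Phi$ into the appropriate quadrants reduces the problem to an outer model RHP supported on a short cut along the imaginary axis (the ``band'' of $g'$), plus local parametrices at its endpoints; the $\dbar$-errors contribute $\mathcal{O}((\log(\eps^{-1}))^{-1/2})$ exactly as in the proof of Theorem~\ref{theorem:initial-condition-small}. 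The outer model admits a closed-form scalar solution, and extracting the $1/k$ coefficient and combining it with $e^{\myi g(\infty;t)/\eps}$ produces $H(t)e^{\myi S(t)/\eps}$, because $g$ is chosen precisely so that its value at infinity encodes $S(t)$ while the amplitude of the outer model recovers $H(t)$. The exceptional times $t_\mathfrak{a}$ and $t_\mathfrak{b}$ should be the instants at which the band endpoints of $g'$ coalesce or cross a critical ray, which is exactly what prevents uniform asymptotics there.

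For the Neumann value I would read off one more order in the large-$k$ expansion $\tilde{\mathbf{M}}(k)=\mathbb{I}+\tilde{\mathbf{M}}_1/k+\tilde{\mathbf{M}}_2/k^2+\cdots$. The $x$-part of the Lax pair gives the recursion $[\sigma_3,\tilde{\mathbf{M}}_1]=-\myi Q$ and $\eps\partial_x\tilde{\mathbf{M}}_1=-\myi[\sigma_3,\tilde{\mathbf{M}}_2]+Q\tilde{\mathbf{M}}_1$, which on $(1,2)$-entries yields a formula expressing $\eps\tilde{q}^\eps_x(0,t)$ in terms of $(\tilde{\mathbf{M}}_2)_{12}$, $(\tilde{\mathbf{M}}_1)_{22}$, and the already-known $\tilde{q}^\eps(0,t)$. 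Running the same steepest-descent analysis but retaining one more order in the outer parametrix, the coefficient $(\tilde{\mathbf{M}}_2)_{12}$ at $x=0$ is pinned down by the band endpoints of $g'$ on the imaginary axis; an elementary computation identifies these endpoints with $\pm\myi U(t)/2$, reflecting the fact that $U(t)$ is precisely the semiclassical phase gradient defined by \eqref{eq:Udefine}. This produces $\myi U(t)H(t)e^{\myi S(t)/\eps}$ at leading order, again with error $\mathcal{O}((\log(\eps^{-1}))^{-1/2})$ supplied by the $\dbar$-component.

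The main obstacles I anticipate are threefold. First, the $g$-function must be set up so that its moment conditions at infinity exactly encode the algebraic relations $U(t)^2=-S'(t)-2H(t)^2$ and the amplitude identities built into $\tau$; this is what ties the static Dirichlet data $(H,S)$ at $x=0$ to the endpoints of the band and hence to the outer parametrix. Second, uniformity as $t$ ranges over a compact subinterval of $(0,+\infty)\setminus\{t_\mathfrak{a},t_\mathfrak{b}\}$ must be tracked carefully, since the band structure degenerates at $t=t_\mathfrak{a}$ and $t=t_\mathfrak{b}$ and all local parametrix constructions depend continuously on the endpoint locations. Third, and most delicate, one must verify that the ad-hoc modification of the spectral functions introduced in \S\ref{sec:Formulation} — which was designed primarily around the Dirichlet data — remains compatible with reading off the correct Neumann boundary value at subleading order, so that no spurious $\mathcal{O}(1)$ contribution survives in $(\tilde{\mathbf{M}}_2)_{12}$.
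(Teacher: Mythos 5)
Your high-level strategy --- introduce a $g$-function at $x=0$, open lenses, build an outer parametrix on a band plus Airy-type local parametrices at its endpoints, and read the Dirichlet and Neumann values off the first two moments $\tilde{\mathbf{M}}_1,\tilde{\mathbf{M}}_2$ --- is indeed the paper's strategy. But the concrete geometry you propose is wrong in a way that would prevent the computation from producing the stated answer. First, Riemann-Hilbert Problem~\ref{rhp-M-tilde} has its jump only on the real interval $[k_\mathfrak{a},k_\mathfrak{b}]\subset(-\infty,0)$; the cross $\{\Im\{k^2\}=0\}$ and in particular the imaginary axis were deliberately removed in \S\ref{sec:Formulation}, precisely because a $g$-function of the required type amplifies the imaginary-axis jumps beyond control (Remark~\ref{remark-imaginary-amplify}). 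Second, and decisively, the band of the $g$-function is not ``a short cut along the imaginary axis'' with endpoints $\pm\myi U(t)/2$: it is the real interval $[\alpha,\beta]=[\mathfrak{a}(t),\mathfrak{b}(t)]$ with $\mathfrak{a}(t)=-\tfrac12 U(t)-H(t)$ and $\mathfrak{b}(t)=-\tfrac12 U(t)+H(t)$, so that $\beta-\alpha=2H(t)$ and $\alpha+\beta=-U(t)$. These two identities are exactly what make the theorem come out: the outer parametrix gives $\tilde{q}^\eps(0,t)\approx\tfrac12(\beta-\alpha)e^{-2\myi\phi_\mathrm{B}(t)/\eps}=H(t)e^{\myi S(t)/\eps}$ and $\eps\tilde{q}^\eps_x(0,t)\approx-\tfrac12\myi(\beta^2-\alpha^2)e^{-2\myi\phi_\mathrm{B}(t)/\eps}=\myi U(t)H(t)e^{\myi S(t)/\eps}$. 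With your endpoints $\pm\myi U(t)/2$ the band length would be $U(t)$ and $H(t)$ would never appear in the leading term, so the Dirichlet data could not be recovered. Showing that the moment conditions \eqref{eq:moments-genus-zero} are solved by $\alpha=\mathfrak{a}(t)$, $\beta=\mathfrak{b}(t)$ (Lemma~\ref{lemma:endpoints-x-zero}) and that the band constant is $\phi_\mathrm{B}(t)=-\tfrac12 S(t)$ (Lemma~\ref{lemma-phi-B-S}, not ``$g(\infty;t)$'', which is $0$ by normalization) is the real mathematical content, and your proposal does not supply it.

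Two smaller corrections. The $\dbar$ machinery is not needed here: for $t>0$ the function $\phi(k;t)$ is analytic on the voids and saturated regions (Lemma~\ref{lemma:phi-analytic-gaps}, using that $k_0$ cannot lie in a void and $k_\infty$ cannot lie in a saturated region), so the lenses open analytically; the $\mathcal{O}((\log(\eps^{-1}))^{-1/2})$ error comes instead from the endpoints $k_\mathfrak{a},k_\mathfrak{b}$ of the truncated support of $\tilde\Gamma$, where $Y^\eps$ vanishes only linearly and $e^{\pm 2\myi\phi/\eps}$ has no decay (Lemma~\ref{lemma-Y-phi-endpoints}). And the exceptional times $t_\mathfrak{a},t_\mathfrak{b}$ are not where the band endpoints coalesce (that happens only as $t\to 0$ or $t\to\infty$); they are where $\mathfrak{a}(t)$ reaches $k_\mathfrak{a}$ or $\mathfrak{b}(t)$ reaches $k_\mathfrak{b}$, so that the adjacent void or saturated region collapses and the four-configuration (VBV/VBS/SBV/SBS) construction of $g$ degenerates.
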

Equation \eqref{eq:Dirichlet-Data-Recover} shows that the same solution $\tilde{q}^\eps(x,t)$ very nearly satisfies the given nonhomogeneous Dirichlet boundary condition \eqref{eq:DirichletDataForm} at $x=0$ for $t>0$.  Moreover, from \eqref{eq:Neumann-Data-Recover}  we see directly that the true
Neumann data at the boundary is indeed asymptotically consistent with the formal approximation given by Definition~\ref{definition:D-to-N}.  The proof of this result is again based on the steepest descent method, this time augmented with the use of a complex phase function $g$.  After describing the general methodology and constructing the function $g$ in \S\ref{sec:g-function}, we present the proof of Theorem~\ref{theorem:boundary-condition-recover} in \S\ref{sec:boundary-condition-recover-proof}.

The main point, however, is that the solution $\tilde{q}^\eps(x,t)$ is represented also for $(x,t)$ not on the boundary of the quarter plane $x>0$, $t>0$ via exactly the same Riemann-Hilbert Problem (see Riemann-Hilbert Problem~\ref{rhp-M-tilde}).  This means that one may use steepest descent methods to calculate $\tilde{q}^\eps(x,t)$ for small $\eps>0$ for positive $t$ and away from the boundary.  For $(x,t)$ close to the boundary of the quarter plane the analysis is virtually the same as it is exactly on the boundary, with similar results.  For example, a corollary of the proof of Theorem~\ref{theorem:initial-condition-small} is the following.  The points $k_\mathfrak{a}$ and $k_\mathfrak{b}$ are specified in terms of the functions characterized by Assumption~\ref{assumption:data} below, and $\Phi$ is explicitly given by \eqref{eq:Phi-define}.
\begin{corollary}[existence of a vacuum domain]
Let $t\ge 0$, and let $X(t)$ denote the smallest nonnegative value of $x_0$ for which the inequality $x+4tk-\Phi'(k)\ge 0$ holds for all $k\in (k_\mathfrak{a},k_\mathfrak{b})$ whenever $x\ge x_0$.  Then the solution $q=\tilde{q}^\eps(x,t)$ of the defocusing nonlinear Schr\"odinger equation \eqref{eq:NLS} obtained from Riemann-Hilbert Problem~\ref{rhp-M-tilde} satisfies $\tilde{q}^\eps(x,t)=\mathcal{O}((\log(\eps^{-1}))^{-1/2})$ as $\eps\downarrow 0$ whenever $x>X(t)$.
\label{corollary-vacuum}
\end{corollary}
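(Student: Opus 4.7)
The plan is to reuse, essentially unchanged, the steepest-descent / $\dbar$ apparatus developed in the proof of Theorem~\ref{theorem:initial-condition-small}, and simply to re-examine the inequality on which that apparatus depends when one allows $t>0$. At $t=0$ the exponential factors in the jump matrices \eqref{eq:exact-jump-positive}--\eqref{eq:exact-jump-negative} reduce to $e^{\pm 2\myi kx/\eps}$, and the success of the deformation hinges on a pointwise sign condition, across the effective spectral band $(k_\mathfrak{a},k_\mathfrak{b})$, for the derivative $\partial_k[\theta(k;x,0)-\tfrac{1}{2}\Phi(k)]$. For general $t\ge 0$, the same mechanism is in force with $\theta_k(k;x,t)=x+4tk$ replacing $\theta_k(k;x,0)=x$, and the condition $x>X(t)$ of the corollary is precisely the condition $x+4tk-\Phi'(k)>0$ on $(k_\mathfrak{a},k_\mathfrak{b})$ that makes the proof go through.

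Concretely, I would first transform Riemann-Hilbert Problem~\ref{rhp-M-tilde} by the same sequence of triangular factorizations and openings of lenses used in \S\ref{sec:initial-condition-small-proof}, retaining the full $\theta(k;x,t)=kx+2k^2 t$ in the conjugating exponentials in place of $kx$. On the pieces of the deformed contour crossing $(k_\mathfrak{a},k_\mathfrak{b})$, the resulting triangular factors carry exponentials whose decay into the upper or lower half-plane is controlled by $\partial_k[\theta(k;x,t)-\tfrac{1}{2}\Phi(k)]=x+4tk-\Phi'(k)$; when this quantity is nonnegative throughout $(k_\mathfrak{a},k_\mathfrak{b})$, the contours can be pushed into the same half-planes as in the $t=0$ argument and the off-diagonal entries become exponentially small in $\eps$ on these contours away from the endpoints. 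Next, I would perform the same $\dbar$-extension of the jump data as in \cite{McLaughlinM08}, which works without modification because it is entirely local to $(k_\mathfrak{a},k_\mathfrak{b})$ and insensitive to the precise form of $\theta$. The logarithmic loss $\mathcal{O}((\log(\eps^{-1}))^{-1/2})$ arises, exactly as in the proof of Theorem~\ref{theorem:initial-condition-small}, from the parametrix construction at the two endpoints $k=k_\mathfrak{a},k_\mathfrak{b}$. The bound on $\tilde{q}^\eps(x,t)$ is then read off from the large-$k$ expansion via \eqref{eq:q-from-M}.

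The main obstacle is to verify that the implicit constants and all the auxiliary data of the construction (the choice of lens widths, $\dbar$-cutoffs, and local parametrix discs) can be chosen uniformly in $(x,t)$ on the open vacuum region $\{x>X(t)\}$. This reduces to observing that on any compact subset of this region the positive quantity $\min_{k\in[k_\mathfrak{a},k_\mathfrak{b}]}(x+4tk-\Phi'(k))$ is bounded below by a positive constant, which is immediate from continuity and the definition of $X(t)$; with that uniform lower bound in hand, the contours and cutoffs can be selected as continuous functions of $(x,t)$, and the small-norm estimates inherit this uniformity. No new analytic ingredients beyond those already developed for Theorem~\ref{theorem:initial-condition-small} are needed, and the statement genuinely emerges as a corollary of that proof.
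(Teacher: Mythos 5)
Your proposal is correct and follows essentially the same route as the paper: the only $(x,t)$-dependence in the proof of Theorem~\ref{theorem:initial-condition-small} enters through the inequality $\theta'(k;x,t)-\Phi'(k)=x+4tk-\Phi'(k)\ge 0$ on $(k_\mathfrak{a},k_\mathfrak{b})$, which is exactly what $x>X(t)$ guarantees, so the whole lens-opening/$\dbar$/small-norm argument goes through unchanged. (Minor slip: the relevant derivative is $\partial_k[\theta(k;x,t)-\Phi(k)]$, not $\partial_k[\theta(k;x,t)-\tfrac{1}{2}\Phi(k)]$, but the condition you actually impose is the correct one.)
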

In the case that $f(\cdot):=-\Phi'(\cdot)$ is convex, we may characterize $X(t)$ explicitly in terms of the Legendre dual $f^*$ as follows:
\begin{equation}
X(t):=f^*(-4t)=[-\Phi']^*(-4t),\quad t>0,\quad\text{where}\quad f^*(p):=\sup_{k_\mathfrak{a}<k<k_\mathfrak{b}}(pk-f(k)).
\label{eq:Legendre-dual}
\end{equation}
The proof of Corollary~\ref{corollary-vacuum} is given in \S\ref{sec:vacuum-domain}.
We call the domain $x>X(t)$ the \emph{vacuum domain} corresponding to the Dirichlet boundary data $H(t)e^{iS(t)/\eps}$.  In the vacuum domain the solution is influenced predominantly by the homogeneous initial data rather than the nonhomogeneous boundary data in the semiclassical limit.
A concrete calculation of the vacuum domain for a particular choice of Dirichlet boundary data is shown in Figure~\ref{fig:VacuumDomain}.
\begin{figure}[h]
\includegraphics{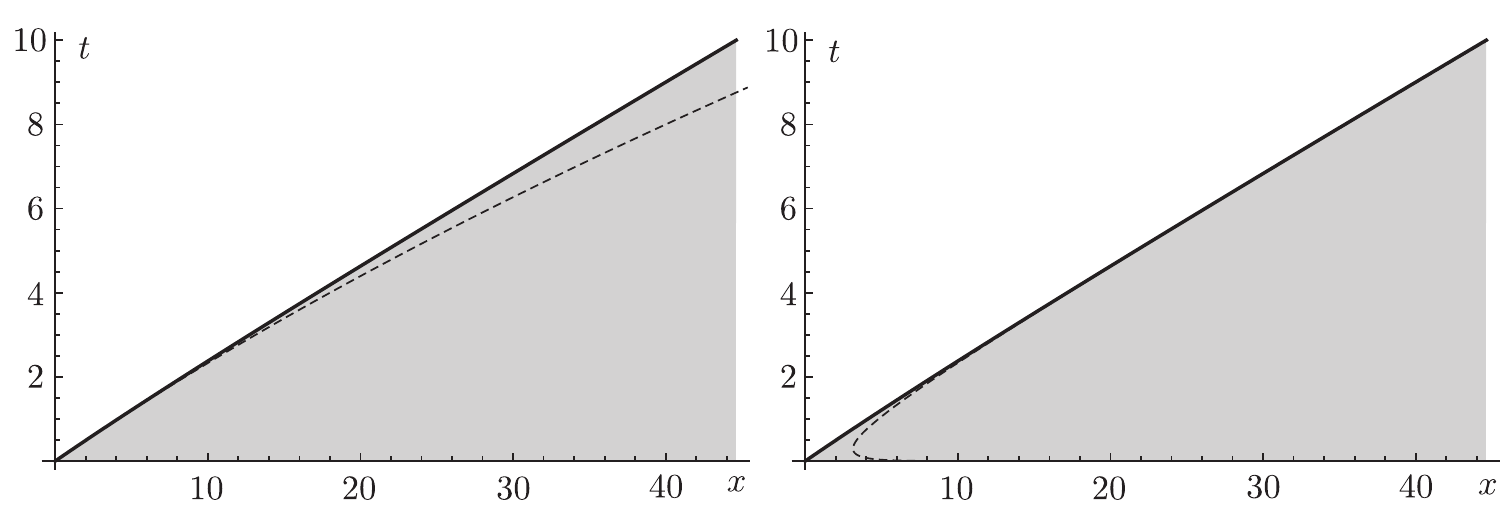}
\caption{The curve $x=X(t)$ (solid black curve) and the $\eps$-independent vacuum domain (shaded) calculated from $\Phi$ corresponding to the explicit boundary data illustrated in Figure~\ref{fig:Assumption1} below.   Also shown (dashed curves) are explicit asymptotes to $x=X(t)$ for small $t$ (left panel, the asymptote $x=X_0(t)$) and large $t$ (right panel, the asymptote $x=X_\infty(t)$).  These asymptotes are described in \S\ref{sec:vacuum-domain} (see \eqref{eq:X-asymptote-t-small} and \eqref{eq:X-asymptote-t-large}).}
\label{fig:VacuumDomain}
\end{figure}

Another result is the following, which is essentially a corollary of the proof of Theorem~\ref{theorem:boundary-condition-recover}.  Here $\mathfrak{a}(\cdot)$ and $\mathfrak{b}(\cdot)$ are defined in terms of the Dirichlet data by \eqref{eq:a-b-define}.
\begin{corollary}[existence of a plane-wave domain]
Each point $(0,t_0)$ with $t_0\in (0,+\infty)\setminus\{t_\mathfrak{a},t_\mathfrak{b}\}$ has a neighborhood $D_{t_0}$ in the $(x,t)$-plane in which there exist unique differentiable functions $\alpha=\alpha(x,t)$ and $\beta=\beta(x,t)$ satisfying $\alpha(0,t)=\mathfrak{a}(t)$ and $\beta(0,t)=\mathfrak{b}(t)$ and the partial differential equations
\begin{equation}
\begin{split}
\frac{\partial\alpha}{\partial t}-(3\alpha+\beta)\frac{\partial\alpha}{\partial x}&=0\\
\frac{\partial\beta}{\partial t}-(\alpha+3\beta)\frac{\partial\beta}{\partial x}&=0,
\end{split}
\label{eq:Whitham-1}
\end{equation}
and such that the solution $q=\tilde{q}^\eps(x,t)$ of the defocusing nonlinear Schr\"odinger equation \eqref{eq:NLS} obtained from Riemann-Hilbert Problem~\ref{rhp-M-tilde} satisfies
\begin{equation}
\tilde{q}^\eps(x,t)=\eta(x,t)e^{\myi\sigma(x,t)/\eps}+\mathcal{O}((\log(\eps^{-1}))^{-1/2})
\label{eq:plane-wave}
\end{equation}
uniformly for $(x,t)\in D_{t_0}$ as $\eps\downarrow 0$, where
\begin{equation}
\eta(x,t):=\frac{1}{2}(\beta(x,t)-\alpha(x,t))\quad\text{and}\quad
u(x,t):=\frac{\partial\sigma}{\partial x}(x,t)=-(\alpha(x,t)+\beta(x,t))
\label{eq:eta-u}
\end{equation}
and $\sigma(0,t)=S(t)$.
\label{corollary:plane-wave}
\end{corollary}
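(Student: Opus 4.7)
The plan is to lift the genus-zero $g$-function used at the boundary in the proof of Theorem~\ref{theorem:boundary-condition-recover} off the line $x=0$ via the implicit function theorem, and then rerun the steepest-descent/$\dbar$ analysis at each nearby point $(x,t)$ essentially verbatim.

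At $x=0$, the analysis of Theorem~\ref{theorem:boundary-condition-recover} produces, for each $t\in(0,\infty)\setminus\{t_\mathfrak{a},t_\mathfrak{b}\}$, a scalar $g$-function whose natural band of support is an arc joining two endpoints $\mathfrak{a}(t)$ and $\mathfrak{b}(t)$; these endpoints are determined by two scalar endpoint equations (the square-root vanishing and moment conditions characterizing a genus-zero band). Encode these as $F_1(\alpha,\beta;x,t)=0$ and $F_2(\alpha,\beta;x,t)=0$, where $F_1$ and $F_2$ depend smoothly on $(x,t)$ only through $\theta(k;x,t)=kx+2k^2t$, and satisfy $F_j(\mathfrak{a}(t),\mathfrak{b}(t);0,t)=0$ by construction. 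I would then apply the implicit function theorem in the unknowns $(\alpha,\beta)$ based at $(\mathfrak{a}(t_0),\mathfrak{b}(t_0);0,t_0)$. The hypothesis that $t_0\neq t_\mathfrak{a}$ and $t_0\neq t_\mathfrak{b}$ is exactly what ensures that the two endpoints remain distinct and that the Jacobian $\partial(F_1,F_2)/\partial(\alpha,\beta)$ is nondegenerate; the exceptional times $t_\mathfrak{a}$ and $t_\mathfrak{b}$ from Assumption~\ref{assumption:data} are precisely the times at which the genus-zero ansatz degenerates. This yields unique differentiable functions $\alpha(x,t)$ and $\beta(x,t)$ on some neighborhood $D_{t_0}$ of $(0,t_0)$ with the prescribed boundary values at $x=0$.

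Differentiating $F_1=F_2=0$ with respect to $x$ and $t$ and solving the resulting linear system for $(\alpha_t,\beta_t)$ in terms of $(\alpha_x,\beta_x)$ produces the diagonal quasilinear system \eqref{eq:Whitham-1}; this merely reflects the standard fact that the dispersionless limit of the defocusing NLS, written in the Riemann invariants defined by \eqref{eq:eta-u}, is diagonal with characteristic speeds $-(3\alpha+\beta)$ and $-(\alpha+3\beta)$. Alternatively, the same system arises as the compatibility condition between the $x$- and $t$-derivatives of the $g$-function's moment equations, i.e., as the modulation equations for its endpoints.

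With $\alpha(x,t)$ and $\beta(x,t)$ in hand, I would then repeat, for each $(x,t)\in D_{t_0}$, the opening of lenses and the construction of the outer parametrix exactly as in \S\ref{sec:boundary-condition-recover-proof}, but with the endpoints $\alpha(x,t),\beta(x,t)$ in place of $\mathfrak{a}(t),\mathfrak{b}(t)$. The outer parametrix is the explicit genus-zero plane-wave model matrix, and the recovery formula \eqref{eq:q-from-M} yields $\tilde{q}^\eps(x,t)=\eta(x,t)e^{\myi\sigma(x,t)/\eps}+\mathcal{O}((\log\eps^{-1})^{-1/2})$ with $\eta=(\beta-\alpha)/2$ and $\partial_x\sigma=-(\alpha+\beta)$, uniformly on a compact subneighborhood; the phase normalization $\sigma(0,t)=S(t)$ is inherited from \eqref{eq:Dirichlet-Data-Recover}. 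The main technical obstacle is verifying uniformity of the steepest-descent/$\dbar$ construction under small perturbation of $(x,t)$: the sign structure of the imaginary part of the phase on the deformed contours and on the $\dbar$-extension regions, which was checked at $x=0$ in the proof of Theorem~\ref{theorem:boundary-condition-recover}, must persist throughout $D_{t_0}$. Because signs are preserved under small perturbation on compact sets, this is an open condition and hence automatic on a sufficiently small $D_{t_0}$, but it is the one nontrivial piece of work that the phrase \emph{essentially a corollary} quietly subsumes.
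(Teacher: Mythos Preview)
Your approach is essentially the paper's: continue the band endpoints $(\alpha,\beta)$ off $x=0$ by the implicit function theorem applied to the two moment conditions \eqref{eq:moments-genus-zero}, derive the Whitham system \eqref{eq:Whitham-1} by implicit differentiation (the paper does this via Lemma~\ref{lemma-moment-derivatives}), observe that the strict inequalities of Lemma~\ref{lemma:inequalities-x-zero} are open conditions and hence persist for small $x$, and then rerun the steepest descent of \S\ref{sec:boundary-condition-recover-proof} verbatim.

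Two small corrections. First, the exclusion of $t_\mathfrak{a}$ and $t_\mathfrak{b}$ is not about the endpoints $\alpha,\beta$ remaining distinct --- they always are, since $\beta-\alpha=2H(t)>0$. Rather, at $t=t_\mathfrak{a}$ (resp.\ $t_\mathfrak{b}$) the band endpoint $\alpha$ (resp.\ $\beta$) reaches the hard edge $k_\mathfrak{a}$ (resp.\ $k_\mathfrak{b}$), the adjacent void or saturated region collapses, and the partial derivative $\partial m_1/\partial\alpha$ (resp.\ $\partial m_1/\partial\beta$) vanishes, killing the Jacobian. Second, there is no $\dbar$ analysis in the proof of Theorem~\ref{theorem:boundary-condition-recover} that you are extending; that machinery was used only for Theorem~\ref{theorem:initial-condition-small}. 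Here the parametrix is purely analytic (outer model plus Airy local models), so your references to a ``steepest-descent/$\dbar$ construction'' should simply read ``steepest descent.''
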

The proof of Corollary~\ref{corollary:plane-wave} is given in \S\ref{sec:plane-wave}.  Note that eliminating $\alpha$ and $\beta$ from \eqref{eq:Whitham-1} in favor of $\eta$ and $u$
using \eqref{eq:eta-u} yields 
\begin{equation}
\frac{\partial}{\partial t} \begin{bmatrix}\eta\\u\end{bmatrix}+\mathbf{C}(\eta,u)
\frac{\partial}{\partial x}\begin{bmatrix}\eta\\u\end{bmatrix}=\begin{bmatrix}0\\0\end{bmatrix},\quad \mathbf{C}(\eta,u):=
\begin{bmatrix}2u & \eta\\4\eta & 2u\end{bmatrix},
\label{eq:dispersion-less-NLS}
\end{equation}
which should be compared with \eqref{eq:NLSWKB2}, the defocusing nonlinear Schr\"odinger equation written without approximation in terms of amplitude $\eta$ and phase derivative $u$.
Therefore, we observe that for small positive $x$, $\tilde{q}^\eps(x,t)$ resembles a modulated plane wave of the form \eqref{eq:plane-wave-form} for amplitude $\eta$ and phase $\sigma$ independent of $\eps$, and the modulation is described by the dispersionless nonlinear Schr\"odinger system \eqref{eq:dispersion-less-NLS}, or equivalently the Whitham (Riemann-invariant form) system 
\eqref{eq:Whitham-1}.  This shows consistency with, and adds yet more weight to, our approximate formula for the Dirichlet-to-Neumann map given in Definition~\ref{definition:D-to-N}.  Indeed, the latter was formally derived under the \emph{initially unjustified assumption} that the solution resembles a modulated plane wave near the boundary $x=0$.

We call the union of the neighborhoods of the $(x,t)$ plane for $x>0$ in which $\tilde{q}^\eps(x,t)$ is described by Corollary~\ref{corollary:plane-wave} the \emph{plane-wave domain} for $\tilde{q}^\eps(x,t)$.  We therefore see that the quarter-plane $x>0$ and $t>0$ is split up into several regions in which the approximate solution $\tilde{q}^\eps(x,t)$ of the Dirichlet boundary-value problem\footnote{We wish to stress that while $q=\tilde{q}^\eps(x,t)$ only approximately satisfies the given initial and boundary conditions, it is an exact solution of the defocusing nonlinear Schr\"odinger equation \eqref{eq:NLS} for every $\eps>0$.} behaves quite differently.  So far we have observed the vacuum domain, in which $\tilde{q}^\eps(x,t)$ simply decays to zero with $\eps>0$, and the plane-wave domain, in which $\tilde{q}^\eps(x,t)$ resembles a modulated plane wave with nonzero amplitude.  It is to be expected that these two domains do not exhaust the quarter plane.  While we do not pursue the topic further in this paper, the methodology presented in \S\ref{sec:g-function} below also allows one to calculate the semiclassical behavior of $\tilde{q}^\eps(x,t)$ for $(x,t)$ in domains not contiguous to the boundary of the quarter plane, in which (in principle) more complicated local behavior of $\tilde{q}^\eps(x,t)$ can occur, with microstructure modeled by higher transcendental functions (e.g., dispersive shock waves described by modulated elliptic functions).
See Remark~\ref{remark-multiphase} for more information.

With the proofs of our results complete, we conclude the body of our paper with some further remarks, some indicating directions for future work, in \S\ref{sec:concluding-remarks}.

\subsection{Related work}
Our paper represents a further contribution to the literature on the use of the unified transform to study nonlinearizable boundary value problems in various asymptotic limits.  A key observation that was made fairly early in the development of the theory was that regardless of whether the spectral functions $A(k)$ and $B(k)$ actually correspond to a compatible Dirichlet-Neumann pair $(Q^\mathrm{D},Q^\mathrm{N})$,  Riemann-Hilbert Problem~\ref{rhp-original} yields to asymptotic analysis in the limit of large $t$ (with $x=vt$ for some nonnegative velocity $v$) by the steepest descent method.  
General asymptotic properties of the solution that can be observed by such analysis therefore necessarily also describe the physical solutions of the Dirichlet problem that simply correspond to the special case that the spectral functions satisfy the global relation.  As a representative of this type of analysis (for the focusing case of the nonlinear Schr\"odinger equation), we cite a paper of Boutet de Monvel, Its, and Kotlyarov \cite{BoutetIK07}, where time-periodic boundary conditions are analyzed.  Such analysis does not require any preliminary asymptotic analysis of the spectral functions, as they are independent of the asymptotic parameter $t$.  Another approach to the asymptotic solution of nonlinearizable boundary value problems is to consider the situation in which the initial and boundary data are small, in which case a perturbation scheme based on the amplitude as a small parameter can be developed in detail, and significantly this allows the global relation to be solved order-by-order.  This means that the asymptotic results obtained are guaranteed to correspond to a compatible Dirichlet-Neumann pair $(Q^\mathrm{D},Q^\mathrm{N})$ even though only $Q^\mathrm{D}$ is given.  The recent papers of Fokas and Lenells \cite{FokasL12a,FokasL12b}
pursue this approach and obtain new convergence results showing that for small simple harmonic Dirichlet boundary data, the solution is eventually periodic with the same period, at least to third order in the small amplitude.  

The semiclassical limit is, in a sense, the exact opposite to the weakly-nonlinear small-amplitude limit.  Indeed, the formal semiclassical limit is given by the strongly nonlinear hyperbolic system \eqref{eq:dispersion-less-NLS}.
There is at least one other paper in the literature on the subject of semiclassical analysis of the Dirichlet initial-boundary-value problem on the half-line for the defocusing nonlinear Schr\"odinger equation, namely a paper of Kamvissis \cite{Kamvissis03}, which directly stimulated our interest in this problem.  Like we do, Kamvissis  considers general nonhomogeneous Dirichlet boundary data together with homogeneous initial conditions, and
he applies the steepest descent methodology for Riemann-Hilbert problems to deduce general properties of the solution in the semiclassical limit.  Our Corollary~\ref{corollary:plane-wave} is consistent with Theorem~5 of \cite{Kamvissis03} (the main result of that paper) albeit in the simplest case of genus $N=0$.
On the other hand, it is less clear whether the vacuum domain $x>X(t)$ described by our Corollary~\ref{corollary-vacuum} is a special case of Kamvissis' Theorem~5. 

While we study the same problem, and apply similar methods, the approach in \cite{Kamvissis03} is fundamentally different from ours, being based solely on the abstract existence result for the unknown Neumann data $Q^\mathrm{N}(t)$ corresponding to the given Dirichlet data $Q^\mathrm{D}(t)$.  While Kamvissis'  assumption that $Q^\mathrm{D}(t)$ is independent of $\eps$ is quite reasonable and physically interesting\footnote{In the setting of \eqref{eq:DirichletDataForm}, Dirichlet boundary data that is independent of $\eps$ corresponds to taking $S(t)\equiv 0$.  Therefore, in a sense our results cannot be compared well with those of \cite{Kamvissis03}, because we require $S'(t)$ to be strictly negative (see \eqref{eq:Sprimebound}).}, his 
subsequent analysis of the direct spectral problem for the $t$-part of the Lax pair (Theorems~2 and 3 of \cite{Kamvissis03}, of which our Propositions~\ref{prop:noturningpoints} and \ref{prop:two-turning-points} are analogues) apparently rests upon the additional hidden assumption that the implicitly-defined function $q_x(0,t)$ is \emph{also} independent of $\eps$; otherwise the WKB methodology cited in \cite[Section III]{Kamvissis03} does not apply.  Since the defocusing nonlinear Schr\"odinger equation involves the parameter $\eps$ in a singular way, whether this assumption is justified is certainly not 
obvious.  Indeed one might worry that a slowly-varying Dirichlet boundary condition might give rise to a Neumann boundary value with rapid variations in amplitude or phase of period proportional to $\eps$.  For example, our Theorem~\ref{theorem:boundary-condition-recover} shows that some bounded Dirichlet data $q(0,t)$ can lead to rapidly oscillatory Neumann data $q_x(0,t)$ that moreover is large of size $\eps^{-1}$.

Our approach is to avoid abstract assumptions, and instead make a very explicit assumption, based on the modulated plane-wave ansatz, concerning the unknown Neumann data as described in Definition~\ref{definition:D-to-N}.  This allows us to justify our steepest descent analysis by ultimately tying the solution generated back to the hypothesized initial and boundary data (Theorems~\ref{theorem:initial-condition-small} and \ref{theorem:boundary-condition-recover}) in an explicit fashion.  This same approach leads to 
a very concrete description of the semiclassical dynamics of the solution in the full domain $x>0$ and $t>0$, as in the characterization of the vacuum domain presented in Corollary~\ref{corollary-vacuum}.

Another paper that we wish to mention is work of Degasperis, Manakov, and Santini \cite{DegasperisMS02} that presents an alternate approach to the general initial-boundary value problem for the defocusing nonlinear Schr\"odinger equation.  The method described in \cite{DegasperisMS02}  avoids using the $t$-part of the Lax pair to formulate the inverse problem and instead uses the inverse theory of the spatial part of the Lax pair only, at the cost of a more implicit nonlinear description of the time evolution of the jump matrices on the real line.  The fact that the inverse problem is ultimately formulated as a Riemann-Hilbert problem relative to the real axis may be a crucial benefit in our view (see Remark~\ref{remark-imaginary-amplify}).  In the future, we plan to explore the possibility of using semiclassical asymptotic techniques to analyze this alternate method of studying initial-boundary value problems.

\section{A Class of Dirichlet Boundary-Value Problems}
\subsection{Characterization of the boundary data}
For simplicity\footnote{See Remark~\ref{remark:nonzero-IC}.},
we consider the case of vanishing initial data:
\begin{equation}
q_0(x)=0,\quad x>0.
\end{equation}
This immediately implies that the spectral transforms defined from the differential equation \eqref{eq:X-problem} satisfy $a(k)\equiv 1$ and $b(k)\equiv 0$ for all $k\in\mathbb{C}$.
We take the Dirichlet boundary data in the form \eqref{eq:DirichletDataForm}, and for convenience
we impose several conditions on the functions $H(t)$ and $S(t)$ for $t\ge 0$.  These are specified in terms of an auxiliary function $U$ as follows:
\begin{assumption}
The functions $H:\mathbb{R}_+\to\mathbb{R}$ and $U:\mathbb{R}_+\to\mathbb{R}$ satisfy the following conditions:
\begin{itemize}
\item
$H:\mathbb{R}_+\to\mathbb{R}$ is real analytic for $t>0$, strictly positive for all $t>0$, and $t^pH^{(q)}(t)\to 0$ as $t\to +\infty$ for all $p\ge 0$ and $q=0,1,2,\dots$.  Also,
there is a positive number $h_0$ such that $H(t)=h_0t^{1/2}(1+o(1))$ and $H'(t)=\tfrac{1}{2}h_0t^{-1/2}(1+o(1))$ hold as $t\to 0$ with $\Re\{t\}\ge 0$.
\item 
$U:\mathbb{R}_+\to\mathbb{R}$ is real analytic for $t>0$, satisfying $U(t)\ge 2H(t)+\delta$ for some $\delta>0$,
and $t^pU^{(q)}(t)\to 0$ as $t\to +\infty$ for all $p\ge 0$ and $q=1,2,3,\dots$.  Also, there is a positive number $U_0$ such that $U(t)=U_0+o(t^{1/2})$ and $U'(t)=\mathcal{O}(t^{-1/2})$ hold as $t\to 0$ with $\Re\{t\}\ge 0$.
\item The functions 
\begin{equation}
\mathfrak{a}(t):=-\tfrac{1}{2}U(t)-H(t)\quad \text{and}\quad \mathfrak{b}(t):=-\tfrac{1}{2}U(t)+H(t)
\label{eq:a-b-define}
\end{equation}
each have precisely one critical point in $(0,\infty)$, corresponding to a nondegenerate maximum for $\mathfrak{b}$ at a point $t=t_\mathfrak{b}$ and a nondegenerate minimum for $\mathfrak{a}$ at a point $t=t_\mathfrak{a}$.  Nondegeneracy means that $\mathfrak{a}''(t_\mathfrak{a})>0$ and $\mathfrak{b}''(t_\mathfrak{b})<0$.
\end{itemize}
\label{assumption:data}
\end{assumption}

\begin{remark}
The square-root behavior of the amplitude $H(t)$ that is specified in Assumption~\ref{assumption:data} evidently violates the conditions for the proof of Carroll and Bu \cite{CarrollB91} to guarantee the existence of a solution of the initial-boundary value problem.
Nonetheless this behavior leads to additional smoothness of the integral transform $\Phi$ defined in \eqref{eq:Phi-define} below that is useful in the proof of Theorem~\ref{theorem:initial-condition-small}.  See Remark~\ref{remark:Phi-smooth}.  

We may avoid this difficulty as follows.  Let $\mathcal{B}(t)$ be a $C^\infty(\mathbb{R}_+)$ ``bump function'' satisfying $\mathcal{B}(t)=0$ for $0<t<1$ and $\mathcal{B}(t)=1$ for $t>2$.  Replacing
$H(t)$ by $H^\eps(t):=\mathcal{B}(\eps^{-1}t)H(t)$, by \cite{CarrollB91} there is a unique solution $q=q^\eps(x,t)$ of \eqref{eq:NLS} for each $\eps>0$ satisfying $q^\eps(x,0)=0$ for $x>0$ and $q^\eps(0,t)=H^\eps(t)e^{\myi S(t)/\eps}$ for $t>0$.  We may view our results as a comparison between $\tilde{q}^\eps(x,t)$ and the function $q^\eps(x,t)$, the latter of which exactly satisfies the given boundary condition \eqref{eq:DirichletDataForm} for every $t>0$ as long as $\eps>0$ is sufficiently small (given $t$).
\myendrmk
\label{remark:amplitude-fix}
\end{remark}

We now use \eqref{eq:Udefine} to define $S(t)$ in terms of functions $H$ and $U$ satisfying the conditions of Assumption~\ref{assumption:data} as
\begin{equation}
S(t):=S(0)-\int_0^t\left[U(s)^2+2H(s)^2\right]\,ds.
\label{eq:SprimeUH}
\end{equation}
Note that as $U$ is real, the inequality \eqref{eq:Sprimebound} is automatically satisfied.
We introduce the following notation:  
\begin{equation}
k_0:=-\tfrac{1}{2}U_0,\quad
U_\infty:=\lim_{t\to +\infty}U(t),\quad\text{and}\quad k_\infty:=-\tfrac{1}{2}U_\infty
\end{equation}
($U_\infty$ is well-defined as $U'(\cdot)\in L^1(\mathbb{R}_+)$), and we set $k_\mathfrak{a}:=\mathfrak{a}(t_\mathfrak{a})$ and $k_\mathfrak{b}:=\mathfrak{b}(t_\mathfrak{b})$.  Note that the assumption $H(t)>0$ guarantees that $\mathfrak{a}(t)<\mathfrak{b}(t)$, and the assumption that $U(t)\ge 2H(t)+\delta$ guarantees that $k_\mathfrak{a}<k_\mathfrak{b}<0$.  
The points $k_0$ and $k_\infty$ lie in the interval $(k_\mathfrak{a},k_\mathfrak{b})$, and we see that 
$\mathfrak{a}(0)=\mathfrak{b}(0)=k_0$ while $\lim_{t\to+\infty}\mathfrak{a}(t)=\lim_{t\to +\infty}\mathfrak{b}(t)=k_\infty$.
These definitions are illustrated for boundary data satisfying Assumption~\ref{assumption:data} in Figure~\ref{fig:Assumption1}.
\begin{figure}[h]
\includegraphics{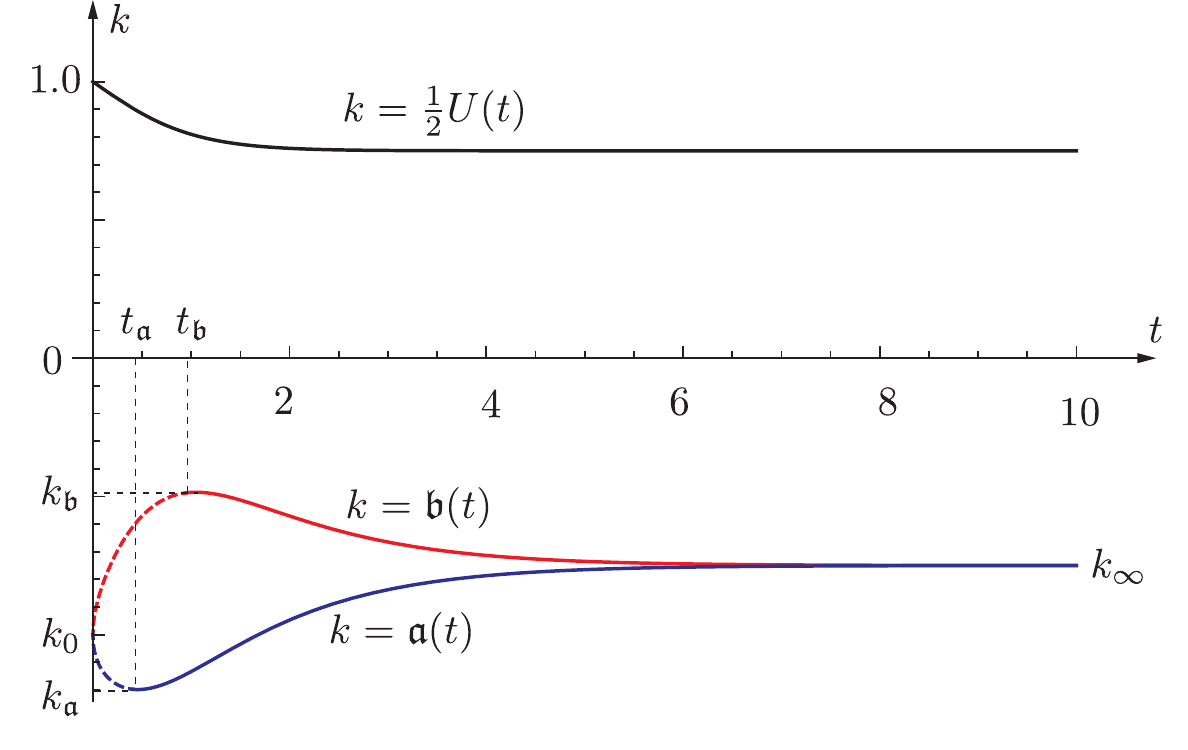}
\caption{The turning point curve consists of three branches, curves along which $\lambda^2=0$  (see \eqref{eq:lambdasquared}).  Here the turning point curve is shown for boundary data $H(t):=\tfrac{1}{2}t^{1/2}\,\mathrm{sech}(t)$ and $U(t):=2-\tfrac{1}{2}\tanh(t)$ consistent with Assumption~\ref{assumption:data}.  The functions $\mathfrak{a}(\cdot)$ and $\mathfrak{b}(\cdot)$ are given in terms of $H(\cdot)$ and $U(\cdot)$ by \eqref{eq:a-b-define}.  The dashed (solid) curves in the interval $k_\mathfrak{a}<k<k_\mathfrak{b}$ correspond to the function $t=t_-(k)$ (the function $t=t_+(k)$).}
\label{fig:Assumption1}
\end{figure}

With the semiclassical approximation of the Dirichlet-to-Neumann map given in Definition~\ref{definition:D-to-N}, the direct scattering problem encoding the boundary data is
\begin{equation}
\begin{split}
\eps\frac{d\mathbf{T}_0}{dt}(t;k)&=
\begin{bmatrix}
-2\myi k^2-\myi|Q^\mathrm{D}(t)|^2 & 2kQ^\mathrm{D}(t)+\myi Q^\mathrm{N}_0(t)\\
2kQ^\mathrm{D}(t)^* -\myi Q^\mathrm{N}_0(t)^* & 2\myi k^2+\myi |Q^\mathrm{D}(t)|^2\end{bmatrix}\mathbf{T}_0(t;k)\\
{}&=
\begin{bmatrix}
-2\myi k^2-\myi H(t)^2 & (2k-U(t))H(t)e^{\myi S(t)/\eps}\\
(2k-U(t))H(t)e^{-\myi S(t)/\eps} & 2\myi k^2+\myi H(t)^2\end{bmatrix}\mathbf{T}_0(t;k),\\
&\qquad\qquad
\lim_{t\to +\infty}\mathbf{T}_0(t;k)e^{2\myi k^2t\sigma_3/\eps}=\mathbb{I}.
\end{split}
\label{eq:T-equation-rewrite}
\end{equation}
The oscillatory factors $e^{\pm \myi S(t)/\eps}$ can be removed from the coefficient matrix by means of a simple substitution:
\begin{equation}
\mathbf{T}_0(t;k)=e^{\myi S(t)\sigma_3/(2\eps)}\mathbf{F}(t;k).
\label{eq:T-and-F}
\end{equation}
Indeed, making use of  \eqref{eq:SprimeUH}, this substitution 
leads to the equivalent system of equations
\begin{equation}
\eps\frac{d\mathbf{F}}{dt}(t;k)=\mathbf{B}(t;k)\mathbf{F}(t;k),
\label{eq:Fsystem}
\end{equation}
with $\epsilon$-independent coefficient matrix given by
\begin{equation}
\mathbf{B}(t;k):=\frac{1}{2}\begin{bmatrix}
-4\myi k^2+\myi U(t)^2 & 2H(t)(2k-U(t))\\
2H(t)(2k-U(t)) & 4\myi k^2-\myi U(t)^2\end{bmatrix}=\frac{1}{2}(2k-U(t))\begin{bmatrix}-\myi (2k+U(t)) & 2H(t)\\2H(t) & \myi (2k+U(t))\end{bmatrix},
\label{eq:B-matrix}
\end{equation}
that we need to solve 
subject to the boundary condition
\begin{equation}
\lim_{t\to +\infty}\mathbf{F}(t;k)e^{\myi(4k^2t+S(t))\sigma_3/(2\eps)}=\mathbb{I}.
\label{eq:Fnorm}
\end{equation}
The corresponding spectral transforms are given for $\Im\{k^2\}\le 0$ by
\begin{equation}
A_0(k^*)^*:=T_{0,11}(0;k)=e^{\myi S(0)/(2\eps)}f_1(0;k)
\label{eq:AstarF}
\end{equation}
and
\begin{equation}
B_0(k^*)^*:=T_{0,21}(0;k)=e^{-\myi S(0)/(2\eps)}f_2(0;k),
\label{eq:BstarF}
\end{equation}
where $\mathbf{f}(t;k)=(f_1(t;k),f_2(t;k))^\mathsf{T}:=(F_{11}(t;k),F_{21}(t;k))^\mathsf{T}$ denotes the first column of $\mathbf{F}(t;k)$.  

\subsection{Semiclassical behavior of the spectral functions $A_0(k)$ and $B_0(k)$}
\label{sec:spectral-analysis}
Since $\eps>0$ appears both in the data $(Q^\mathrm{D},Q^\mathrm{N}_0)$ and in the differential equation \eqref{eq:T-equation-rewrite}, the spectral functions $A_0(\cdot)$ and $B_0(\cdot)$ will also
depend on this small parameter.  We now study this dependence rigorously in the limit $\eps\downarrow 0$.

Given any sufficiently small number $\delta>0$ (not necessarily related to the constant in Assumption~\ref{assumption:data}) we define $\mathcal{Q}_\delta^\mathrm{II}$ to be the closed unbounded subset of the $k$-plane characterized by the inequalities $\pi/2\le\arg(k)\le\pi$ and one of the three inequalities:  $\Re\{k\}\le k_\mathfrak{a}-\delta$ or $\Re\{k\}\ge k_\mathfrak{b}+\delta$ or $\Im\{k\}\ge\delta$.  See Figure~\ref{fig:QIIdelta}.
\begin{figure}[h]
\begin{center}
\includegraphics{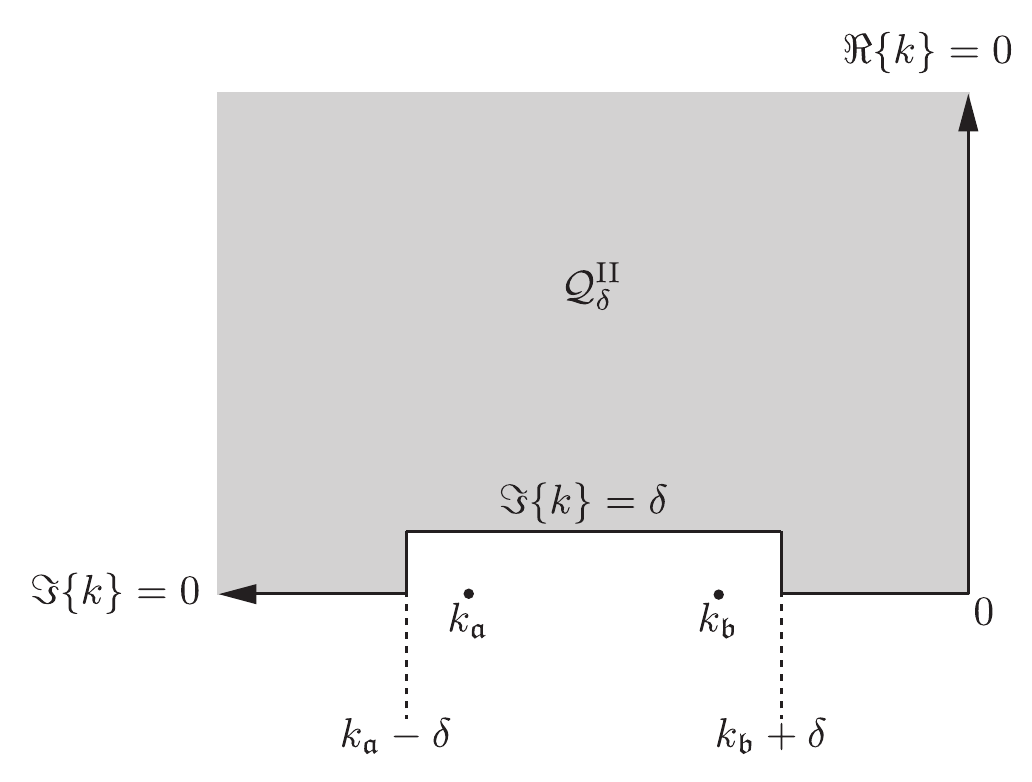}
\end{center}
\caption{The closed unbounded subset $\mathcal{Q}^\mathrm{II}_\delta$ of the second quadrant of the complex $k$-plane.}
\label{fig:QIIdelta}
\end{figure}

The eigenvalues $\lambda$ of $\mathbf{B}(t;k)$ satisfy
\begin{equation}
\lambda^2 = \frac{1}{4}(2k-U(t))^2\left[4H(t)^2-(2k+U(t))^2\right]=(2k-U(t))^2(k-\mathfrak{a}(t))(\mathfrak{b}(t)-k).
\label{eq:lambdasquared}
\end{equation}
Given $k$ with $\Im\{k^2\}\le 0$, a positive real number $t>0$ is called a \emph{turning point} for \eqref{eq:Fsystem} if the two eigenvalues of $\mathbf{B}(t;k)$ degenerate (at $\lambda=0$).
We have the following basic fact.
\begin{lemma}[Existence of turning points]
\label{lem:turningpoints}
Suppose that Assumption~\ref{assumption:data} holds and that $\pi/2\le\arg(k)\le\pi$.  Then there exist turning points $t>0$ 
precisely when
$k$ lies in the negative real interval $k_\mathfrak{a}\le k\le k_\mathfrak{b}<0$.  Moreover,
for each $k\in (k_\mathfrak{a},k_\mathfrak{b})$ there exist precisely two turning points $t_-(k)<t_+(k)$.
The two turning points coalesce as $k\downarrow k_\mathfrak{a}$ and as $k\uparrow k_\mathfrak{b}$:  $t_-(k_\mathfrak{a})=t_+(k_\mathfrak{a})=t_\mathfrak{a}$ and
$t_-(k_\mathfrak{b})=t_+(k_\mathfrak{b})=t_\mathfrak{b}$.  Also, $t_-(k)\to 0$ as $k\to k_0$ while $t_+(k)\to +\infty$ as $k\to k_\infty$.
Finally, given any $\delta>0$, the condition that $k\in \mathcal{Q}^\mathrm{II}_\delta$ bounds $\lambda$ away from zero uniformly for $t>0$.
\end{lemma}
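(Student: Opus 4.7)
The plan is to exploit the explicit factorization \eqref{eq:lambdasquared}, which shows that $\lambda=0$ iff one of the three factors $2k-U(t)$, $k-\mathfrak{a}(t)$, $\mathfrak{b}(t)-k$ vanishes. The first step is to localize turning points to the negative real $k$-axis. Since $U(t)>0$ and $\mathfrak{a}(t),\mathfrak{b}(t)$ are real-valued, while $\Re(k)\le 0$ throughout the closed second quadrant, the factor $2k-U(t)$ never vanishes there, and the other two factors can vanish only when $k$ is real. This immediately rules out turning points for $k$ in the open second quadrant or on the positive imaginary axis, and reduces the existence question to counting real solutions $t>0$ of $k=\mathfrak{a}(t)$ and $k=\mathfrak{b}(t)$ with $k\in(-\infty,0]$.

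The second step uses the unimodal structure from Assumption~\ref{assumption:data} to carry out this counting. Under the assumption $\mathfrak{a}$ is strictly decreasing from $\mathfrak{a}(0)=k_0$ to its unique minimum $k_\mathfrak{a}$ at $t_\mathfrak{a}$ and then strictly increasing to $\lim_{t\to+\infty}\mathfrak{a}(t)=k_\infty$, while $\mathfrak{b}$ has mirror behavior with a unique maximum $k_\mathfrak{b}$ at $t_\mathfrak{b}$. In particular the ranges of $\mathfrak{a}$ and $\mathfrak{b}$ both sit inside $[k_\mathfrak{a},k_\mathfrak{b}]$, which instantly excludes turning points for $k\notin[k_\mathfrak{a},k_\mathfrak{b}]$. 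For $k\in(k_\mathfrak{a},k_\mathfrak{b})$, a short case split on the position of $k$ relative to $\min(k_0,k_\infty)$ and $\max(k_0,k_\infty)$ produces in every case exactly two turning points: both on $\mathfrak{b}$ when $k>\max(k_0,k_\infty)$, both on $\mathfrak{a}$ when $k<\min(k_0,k_\infty)$, and one on each branch otherwise. These I label $t_-(k)<t_+(k)$. The coalescences $t_\pm(k_\mathfrak{a})=t_\mathfrak{a}$ and $t_\pm(k_\mathfrak{b})=t_\mathfrak{b}$ follow from the uniqueness of the critical points of $\mathfrak{a}$ and $\mathfrak{b}$, and the boundary limits $t_-(k)\to 0$ as $k\to k_0$ and $t_+(k)\to+\infty$ as $k\to k_\infty$ follow from monotonicity of the relevant branches together with the boundary values $\mathfrak{a}(0)=\mathfrak{b}(0)=k_0$ and $\lim_{t\to+\infty}\mathfrak{a}(t)=\lim_{t\to+\infty}\mathfrak{b}(t)=k_\infty$.

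For the final uniform lower bound on $|\lambda|$ over $\mathcal{Q}^\mathrm{II}_\delta$, I would combine compactness with the decay conditions of Assumption~\ref{assumption:data}. The region $\mathcal{Q}^\mathrm{II}_\delta$ avoids an open real neighborhood of $[k_\mathfrak{a},k_\mathfrak{b}]$ of thickness $\delta$ in both $\Re$ and $\Im$ directions, so each of the three factors of $\lambda^2$ is jointly continuous and nonvanishing on any compact set of the form $\{(t,k):t\in[\tau,T],\,k\in\mathcal{Q}^\mathrm{II}_\delta,\,|k|\le R\}$, yielding a uniform positive lower bound there. As $t\to+\infty$ the coefficients tend to $k_\infty,k_\infty,U_\infty$, so $|\lambda|^2$ tends to $|2k-U_\infty|^2|k-k_\infty|^2$; this is bounded below on $\mathcal{Q}^\mathrm{II}_\delta$ because $k_\infty\in(k_\mathfrak{a},k_\mathfrak{b})$ lies in the rectangle excluded by $\mathcal{Q}^\mathrm{II}_\delta$ and because $\Re(2k-U_\infty)\le -U_\infty<0$ whenever $\Re(k)\le 0$. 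The $t\to 0^+$ regime is handled identically using $U_0$ and $k_0$, and for $|k|\to\infty$ inside $\mathcal{Q}^\mathrm{II}_\delta$ the leading behavior $|\lambda|^2\sim 4|k|^4$ is uniform in $t$. Patching these estimates yields the required global bound.

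I expect the main obstacle to be the case analysis of the second paragraph: a priori $k_0$ and $k_\infty$ may lie anywhere in $(k_\mathfrak{a},k_\mathfrak{b})$ and in either order, so one must keep careful track of which monotone branch of $\mathfrak{a}$ or $\mathfrak{b}$ realizes each turning point in order to see that exactly two always survive and that the stated boundary values at $k_\mathfrak{a}$, $k_\mathfrak{b}$, $k_0$, and $k_\infty$ arise consistently. The endpoint asymptotics stipulated for $H$ and $U$ in Assumption~\ref{assumption:data} are precisely what make the limits $k\to k_0$ and $k\to k_\infty$ go through cleanly.
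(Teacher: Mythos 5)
Your argument is correct and is exactly the direct verification the paper intends: the authors omit the proof of Lemma~\ref{lem:turningpoints}, calling it ``a direct and easy consequence'' of Assumption~\ref{assumption:data} and the factorization \eqref{eq:lambdasquared}, and your localization of zeros to the real factors $k-\mathfrak{a}(t)$, $\mathfrak{b}(t)-k$ followed by the unimodal counting of solutions of $k=\mathfrak{a}(t)$ and $k=\mathfrak{b}(t)$ is precisely that argument. One small simplification: the final uniform bound on $\mathcal{Q}^\mathrm{II}_\delta$ needs no compactness-plus-limits patching, since each factor is bounded below in modulus pointwise --- $|2k-U(t)|\ge U(t)\ge\delta$ by Assumption~\ref{assumption:data}, while $|k-\mathfrak{a}(t)|$ and $|k-\mathfrak{b}(t)|$ are at least $\delta$ because either $\Im\{k\}\ge\delta$ or $\Re\{k\}$ lies at distance at least $\delta$ from $[k_\mathfrak{a},k_\mathfrak{b}]$, which contains the ranges of both $\mathfrak{a}$ and $\mathfrak{b}$.
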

We omit the proof as it is a direct and easy consequence of the conditions on the functions $U(t)$ and $H(t)$ and formula \eqref{eq:lambdasquared}.  Given a value of $k$, the presence or absence of turning points determines the nature of the spectral functions in the semiclassical limit.

\subsubsection{Analysis in the absence of turning points}
According to Lemma~\ref{lem:turningpoints}, there are no turning points if $k\in\mathcal{Q}^\mathrm{II}_\delta$.  This implies a certain triviality of the spectral functions in this region of the $k$-plane.  We have the following result.
\begin{proposition}
Assume that $q_0(x)=0$.
Let a number $\delta>0$ be given, and suppose that the functions $H:\mathbb{R}_+\to \mathbb{R}$ and $U:\mathbb{R}_+\to\mathbb{R}$ satisfy Assumption~\ref{assumption:data}.  Then:
\begin{itemize}
\item
For bounded $\eps>0$,  any zeros in $\mathcal{Q}_\delta^\mathrm{II}$ of the analytic function $d_0(k):=A_0(k^*)^*$
lie in an $\eps$-independent bounded subset.
\item
The analytic function $d_0(k)$ has no zeros in $\mathcal{Q}_\delta^\mathrm{II}$ if $\eps>0$ is sufficiently small.
\item The function $\Gamma_0(k):=B_0(k^*)^*/A_0(k^*)^*$ satisfies a bound of the form
\begin{equation}
\Gamma_0(k) = \mathcal{O}\left(\frac{\eps^{1/2}}{1+|k|^3}\right),\quad k\in \mathcal{Q}_\delta^\mathrm{II},\quad\eps>0,
\label{eq:Gammabound}
\end{equation}
where the constant implicit in the estimate depends only on the functions $H$ and $U$.
\end{itemize}
\label{prop:noturningpoints}
\end{proposition}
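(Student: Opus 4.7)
The plan is a classical WKB analysis of the singularly perturbed linear system \eqref{eq:Fsystem} with normalization \eqref{eq:Fnorm}, taking advantage of the uniform separation of the eigenvalues $\pm\lambda(t;k)$ of $\mathbf{B}(t;k)$ provided by Lemma~\ref{lem:turningpoints} for $k\in\mathcal{Q}_\delta^{\mathrm{II}}$. First diagonalize $\mathbf{B}(t;k)=\mathbf{P}(t;k)\lambda(t;k)\sigma_3\mathbf{P}(t;k)^{-1}$ with $\det\mathbf{P}\equiv 1$ and both $\mathbf{P},\mathbf{P}^{-1}$ uniformly bounded on $(0,\infty)\times\mathcal{Q}_\delta^{\mathrm{II}}$, introduce an antiderivative $\Psi(t;k)$ of $\lambda(t;k)$ chosen so as to match the normalizing exponents $\myi(4k^2t+S(t))/2$ of \eqref{eq:Fnorm} at $t=+\infty$, and substitute $\mathbf{F}(t;k)=\mathbf{P}(t;k)e^{\Psi(t;k)\sigma_3/\eps}\mathbf{H}(t;k)$. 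The result is an equivalent problem of finding $\mathbf{H}$ with $\mathbf{H}(t;k)\to\mathbb{I}$ as $t\to+\infty$, whose coefficient matrix is purely off-diagonal with entries equal to the smooth functions $-(\mathbf{P}^{-1}\mathbf{P}_t)_{12,21}$ multiplied by the oscillatory exponentials $e^{\mp 2\Psi(t;k)/\eps}$.

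Next, rewrite the first column of $\mathbf{H}$ as the solution of a Volterra integral system with integration from $+\infty$ downward to $t=0$. The essential analytic input is that in $\mathcal{Q}_\delta^{\mathrm{II}}$ one can deform contours in $t$ so that the real part of the oscillating phase has a favorable sign, making the exponential kernels bounded; combined with the integrability of $(\mathbf{P}^{-1}\mathbf{P}_t)_{ij}$ on $[\delta',\infty)$ (by the decay at infinity of $H,U$ and their derivatives in Assumption~\ref{assumption:data}), this makes the Volterra operator a contraction for $\eps>0$ sufficiently small. Iterating gives $h_1(0;k)=1+O(\eps^{1/2})$ and $h_2(0;k)=O(\eps^{1/2})$ uniformly in $k\in\mathcal{Q}_\delta^{\mathrm{II}}$.

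The delicate ingredient---responsible for both the $\eps^{1/2}$ and $|k|^{-3}$ factors in \eqref{eq:Gammabound}---is the endpoint behavior at $t=0$. The square-root asymptotics $H(t)=h_0t^{1/2}(1+o(1))$ in Assumption~\ref{assumption:data} make $(\mathbf{P}^{-1}\mathbf{P}_t)_{ij}$ non-smooth with a $t^{-1/2}$ singularity at the origin. The leading endpoint contribution to $h_2(0;k)$ is a model integral of the form $\int_0^{\delta'}t^{-1/2}\theta(t;k)e^{\mp 2\Psi(t;k)/\eps}\,dt$, with eigenvector-rotation angle $\theta(t;k)=\myi H(t)/(U(t)+2k)=O(H/|k|)$ and with $\Psi'(0;k)=\lambda(0;k)\sim -2\myi k^2$ for large $|k|$. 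Rescaling $t=\eps\tau/|k|^2$ converts this to a convergent model integral of size $\eps^{1/2}$; the explicit factor $1/k$ in $\theta$ together with one oscillatory integration by parts (killing the leading $k^{-1}$ contribution in the outer large-$k$ expansion) then produces the advertised $|k|^{-3}$ decay.

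Assembling: $d_0(k)=A_0(k^*)^*=f_1(0;k)e^{\myi S(0)/(2\eps)}$ is close to a nonzero limit uniformly for $k$ outside a bounded set (giving Claim 1 for any bounded $\eps>0$ by large-$|k|$ asymptotics and continuity in $(\eps,k)$) and is close to a nonzero function of $k$ uniformly on $\mathcal{Q}_\delta^{\mathrm{II}}$ when $\eps$ is small (Claim 2), while $\Gamma_0(k)=f_2(0;k)/f_1(0;k)\cdot e^{-\myi S(0)/\eps}$ inherits \eqref{eq:Gammabound} from the endpoint analysis (Claim 3). The principal obstacle is securing the $\eps^{1/2}|k|^{-3}$ bound uniformly in the unbounded region $\mathcal{Q}_\delta^{\mathrm{II}}$: it requires a careful matching of the outer WKB approximation on $[\delta',\infty)$ to an inner analysis near $t=0$ that resolves the $t^{-1/2}$ singularity of the Riccati coefficient, together with the cancellation of the leading $k^{-1}$ and $k^{-2}$ terms of the outer expansion via oscillatory integration by parts.
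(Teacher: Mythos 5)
Your overall architecture is the same as the paper's: diagonalize the $\eps$-independent coefficient matrix $\mathbf{B}(t;k)$ by a unimodular eigenvector matrix, peel off the exponential of the antiderivative of the eigenvalue normalized at $t=+\infty$, convert the resulting system for the correction to a Volterra integral equation integrated downward from $+\infty$, and trace the $\eps^{1/2}$ rate to the $t^{-1/2}$ singularity of the off-diagonal (eigenvector-rotation) coefficient forced by $H(t)\sim h_0t^{1/2}$, with the $|k|^{-3}$ decay coming from the $|k|^{2}$ growth of the phase and the $|k|^{-1}$ decay of that coefficient. All of that matches Appendix~\ref{sec:Appendix-noturningpoints} essentially step for step (no Langer transformation is needed here; that machinery is reserved for the turning-point regime of Proposition~\ref{prop:two-turning-points}).

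There is, however, one genuine gap. The linchpin of the Volterra argument is that the exponential kernel $e^{2\varphi(t,t_1;k)/\eps}$, $\varphi(t,t_1;k)=\int_t^{t_1}\Lambda(s;k)\,ds$, stays bounded \emph{uniformly in $\eps$} for $t\le t_1$; this requires a globally continuous eigenvalue branch with $\Re\{\Lambda(t;k)\}\le 0$ for all $t>0$. You attribute this to Lemma~\ref{lem:turningpoints} plus "deforming contours in $t$," but Lemma~\ref{lem:turningpoints} only gives $|\lambda|$ bounded away from zero on $\mathcal{Q}^\mathrm{II}_\delta$ and says nothing about the sign of $\Re\{\lambda\}$; a priori $\lambda^2(t;k)$ could cross the negative real axis as $t$ varies, forcing any continuous branch to change the sign of its real part and making the kernel exponentially large. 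The paper closes this with a separate algebraic computation (Lemma~\ref{lem:eigenvaluebranch}): one shows from \eqref{eq:lambdasquared} that $\Im\{\lambda^2\}=0$ together with $\Re\{k\}\le 0$, $\Im\{k\}>0$ forces $\Re\{\lambda^2\}>0$, so $\lambda^2$ never touches $(-\infty,0]$ and a branch with $\Re\{\lambda\}\le 0$ exists globally; for real $k$ outside $[k_\mathfrak{a},k_\mathfrak{b}]$ one instead has $\lambda^2<0$, so the kernel is purely oscillatory and still bounded. Contour deformation in $t$ is neither available uniformly (Assumption~\ref{assumption:data} gives real analyticity on $(0,\infty)$ and decay only along the real axis, not analyticity in a fixed complex strip) nor necessary. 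A second, more minor point: the Volterra operator is not a contraction for small $\eps$ — its kernel is $\mathcal{O}(1)$ in $\eps$ — and the Neumann series converges instead because of the nested (factorially damped) Volterra structure; the $\eps^{1/2}$ smallness of the answer comes entirely from the inhomogeneous term, exactly as in your endpoint analysis, so this does not affect the conclusion but should be stated correctly.
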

In other words, when $\eps>0$ is small, then for $k\in \mathcal{Q}_\delta^\mathrm{II}$, $\Gamma_0(k)$ has no poles and is uniformly small.
The proof of this proposition is given in Appendix~\ref{sec:Appendix-noturningpoints}.

\subsubsection{Analysis in the presence of turning points}
We now study the asymptotic behavior of the function $\Gamma_0(k):=B_0(k^*)^*/A_0(k^*)^*$ for real $k$ in the interval $k_\mathfrak{a}<k<k_\mathfrak{b}$.  For each such $k$, as can be seen in Figure~\ref{fig:Assumption1}, the eigenvalues $\pm\lambda$ of the coefficient matrix $\mathbf{B}(t;k)$ satisfy $\lambda^2>0$ for $t_-(k)<t<t_+(k)$, while $\lambda^2<0$ for $0<t<t_-(k)$ and for $t>t_+(k)$.  Considering $\eps>0$ as being very small, one is reminded of the language of the WKB method, in which the interval $(t_-(k),t_+(k))$ is analogous to a ``classically forbidden region'' separating two ``classically allowed regions''.  Thus we have an analogue of a quantum tunneling problem.  Rather than use the WKB method, which is well-known to fail near the turning points $t_\pm(k)$, in the proof of the following results we use the method of Langer transformations to uniformly handle the neighborhoods of the two turning points while simultaneously maintaining full accuracy when $t$ is not close to either turning point.  The presence of turning points leads to nontrivial behavior of the spectral functions in the limit $\eps\downarrow 0$, as the following result shows.
\begin{proposition}
Let $k\in (k_\mathfrak{a},k_\mathfrak{b})$ with $k\neq k_0$ and $k\neq k_\infty$, and suppose that the functions $H:\mathbb{R}_+\to\mathbb{R}$ and $U:\mathbb{R}_+\to\mathbb{R}$ satisfy Assumption~\ref{assumption:data}.  Then in the limit $\eps\downarrow 0$,
\begin{equation}
A_0(k)=-e^{\tau(k)/\eps}e^{-\myi(\ell(k)\sgn(k^2-k_\infty^2)-\tfrac{1}{2}S_\infty)/\eps}\left[e^{-\myi\Phi(k)/\eps} + \mathcal{O}(\eps)\right]
\end{equation}
and
\begin{equation}
B_0(k)=-e^{\tau(k)/\eps}e^{-\myi(\ell(k)\sgn(k^2-k_\infty^2)-\tfrac{1}{2}S_\infty)/\eps}\left[e^{i\Phi(k)/\eps} + \mathcal{O}(\eps)\right]
\end{equation}
where
\begin{equation}
\ell(k):=\int_{t_+(k)}^{+\infty}\left[(U(t)-2k)\sqrt{(k-\mathfrak{a}(t))(k-\mathfrak{b}(t))}-2|k^2-k_\infty^2|\right]\,dt - 2|k^2-k_\infty^2|t_+(k),
\label{eq:ell-define}
\end{equation}
\begin{equation}
S_\infty:=\lim_{t\to +\infty}\left(S(t)+U_\infty^2 t\right)=\lim_{t\to +\infty}\left(S(t)+4k_\infty^2t\right),
\label{eq:S-infty}
\end{equation}
\begin{equation}
\tau(k):=\int_{t_-(k)}^{t_+(k)}(U(t)-2k)\sqrt{(k-\mathfrak{a}(t))(\mathfrak{b}(t)-k)}\,dt,
\label{eq:tau-define-1}
\end{equation}
and
\begin{equation}
\Phi(k):=\frac{1}{2}S(0)+\sgn(k^2-k_0^2)\int_0^{t_-(k)}(U(t)-2k)\sqrt{(k-\mathfrak{a}(t))(k-\mathfrak{b}(t))}\,dt.
\label{eq:Phi-define}
\end{equation}
The error terms are uniform in $k$ in compact subintervals of $(k_\mathfrak{a},k_\mathfrak{b})\setminus\{k_0,k_\infty\}$. 
\label{prop:two-turning-points}
\end{proposition}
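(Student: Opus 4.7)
The plan is to integrate the singularly perturbed $2\times 2$ system \eqref{eq:Fsystem} backward from the normalization at $t=+\infty$ (see \eqref{eq:Fnorm}) down to $t=0$, exploiting the three-region structure determined by the sign of $\lambda^2$ (see \eqref{eq:lambdasquared}) for real $k\in(k_\mathfrak{a},k_\mathfrak{b})$: a semi-infinite oscillatory region on $t>t_+(k)$ with $\lambda^2<0$, a classically forbidden region on $t_-(k)<t<t_+(k)$ with $\lambda^2>0$, and a second oscillatory region on $0<t<t_-(k)$ with $\lambda^2<0$. Standard WKB asymptotics apply in the interior of each region, while the turning points $t_\pm(k)$ are handled uniformly by Langer's method. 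The tunneling factor $e^{\tau(k)/\eps}$ emerges because the specific solution picked out by \eqref{eq:Fnorm} is subdominant in the forbidden region, so its backward continuation through the barrier accumulates the full tunneling weight $\exp\bigl(\tfrac{1}{\eps}\int_{t_-(k)}^{t_+(k)}\sqrt{\lambda^2}\,ds\bigr)=e^{\tau(k)/\eps}$ with $\tau(k)$ as in \eqref{eq:tau-define-1}.

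In each oscillatory region I would diagonalize $\mathbf{B}(t;k)$ via a nonsingular matrix $\mathbf{P}(t;k)$ defined away from turning points, and construct two linearly independent formal WKB solutions of the form $\mathbf{P}(t;k)[\mathbb{I}+\eps\mathbf{P}_1(t;k)+\cdots]\exp\bigl(\pm\tfrac{\myi}{\eps}\int\Omega(s;k)\,ds\,\sigma_3\bigr)$, where $\Omega:=(U-2k)\sqrt{(k-\mathfrak{a})(k-\mathfrak{b})}$ is the modulus of the imaginary eigenvalue $\pm\myi\Omega$ of $\mathbf{B}$. In the forbidden region the analogous ansatz yields exponentially growing and decaying modes with real exponents $\pm\sqrt{\lambda^2}/\eps$. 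On the semi-infinite interval $t>t_+(k)$ the normalization \eqref{eq:Fnorm} fixes the two amplitudes: using the large-$t$ limits $\Omega(t;k)\to 2|k^2-k_\infty^2|$ and $S(t)+4k_\infty^2 t\to S_\infty$ from \eqref{eq:S-infty}, the first column of $\mathbf{F}$ asymptotes at $+\infty$ to a single regularized WKB exponential in which the linearly divergent phase pieces cancel, producing exactly the combination $\ell(k)\,\sgn(k^2-k_\infty^2)-\tfrac{1}{2}S_\infty$. The factor $\sgn(k^2-k_\infty^2)$ appears because the oscillation rate prescribed by \eqref{eq:Fnorm}, namely $-2(k^2-k_\infty^2)/\eps$, is matched by $\mp\Omega/\eps$ according to whether $k^2$ exceeds or is less than $k_\infty^2$.

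To rigorously transfer this WKB representation across each turning point I would use Langer's method: near $t=t_+(k)$ one introduces a new independent variable $\zeta=\zeta(t;k)$ defined by an action-integral normalization so that $\zeta$ has a simple zero at $t_+(k)$, together with a corresponding linear change of dependent variable that recasts \eqref{eq:Fsystem} as a perturbation of the scalar Airy equation $\eps^2\phi''(\zeta)=\zeta\phi(\zeta)$ whose error coefficient is analytic and uniformly bounded in a full neighborhood of $\zeta=0$. Since by Lemma~\ref{lem:turningpoints} the turning points are simple for $k$ in compact subintervals of $(k_\mathfrak{a},k_\mathfrak{b})\setminus\{k_0,k_\infty\}$ (they coalesce only at the endpoints), the Langer coordinate change is an analytic diffeomorphism uniformly in such $k$, and a Volterra iteration against the $\eps$-perturbation produces exact solutions of \eqref{eq:Fsystem} agreeing with the Airy functions $\mathrm{Ai}(\eps^{-2/3}\zeta)$ and $\mathrm{Bi}(\eps^{-2/3}\zeta)$ up to relative error $\mathcal{O}(\eps)$. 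The classical Airy connection formulas then give the precise linear-combination coefficients expressing each WKB solution on one side of the turning point in terms of WKB solutions on the other. An identical construction at $t_-(k)$, followed by integration of the oscillatory WKB phase from $t_-(k)$ down to $0$ and combination with the prefactor $e^{\pm\myi S(0)/(2\eps)}$ in \eqref{eq:AstarF}--\eqref{eq:BstarF}, yields the asserted asymptotics; the $\sgn(k^2-k_0^2)$ in $\Phi(k)$ arises by the same mechanism as $\sgn(k^2-k_\infty^2)$, now reflecting how the WKB branch in the inner oscillatory region flips as $k$ crosses $k_0$, where $t_-(k)\to 0$ and the inner region degenerates.

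The main obstacle will be the careful branch and sign bookkeeping across the two Langer patches: identifying which linear combination of Airy modes corresponds to the WKB solution selected by \eqref{eq:Fnorm}, pairing the dominant and subdominant forbidden-region solutions with the correct WKB modes on each adjacent oscillatory side via the Stokes structure of the Airy functions, and tracking the overall multiplicative constant $-1$ that accumulates from the two turning-point transitions. Uniformity of the $\mathcal{O}(\eps)$ error on compact subintervals of $(k_\mathfrak{a},k_\mathfrak{b})\setminus\{k_0,k_\infty\}$ will follow from uniform control of the Langer diffeomorphisms and of the Volterra iterations, using the fact that excluding $k_0$ and $k_\infty$ keeps $t_-(k)$ bounded away from $0$ and $t_+(k)$ bounded above.
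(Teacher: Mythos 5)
Your proposal is correct and follows essentially the same route as the paper: a Langer-type reduction of the system \eqref{eq:Fsystem} to a perturbed Airy equation at each turning point, rigorous error control via Volterra iteration, and Airy asymptotics to connect the solution normalized at $t=+\infty$ through the forbidden region down to $t=0$, which is exactly how the tunneling factor $e^{\tau(k)/\eps}$ and the phases $\ell(k)$, $S_\infty$, $\Phi(k)$ (with their $\sgn$ factors) arise in Appendix~\ref{sec:Appendix-two-turning-points}. The only organizational difference is that the paper uses just two Langer patches, each covering a large interval containing one turning point ($(t_-(k),+\infty)$ and $(0,t_+(k))$), matched at a single interior point $t_0$ of the barrier, so it never needs separate WKB regions or WKB-to-Airy overlap matching.
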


The proof of Proposition~\ref{prop:two-turning-points} is given in Appendix~\ref{sec:Appendix-two-turning-points}.  An immediate corollary is the following.
\begin{corollary}
Suppose that $q_0(x)=0$.
Under the same conditions and with the same characterization of the error terms as in Proposition~\ref{prop:two-turning-points}, we have
\begin{equation}
\Gamma_0(k)=\frac{B_0(k)^*}{A_0(k)^*}=\sqrt{1-e^{-2\tau(k)/\eps}}e^{-2\myi\Phi(k)/\eps} + \mathcal{O}(\eps)\quad\text{and}\quad
1-|\Gamma_0(k)|^2=e^{-2\tau(k)/\eps}(1+\mathcal{O}(\eps)).
\end{equation}
\label{corollary:Gamma}
\end{corollary}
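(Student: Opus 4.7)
The plan is to derive both formulas directly from the asymptotic expansions of $A_0(k)$ and $B_0(k)$ given by Proposition~\ref{prop:two-turning-points}, supplemented by one algebraic identity from the integrable structure. The key auxiliary input is that $\mathrm{tr}\,\mathbf{B}(t;k)=0$ from \eqref{eq:B-matrix}, so the flow \eqref{eq:Fsystem} preserves $\det\mathbf{F}$; combined with \eqref{eq:T-and-F}, \eqref{eq:Fnorm}, and the fact that $\det \mathbf{T}_0(t;k)\to 1$ as $t\to +\infty$, one gets $\det \mathbf{T}_0(0;k)\equiv 1$. Reading off matrix entries via \eqref{eq:basic-transforms}--\eqref{eq:conjugate-transforms} and specializing to real $k\in (k_\mathfrak{a},k_\mathfrak{b})$ (where $k^*=k$) yields the unit-Wronskian identity
\begin{equation*}
|A_0(k)|^2 - |B_0(k)|^2 = 1, \qquad \text{equivalently} \qquad |A_0(k)|^2\bigl(1-|\Gamma_0(k)|^2\bigr)=1.
\end{equation*}

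Next I would compute $|A_0(k)|^2$ by taking the modulus of the formula in Proposition~\ref{prop:two-turning-points}. Since $\tau(k)$, $\ell(k)$, $\Phi(k)$, $S_\infty$ are all real, the outer prefactor has modulus $e^{\tau(k)/\eps}$ and the bracketed factor satisfies
\begin{equation*}
\bigl|e^{-\myi\Phi(k)/\eps}+\mathcal{O}(\eps)\bigr|^2 = 1 + 2\Re\bigl(e^{\myi\Phi(k)/\eps}\mathcal{O}(\eps)\bigr) + \mathcal{O}(\eps^2) = 1+\mathcal{O}(\eps),
\end{equation*}
so $|A_0(k)|^2 = e^{2\tau(k)/\eps}(1+\mathcal{O}(\eps))$. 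Inverting and using $(1+\mathcal{O}(\eps))^{-1}=1+\mathcal{O}(\eps)$ gives the second claim $1-|\Gamma_0(k)|^2 = e^{-2\tau(k)/\eps}(1+\mathcal{O}(\eps))$. For $\Gamma_0(k)=B_0(k)^*/A_0(k)^*$ itself, I would take the complex conjugate of each asymptotic formula in Proposition~\ref{prop:two-turning-points} and divide; the common prefactor $-e^{\tau(k)/\eps}e^{\myi(\ell(k)\sgn(k^2-k_\infty^2)-S_\infty/2)/\eps}$ cancels, leaving
\begin{equation*}
\Gamma_0(k) = \frac{e^{-\myi\Phi(k)/\eps}+\mathcal{O}(\eps)}{e^{\myi\Phi(k)/\eps}+\mathcal{O}(\eps)} = e^{-2\myi\Phi(k)/\eps}\cdot\frac{1+\mathcal{O}(\eps)}{1+\mathcal{O}(\eps)}= e^{-2\myi\Phi(k)/\eps}+\mathcal{O}(\eps).
\end{equation*}

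To recover the form stated in the corollary with the barrier-penetration amplitude $\sqrt{1-e^{-2\tau(k)/\eps}}$, I use that $\tau$ is continuous on $(k_\mathfrak{a},k_\mathfrak{b})$ and strictly positive there (because its integrand is strictly positive on the nonempty interval $(t_-(k),t_+(k))$ by Lemma~\ref{lem:turningpoints}); hence on any compact subinterval of $(k_\mathfrak{a},k_\mathfrak{b})\setminus\{k_0,k_\infty\}$ there is a $\tau_0>0$ with $\tau(k)\ge \tau_0$. Then
\begin{equation*}
\bigl|\sqrt{1-e^{-2\tau(k)/\eps}} - 1\bigr|\le e^{-2\tau(k)/\eps}\le e^{-2\tau_0/\eps},
\end{equation*}
which is beyond all orders in $\eps$, so replacing $1$ by $\sqrt{1-e^{-2\tau(k)/\eps}}$ in front of $e^{-2\myi\Phi(k)/\eps}$ is absorbed by the $\mathcal{O}(\eps)$ error. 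This proves the first displayed formula. The uniformity of the error in compact subintervals is inherited directly from the uniformity statement in Proposition~\ref{prop:two-turning-points} together with the uniform lower bound on $\tau$ just noted.

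There is no real obstacle: the corollary is a short algebraic consequence of the proposition and the trace-free structure of the $t$-part of the Lax pair. The only minor subtlety is recognizing \emph{why} the $\sqrt{1-e^{-2\tau/\eps}}$ factor is displayed at all — it is invisible at the $\mathcal{O}(\eps)$ level for the ratio $\Gamma_0$, but writing it explicitly makes the relation $1-|\Gamma_0|^2 = e^{-2\tau/\eps}(1+\mathcal{O}(\eps))$ manifest and matches the form in which $\Gamma_0$ will be used in the inverse problem analysis of the subsequent sections.
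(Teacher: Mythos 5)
Your proof is correct and follows essentially the same route as the paper: the unimodularity of $\mathbf{T}_0(0;k)$ (which the paper simply asserts and you derive from the tracelessness of the coefficient matrix) gives $1-|\Gamma_0(k)|^2=1/|A_0(k)|^2$, the modulus and ratio of the two expansions in Proposition~\ref{prop:two-turning-points} give the two displayed formulas, and the factor $\sqrt{1-e^{-2\tau(k)/\eps}}$ is exponentially close to $1$ on compact subintervals and is retained only so that the resulting jump matrix has unit determinant. No changes needed.
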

\begin{proof}
Since $q_0(x)=0$ we have $\gamma(k)=0$ and hence $\Gamma_0(k)=B_0(k)^*/A_0(k)^*$ for all real $k<0$.  
The formula for $1-|\Gamma_0(k)|^2$ follows from the identity $1-|\Gamma_0(k)|^2=1/|A_0(k)|^2$ equivalent to the condition that $\det(\mathbf{T}_0(0;k))=1$.  The factor $\sqrt{1-e^{-2\tau(k)/\eps}}$ is exponentially close to $1$ (except near $k_\mathfrak{a}$ and $k_\mathfrak{b}$, points excluded from consideration) and is included in the formula for $\Gamma_0$ to ensure that the jump matrix we shall construct from this approximation has unit determinant.
\end{proof}

\section{Formulation of the Inverse Problem}
\label{sec:Formulation}
Propositions~\ref{prop:noturningpoints} and \ref{prop:two-turning-points} and Corollary~\ref{corollary:Gamma} give a rigorous characterization of the spectral functions associated with vanishing initial data $q_0(x)$  and with a class of boundary data $(Q^\mathrm{D},Q^\mathrm{N}_0)$
given (in terms of the functions $H(\cdot)$ and $U(\cdot)$ described by Assumption~\ref{assumption:data}) by \eqref{eq:DirichletDataForm} and \eqref{5} subject to the equation \eqref{eq:SprimeUH} giving $S(t)$ in terms of $H(\cdot)$ and $U(\cdot)$.  To summarize:
\begin{itemize}
\item From $q_0=0$ we have $\gamma(k)=0$ for all $k\in\mathbb{R}$.  Therefore, Riemann-Hilbert Problem~\ref{rhp-original} has no jump on the positive real axis, and the remaining jump matrices only involve $\Gamma_0(k)$, which is simply a ratio of the spectral functions $A_0(k^*)^*$ and $B_0(k^*)^*$ arising from the approximate boundary data.
\item If the function $\Gamma_0(k)$ has any poles in the second quadrant of the complex plane, they must lie very close (in the limit $\eps\downarrow 0$) to the negative real interval $[k_\mathfrak{a},k_\mathfrak{b}]$.
\item On the imaginary $k$-axis, as well as on the negative real $k$-axis away from the interval $[k_\mathfrak{a},k_\mathfrak{b}]$, $\Gamma_0(k)$ is small in the limit $\eps\downarrow 0$.
\item In the interior of the negative real interval $(k_\mathfrak{a},k_\mathfrak{b})$ and away from the special points $k_0$ and $k_\infty$, $\Gamma_0(k)$ has an accurate explicit approximation given by Corollary~\ref{corollary:Gamma}.
\end{itemize}
However, this information alone is insufficient to properly formulate and analyze Riemann-Hilbert Problem~\ref{rhp-original} associated with the exact spectral transforms $A_0(k)$ and $B_0(k)$ corresponding to the approximate Neumann boundary data $Q^\mathrm{N}_0(t)$.  Indeed, to formulate the Riemann-Hilbert problem without poles one would need to know a priori that there cannot be any poles of $\Gamma_0(k)$ whatsoever in the second quadrant, and it is not enough to know that any poles have to move toward $[k_\mathfrak{a},k_\mathfrak{b}]$ as $\eps\downarrow 0$.  Another issue is that our results do not provide approximations for $\Gamma_0(k)$ near the real points $k_\mathfrak{a}$, $k_0$, $k_\infty$, or $k_\mathfrak{b}$.  In fact, the analytical methodology based on Langer transformations used in the proof of Proposition~\ref{prop:two-turning-points} either requires substantial modification or breaks down entirely in neighborhoods of these points.  

These arguments\footnote{A more serious reason for making this modification, especially the step of setting the jump matrix on the imaginary axis to the identity, is discussed in detail in Remark~\ref{remark-imaginary-amplify}.} suggest making a further modification of the first step of the proposed iteration algorithm:  we will reformulate the inverse problem by:
\begin{itemize}
\item Assuming that the Riemann-Hilbert problem can be formulated without poles,
\item Neglecting $\Gamma_0(k)$ and $\Gamma_0(k)^*$ completely on the imaginary axis,
\item Neglecting $\Gamma_0(k)$ on the real axis for $k<k_\mathfrak{a}$ and $k>k_\mathfrak{b}$, and
\item Replacing $\Gamma_0(k)$ in the whole interval $(k_\mathfrak{a},k_\mathfrak{b})$ by the formulae recorded in Corollary~\ref{corollary:Gamma} with the $\mathcal{O}(\eps)$ error terms set to zero.
\end{itemize}
The resulting Riemann-Hilbert problem has the negative real interval $[k_\mathfrak{a},k_\mathfrak{b}]$ as its only jump contour.  For convenience we will re-orient this contour from left to right, which requires the inversion of the jump matrix written in \eqref{eq:exact-jump-negative}.  

We therefore formulate the following Riemann-Hilbert problem.  Let $\tilde{\Gamma}(k)$ be defined by
\begin{equation}
\tilde{\Gamma}(k):=\chi_{(k_\mathfrak{a},k_\mathfrak{b})}(k)Y^\eps(k)e^{-2i\Phi(k)/\eps},\quad Y^\eps(k):=\sqrt{1-e^{-2\tau(k)/\eps}},\quad k\in\mathbb{R},
\label{eq:tilde-Gamma}
\end{equation}
where $\chi_{(k_\mathfrak{a},k_\mathfrak{b})}$ denotes the characteristic function of the interval $(k_\mathfrak{a},k_\mathfrak{b})$, and where $\tau:(k_\mathfrak{a},k_\mathfrak{b})\to \mathbb{R}_+$ is defined by \eqref{eq:tau-define-1} while $\Phi:(k_\mathfrak{a},k_\mathfrak{b})\to\mathbb{R}$ is defined by \eqref{eq:Phi-define}.  It can be shown that $\tilde{\Gamma}:\mathbb{R}\to\mathbb{C}$ is H\"older continuous with every exponent $0<\alpha\le 1$.
\begin{rhp}
Seek a $2\times 2$ matrix function $\tilde{\mathbf{M}}:\mathbb{C}\setminus\mathbb{R}\to\mathbb{C}^{2\times 2}$ with the following properties:
\begin{itemize}
\item[]\textit{\textbf{Analyticity}}: $\tilde{\mathbf{M}}$ is analytic in $\mathbb{C}_+$ and $\mathbb{C}_-$
and H\"older continuous for some exponent $0<\alpha<1$ in $\overline{\mathbb{C}}_+$ and $\overline{\mathbb{C}}_-$, taking boundary values $\tilde{\mathbf{M}}_\pm:\mathbb{R}\to\mathbb{C}^{2\times 2}$ on $\mathbb{R}$ from $\mathbb{C}_\pm$. 
\item[]\textit{\textbf{Jump Condition}}:  The boundary values are related  by
\begin{equation}
\tilde{\mathbf{M}}_+(k)=\tilde{\mathbf{M}}_-(k)\begin{bmatrix}1-|\tilde{\Gamma}(k)|^2 & -\tilde{\Gamma}(k)^*e^{-2\myi\theta(k;x,t)/\eps}\\\tilde{\Gamma}(k)e^{2\myi\theta(k;x,t)/\eps} & 1\end{bmatrix},\quad k\in\mathbb{R}.
\label{eq:M-tilde-jump}
\end{equation}
\item[]\textit{\textbf{Normalization}}:  The matrix function $\tilde{\mathbf{M}}$ satisfies
\begin{equation}
\lim_{k\to\infty}\tilde{\mathbf{M}}(k)=\mathbb{I},
\label{eq:M-tilde-normalize}
\end{equation}
where the limit is uniform with respect to direction in the complex plane.
\end{itemize}
\label{rhp-M-tilde}
\end{rhp}
The following is a standard result.
\begin{proposition}
Riemann-Hilbert Problem~\ref{rhp-M-tilde} has a unique solution for every $(x,t)\in\mathbb{R}^2$ and for every $\eps>0$.  The function 
\begin{equation}
q=\tilde{q}^\eps(x,t):=2\myi\lim_{k\to\infty}k\tilde{M}_{12}(k)
\label{eq:q-tilde-define}
\end{equation}
is infinitely differentiable with respect to $(x,t)$, and for every $\eps>0$, $q=\tilde{q}^\eps(x,t)$ satisfies the defocusing nonlinear Schr\"odinger equation \eqref{eq:NLS}.
\label{prop:solution-exists}
\end{proposition}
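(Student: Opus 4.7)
The plan is to verify the three assertions—unique solvability of Riemann-Hilbert Problem~\ref{rhp-M-tilde} for every $(x,t)\in\mathbb{R}^2$ and every $\eps>0$, smoothness of $\tilde{q}^\eps$ in $(x,t)$, and the NLS equation itself—by the standard Beals-Coifman/dressing-method package. The crucial structural observation is that the jump matrix $\mathbf{V}(k;x,t)$ in \eqref{eq:M-tilde-jump} has determinant one, and its Hermitian part is
\[
\tfrac{1}{2}\bigl(\mathbf{V}(k;x,t)+\mathbf{V}(k;x,t)^\dagger\bigr)=\mathrm{diag}\bigl(1-|\tilde{\Gamma}(k)|^2,\,1\bigr),
\]
which is positive definite pointwise on $\mathbb{R}$: it equals the identity off $(k_\mathfrak{a},k_\mathfrak{b})$, and on $(k_\mathfrak{a},k_\mathfrak{b})$ the top diagonal entry equals $e^{-2\tau(k)/\eps}>0$.

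Given this, unique solvability follows by a standard argument. Since $\tilde{\Gamma}$ is Hölder continuous and compactly supported, the singular integral operator associated with the natural triangular factorization of $\mathbf{V}$ is bounded and Fredholm of index zero on $L^2(\mathbb{R})$, and Zhou's vanishing lemma (see \cite{Zhou89}) applied to the positive-definite Hermitian part rules out any nontrivial solution of the homogeneous RHP (normalized to $\mathbf{0}$ at infinity); the operator is therefore invertible and $\tilde{\mathbf{M}}$ exists and is unique. Smoothness in $(x,t)$ is then immediate: the jump matrix depends entirely analytically on $(x,t)$ through the phase $\theta(k;x,t)=kx+2k^2t$, and differentiating the Fredholm equation in $(x,t)$ and inverting with the resolvent propagates $C^\infty$-regularity to $\tilde{\mathbf{M}}(\,\cdot\,;x,t)$ and hence, via \eqref{eq:q-tilde-define}, to $\tilde{q}^\eps$.

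For the NLS equation I would apply the standard dressing argument. Set $\tilde{\mathbf{\Psi}}(k;x,t):=\tilde{\mathbf{M}}(k;x,t)e^{-\myi\theta(k;x,t)\sigma_3/\eps}$. The jump condition \eqref{eq:M-tilde-jump} then reads $\tilde{\mathbf{\Psi}}_+=\tilde{\mathbf{\Psi}}_-\mathbf{V}_0$ with $\mathbf{V}_0$ independent of $(x,t)$, so the logarithmic derivatives
\[
\widetilde{\mathbf{U}}(k;x,t):=\eps\,\partial_x\tilde{\mathbf{\Psi}}\cdot\tilde{\mathbf{\Psi}}^{-1},\qquad \widetilde{\mathbf{V}}(k;x,t):=\eps\,\partial_t\tilde{\mathbf{\Psi}}\cdot\tilde{\mathbf{\Psi}}^{-1}
\]
have no jumps across $\mathbb{R}$ and are entire in $k$. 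The large-$k$ expansion $\tilde{\mathbf{M}}(k)=\mathbb{I}+k^{-1}\mathbf{m}_1(x,t)+k^{-2}\mathbf{m}_2(x,t)+\mathcal{O}(k^{-3})$ then forces $\widetilde{\mathbf{U}}$ to be a polynomial of degree one and $\widetilde{\mathbf{V}}$ a polynomial of degree two in $k$; matching coefficients against the explicit conjugation by $e^{-\myi\theta\sigma_3/\eps}$ identifies these polynomials with the Lax matrices $\mathbf{U}$ and $\mathbf{V}$ of \eqref{eq:xproblem}--\eqref{eq:tproblem}, with $q(x,t)=2\myi[\mathbf{m}_1(x,t)]_{12}=\tilde{q}^\eps(x,t)$. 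The identity $\eps\partial_t\widetilde{\mathbf{U}}-\eps\partial_x\widetilde{\mathbf{V}}+[\widetilde{\mathbf{U}},\widetilde{\mathbf{V}}]=0$, automatic by construction, is equivalent to \eqref{eq:NLS}.

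The main technical point is the vanishing-lemma step: one must verify that the endpoint behavior of $\tilde{\Gamma}$ at $k_\mathfrak{a}$ and $k_\mathfrak{b}$—where $\tilde{\Gamma}$ vanishes but $\tau$ has a square-root zero—is compatible with the standard Hölder framework of the vanishing lemma, so that no specialized endpoint analysis (e.g.\ Painlevé or Bessel local models) is required. Once this is confirmed, the remaining smoothness and dressing computations are routine, which is why the author labels the result ``standard.''
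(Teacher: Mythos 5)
Your proposal is correct and follows essentially the same route as the paper: Fredholm theory plus Zhou's vanishing lemma (using exactly the positive-definiteness of the Hermitian part of the jump matrix, which equals $\mathrm{diag}(e^{-2\tau(k)/\eps},1)$ on the support of $\tilde{\Gamma}$) for unique solvability, compact support of $\tilde\Gamma$ for smoothness in $(x,t)$, and the dressing argument for the NLS equation. The endpoint worry you raise at the end is already dispatched by the fact, recorded just before the Riemann-Hilbert problem is posed, that $\tilde{\Gamma}$ is H\"older continuous on all of $\mathbb{R}$ with every exponent, so the standard vanishing-lemma framework applies without any local endpoint models.
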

\begin{proof}
To see the uniqueness of the solution of Riemann-Hilbert Problem~\ref{rhp-M-tilde} (assuming existence), one first notes that $\det(\tilde{\mathbf{M}}(k))=1$ necessarily holds as an identity for any solution, and therefore $\tilde{\mathbf{M}}(k)^{-1}$ is also analytic for $k\in\mathbb{C}\setminus\mathbb{R}$.  Given two solutions, say $\tilde{\mathbf{M}}$ and $\tilde{\mathbf{N}}$, one considers the matrix ratio $\mathbf{R}(k):=\tilde{\mathbf{M}}(k)\tilde{\mathbf{N}}(k)^{-1}$,
which is analytic for $k\in\mathbb{C}\setminus\mathbb{R}$ and satisfies $\mathbf{R}(k)\to\mathbb{I}$ as $k\to\infty$.  A simple calculation using the jump condition \eqref{eq:M-tilde-jump} satisfied by both $\tilde{\mathbf{M}}$ and $\tilde{\mathbf{N}}$ shows that the continuous boundary values taken on $\mathbb{R}$ agree:  $\mathbf{R}_+(k)=\mathbf{R}_-(k)$ for all $k\in\mathbb{R}$.  It follows that $\mathbf{R}(k)$ is an entire analytic (matrix-valued) function of $k$ that tends to $\mathbb{I}$ as $k\to\infty$, so by Liouville's theorem $\mathbf{R}(k)=\mathbb{I}$ for all $k$, i.e., $\tilde{\mathbf{N}}(k)=\tilde{\mathbf{M}}(k)$ holds for all $k\in\mathbb{C}\setminus\mathbb{R}$.

To establish existence of a solution, one observes that Riemann-Hilbert Problem~\ref{rhp-M-tilde} is equivalent to a system of linear singular integral equations for which the relevant operator is Fredholm with zero index on an appropriate space of H\"older-continuous functions (see \cite{Muskhelishvili} and \cite{Zhou89}).  It therefore suffices to show that the kernel of this Fredholm operator is trivial.  Triviality of the kernel is equivalent to the assertion that the only solution of the homogeneous form of Riemann-Hilbert Problem~\ref{rhp-M-tilde}, in which the normalization condition \eqref{eq:M-tilde-normalize} is replaced with a limit of $\mathbf{0}$ as $k\to\infty$, is the zero matrix.  Zhou's \emph{vanishing lemma} \cite[Theorem 9.3]{Zhou89} shows that this latter assertion holds true in the present case because the jump contour is the real axis and the jump matrix has a positive semidefinite real part as a consequence of the inequality $|\tilde{\Gamma}(k)|^2\le 1$ holding for $k\in\mathbb{R}$.

The infinite differentiability of the matrix $\tilde{\mathbf{M}}$ with respect to $(x,t)$, and hence that of $\tilde{q}^\eps$, follows from the compact support $[k_\mathfrak{a},k_\mathfrak{b}]$ of $\tilde{\Gamma}$.  Finally, let us show that $q=\tilde{q}^\eps$ satisfies \eqref{eq:NLS}.  We begin by defining a matrix $\mathbf{L}(k)$ from the solution $\tilde{\mathbf{M}}(k)$ by setting
\begin{equation}
\mathbf{L}(k):=\tilde{\mathbf{M}}(k)e^{-\myi\theta(k;x,t)\sigma_3/\eps}.
\end{equation}
One verifies that $\mathbf{L}(k)$ is analytic for $k\in\mathbb{C}\setminus\mathbb{R}$, and that 
\begin{equation}
\mathbf{L}_+(k)=\mathbf{L}_-(k)\begin{bmatrix} 1-|\tilde{\Gamma}(k)|^2 & -\tilde{\Gamma}(k)^*\\
\tilde{\Gamma}(k) & 1\end{bmatrix},\quad k\in\mathbb{R}.
\end{equation}
It follows from the fact that this jump condition is independent of $x$ and $t$, that the matrices $\mathbf{U}(k):=\eps\mathbf{L}_x(k)\mathbf{L}(k)^{-1}$ and $\mathbf{V}(k):=\eps\mathbf{L}_t(k)\mathbf{L}(k)^{-1}$ have no jump across the real axis and since $\det(\mathbf{L}(k))=1$, $\mathbf{U}$ and $\mathbf{V}$ are entire functions of $k$.  Moreover, 
from the asymptotic behavior of $\mathbf{L}(k)$ near $k=\infty$ one can check that $\mathbf{U}$ is a linear function of $k$ while $\mathbf{V}$ is a quadratic polynomial in $k$.  In fact, using \eqref{eq:q-tilde-define} one sees that $\mathbf{U}$ is given by \eqref{eq:xproblem} with $q=\tilde{q}^\eps$.
Moreover, using the fact that $\mathbf{L}(k)$ satisfies the differential equation $\eps \mathbf{L}_x=\mathbf{U}\mathbf{L}$ (by definition of $\mathbf{U}$), one sees that $\mathbf{V}$ is given by \eqref{eq:tproblem} with $q=\tilde{q}^\eps$.
The fact that $\mathbf{L}$ is a simultaneous fundamental solution matrix of the Lax pair equations $\eps\mathbf{L}_x=\mathbf{U}\mathbf{L}$ and $\eps\mathbf{L}_t=\mathbf{V}\mathbf{L}$ means that these equations are consistent, that is, the zero-curvature condition 
\begin{equation}
\eps \mathbf{U}_t-\eps \mathbf{V}_x + [\mathbf{U},\mathbf{V}]=\mathbf{0}
\end{equation}
holds, and substitution from \eqref{eq:xproblem} and \eqref{eq:tproblem} yields the equation 
\eqref{eq:NLS} for $q=\tilde{q}^\eps$ (and the complex conjugate of that equation).
\end{proof}

We note that this proof implies that $\tilde{\mathbf{M}}(k)$ has a convergent Laurent series in descending powers of $k$ for $|k|$ sufficiently large:
\begin{equation}
\tilde{\mathbf{M}}(k)=\mathbb{I}+k^{-1}\tilde{\mathbf{M}}_1 + k^{-2}\tilde{\mathbf{M}}_2 + \mathcal{O}(k^{-3}),\quad k\to\infty,
\label{eq:tilde-M-expansion}
\end{equation}
and that $\tilde{q}^\eps(x,t)$ and $\eps\tilde{q}^\eps_x(x,t)$ can be expressed in terms of the coefficients $\tilde{\mathbf{M}}_1$ and $\tilde{\mathbf{M}}_2$ as follows:
\begin{equation}
\tilde{q}^\eps(x,t)=2\myi\tilde{M}_{1,12}\quad\text{and}\quad
\eps\tilde{q}^\eps_x(x,t)=4\tilde{M}_{2,12}+2\myi\tilde{q}^\eps(x,t)\tilde{M}_{1,22}=4\tilde{M}_{2,12}-4
\tilde{M}_{1,12}\tilde{M}_{1,22}.
\label{eq:tilde-q-qx-moments}
\end{equation}

The question that remains is what, if anything, does the family of functions $\tilde{q}^\eps(x,t)$ have to do with the exact solution $q^\eps(x,t)$ of the Dirichlet initial-boundary value problem with $q^\eps(x,0)=0$ for $x>0$ and $q^\eps(0,t)=H^\eps(t)e^{iS(t)/\eps}$ for $t>0$ (recall the modified amplitude function $H^\eps(t)$ defined in Remark~\ref{remark:amplitude-fix}).  This is the topic we take up next.

\begin{remark}
The values of $\Gamma(k)$ for $k$ real and positive are irrelevant to the inverse problem, as the jump matrix for $k>0$ generally only involves the function $\gamma(k)$ obtained from the initial data for $x>0$ (see \eqref{eq:exact-jump-positive}), and in the present case that $q_0(x)=0$, $\gamma(k)=0$.  
The condition $U(t)>2H(t)$ implied by Assumption~\ref{assumption:data} ensures that $k_\mathfrak{b}<0$, and hence that the full asymptotic support of $\Gamma$ on $\mathbb{R}$ (and hence by definition the exact support of $\tilde{\Gamma}$ on $\mathbb{R}$) contributes to the jump matrix for the inverse problem.  If on the contrary we had $k_\mathfrak{b}>0$, then some information about the boundary data would be lost from the inverse problem in the semiclassical limit.  It therefore seems that it is possible to reconstruct the boundary data only if $U(t)>2H(t)$.  As pointed out earlier, this is a stronger condition than the necessary condition $U(t)>H(t)$ for the boundary to be a spacelike curve for the hyperbolic dispersionless system \eqref{eq:dispersion-less-NLS}.
\myendrmk
\label{remark:kb-negative}
\end{remark}

\begin{remark}
One may observe that Riemann-Hilbert Problem~\ref{rhp-M-tilde} is of exactly the same form as that which occurs in the treatment of the initial-value problem for the defocusing nonlinear Schr\"odinger equation formulated on an appropriate space of decaying functions of $x\in\mathbb{R}$ instead of the half-line.  The function $\tilde{\Gamma}(k)$, here obtained from Dirichlet boundary data via the temporal part of the Lax pair, plays the role usually played by the reflection coefficient calculated from initial data via the spatial part of the Lax pair.  This means that the ``reflection coefficient'' $\tilde{\Gamma}(k)$ corresponds to some initial data given on the whole line $x\in\mathbb{R}$, a fact that has been made quite rigorous in \cite{Zhou98}.  In this case, according to Theorem~\ref{theorem:initial-condition-small}, the initial data is very small for $x>0$; however to reproduce the nontrivial boundary data described by Theorem~\ref{theorem:boundary-condition-recover} the initial data must not be small for $x<0$.  Thus, the formula \eqref{eq:tilde-Gamma} for $\tilde{\Gamma}(k)$ makes a connection in the transform domain between (i) a problem on the half-line with zero initial data and nontrivial boundary data and (ii) a problem on the whole line with initial data supported on the negative half-line.  The latter initial data is then defined implicitly in terms of the boundary data for the former problem via the solution of Riemann-Hilbert Problem~\ref{rhp-M-tilde}.
\myendrmk
\label{remark:half-line-support}
\end{remark}

\section{Semiclassical Analysis of $\tilde{q}^\eps(x,t)$}
\subsection{Asymptotic behavior of $\tilde{q}^\eps(x,0)$ for $x>0$ and related analysis}
\subsubsection{Implications of Assumption~\ref{assumption:data} for the functions $\Phi$ and $\tau$}
\label{sec:preliminary-results}
In light of Assumption~\ref{assumption:data}, the definition \eqref{eq:tau-define-1} of $\tau$ easily implies the following.
\begin{lemma}
Under Assumption~\ref{assumption:data}, the function $\tau:(k_\mathfrak{a},k_\mathfrak{b})\setminus\{k_0,k_\infty\}\to\mathbb{R}$ is real analytic and it extends by continuity to a function of class $C^0(k_\mathfrak{a},k_\mathfrak{b})$ satisfying $\tau(k)>0$.
\label{lemma:tau-continuous-positive}
\end{lemma}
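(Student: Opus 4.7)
The plan is to verify the three claims in turn: positivity on $(k_\mathfrak{a},k_\mathfrak{b})$, real analyticity on $(k_\mathfrak{a},k_\mathfrak{b})\setminus\{k_0,k_\infty\}$, and continuous extension across $k_0$ and $k_\infty$. Positivity is immediate from Lemma~\ref{lem:turningpoints}: for any $k\in(k_\mathfrak{a},k_\mathfrak{b})$ the interval $(t_-(k),t_+(k))$ is nonempty, on its interior $\mathfrak{a}(t)<k<\mathfrak{b}(t)$ so the square root in \eqref{eq:tau-define-1} is strictly positive, and $k<k_\mathfrak{b}<0$ together with the bound $U(t)\ge 2H(t)+\delta>0$ from Assumption~\ref{assumption:data} gives $U(t)-2k>0$; hence the integrand is strictly positive on a nondegenerate interval.

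For real analyticity away from $k_0,k_\infty$, I would first apply the implicit function theorem to the defining relations $\mathfrak{a}(t_-(k))=k$ and $\mathfrak{b}(t_+(k))=k$ to obtain real-analytic branches $t_\pm(k)$: the required nonvanishing of $\mathfrak{a}'(t_-(k))$ and $\mathfrak{b}'(t_+(k))$ is guaranteed away from the critical values $k_\mathfrak{a},k_\mathfrak{b}$ by Assumption~\ref{assumption:data} (only one critical point each), while excluding $k_0$ and $k_\infty$ keeps $t_-(k)>0$ and $t_+(k)<+\infty$. Next I would perform the substitution $t=t_-(k)+s(t_+(k)-t_-(k))$, $s\in[0,1]$, producing
\[
\tau(k)=(t_+(k)-t_-(k))\int_0^1 (U(t(s,k))-2k)\sqrt{s(1-s)}\,\Psi(s,k)\,ds,
\]
where $\Psi(s,k):=\sqrt{(k-\mathfrak{a}(t(s,k)))(\mathfrak{b}(t(s,k))-k)/(s(1-s))}$. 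Taylor expansions at the endpoints give $k-\mathfrak{a}(t(s,k))=-s(t_+-t_-)\mathfrak{a}'(t_-(k))+O(s^2)$ and $\mathfrak{b}(t(s,k))-k=-(1-s)(t_+-t_-)\mathfrak{b}'(t_+(k))+O((1-s)^2)$, with both leading coefficients strictly positive since $\mathfrak{a}'(t_-(k))<0$ and $\mathfrak{b}'(t_+(k))<0$. Hence $\Psi$ is strictly positive, bounded, and jointly real-analytic on $[0,1]\times K$ for any compact $K\subset(k_\mathfrak{a},k_\mathfrak{b})\setminus\{k_0,k_\infty\}$. Real analyticity of $\tau$ on $K$ then follows by differentiation under the integral (or Morera's theorem applied to complex $k$ perturbations), the $k$-independent factor $\sqrt{s(1-s)}$ providing an integrable weight.

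For continuity at $k_0$ and $k_\infty$, I would argue by dominated convergence on a neighborhood of each exceptional point. Near $k_0$, the interval of integration contracts since $t_-(k)\to 0$ while $t_+$ remains bounded; the integrand is jointly continuous and uniformly bounded on $[0,T]\times K$ for some fixed $T$ and a compact neighborhood $K$ of $k_0$, and the $H(t)=h_0 t^{1/2}(1+o(1))$ behavior near $t=0$ only makes the integrand vanish there. Near $k_\infty$, where $t_+(k)\to+\infty$, I would use the elementary estimate $(k-\mathfrak{a}(t))(\mathfrak{b}(t)-k)\le H(t)^2$ to dominate the integrand pointwise by $(U(t)+2|k|)H(t)$, which by the rapid decay of $H$ and boundedness of $U$ from Assumption~\ref{assumption:data} lies in $L^1(\mathbb{R}_+)$ uniformly for $k$ in a compact neighborhood of $k_\infty$. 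The main subtlety I anticipate is the analyticity step, specifically the verification that $\Psi(s,k)$ is a genuinely bounded, nonvanishing, jointly real-analytic function up to the endpoints $s=0,1$; this is exactly where the exclusion of the exceptional set $\{k_\mathfrak{a},k_\mathfrak{b},k_0,k_\infty\}$ is used to ensure the simple-zero structure of $k-\mathfrak{a}$ at $t_-(k)$ and of $\mathfrak{b}-k$ at $t_+(k)$.
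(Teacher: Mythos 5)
The paper offers no proof of this lemma (it is asserted to follow ``easily'' from the definition of $\tau$), so you are filling in an omitted argument rather than reproducing one. The closest analogue in the paper is the contour-integral representation \eqref{eq:tau-contour} used in the proof of Lemma~\ref{lemma-tau}: fixing the loop $C$ locally in $k$ and differentiating under the integral sign gives analyticity directly, at the cost of setting up the branch structure of $R(t;k)$; your endpoint rescaling is a more elementary route to the same conclusion. Your positivity and continuity arguments are correct: the domination $(k-\mathfrak{a}(t))(\mathfrak{b}(t)-k)\le H(t)^2$ together with the rapid decay of $H$ and boundedness of $U$ is exactly what is needed at $k_\infty$, and dominated convergence on a fixed $[0,T]$ handles $k_0$ (note only that the integration interval does not ``contract'' there --- its lower endpoint descends to $t=0$ --- which does not affect the argument; also, to get $\tau>0$ at the two exceptional points you should observe that the limiting integrals are themselves integrals of a positive integrand over nondegenerate intervals).

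The one statement you must repair is the claim that $t_\pm(k)$ are defined by $\mathfrak{a}(t_-(k))=k$ and $\mathfrak{b}(t_+(k))=k$ with $\mathfrak{a}'(t_-(k))<0$ and $\mathfrak{b}'(t_+(k))<0$. This is false on part of the interval: as the paper records in Appendix~\ref{sec:Appendix-two-turning-points}, one has $\mathfrak{a}(t_-(k))=k$ only for $k_\mathfrak{a}<k<k_0$ and $\mathfrak{b}(t_-(k))=k$ for $k_0<k<k_\mathfrak{b}$ (since $\mathfrak{a}(0)=\mathfrak{b}(0)=k_0$, the entry into the region $\mathfrak{a}(t)<k<\mathfrak{b}(t)$ occurs on the $\mathfrak{b}$-branch when $k>k_0$), and likewise $\mathfrak{a}(t_+(k))=k$ for $k<k_\infty$ while $\mathfrak{b}(t_+(k))=k$ for $k>k_\infty$; for the data of Figure~\ref{fig:Assumption1} one has $k_0<k_\infty$, so there is no $k$ at which your two relations hold simultaneously. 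The repair is immediate and leaves your structure intact: in each of the four cases exactly one of the factors $k-\mathfrak{a}(t)$, $\mathfrak{b}(t)-k$ has a simple zero at $t_\pm(k)$ (simple because $k\neq k_\mathfrak{a},k_\mathfrak{b}$ keeps $t_\pm(k)$ away from the unique critical points of $\mathfrak{a}$ and $\mathfrak{b}$), while the other factor equals $2H(t_\pm(k))>0$ there; hence the product vanishes to exactly first order at both endpoints, $\Psi^2$ is positive and jointly real analytic on $[0,1]\times K$, and your differentiation under the integral sign goes through verbatim.
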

We will also require detailed information about the behavior of the function $\Phi$ near the point $k=k_0$.  In this direction we have the following.
\begin{lemma}
If the functions $U$ and $H$ satisfy Assumption~\ref{assumption:data}, then
$\Phi:(k_\mathfrak{a},k_\mathfrak{b})\setminus\{k_0,k_\infty\}\to\mathbb{R}$ defined by \eqref{eq:Phi-define} extends by continuity to a function analytic in $(k_\mathfrak{a},k_\mathfrak{b})\setminus\{k_0\}$, and of class $C^3$ in a neighborhood of $k=k_0$.  Also, $\Phi'(k)\le 0$ for $k_\mathfrak{a}<k<k_\mathfrak{b}$ with equality only for $k=k_0$.
\label{lemma-Phi-interior}
\end{lemma}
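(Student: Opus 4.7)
The plan is to treat the subintervals $(k_\mathfrak{a},k_0)$ and $(k_0,k_\mathfrak{b})$ separately, where the sign factor in \eqref{eq:Phi-define} is constant and $t_-(k)$ is real analytic by the implicit function theorem (because $\mathfrak{a}'$, respectively $\mathfrak{b}'$, is nonvanishing on the strictly monotonic branch containing $t_-(k)$), and then to verify continuous matching and $C^3$ regularity at $k_0$, where the boundary behavior $H(t)\sim h_0 t^{1/2}$ from Assumption~\ref{assumption:data} causes $t_-$ to lose analyticity. The point $k_\infty$ requires no special treatment since $t_-(k)$ is analytic across it.

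On each subinterval I would differentiate under the integral sign; the boundary contribution vanishes because $R:=\sqrt{(k-\mathfrak{a})(k-\mathfrak{b})}$ is zero at $t=t_-(k)$, and a short manipulation using $R^2=(k+U/2)^2-H^2$ yields
\begin{equation*}
\Phi'(k)=-2\,\sgn(k^2-k_0^2)\int_0^{t_-(k)}\frac{2k^2+kU(t)-H(t)^2}{R}\,dt,
\end{equation*}
with an integrable $1/R$ singularity at $t_-(k)$. Analyticity in $k$ follows by substituting $k'=\mathfrak{a}(t)$ (respectively $\mathfrak{b}(t)$) to fix the upper endpoint at $k$ and then $k-k'=u^2$ to remove the square root, leaving an integral of an analytic integrand over a fixed interval. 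For the sign, rewrite $2k^2+kU-H^2=2k(k+U/2)-H^2$: on the range $k<\mathfrak{a}(t)=-U/2-H$ both $k+U/2<-H<0$ and $k<-H<0$, so $2k(k+U/2)>2H^2$ and the integrand is strictly positive; on the range $k>\mathfrak{b}(t)=-U/2+H$ one has $k+U/2>H>0$ while $k<0$, so the integrand is strictly negative. Combined with the opposite values of $\sgn(k^2-k_0^2)$ on the two subintervals, this gives $\Phi'(k)<0$ strictly for every $k\in(k_\mathfrak{a},k_\mathfrak{b})\setminus\{k_0\}$.

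For the behavior at $k_0$, the natural variable near $t=0$ is $\xi=t^{1/2}$, in which Assumption~\ref{assumption:data} gives expansions $\mathfrak{a}(\xi^2)-k_0=-h_0\xi+O(\xi^2)$ and $\mathfrak{b}(\xi^2)-k_0=h_0\xi+O(\xi^2)$. Setting $\kappa:=k-k_0$, for $\kappa>0$ I introduce $\eta\in[0,1]$ via $\mathfrak{b}(\xi^2)-k_0=\kappa\eta$, making $\xi$ an analytic function of $\kappa\eta$; the parallel substitution through $\mathfrak{a}$ handles $\kappa<0$. In the new variable, $k-\mathfrak{b}=\kappa(1-\eta)$, $k-\mathfrak{a}=\kappa(1+\eta)(1+O(\kappa))$, $U-2k=2U_0+O(\kappa)$, and $dt$ supplies a factor of order $\kappa^2\,d\eta$. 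Assembling yields
\begin{equation*}
J(k):=\int_0^{t_-(k)}(U-2k)R\,dt=|\kappa|^3A_\pm(\kappa),
\end{equation*}
with $A_\pm$ continuous on one-sided neighborhoods of $0$ and sharing the common value $A_\pm(0)=(4U_0/h_0^2)\int_0^1\eta\sqrt{1-\eta^2}\,d\eta=4U_0/(3h_0^2)$. Multiplication by $\sgn(k^2-k_0^2)=-\sgn(\kappa)$ converts $|\kappa|^3$ to $-\kappa^3$, so $\Phi(k)-\tfrac12 S(0)=-\kappa^3A(\kappa)$ with $A$ continuous at $\kappa=0$, yielding $C^3$ regularity together with $\Phi'(k_0)=\Phi''(k_0)=0$ and $\Phi'''(k_0)=-6A(0)<0$.

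The main obstacle is the bookkeeping in this last step: I must extract precisely $|\kappa|^3$ times a function continuous at $\kappa=0$, with no residual $|\kappa|^3\log|\kappa|$ term and with $A_-(0)=A_+(0)$. The structural features of Assumption~\ref{assumption:data} that enable this are the absence of a $\xi^1$-term in $\mathfrak{a}+\mathfrak{b}=-U$ (equivalent to $U-U_0=o(t^{1/2})$) and the presence of the nonzero $\xi^1$-coefficient $2h_0$ in $\mathfrak{b}-\mathfrak{a}=2H$, which together make the local inversions analytic on each side of $\kappa=0$ and force the leading coefficients from the two sides to agree.
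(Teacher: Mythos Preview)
Your sign analysis for $\Phi'$ is correct and essentially matches the paper's: the paper writes the numerator $4k^2+2U(t)k-2H(t)^2$ in two equivalent algebraic forms and checks its sign on each subinterval exactly as you do. Your route to analyticity on $(k_\mathfrak{a},k_0)$ and $(k_0,k_\mathfrak{b})$ via the real substitution $k'=\mathfrak{a}(t)$ or $\mathfrak{b}(t)$ followed by $k'-k=u^2$ is different from the paper's contour-integral representation of $\Phi$, but it is a standard alternative and achieves the same end.

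The genuine gap is in your $C^3$ argument at $k_0$. From $\Phi(k)-\tfrac12 S(0)=-\kappa^3 A(\kappa)$ with $A$ continuous at $0$ (and analytic for $\kappa\neq 0$) you conclude $\Phi\in C^3$. That implication is false in general: for $\Phi'''$ to extend continuously to $k_0$ you must also have $\kappa A'(\kappa)\to 0$, $\kappa^2 A''(\kappa)\to 0$, and $\kappa^3 A'''(\kappa)\to 0$, none of which follow from continuity of $A$ alone. Your remedy, the assertion that ``$\xi$ is an analytic function of $\kappa\eta$,'' would supply one-sided analyticity of $A_\pm$ at $0$ and close the gap, but it overreaches Assumption~\ref{assumption:data}: that assumption gives only $H(t)=h_0 t^{1/2}(1+o(1))$ and $U(t)=U_0+o(t^{1/2})$ near $t=0$, which does not make $\mathfrak{b}(\xi^2)-k_0$ analytic in $\xi$ at $\xi=0$, so the implicit-function inversion need not be analytic there.

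The paper sidesteps this by writing $\Phi$ as a loop integral over a contour $C$ that avoids the endpoint singularity at $t_-(k)$, differentiating under the integral sign four times, rescaling $C$ by $(k-k_0)^2$, and then using only the $o$-asymptotics of Assumption~\ref{assumption:data} to show directly that $\Phi,\Phi',\Phi'',\Phi'''$ each extend continuously to $k_0$, with $\Phi'''(k_0)=16k_0 h_0^{-2}$. Since $U_0=-2k_0$, this agrees with your $-6A(0)=-8U_0/h_0^2$, so your leading-order computation is right; what is missing is a derivative-by-derivative verification that replaces the unjustified analyticity of the inversion.
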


\begin{proof}
Using analyticity of $U$ and $H$, which implies that of $\mathfrak{a}$ and $\mathfrak{b}$, we may
write $\Phi(k)$ in terms of a contour integral.  Indeed, 
\begin{equation}
\Phi(k)=\frac{1}{2}S(0)+\frac{1}{2}\myi\,\sgn(k^2-k_0^2)\oint_C(U(t)-2k)(k-\mathfrak{a}(t))^{1/2}(\mathfrak{b}(t)-k)^{1/2}\,dt,
\label{eq:Phi-contour}
\end{equation}
where the fractional powers denote the principal branches.  The integrand has a branch cut connecting $t=0$ with $t=t_-(k)$ (due to the factor $(k-\mathfrak{a}(t))^{1/2}$ when $k_\mathfrak{a}<k<k_0$ and due to the factor $(\mathfrak{b}(t)-k)^{1/2}$ when $k_0<k<k_\mathfrak{b}$).
The contour $C$ is a positively-oriented loop; it begins at $t=0$ on the lower edge of the branch cut, encloses the cut once passing through the real axis at a point $t=t_-(k)+\delta<t_+(k)$, and terminates at $t=0$ on the upper edge of the cut.  
In the neighborhood of a fixed value of $k$ the contour $C$ may be taken to be independent of $k$, and it follows easily that $\Phi(k)$ is analytic in $k$ separately in the intervals $(k_\mathfrak{a},k_0)$ and $(k_0,k_\mathfrak{b})$.  

For $k\neq k_0$, all derivatives of $\Phi$ may be calculated by differentiation under the integral sign.  Thus,
\begin{equation}
\Phi'(k)=\myi\,\sgn(k^2-k_0^2)\oint_C\frac{2k^2+U(t)k-H(t)^2}{(k-\mathfrak{a}(t))^{1/2}(\mathfrak{b}(t)-k)^{1/2}}\,dt,\quad k\neq k_0,
\end{equation}
\begin{equation}
\Phi''(k)=\frac{1}{4}\myi\,\sgn(k^2-k_0^2)\oint_C\frac{2H(t)^2(6k+U(t))-(2k+U(t))^3}{(k-\mathfrak{a}(t))^{3/2}(\mathfrak{b}(t)-k)^{3/2}}\,dt,\quad k\neq k_0,
\end{equation}
\begin{equation}
\Phi'''(k)=\frac{3}{2}\myi\,\sgn(k^2-k_0^2)\oint_C\frac{H(t)^2(2H(t)^2-U(t)(2k+U(t)))}{(k-\mathfrak{a}(t))^{5/2}(\mathfrak{b}(t)-k)^{5/2}}\,dt,\quad k\neq k_0,
\end{equation}
and
\begin{equation}
\Phi^{(4)}(k)=\frac{3}{2}\myi\,\sgn(k^2-k_0^2)\oint_C\frac{H(t)^2(H(t)^2(10k+3U(t))-2U(t)(2k+U(t))^2)}{(k-\mathfrak{a}(t))^{7/2}(\mathfrak{b}(t)-k)^{7/2}}\,dt,\quad k\neq k_0.
\end{equation}
We consider $k$ to be a real number close to, but not equal to, $k_0$.
Assumption~\ref{assumption:data} ensures that $\mathfrak{a}(t)=k_0-h_0t^{1/2}+o(t^{1/2})$
and $\mathfrak{b}(t)=k_0+h_0t^{1/2}+o(t^{1/2})$ for small $t$.  This implies that when $k-k_0$ is small, $t_-(k)$ is proportional to $(k-k_0)^2$.  Based on this observation, we scale the contour $C$ as $C=h_0^{-2}(k-k_0)^2D$, where $D$ is a suitable contour that we will hold fixed as $k\to k_0$, and we make the substitution $t=h_0^{-2}(k-k_0)^2s$ in the above integrals.  In each case, the integrand considered as a function of $s$ has uniform asymptotic behavior on the contour $D$ in the limit $k\to k_0$, 
being determined from the local behavior of the functions $U$ and $H$ near $t=0$ as specified in Assumption~\ref{assumption:data}.  Thus, uniformly for $s\in D$ one has $U(t)=U(h_0^{-2}(k-k_0)^2s)=-2k_0 + o(k-k_0)$ and $H(t)=H(h_0^{-2}(k-k_0)^2s)=|k-k_0|s^{1/2}(1+o(1))$ in the limit $k\to k_0$, and it follows that
\begin{equation}
\Phi(k)=\frac{1}{2}S(0)+2\myi k_0h_0^{-2}(k-k_0)^3\left[\oint_D(s-1)^{1/2}\,ds+o(1)\right],\quad k\to k_0,
\end{equation}
\begin{equation}
\Phi'(k)=-2\myi k_0 h_0^{-2}(k-k_0)^2\left[\oint_D\frac{ds}{(s-1)^{1/2}}+o(1)\right],\quad k\to k_0,
\end{equation}
\begin{equation}
\Phi''(k)=-2\myi k_0 h_0^{-2} (k-k_0)\left[\oint_D\frac{s\,ds}{(s-1)^{3/2}} + o(1)\right],\quad k\to k_0,
\end{equation}
\begin{equation}
\Phi'''(k)=-6\myi k_0h_0^{-2}\left[\oint_D\frac{s\,ds}{(s-1)^{5/3}}+o(1)\right],\quad k\to k_0,
\end{equation}
and
\begin{equation}
\Phi^{(4)}(k)=-6\myi k_0 h_0^{-2} (k-k_0)^{-1}\left[\oint_D\frac{s^2+4s}{(s-1)^{7/2}}\,ds + o(1)\right],\quad k\to k_0.
\label{eq:Phi-Four}
\end{equation}
The contour $D$ begins and ends at $s=0$ on opposite sides of the branch cut (all fractional powers of $s-1$ are understood as principal branches) and encircles the branch point $s=1$ once in the counterclockwise sense.  It is now obvious that $\Phi(k)$ tends to $\tfrac{1}{2}S(0)$ while $\Phi'(k)$ and $\Phi''(k)$ both vanish as $k\to k_0$ and hence all three extend by continuity to $k=k_0$.  It is also obvious that $\Phi'''(k)$ has a finite limit as $k\to k_0$; by computing an integral we find the limiting value 
\begin{equation}
\Phi'''(k_0)=16k_0 h_0^{-2}.
\label{eq:Phi-Three-k0}
\end{equation}
This completes the proof that $\Phi(k)$ is of class $C^3$ near $k=k_0$.  We note that the fourth derivative $\Phi^{(4)}(k)$ appears to be singular at $k=k_0$, but in reality the issue is subtle because the explicit leading term in the square brackets in \eqref{eq:Phi-Four} is an integral that vanishes identically,
and therefore the asymptotic behavior of $\Phi^{(4)}(k)$ in the limit $k\to k_0$ cannot be determined without making further hypotheses on $U$ and $H$ sufficient to provide leading-order asymptotics for the $o(1)$ error term in \eqref{eq:Phi-Four}.  

It remains to determine the sign of $\Phi'(k)$ for $k\neq k_0$.  We go back to the real integral representation \eqref{eq:Phi-define} for $\Phi(k)$, which admits differentiation by Leibniz' rule because either $k-\mathfrak{a}(t_-(k))=0$ or $k-\mathfrak{b}(t_-(k))=0$ with the result:
\begin{equation}
\Phi'(k)=-\sgn(k^2-k_0^2)\int_0^{t_-(k)}\frac{4k^2+2U(t)k-2H(t)^2}{\sqrt{(k-\mathfrak{a}(t))(k-\mathfrak{b}(t))}}\,dt,
\label{eq:Phi-prime-real-1}
\end{equation}
which can also be written in the form
\begin{equation}
\Phi'(k)=-\sgn(k^2-k_0^2)\int_0^{t_-(k)}\frac{2(\mathfrak{a}(t)-k)(\mathfrak{b}(t)-k)+\tfrac{1}{2}(U(t)-2k)((\mathfrak{a}(t)-k)+(\mathfrak{b}(t)-k))}{\sqrt{(\mathfrak{a}(t)-k)(\mathfrak{b}(t)-k)}}\,dt.
\label{eq:Phi-prime-real-2}
\end{equation}
If $k_0<k<k_\mathfrak{b}$, then we use the form \eqref{eq:Phi-prime-real-1} and factor the quadratic in the numerator as $4(k-k_+(t))(k-k_-(t))$ with
\begin{equation}
k_\pm(t):=\frac{1}{4}\left(\mathfrak{a}(t)+\mathfrak{b}(t)\pm\sqrt{(\mathfrak{a}(t)+\mathfrak{b}(t))^2+(\mathfrak{b}(t)-\mathfrak{a}(t))^2}\right).
\label{eq:k-plus-k-minus}
\end{equation}
Since $k_+(t)>0$ and $k<k_\mathfrak{b}<0$, obviously $k-k_+(t)<0$.  Also, since $k_-(t)<\tfrac{1}{2}(\mathfrak{a}(t)+\mathfrak{b}(t))\le \mathfrak{b}(t)$ we have $k-k_-(t)>k-\mathfrak{b}(t)$, which is nonnegative for $0<t<t_-(k)$ because $k>k_0$.  Hence $\Phi'(k)< 0$ for $k_0<k<k_\mathfrak{b}$.
On the other hand, for $k_\mathfrak{a}<k<k_0$ we instead use the form \eqref{eq:Phi-prime-real-2}, because in this range of $k$ we have $\mathfrak{a}(t)-k>0$ and $\mathfrak{b}(t)-k>0$ for $0<t<t_-(k)$ so combining this with the inequality $U(t)-2k>0$ shows that also for $k_\mathfrak{a}<k<k_0$ we have $\Phi'(k)< 0$.
\end{proof}
\begin{remark}
\label{remark:Phi-smooth}
Although it may seem counterintuitive, assuming that $H$ is smoother at $t=0$, say vanishing linearly rather than like $t^{1/2}$ as $t\downarrow 0$, leads to less smoothness of $\Phi(k)$ at $k=k_0$.  Linear vanishing of $H$ implies continuity of $\Phi$ and $\Phi'$ at $k_0$,
but $\Phi''$ will have a jump discontinuity.  The point is that it should be the inverse function $t_-(k)$ that is smooth at $k=k_0$, not the functions $\mathfrak{a}(t)$ and $\mathfrak{b}(t)$ at $t=0$.
\myendrmk
\end{remark}

The part of Assumption~\ref{assumption:data} concerning the nondegeneracy of the extrema
of $\mathfrak{a}$ and $\mathfrak{b}$ allows us to obtain the following result.
\begin{lemma}
If the functions $U$ and $H$ satisfy Assumption~\ref{assumption:data},
then $\tau:(k_\mathfrak{a},k_\mathfrak{b})\to\mathbb{R}_+$ is analytic at $k=k_\mathfrak{a}$ and $k=k_\mathfrak{b}$, with $\tau(k_\mathfrak{a})=0$ and $\tau'(k_\mathfrak{a})>0$ while $\tau(k_\mathfrak{b})=0$ and $\tau'(k_\mathfrak{b})<0$.   
\label{lemma-tau}
\end{lemma}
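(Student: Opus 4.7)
The plan is to prove analyticity and compute the leading coefficient of $\tau$ at each endpoint by an analytic change of variable that desingularizes the coalescing turning points. The nondegeneracy of the extrema in Assumption~\ref{assumption:data} is exactly what is needed to apply a Morse-type normal form, and once we are in that normal form the trigonometric substitution $v = \kappa \sin\phi$ kills the square root at both endpoints simultaneously.

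First I would work near $k = k_\mathfrak{a}$. Since $\mathfrak{a}$ is real analytic with $\mathfrak{a}'(t_\mathfrak{a}) = 0$ and $\mathfrak{a}''(t_\mathfrak{a}) > 0$, Morse's lemma in one variable gives an analytic diffeomorphism $v = v(t)$ on a neighborhood of $t_\mathfrak{a}$ with $v(t_\mathfrak{a}) = 0$, $v'(t_\mathfrak{a}) = \sqrt{\tfrac{1}{2}\mathfrak{a}''(t_\mathfrak{a})}$, and
\begin{equation*}
\mathfrak{a}(t) - k_\mathfrak{a} = v(t)^2.
\end{equation*}
Setting $\kappa^2 := k - k_\mathfrak{a} \ge 0$, the turning points $t_\pm(k)$ satisfy $v(t_\pm(k)) = \pm\kappa$, so under the substitution $v = \kappa \sin\phi$ with $\phi \in [-\pi/2,\pi/2]$,
\begin{equation*}
\tau(k) = \kappa^2 \int_{-\pi/2}^{\pi/2} G(\kappa \sin\phi,\,\kappa^2)\,\cos^2\phi\,d\phi,
\end{equation*}
where $G(v,\kappa^2) := (U(t(v)) - 2k_\mathfrak{a} - 2\kappa^2)\sqrt{\mathfrak{b}(t(v)) - k_\mathfrak{a} - \kappa^2}/v'(t(v))$. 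The factor under the remaining square root is nonzero at $(v,\kappa) = (0,0)$ because $\mathfrak{b}(t_\mathfrak{a}) - k_\mathfrak{a} = 2H(t_\mathfrak{a}) > 0$, so $G$ is jointly analytic in a neighborhood of the origin.

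Next I would expand $G$ as a convergent double power series in its arguments. Each odd power of $\sin\phi$ integrates to zero against $\cos^2\phi$, so the $\phi$-integral produces an analytic function of $\kappa^2$. Hence $\tau(k)$ is analytic in $k - k_\mathfrak{a}$ near $0$ with $\tau(k_\mathfrak{a}) = 0$, and the leading coefficient is
\begin{equation*}
\tau'(k_\mathfrak{a}) = G(0,0)\int_{-\pi/2}^{\pi/2}\cos^2\phi\,d\phi = \frac{\pi(U(t_\mathfrak{a}) - 2k_\mathfrak{a})\sqrt{\mathfrak{b}(t_\mathfrak{a}) - k_\mathfrak{a}}}{2\,v'(t_\mathfrak{a})}.
\end{equation*}
Using the formulas in \eqref{eq:a-b-define} one has $U(t_\mathfrak{a}) - 2k_\mathfrak{a} = 2U(t_\mathfrak{a}) + 2H(t_\mathfrak{a}) > 0$ and $\mathfrak{b}(t_\mathfrak{a}) - k_\mathfrak{a} = 2H(t_\mathfrak{a}) > 0$, while $v'(t_\mathfrak{a}) > 0$ by construction, so $\tau'(k_\mathfrak{a}) > 0$ as claimed.

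The argument at $k = k_\mathfrak{b}$ is entirely parallel after substituting $v(t)^2 = k_\mathfrak{b} - \mathfrak{b}(t)$ (which is available because $\mathfrak{b}''(t_\mathfrak{b}) < 0$), giving analyticity of $\tau$ at $k_\mathfrak{b}$ with $\tau(k_\mathfrak{b}) = 0$. The sign $\tau'(k_\mathfrak{b}) < 0$ then follows either from the analogous explicit leading coefficient or, more quickly, from the fact that $\tau > 0$ on $(k_\mathfrak{a},k_\mathfrak{b})$ by Lemma~\ref{lemma:tau-continuous-positive} combined with the ruling out of $\tau'(k_\mathfrak{b}) = 0$ by the same leading-coefficient computation (which is again a product of manifestly positive factors). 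The only real obstacle is the routine verification that the Morse change of variable, the $v$-analytic extension of $(U(t(v)) - 2k)\sqrt{\mathfrak{b}(t(v)) - k}$, and the subsequent trigonometric substitution are all valid on a common neighborhood of the coalescence point, and this is a standard exercise in analytic implicit function theory once nondegeneracy is given.
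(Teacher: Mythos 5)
Your proof is correct, and it reaches the paper's own explicit values of $\tau'(k_\mathfrak{a})$ and $\tau'(k_\mathfrak{b})$ by a genuinely different route. The paper rewrites $\tau$ as the loop integral \eqref{eq:tau-contour} around the collapsing branch cut $[t_-(k),t_+(k)]$; analyticity in $k$ is then immediate because the contour can be held fixed while the integrand is analytic in $k$ at each $t\in C$, the vanishing $\tau(k_{\mathfrak{a},\mathfrak{b}})=0$ follows because $R(t;k_{\mathfrak{a},\mathfrak{b}})$ becomes analytic inside $C$ once the cut degenerates to a point, and $\tau'$ is extracted by a residue computation using the local linear behavior $R(t;k_\mathfrak{a})\sim \myi\sqrt{\tfrac12\mathfrak{a}''(t_\mathfrak{a})(\mathfrak{b}(t_\mathfrak{a})-k_\mathfrak{a})}\,(t-t_\mathfrak{a})$. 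You instead stay on the real line: the analytic Morse normal form $\mathfrak{a}(t)-k_\mathfrak{a}=v(t)^2$ (valid precisely because of the nondegeneracy hypothesis $\mathfrak{a}''(t_\mathfrak{a})>0$ in Assumption~\ref{assumption:data}) together with $v=\kappa\sin\phi$ turns $\tau$ into $\kappa^2$ times a fixed-interval integral of a jointly analytic integrand, and the parity argument (odd powers of $\sin\phi$ integrate to zero against $\cos^2\phi$) shows the result is a power series in $\kappa^2=k-k_\mathfrak{a}$; the first coefficient is a Wallis integral giving exactly the paper's $\tau'(k_\mathfrak{a})=\tfrac{\pi}{2}(U(t_\mathfrak{a})-2k_\mathfrak{a})\sqrt{\mathfrak{b}(t_\mathfrak{a})-k_\mathfrak{a}}\,/\sqrt{\tfrac12\mathfrak{a}''(t_\mathfrak{a})}$, and your sign checks via \eqref{eq:a-b-define} are right. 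What each approach buys: the contour method is the more uniform tool in this paper (the same device handles $\Phi$ in Lemma~\ref{lemma-Phi-endpoints}, where the endpoint behavior is logarithmic rather than analytic and your substitution would not close up so cleanly), while your method is more elementary, makes the positivity of the leading coefficient manifest as a product of positive factors rather than the output of a residue, and exposes directly why analyticity in $k$ (rather than mere analyticity in $\sqrt{k-k_\mathfrak{a}}$) holds, namely the even-parity cancellation. Your shortcut for the sign at $k_\mathfrak{b}$ via Lemma~\ref{lemma:tau-continuous-positive} plus nonvanishing of the leading coefficient is also valid. Both arguments rest on the same two inputs, real-analyticity of $U$ and $H$ and nondegeneracy of the extrema, so neither is more general here.
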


\begin{proof}
Using analyticity of $U$ and $H$ for $t$ near $t_\mathfrak{a}$ and $t_\mathfrak{b}$, we can express $\tau(k)$ as a contour integral:
\begin{equation}
\tau(k)=\frac{1}{2}\oint_C(U(t)-2k)R(t;k)\,dt
\label{eq:tau-contour}
\end{equation}
where $C$ is a closed contour enclosing the interval $[t_-(k),t_+(k)]$ once in the positive sense and where $R(t;k)$ is the function analytic in $D\setminus[t_-(k),t_+(k)]$, where $D$ is a domain containing $C$, that satisfies $R(t;k)^2=(k-\mathfrak{a}(t))(\mathfrak{b}(t)-k)$ and that the boundary value $R_+(t;k)$ taken on the upper edge of the branch cut $[t_-(k),t_+(k)]$ is negative.  

To analyze $\tau$ for $k$ near $k_\mathfrak{a}$ and $k_\mathfrak{b}$, we may in each case choose the contour $C$ to be fixed, and then since the integrand is analytic in $k$ at each point $t\in C$
it follows that $\tau$ extends from a function defined for real $k$ in a right (left) neighborhood of $k=k_\mathfrak{a}$ ($k=k_\mathfrak{b}$) to an analytic function of $k$ at $k_\mathfrak{a}$ ($k_\mathfrak{b}$).  Since $C$ is fixed and since $R_k(t;k)=(\mathfrak{a}(t)+\mathfrak{b}(t)-2k)/(2R(t;k))=-(U(t)+2k)/(2R(t;k))$ by differentiation of $R(t;k)^2$, 
\begin{equation}
\tau'(k)=-\oint_C R(t;k)\,dt -\frac{1}{4}\oint_C\frac{(U(t)-2k)(U(t)+2k)\,dt}{R(t;k)}.
\label{eq:tau-prime-contour}
\end{equation}
Now near $t_\mathfrak{a}$, the function $R(t;k_\mathfrak{a})^2$ has the Taylor expansion
$R(t;k_\mathfrak{a})^2=-\tfrac{1}{2}\mathfrak{a}''(t_\mathfrak{a})(\mathfrak{b}(t_\mathfrak{a})-k_\mathfrak{a})(t-t_\mathfrak{a})^2 + \mathcal{O}((t-t_\mathfrak{a})^3)$, and since $R(t;k)$ is positive imaginary to the right of $t_+(k)$ and negative imaginary to the left of $t_-(k)$, it follows that when $k=k_\mathfrak{a}$ so that the branch cut collapses to a point $t=t_\mathfrak{a}$, we have
\begin{equation}
R(t;k_\mathfrak{a})=i\sqrt{\tfrac{1}{2}\mathfrak{a}''(t_\mathfrak{a})(\mathfrak{b}(t_\mathfrak{a})-k_\mathfrak{a})} (t-t_\mathfrak{a}) + \mathcal{O}((t-\mathfrak{a})^2),\quad t\to t_\mathfrak{a}.  
\label{eq:R-a}
\end{equation}
Similar arguments show that 
\begin{equation}
R(t;k_\mathfrak{b})=i\sqrt{-\tfrac{1}{2}\mathfrak{b}''(t_\mathfrak{b})(k_\mathfrak{b}-\mathfrak{a}(t_\mathfrak{b}))}(t-t_\mathfrak{b})+\mathcal{O}((t-t_\mathfrak{b})^2),\quad t\to t_\mathfrak{b}.
\label{eq:R-b}
\end{equation}
In both cases the indicated square roots are positive numbers.  In particular, since $R(t;k_\mathfrak{a})$ and $R(t;k_\mathfrak{b})$ are analytic functions of $t$ within $C$ it follows from \eqref{eq:tau-contour} that $\tau(k_\mathfrak{a})=\tau(k_\mathfrak{b})=0$.  We may now use \eqref{eq:R-a}--\eqref{eq:R-b} in \eqref{eq:tau-prime-contour} to compute $\tau'(k_\mathfrak{a})$ and $\tau'(k_\mathfrak{b})$ by residues:
\begin{equation}
\tau'(k_\mathfrak{a})=-\frac{\pi}{2}\frac{(U(t_\mathfrak{a})-2k_\mathfrak{a})(U(t_\mathfrak{a})+2k_\mathfrak{a})}{\sqrt{\tfrac{1}{2}\mathfrak{a}''(t_\mathfrak{a})(\mathfrak{b}(t_\mathfrak{a})-k_\mathfrak{a})}}=\frac{\pi}{2}\frac{(U(t_\mathfrak{a})-2k_\mathfrak{a})\sqrt{\mathfrak{b}(t_\mathfrak{a})-k_\mathfrak{a}}}{\sqrt{\tfrac{1}{2}\mathfrak{a}''(t_\mathfrak{a})}}>0
\end{equation}
and
\begin{equation}
\tau'(k_\mathfrak{b})=-\frac{\pi}{2}\frac{(U(t_\mathfrak{b})-2k_\mathfrak{b})(U(t_\mathfrak{b})+2k_\mathfrak{b})}{\sqrt{-\tfrac{1}{2}\mathfrak{b}''(t_\mathfrak{b})(k_\mathfrak{b}-\mathfrak{a}(t_\mathfrak{b}))}}=-\frac{\pi}{2}\frac{(U(t_\mathfrak{b})-2k_\mathfrak{b})\sqrt{k_\mathfrak{b}-\mathfrak{a}(t_\mathfrak{b})}}{\sqrt{-\tfrac{1}{2}\mathfrak{b}''(t_\mathfrak{b})}}<0.
\end{equation}
This completes the proof.
\end{proof}

The nondegeneracy of the extrema of $\mathfrak{a}$ and $\mathfrak{b}$ also leads to the following result.
\begin{lemma}
If the functions $U$ and $H$ satisfy Assumption~\ref{assumption:data},
then $\Phi:(k_\mathfrak{a},k_\mathfrak{b})\to\mathbb{R}$ has an analytic continuation into the complex plane from a right neighborhood of $k_\mathfrak{a}$ and from a left neighborhood of $k_\mathfrak{b}$ and 
\begin{equation}
\Phi(k)=\Phi(k_\mathfrak{a})+C_\mathfrak{a}(k-k_\mathfrak{a})\log(k-k_\mathfrak{a})+\mathcal{O}(k-k_\mathfrak{a}),\quad k\to k_\mathfrak{a},\quad
C_\mathfrak{a}:=\frac{1}{4}\frac{(U(t_\mathfrak{a})-2k_\mathfrak{a})\sqrt{\mathfrak{b}(t_\mathfrak{a})-k_\mathfrak{a}}}{\sqrt{\tfrac{1}{2}\mathfrak{a}''(t_\mathfrak{a})}},
\label{eq:Phi-k-a}
\end{equation}
while
\begin{equation}
\Phi(k)=\Phi(k_\mathfrak{b}) +C_\mathfrak{b}(k-k_\mathfrak{b})\log(k_\mathfrak{b}-k) +\mathcal{O}(k-k_\mathfrak{b}),\quad k\to k_\mathfrak{b},\quad C_\mathfrak{b}:=
\frac{1}{4}\frac{(U(t_\mathfrak{b})-2k_\mathfrak{b})\sqrt{k_\mathfrak{b}-\mathfrak{a}(t_\mathfrak{b})}}{\sqrt{-\tfrac{1}{2}\mathfrak{b}''(t_\mathfrak{b})}}.
\label{eq:Phi-k-b}
\end{equation}
\label{lemma-Phi-endpoints}
\end{lemma}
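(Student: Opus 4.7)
The approach is to combine a localized Taylor expansion of the integrand in \eqref{eq:Phi-define} with explicit evaluation of the resulting elementary integral, supplemented by a contour deformation in the complex $t$-plane that furnishes the analytic continuation. The argument parallels and extends the proof of Lemma~\ref{lemma-Phi-interior}, with the essential new ingredient being the \emph{nondegenerate} coalescence of the two turning points at $t=t_\mathfrak{a}$ (respectively $t=t_\mathfrak{b}$) as $k\downarrow k_\mathfrak{a}$ (respectively $k\uparrow k_\mathfrak{b}$) guaranteed by the last bullet of Assumption~\ref{assumption:data}.

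Focusing first on the expansion near $k_\mathfrak{a}$, I would fix small $\delta>0$ and split the integral in \eqref{eq:Phi-define} as $I(k)=I_{\mathrm{reg}}(k)+I_{\mathrm{loc}}(k)$, with $I_{\mathrm{reg}}$ the part over $[0,t_\mathfrak{a}-\delta]$ and $I_{\mathrm{loc}}$ the part over $[t_\mathfrak{a}-\delta,\,t_-(k)]$. The regular piece depends analytically on $k$ in a full complex neighborhood of $k_\mathfrak{a}$ since its integrand has no branch points in the fixed interval of integration. For the local piece I would substitute the Taylor expansions $\mathfrak{a}(t)=k_\mathfrak{a}+\tfrac{1}{2}\mathfrak{a}''(t_\mathfrak{a})(t-t_\mathfrak{a})^2+O((t-t_\mathfrak{a})^3)$, $\mathfrak{b}(t)=\mathfrak{b}(t_\mathfrak{a})+O(t-t_\mathfrak{a})$, and $U(t)=U(t_\mathfrak{a})+O(t-t_\mathfrak{a})$, together with the identity $\mathfrak{b}(t_\mathfrak{a})-k_\mathfrak{a}=2H(t_\mathfrak{a})$. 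After the change of variable $u=t_\mathfrak{a}-t$, the leading contribution becomes
\begin{equation*}
(U(t_\mathfrak{a})-2k_\mathfrak{a})\sqrt{2H(t_\mathfrak{a})}\sqrt{\tfrac{1}{2}\mathfrak{a}''(t_\mathfrak{a})}\int_\rho^\delta\sqrt{u^2-\rho^2}\,du,\qquad \rho:=\sqrt{2(k-k_\mathfrak{a})/\mathfrak{a}''(t_\mathfrak{a})},
\end{equation*}
and the elementary primitive $\int\sqrt{u^2-a^2}\,du=\tfrac{1}{2}u\sqrt{u^2-a^2}-\tfrac{a^2}{2}\log(u+\sqrt{u^2-a^2})$ yields $\int_\rho^\delta\sqrt{u^2-\rho^2}\,du=\tfrac{1}{2}\delta^2+\tfrac{(k-k_\mathfrak{a})}{2\mathfrak{a}''(t_\mathfrak{a})}\log(k-k_\mathfrak{a})+O(k-k_\mathfrak{a})$. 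Collecting constants and using $\sgn(k^2-k_0^2)=+1$ for $k$ just above $k_\mathfrak{a}$ (since $k_\mathfrak{a}<k_0<0$) reproduces exactly the coefficient $C_\mathfrak{a}$ in \eqref{eq:Phi-k-a}. The higher-order Taylor remainders and $I_{\mathrm{reg}}$ contribute only $O(k-k_\mathfrak{a})$ after integration, as one verifies by the same kind of elementary estimate.

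To extend into the complex plane, I would reformulate $I_{\mathrm{loc}}$ as a contour integral $\tfrac{1}{2}\myi\oint_{\mathcal{C}_\mathfrak{a}}(U(t)-2k)(k-\mathfrak{a}(t))^{1/2}(\mathfrak{b}(t)-k)^{1/2}\,dt$ over a small fixed loop $\mathcal{C}_\mathfrak{a}$ in the complex $t$-plane enclosing the two nearby branch points $t_\pm(k)=t_\mathfrak{a}\pm\rho(1+O(k-k_\mathfrak{a}))$, which are analytic functions of $\sqrt{k-k_\mathfrak{a}}$ by the implicit function theorem applied to $\mathfrak{a}(t)=k$. Defining $\log(k-k_\mathfrak{a})$ by its principal branch cut along $(-\infty,k_\mathfrak{a}]$, the loop $\mathcal{C}_\mathfrak{a}$ may be held fixed for $k$ in a small complex neighborhood of $k_\mathfrak{a}$ and the integral is manifestly analytic in $k$, its logarithmic multivaluedness matching the explicit local expansion. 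The argument at $k=k_\mathfrak{b}$ is structurally identical: the coalescence is now at the nondegenerate \emph{maximum} of $\mathfrak{b}$ so $-\mathfrak{b}''(t_\mathfrak{b})>0$ supplies the square root in $C_\mathfrak{b}$, the singular factor is $k-\mathfrak{b}(t)$ in place of $k-\mathfrak{a}(t)$, and because $k$ approaches $k_\mathfrak{b}$ from below the analogue of $\rho$ is $\sqrt{2(k_\mathfrak{b}-k)/|\mathfrak{b}''(t_\mathfrak{b})|}$, producing $\log(k_\mathfrak{b}-k)$ in \eqref{eq:Phi-k-b}.

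The main obstacle is careful sign bookkeeping rather than any analytical difficulty. Near $k_\mathfrak{b}$ the factor $\sgn(k^2-k_0^2)$ is generically $-1$ rather than $+1$ (for $k$ close to $k_\mathfrak{b}$ one typically has $|k|<|k_0|$ since $k_0<k_\mathfrak{b}<0$), and this sign flip must combine correctly with the sign implicit in writing $k_\mathfrak{b}-k$ in place of $k-k_\mathfrak{b}$ to deliver positive constants $C_\mathfrak{a},C_\mathfrak{b}>0$ in both \eqref{eq:Phi-k-a} and \eqref{eq:Phi-k-b}. Verifying this, together with confirming that all Taylor remainders contribute only $O(k-k_\mathfrak{a})$ or $O(k-k_\mathfrak{b})$, is routine but requires attention.
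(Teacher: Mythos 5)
Your local computation near $k_\mathfrak{a}$ and $k_\mathfrak{b}$ is correct and considerably more explicit than the paper's own one-line argument (which just invokes the contour formula \eqref{eq:Phi-contour} and the identities $2\pi C_\mathfrak{a}=\tau'(k_\mathfrak{a})$, $2\pi C_\mathfrak{b}=-\tau'(k_\mathfrak{b})$): the split into regular and local pieces, the quadratic expansion of $\mathfrak{a}$ (resp.\ $\mathfrak{b}$) about its nondegenerate extremum, and the elementary primitive of $\sqrt{u^2-\rho^2}$ do reproduce $C_\mathfrak{a}$ and $C_\mathfrak{b}$ exactly, and your sign bookkeeping ($\sgn(k^2-k_0^2)=+1$ near $k_\mathfrak{a}$ and $-1$ near $k_\mathfrak{b}$, combined with $k-k_\mathfrak{b}=-(k_\mathfrak{b}-k)$) checks out. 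The Taylor remainders indeed contribute only $\mathcal{O}(k-k_\mathfrak{a})$, since $\int_\rho^\delta u^j\sqrt{u^2-\rho^2}\,du$ is either polynomial in $\rho^2$ or carries at worst a $\rho^4\log\rho$ term.

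The gap is in your mechanism for the analytic continuation. An integral over a contour held \emph{fixed} as $k$ varies, of an integrand analytic in $k$ uniformly for $t$ on that contour, defines a single-valued analytic function of $k$; it cannot exhibit the logarithmic multivaluedness that your own expansion establishes, so the claim that the fixed loop ``matches the explicit local expansion'' is self-contradictory. Concretely, a loop $\mathcal{C}_\mathfrak{a}$ encircling \emph{both} branch points $t_\pm(k)$ collapses onto the cut joining them and therefore computes (a multiple of) the tunneling integral over $[t_-(k),t_+(k)]$ --- this is precisely the representation \eqref{eq:tau-contour} of $\tau(k)$, which by Lemma~\ref{lemma-tau} is analytic at $k_\mathfrak{a}$ with $\tau(k_\mathfrak{a})=0$ and no logarithm --- not the integral $\int_{t_\mathfrak{a}-\delta}^{t_-(k)}$ you need. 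The correct loop, as in \eqref{eq:Phi-contour}, wraps around the cut terminating at $t_-(k)$ and must pass \emph{between} $t_-(k)$ and $t_+(k)$; it is the pinching of this contour as the two turning points coalesce at $t_\mathfrak{a}$ that generates the logarithm and is why the continuation exists only on a neighborhood of $k_\mathfrak{a}$ with a local branch cut removed. The repair stays within your framework: $I_{\mathrm{reg}}$ is analytic in a full neighborhood of $k_\mathfrak{a}$ since its integrand has no branch points for $t\in[0,t_\mathfrak{a}-\delta]$ and $k$ near $k_\mathfrak{a}$, and $I_{\mathrm{loc}}$ continues by continuing in $k$ the explicit primitive $\tfrac{1}{2}u\sqrt{u^2-\rho^2}-\tfrac{\rho^2}{2}\log\bigl(u+\sqrt{u^2-\rho^2}\bigr)$ (with $\rho^2$ linear in $k-k_\mathfrak{a}$) together with a pinched-contour treatment of the remainder; but the fixed-loop-around-both-branch-points device must be abandoned.
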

\begin{proof}
This can be shown with the help of the contour integral formula \eqref{eq:Phi-contour}.  In particular, note that $\Phi$ is continuous in the limits $k\downarrow k_\mathfrak{a}$ and $k\uparrow k_\mathfrak{b}$, but formulae \eqref{eq:Phi-k-a} and \eqref{eq:Phi-k-b} hold in full neighborhoods of the indicated limit point with only local branch cuts of the logarithms omitted.  Note that $2\pi C_\mathfrak{a}=\tau'(k_\mathfrak{a})$ and $2\pi C_\mathfrak{b}=-\tau'(k_\mathfrak{b})$ so both $C_\mathfrak{a}$ and $C_\mathfrak{b}$ are positive.
\end{proof}

Combining Lemma~\ref{lemma-Phi-endpoints} with Lemma~\ref{lemma-tau} yields the following result.
\begin{lemma}
The function $\tilde{\Gamma}(k)=Y^\eps(k)e^{-2\myi\Phi(k)/\eps}$ has an analytic continuation into the complex plane from right and left neighborhoods of $k_\mathfrak{a}$ and $k_\mathfrak{b}$, respectively, and for each $\theta\in (0,\pi/2)$ and each $\delta>0$ sufficiently small, $\tilde{\Gamma}(k_\mathfrak{a}+re^{i\theta})=\mathcal{O}((\log(\eps^{-1}))^{-1/2})$ and $\tilde{\Gamma}(k_\mathfrak{b}-re^{-i\theta})=\mathcal{O}((\log(\eps^{-1}))^{-1/2})$ both hold in the limit $\eps\to 0$ with $\eps>0$, uniformly for $0\le r\le \delta$.
(Of course by Schwarz reflection $\tilde{\Gamma}(k^*)^*$ satisfies similar estimates along segments in the lower half-plane with endpoints $k_\mathfrak{a}$ and $k_\mathfrak{b}$.)
\label{lemma-tilde-Gamma-endpoints}
\end{lemma}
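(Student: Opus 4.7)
The plan is to combine the analytic continuations furnished by Lemma~\ref{lemma-tau} and Lemma~\ref{lemma-Phi-endpoints}, and then get the uniform estimate by exploiting the sign structure of the local expansions and optimizing over $r$.

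First, I would establish the analytic continuation claim. By Lemma~\ref{lemma-tau}, $\tau$ is analytic in a full neighborhood of $k_\mathfrak{a}$ with $\tau(k_\mathfrak{a})=0$ and $\tau'(k_\mathfrak{a})>0$, so $1-e^{-2\tau(k)/\eps}$ has a simple zero at $k_\mathfrak{a}$, and therefore $Y^\eps(k)=\sqrt{1-e^{-2\tau(k)/\eps}}$ admits an analytic continuation into any neighborhood of $k_\mathfrak{a}$ slit along the local level set $\{\tau(k)/\eps\in(-\infty,0]\}$; with the standard choice of branch this cut lies on the real axis outside $[k_\mathfrak{a},k_\mathfrak{b}]$, consistent with the definition of $\tilde\Gamma$ on $\mathbb{R}$. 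By Lemma~\ref{lemma-Phi-endpoints}, $\Phi$ continues analytically across a right neighborhood of $k_\mathfrak{a}$ with only a local logarithmic branch point. Thus $\tilde\Gamma=Y^\eps e^{-2\myi\Phi/\eps}$ extends analytically into a neighborhood of $k_\mathfrak{a}$ cut only along an interval of $\mathbb R$ to the left of $k_\mathfrak{a}$; in particular the rays $k_\mathfrak{a}+re^{\myi\theta}$ with $\theta\in(0,\pi/2)$ lie in the domain of analyticity. The argument at $k_\mathfrak{b}$ is identical, using $\tau'(k_\mathfrak{b})<0$ and the analogous expansion \eqref{eq:Phi-k-b}.

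Next I would derive two elementary pointwise bounds along the ray $k=k_\mathfrak{a}+re^{\myi\theta}$. From $\tau(k)=\tau'(k_\mathfrak{a})re^{\myi\theta}+\mathcal O(r^2)$ and $\cos\theta>0$, one gets $\Re(\tau(k)/\eps)\ge cr/\eps$ for some $c>0$ and $r\in[0,\delta]$ with $\delta$ small, so that $|e^{-2\tau(k)/\eps}|\le e^{-2cr/\eps}$. Using the elementary inequality $|\sqrt{1-w}|\le\sqrt{2|w|}$ valid when $|w|\le 1$, together with the trivial bound $|Y^\eps|\le\sqrt 2$, this yields the uniform estimate
\begin{equation}
|Y^\eps(k_\mathfrak{a}+re^{\myi\theta})|\le C\min\bigl(\sqrt{r/\eps},\,1\bigr).
\label{eq:Ybd}
\end{equation}
Independently, from Lemma~\ref{lemma-Phi-endpoints},
$\Im\Phi(k)=C_\mathfrak{a}\bigl(r\sin\theta\log r+r\theta\cos\theta\bigr)+\mathcal O(r)$,
so, using $\sin\theta>0$,
\begin{equation}
|e^{-2\myi\Phi(k)/\eps}|=e^{2\Im\Phi(k)/\eps}\le \exp\!\Bigl(\tfrac{2C_\mathfrak{a}\sin\theta}{\eps}\,r\log r+\mathcal O(r/\eps)\Bigr),
\label{eq:phasebd}
\end{equation}
provided $\delta$ is small enough that $\sin\theta\log r$ dominates the bounded corrections for $r\in(0,\delta]$.

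The last step is to multiply \eqref{eq:Ybd} with \eqref{eq:phasebd} and optimize over $r\in[0,\delta]$. Setting $r=\eps\rho$, the product is controlled by $C\min(\sqrt\rho,1)\,\exp\!\bigl(-2C_\mathfrak{a}\sin\theta\,\rho\log(1/\eps)+\mathcal O(\rho\log\rho)+\mathcal O(\rho)\bigr)$. For $\rho\ge 1$ the exponential factor is $\eps^{2C_\mathfrak{a}\sin\theta\rho(1+o(1))}$, which is a positive power of $\eps$; for $\rho\le 1$, calculus gives a maximum at $\rho^*=1/(4C_\mathfrak{a}\sin\theta\log(1/\eps))$, where $\sqrt{\rho^*}=\mathcal O((\log(1/\eps))^{-1/2})$ and the exponential evaluates to $e^{-1/2}(1+o(1))$. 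In either case the product is $\mathcal O((\log(1/\eps))^{-1/2})$, with implicit constants depending only on $\theta$ and on the data functions through $C_\mathfrak{a}$ and $\tau'(k_\mathfrak{a})$. The argument at $k_\mathfrak{b}$ is identical up to reflection, using $C_\mathfrak{b}>0$ and $\tau'(k_\mathfrak{b})<0$, and the bounds in the lower half-plane follow by Schwarz reflection.

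The main obstacle is organizing the two competing regimes ($r\lesssim\eps$, where $Y^\eps$ is small but the phase factor is not, versus $r\gtrsim\eps$, where the phase factor produces the decay) into a single uniform bound; the key identity making the optimization clean is $\tau'(k_\mathfrak{a})=2\pi C_\mathfrak{a}>0$ noted in the proof of Lemma~\ref{lemma-Phi-endpoints}, which guarantees that the linear growth of $\Re(\tau/\eps)$ and the $r\log r$ decay driven by $C_\mathfrak{a}$ come from the same endpoint data and so cannot be in conflict.
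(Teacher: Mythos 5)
Your argument is correct and is essentially the paper's own proof: the paper likewise bounds $Y^\eps(k)=\mathcal{O}(\eps^{-1/2}\tau(k)^{1/2})$ for $\Re\{\tau(k)\}>0$, invokes the $(k-k_\mathfrak{a})\log(k-k_\mathfrak{a})$ expansion of Lemma~\ref{lemma-Phi-endpoints} to control $|e^{-2\myi\Phi(k)/\eps}|$, and reduces the whole estimate to maximizing $f^\eps(x)=\eps^{-1/2}x^{1/2}e^{x\log(x)/\eps}$ over $x\in(0,1)$, whose maximizer $x_c(\eps)\sim\eps/(2\log(\eps^{-1}))$ gives exactly your $\rho^*$ and the value $\mathcal{O}((\log(\eps^{-1}))^{-1/2})$. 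The only blemish is the stated inequality $|\sqrt{1-w}|\le\sqrt{2|w|}$ for $|w|\le 1$, which is false at $w=0$; the bound you actually need and use, $|Y^\eps(k_\mathfrak{a}+re^{\myi\theta})|\le C\min(\sqrt{r/\eps},1)$, follows instead from $|1-e^{-u}|\le |u|$ for $\Re\{u\}\ge 0$ applied with $u=2\tau(k)/\eps$.
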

\begin{proof}
This follows from the fact that $Y^\eps(k)$ has an analytic continuation satisfying $Y^\eps(k)=\mathcal{O}(\eps^{-1/2}\tau(k)^{1/2})$ near $k_\mathfrak{a}$ and $k_\mathfrak{b}$ as long as $\Re\{\tau(k)\}>0$.  By Lemma~\ref{lemma-tau} this estimate holds locally near $k_\mathfrak{a}$ or $k_\mathfrak{b}$ as long as $\Re\{k-k_\mathfrak{a}\}>0$ or $\Re\{k_\mathfrak{b}-k\}>0$ holds, respectively, and in each case we may replace $\tau(k)$ in the estimate by either $|k-k_\mathfrak{a}|$ or $|k-k_\mathfrak{b}|$.

Using Lemma~\ref{lemma-Phi-endpoints} shows that the problem boils down to estimating functions of a real variable, $x\in (0,1)$, having the form
\begin{equation}
f^\eps(x):=\frac{x^{1/2}}{\eps^{1/2}}e^{x\log(x)/\eps},\quad x\in (0,1),\quad \eps>0.
\end{equation}
This function vanishes as $x\downarrow 0$ for each $\eps>0$, and it has two critical points for $x>0$, only the smaller of the two being relevant for bounded $x$.  This critical point is the global maximizer on $(0,1)$ and it satisfies $x=x_c(\eps)\sim\eps/(2\log(\eps^{-1}))$ as $\eps\downarrow 0$.  It then follows that $f^\eps(x_c(\eps))=\mathcal{O}((\log(\eps^{-1}))^{-1/2})$ as $\eps\downarrow 0$ by direct calculation.
\end{proof}
The intuition behind this result is that while the exponential decay of $e^{-2\myi\Phi(k)/\eps}$ in the upper half-plane is not uniform near $k_\mathfrak{a}$ or $k_\mathfrak{b}$ (and in fact there is no decay at all exactly at these two points), the factor $Y^\eps(k)$ vanishes at these points, with the result being that the product is locally uniformly small with $\eps>0$, albeit exhibiting a very slow rate of decay to zero.

\subsubsection{Proof of Theorem~\ref{theorem:initial-condition-small}}
\label{sec:initial-condition-small-proof}
We now give the proof of Theorem~\ref{theorem:initial-condition-small}.
The strategy is to open a single lens about the entire interval $(k_\mathfrak{a},k_\mathfrak{b})$ based upon the natural factorization of the jump matrix:
\begin{equation}
\begin{bmatrix}
1-|\tilde{\Gamma}(k)|^2 & -\tilde{\Gamma}(k)^*e^{-2\myi\theta(k;x,t)/\eps}\\
\tilde{\Gamma}(k)e^{2\myi\theta(k;x,t)/\eps} & 1\end{bmatrix}=
\begin{bmatrix}1 & -\tilde{\Gamma}(k)^*e^{-2\myi\theta(k;x,t)/\eps}\\0 & 1\end{bmatrix}
\begin{bmatrix}1 & 0\\\tilde{\Gamma}(k)e^{2\myi\theta(k;x,t)/\eps} & 1\end{bmatrix},\quad k_\mathfrak{a}<k<k_\mathfrak{b}.
\end{equation}
However, technical modifications of the steepest descent method will be required because $\tau$ has no analytic continuation from the real axis near the points $k_0$ and $k_\infty$, and $\Phi$ fails to be analytic at $k=k_0$.  

We will in particular need a way to extend the three-times differentiable but non-analytic function $\Phi(k)$ into the complex plane from a real neighborhood of $k=k_0$.  Let $\Re\{k\}=\kr$ and $\Im\{k\}=\ki$ 
denote the real and imaginary parts of the complex variable $k$.  We follow the approach of \cite{McLaughlinM08} and first define a nonanalytic extension of $\Phi(\kr)$ by the formula
\begin{equation}
\hat{\Phi}_0(\kr,\ki):=\Phi(\kr)+\myi \ki\Phi'(\kr) +\frac{1}{2}(\myi \ki)^2\Phi''(\kr).
\end{equation}
Note that $\hat{\Phi}_0(\kr,\ki)$ is nearly analytic close to the real axis $\ki=0$ in the sense that
\begin{equation}
\dbar\hat{\Phi}_0(\kr,\ki):=\frac{1}{2}\left(\frac{\partial}{\partial \kr}+\myi\frac{\partial}{\partial \ki}\right)\hat{\Phi}_0(\kr,\ki)=\frac{1}{4}(\myi \ki)^2\Phi'''(\kr) = \mathcal{O}(\ki^2)
\end{equation}
according to Lemma~\ref{lemma-Phi-interior}.  Also according to Lemma~\ref{lemma-Phi-interior}, we may identify $\Phi(k)$ with two distinct analytic functions, $\Phi_\mathfrak{a}(k)$ denoting the analytic continuation of $\Phi(k)$ from $(k_\mathfrak{a},k_0)$ and $\Phi_\mathfrak{b}(k)$ denoting the analytic continuation of $\Phi(k)$ from $(k_0,k_\mathfrak{b})$.  Note that for each fixed $\kr\in (k_\mathfrak{a},k_0)$ we have $\hat{\Phi}_0(\kr,\ki)-\Phi_\mathfrak{a}(k)=\mathcal{O}(\ki^3)$ and
for each fixed $\kr\in (k_0,k_\mathfrak{b})$ we have $\hat{\Phi}_0(\kr,\ki)-\Phi_\mathfrak{b}(k)=\mathcal{O}(\ki^3)$, with the error terms being uniform by Taylor's theorem for $\kr$ in compact subsets of $(k_\mathfrak{a},k_\mathfrak{b})$ bounded away from $k_0$.  Now let $\delta>0$ be so small that $[k_0-2\delta,k_0+2\delta]\subset (k_\mathfrak{a},k_\mathfrak{b})$, and define the smooth bump function $\mathcal{B}:\mathbb{R}\to [0,1]$ such that
$\mathcal{B}$ is of class $C^\infty$ and 
\begin{equation}
\mathcal{B}(u)=\begin{cases} 1,&\quad |u-k_0|<\delta\\
0,&\quad |u-k_0|>2\delta.
\end{cases}
\end{equation}
Then we define an extension of $\Phi$ into the upper half-plane near $(k_\mathfrak{a},k_\mathfrak{b})$ as follows:
\begin{equation}
\hat{\Phi}(\kr,\ki):=\begin{cases}
\mathcal{B}(\kr)\hat{\Phi}_0(\kr,\ki) +(1-\mathcal{B}(\kr))\Phi_\mathfrak{a}(k),&\quad k_\mathfrak{a}<\kr\le k_0\\
\mathcal{B}(\kr)\hat{\Phi}_0(\kr,\ki)+(1-\mathcal{B}(\kr))\Phi_\mathfrak{b}(k),&\quad k_0\le \kr<k_\mathfrak{b}.
\end{cases}
\label{eq:hat-Phi-define}
\end{equation} 
By direct calculation,
\begin{equation}
\dbar\hat{\Phi}(\kr,\ki)=\begin{cases}
\mathcal{B}(\kr)\dbar\hat{\Phi}_0(\kr,\ki)  + \dbar\mathcal{B}(\kr)\cdot(\hat{\Phi}_0(\kr,\ki)-\Phi_{\mathfrak{a}}(k)),&\quad k_\mathfrak{a}<\kr\le k_0\\
\mathcal{B}(\kr)\dbar\hat{\Phi}_0(\kr,\ki)  + \dbar\mathcal{B}(\kr)\cdot(\hat{\Phi}_0(\kr,\ki)-\Phi_{\mathfrak{b}}(k)),&\quad k_0\le \kr<k_\mathfrak{b}.
\end{cases}
\end{equation}
It follows that $\dbar\hat{\Phi}(\kr,\ki)$ is $\mathcal{O}(\ki^2)$ uniformly for $\kr$ in compact subsets of $(k_\mathfrak{a},k_\mathfrak{b})$.  Note that $\hat{\Phi}(\kr,-\ki)=\hat{\Phi}(\kr,\ki)^*$ holds, as the nonanalytic analogue of the Schwarz reflection symmetry $\Phi_{\mathfrak{a},\mathfrak{b}}(k^*)=\Phi_{\mathfrak{a},\mathfrak{b}}(k)^*$ of the real analytic functions $\Phi_{\mathfrak{a},\mathfrak{b}}$.

Based on the extension $\hat{\Phi}$ we now make an explicit transformation of $\tilde{\mathbf{M}}(k)$ to open lenses about $(k_\mathfrak{a},k_\mathfrak{b})$.  Consider the domains illustrated in Figure~\ref{fig:Vacuum-Lenses}.
\begin{figure}[h]
\includegraphics{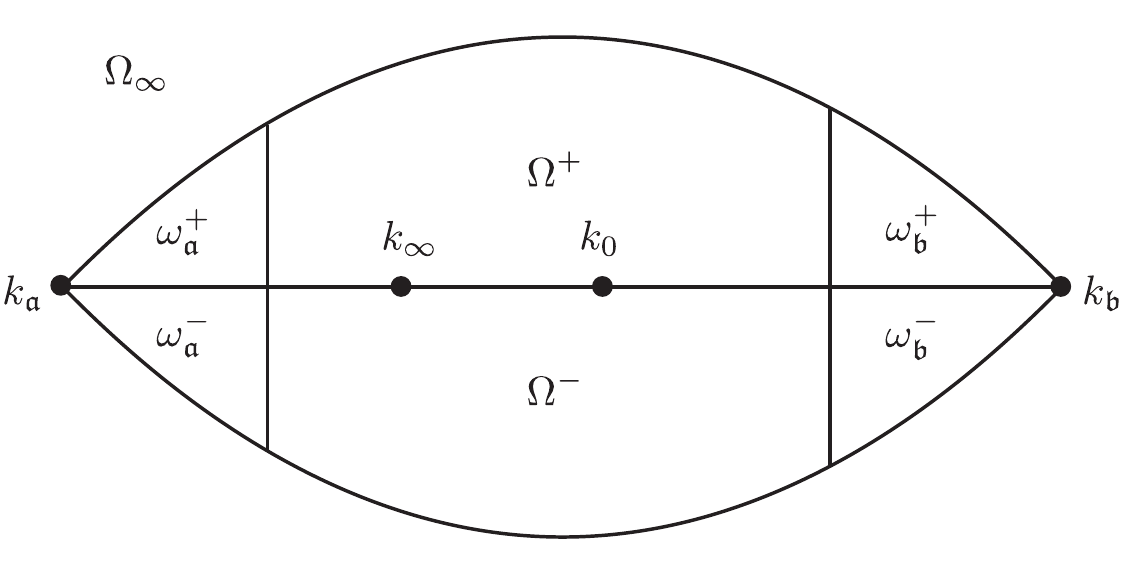}
\caption{The lens domains about the interval $[k_\mathfrak{a},k_\mathfrak{b}]$.}
\label{fig:Vacuum-Lenses}
\end{figure}
We make the following explicit transformations, defining a new matrix unknown $\mathbf{O}(\kr,\ki)$:
\begin{equation}
\mathbf{O}(\kr,\ki):=\tilde{\mathbf{M}}(k)\begin{bmatrix}1 & 0\\
-e^{2\myi(\theta(k;x,t)-\hat{\Phi}(\kr,\ki))/\eps} & 1\end{bmatrix},\quad k\in\Omega^+,
\label{eq:M-O-tilde-Omega-plus}
\end{equation}
\begin{equation}
\mathbf{O}(\kr,\ki):=\tilde{\mathbf{M}}(k)\begin{bmatrix} 1 & -e^{2\myi(\hat{\Phi}(\kr,\ki)-\theta(k;x,t))/\eps}\\0 & 1\end{bmatrix},\quad k\in\Omega^-,
\label{eq:M-O-tilde-Omega-minus}
\end{equation}
\begin{equation}
\mathbf{O}(\kr,\ki):=\tilde{\mathbf{M}}(k)\begin{bmatrix}1 & 0\\
-Y_\mathfrak{a}^\eps(k)e^{2\myi(\theta(k;x,t)-\Phi_\mathfrak{a}(k))/\eps} & 1\end{bmatrix},\quad k\in\omega_\mathfrak{a}^+,
\end{equation}
\begin{equation}
\mathbf{O}(\kr,\ki):=\tilde{\mathbf{M}}(k)\begin{bmatrix}1 & 0\\
-Y_\mathfrak{b}^\eps(k)e^{2\myi(\theta(k;x,t)-\Phi_\mathfrak{b}(k))/\eps} & 1\end{bmatrix},\quad k\in\omega_\mathfrak{b}^+,
\end{equation}
\begin{equation}
\mathbf{O}(\kr,\ki):=\tilde{\mathbf{M}}(k)\begin{bmatrix}1 & -Y_\mathfrak{a}^\eps(k)
e^{2\myi (\Phi_\mathfrak{a}(k)-\theta(k;x,t))/\eps}\\ 0 & 1\end{bmatrix},\quad
k\in\omega_\mathfrak{a}^-,
\end{equation}
\begin{equation}
\mathbf{O}(\kr,\ki):=\tilde{\mathbf{M}}(k)\begin{bmatrix}1 & -Y_\mathfrak{b}^\eps(k)
e^{2\myi (\Phi_\mathfrak{b}(k)-\theta(k;x,t))/\eps}\\ 0 & 1\end{bmatrix},\quad
k\in\omega_\mathfrak{b}^-,
\label{eq:M-O-tilde-last}
\end{equation}
and in the  unbounded domain $\Omega_\infty$ we set $\mathbf{O}(\kr,\ki):=\tilde{\mathbf{M}}(k)$.  Here the notation $Y_{\mathfrak{a},\mathfrak{b}}^\eps(k)$ refers to the two distinct analytic functions $\tau(k)=\tau_\mathfrak{a}(k)$ defined near $k=k_\mathfrak{a}$ and $\tau(k)=\tau_\mathfrak{b}(k)$ defined near $k=k_\mathfrak{b}$ (see Lemma~\ref{lemma:tau-continuous-positive} and Lemma~\ref{lemma-tau}).  Indeed, the domains $\omega_\mathfrak{a}^\pm$ and $\omega_\mathfrak{b}^\pm$ are chosen small enough to exclude both $k_0$ and $k_\infty$, the two points of nonanalyticity of $\tau:(k_\mathfrak{a},k_\mathfrak{b})\to\mathbb{R}_+$.  By making $\delta>0$ smaller if necessary, we also ensure that these domains have no intersection with the vertical strip $|\kr-k_0|\le 2\delta$, in which $\mathbf{O}(\kr,\ki)$ fails to be analytic.
The matrix $\mathbf{O}(\kr,\ki)$ has jump discontinuities across a contour $\Sigma$ illustrated in Figure~\ref{fig:Vacuum-Contour}.  Note that the real segment common to the boundary of the domains $\omega_\mathfrak{a}^\pm$ and the real segment common to the boundary of the domains $\omega_\mathfrak{b}^\pm$ are not part of the jump contour $\Sigma$ as it is easy to check that $\mathbf{O}(\kr,\ki)$ is continuous across these segments.
\begin{figure}[h]
\includegraphics{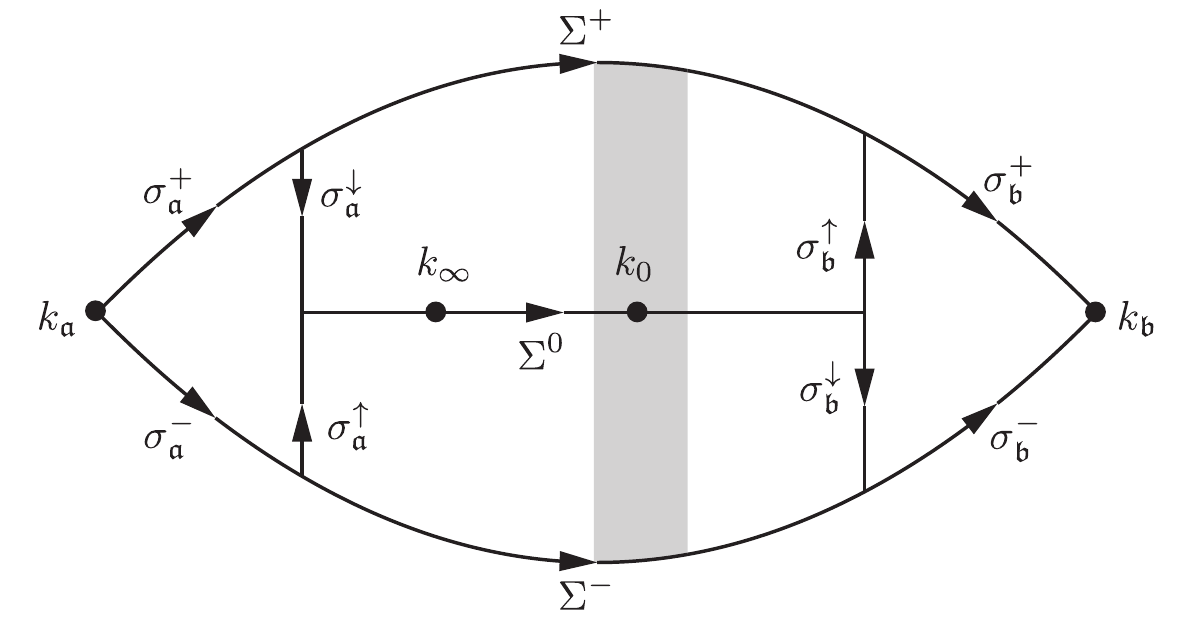}
\caption{The jump contour for the matrix function $\mathbf{O}(\kr,\ki)$ is the union of eleven oriented arcs, labeled as shown.  The domain in which $\mathbf{O}(\kr,\ki)$ fails to be analytic is shaded.}
\label{fig:Vacuum-Contour}
\end{figure}

We claim that the matrix $\mathbf{O}(\kr,\ki)$ satisfies the conditions of a hybrid Riemann-Hilbert-$\dbar$ problem of small-norm type.  This problem is the following.
\begin{rhpdbar}
Find a $2\times 2$ matrix $\mathbf{O}(\kr,\ki)$ with the following properties:
\begin{itemize}
\item[]\textbf{Continuity:}  $\mathbf{O}$ is continuous in each connected component of $\mathbb{R}^2\setminus\Sigma$ and takes continuous boundary values $\mathbf{O}_+$ and $\mathbf{O}_-$ on each oriented arc of $\Sigma$ from the left and right, respectively.
\item[]\textbf{Jump Condition:}  On each oriented arc of $\Sigma$ the boundary values are related by 
$\mathbf{O}_+(\kr,\ki)=\mathbf{O}_-(\kr,\ki)\mathbf{J}_0(\kr,\ki)$ (see below for the explicit definition of $\mathbf{J}_0(\kr,\ki)$).
\item[]\textbf{Deviation from Analyticity:}  In each connected component of $\mathbb{R}^2\setminus\Sigma$ the matrix $\mathbf{O}$ satisfies $\dbar\mathbf{O}(\kr,\ki)=\mathbf{O}(\kr,\ki)\mathbf{W}(\kr,\ki)$ (see below for the explicit definition of $\mathbf{W}(\kr,\ki)$).
\item[]\textbf{Normalization:}  $\mathbf{O}(\kr,\ki)\to\mathbb{I}$ as $(\kr,\ki)\to\infty$ in $\mathbb{R}^2$.
\end{itemize}
\label{rhpdbar-small-norm}
\end{rhpdbar}
The jump matrix $\mathbf{J}_0(\kr,\ki)$ is defined explicitly on each arc of $\Sigma$
simply by using the definitions \eqref{eq:M-O-tilde-Omega-plus}--\eqref{eq:M-O-tilde-last} and the 
jump condition satisfied by $\tilde{\mathbf{M}}$ across the segment $(k_\mathfrak{a},k_\mathfrak{b})$ according to Riemann-Hilbert Problem~\ref{rhp-M-tilde}.  The result is the following:  
\begin{equation}
\mathbf{J}_0(\kr,\ki):=\begin{bmatrix}1 & 0\\Y_{\mathfrak{a},\mathfrak{b}}^\eps(k)
e^{2\myi(\theta(k;x,t)-\Phi_{\mathfrak{a},\mathfrak{b}}(k))/\eps} & 1\end{bmatrix},\quad k\in\sigma_{\mathfrak{a},\mathfrak{b}}^+,
\label{eq:jump-Sigma-a-b-plus}
\end{equation}
\begin{equation}
\mathbf{J}_0(\kr,\ki):=\begin{bmatrix}1 & -Y_{\mathfrak{a},\mathfrak{b}}^\eps(k)e^{2\myi(\Phi_{\mathfrak{a},\mathfrak{b}}(k)-\theta(k;x,t))/\eps}\\0 & 1\end{bmatrix},\quad k\in\sigma_{\mathfrak{a},\mathfrak{b}}^-,
\label{eq:jump-Sigma-a-b-minus}
\end{equation}
\begin{equation}
\mathbf{J}_0(\kr,\ki):=\begin{bmatrix}1 & 0\\(Y_{\mathfrak{a},\mathfrak{b}}^\eps(k)-1)e^{2\myi(\theta(k;x,t)-\Phi_{\mathfrak{a},\mathfrak{b}}(k))/\eps} & 1\end{bmatrix},\quad k\in\sigma_\mathfrak{a}^\downarrow,\sigma_\mathfrak{b}^\uparrow,
\label{eq:Sigma-up-down-jump-1}
\end{equation}
\begin{equation}
\mathbf{J}_0(\kr,\ki):=\begin{bmatrix}
1 & (1-Y_{\mathfrak{a},\mathfrak{b}}^\eps(k))e^{2\myi(\Phi_{\mathfrak{a},\mathfrak{b}}(k)-\theta(k;x,t))/\eps}\\0 & 1\end{bmatrix},\quad k\in\sigma_\mathfrak{a}^\uparrow,\sigma_\mathfrak{b}^\downarrow,
\label{eq:Sigma-up-down-jump-2}
\end{equation}
\begin{equation}
\mathbf{J}_0(\kr,\ki):=\begin{bmatrix}1 & 0\\ e^{2\myi(\theta(k;x,t)-\hat{\Phi}(\kr,\ki))/\eps} & 1\end{bmatrix},\quad k\in\Sigma^+,
\label{eq:Sigma-plus-jump}
\end{equation}
\begin{equation}
\mathbf{J}_0(\kr,\ki):=\begin{bmatrix}1 & -e^{2\myi(\hat{\Phi}(\kr,\ki)-\theta(k;x,t))/\eps}\\
0 & 1\end{bmatrix},\quad k\in\Sigma^-,
\label{eq:Sigma-minus-jump}
\end{equation}
and, finally, using the fact that $\hat{\Phi}(\kr,0)=\Phi(\kr)$, 
\begin{equation}
\mathbf{J}_0(\kr,\ki):=\begin{bmatrix}
1 & (1-Y^\eps(\kr))e^{2\myi(\Phi(\kr)-\theta(\kr;x,t))/\eps}\\0 & 1\end{bmatrix}
\begin{bmatrix}1 & 0\\(Y^\eps(\kr)-1)e^{2\myi(\theta(\kr;x,t)-\Phi(\kr))/\eps} & 1\end{bmatrix},\quad k\in\Sigma^0.
\label{eq:Sigma0-jump}
\end{equation}
Consider the jump matrix defined by \eqref{eq:jump-Sigma-a-b-plus}--\eqref{eq:jump-Sigma-a-b-minus}.  Note that since $\Im\{\theta(k;x,t)\}=o(\Im\{\Phi_{\mathfrak{a},\mathfrak{b}}\})$ near $k=k_{\mathfrak{a},\mathfrak{b}}$ according to Lemma~\ref{lemma-Phi-endpoints}, it follows from Lemma~\ref{lemma-tilde-Gamma-endpoints} that for all $(x,t)\in\mathbb{R}^2$, $\mathbf{J}_0-\mathbb{I}=\mathcal{O}((\log(\eps^{-1}))^{-1/2})$ holds uniformly 
on the four contour arcs $\sigma_{\mathfrak{a},\mathfrak{b}}^{+,-}$, provided the lens opens with an acute nonzero angle and the vertical contours $\sigma_{\mathfrak{a},\mathfrak{b}}^{\uparrow,\downarrow}$ are placed close enough to the respective endpoint $k_{\mathfrak{a},\mathfrak{b}}$.
Similarly, since $\tau(k)>0$ is bounded away from zero while $\Phi(k)$ and $\theta(k;x,t)$ are real for $k\in\Sigma^0$, it is easy to see from \eqref{eq:Sigma0-jump} that $\mathbf{J}_0-\mathbb{I}$ is uniformly exponentially small on $\Sigma^0$ in the limit $\eps\downarrow 0$, again independently of $(x,t)\in\mathbb{R}^2$.  Controlling the jump matrix $\mathbf{J}_0$ on the remaining arcs of $\Sigma$ requires conditions on $(x,t)\in\mathbb{R}^2$ as we will see below.

The matrix $\mathbf{W}(\kr,\ki)$ is defined explicitly by applying the operator $\dbar$ to the formulae
\eqref{eq:M-O-tilde-Omega-plus}--\eqref{eq:M-O-tilde-Omega-minus}.  The result is:
\begin{equation}
\mathbf{W}(\kr,\ki):=\begin{bmatrix}0 & 0\\2\myi\eps^{-1}\dbar\hat{\Phi}(\kr,\ki)\cdot e^{2\myi(\theta(k;x,t)-\hat{\Phi}(\kr,\ki))/\eps} & 0\end{bmatrix},\quad k\in\Omega^+,
\label{eq:W-Omega-plus}
\end{equation}
and
\begin{equation}
\mathbf{W}(\kr,\ki):=\begin{bmatrix}0 & -2\myi\eps^{-1}\dbar\hat{\Phi}(\kr,\ki)\cdot e^{2\myi(\hat{\Phi}(\kr,\ki)-\theta(k;x,t))/\eps}\\0 & 0\end{bmatrix},\quad k\in\Omega^-,
\label{eq:W-Omega-minus}
\end{equation}
and in all other connected components of $\mathbb{R}^2\setminus\Sigma$, $\mathbf{W}(\kr,\ki):=\mathbf{0}$.  In particular, $\mathbf{W}$ has compact support.

Riemann-Hilbert-$\dbar$ Problem~\ref{rhpdbar-small-norm} is of small-norm type in the sense
that, as a consequence of the conditions $t=0$ and $x>0$, the jump matrix $\mathbf{J}_0$ defined on the compact contour $\Sigma$ satisfies $\|\mathbf{J}_0-\mathbb{I}\|_{L^\infty(\Sigma)}=\mathcal{O}((\log(\eps^{-1}))^{-1/2})$ as $\eps\to 0$, and at the same time the matrix $\mathbf{W}$ defined on $\mathbb{R}^2\setminus\Sigma$
satisfies $\|\mathbf{W}\|_{L^\infty(\mathbb{R}^2\setminus\Sigma)}=\mathcal{O}(\eps)$ as $\eps\to 0$.
Indeed,
\begin{equation}
\theta'(k;x,0)-\Phi'(k)=x-\Phi'(k)\ge x>0,\quad k_\mathfrak{a}<k<k_\mathfrak{b},
\label{eq:basic-inequality-1}
\end{equation}
according to Lemma~\ref{lemma-Phi-interior}.  This immediately implies, by the Cauchy-Riemann equations applied to the real analytic functions $\theta(k;x,t)-\Phi_{\mathfrak{a},\mathfrak{b}}(k)$ near the real $k$-axis, that the exponential factors appearing in the formulae \eqref{eq:Sigma-up-down-jump-1}--\eqref{eq:Sigma-up-down-jump-2} are bounded in modulus by $1$ provided the lens is sufficiently thin (independent of $\eps$).  Since the factors $Y^\eps_{\mathfrak{a},\mathfrak{b}}(k)-1$ are exponentially small as they are on the real axis if the lens is thin enough, we conclude that for $x>0$ and $t=0$, $\mathbf{J}_0-\mathbb{I}$ is uniformly exponentially small for $k\in\sigma_\mathfrak{a}^\downarrow\cup\sigma_\mathfrak{b}^\uparrow\cup\sigma_\mathfrak{a}^\uparrow\cup\sigma_\mathfrak{b}^\downarrow$.  Finally, from \eqref{eq:hat-Phi-define} and the Cauchy-Riemann equations, we see that
\begin{equation}
\Im\{\hat{\Phi}(\kr,\ki)\}=\Phi'(\kr)\ki + \mathcal{O}(\ki^2),\quad \ki\to 0,
\end{equation}
a fact which, taken together with \eqref{eq:Sigma-plus-jump}--\eqref{eq:Sigma-minus-jump} for $x>0$ and $t=0$ shows that $\mathbf{J}_0-\mathbb{I}$ is uniformly exponentially small in the limit $\eps\downarrow 0$ for $k\in\Sigma^\pm$.  Combining these estimates yields the claimed $L^\infty(\Sigma)$ bound for $\mathbf{J}_0-\mathbb{I}$.  Similarly, for $t=0$ and for each $x>0$ we obtain exponential decay of the exponential factors in \eqref{eq:W-Omega-plus}--\eqref{eq:W-Omega-minus}, so combining this fact with the fact that $\dbar\hat{\Phi}(\kr,\ki)=\mathcal{O}(\ki^2)$ holds for $k\in\Omega^+\cup\Omega^-$, we obtain ($\|\cdot\|$ denotes any matrix norm)
\begin{equation}
\|\mathbf{W}(\kr,\ki)\|\le K\frac{\ki^2}{\eps}e^{-C|\ki |/\eps},\quad k\in\Omega^+\cup\Omega^-,
\end{equation}
where $C=C(x_0)>0$ for $x>x_0$ and $K>0$ is independent of $x$.  The claimed $L^\infty(\mathbb{R}^2\setminus\Sigma)$ estimate of $\mathbf{W}$ follows immediately because $r^2e^{-Cr}$ is uniformly bounded for all $r>0$ (recall $\mathbf{W}(\kr,\ki)=0$ for $k\not\in \Omega^+\cup\Omega^-$).

One makes use of the estimates $\|\mathbf{J}_0-\mathbb{I}\|_{L^\infty(\Sigma)}=\mathcal{O}((\log(\eps^{-1}))^{-1/2})$ and $\|\mathbf{W}\|_{L^\infty(\mathbb{R}^2\setminus\Sigma)}=\mathcal{O}(\eps)$ as follows.  The strategy is to solve the hybrid Riemann-Hilbert-$\dbar$ problem by first solving the ``$\dbar$ part'' and then using the result to obtain a standard Riemann-Hilbert problem of $L^2$ small-norm type.  We therefore consider the following auxiliary $\dbar$ problem:
\begin{dbarprob}
Find a $2\times 2$ matrix $\dot{\mathbf{O}}(\kr,\ki)$ with the following properties:
\begin{itemize}
\item[]\textbf{Continuity:}  $\dot{\mathbf{O}}:\mathbb{R}^2\to\mathbb{C}^{2\times 2}$ is continuous.
\item[]\textbf{Deviation from Analyticity:}  
$\dbar\dot{\mathbf{O}}(\kr,\ki)=\dot{\mathbf{O}}(\kr,\ki)\mathbf{W}(\kr,\ki)$ holds in the distributional sense.
\item[]\textbf{Normalization:}  $\dot{\mathbf{O}}(\kr,\ki)\to\mathbb{I}$ as $(\kr,\ki)\to\infty$ in $\mathbb{R}^2$.
\end{itemize}
\label{dbar-1}
\end{dbarprob}
We solve for $\dot{\mathbf{O}}$ by setting up an integral equation involving the solid Cauchy transform:
\begin{equation}
\dot{\mathbf{O}}(\kr,\ki)=\mathbb{I}+\mathcal{K}\dot{\mathbf{O}}(\kr,\ki),\quad
\mathcal{K}\mathbf{F}(\kr,\ki):=-\frac{1}{\pi}\iint_{\Omega^+\cup\Omega^-}
\frac{\mathbf{F}(\kr',\ki')\mathbf{W}(\kr',\ki')\,dA(\kr',\ki')}{k'-k},
\label{eq:dbar-integral-equation}
\end{equation}
where $dA(\kr,\ki)=d\kr\,d\ki$ denotes the area element.  It can be shown that the integral equation \eqref{eq:dbar-integral-equation} is in fact equivalent to the formulation of $\dbar$ Problem~\ref{dbar-1}.
The operator norm of $\mathcal{K}$ acting on $L^\infty(\mathbb{R}^2)$ is easily estimated based on the fact that Cauchy kernel is locally integrable in two dimensions.  Thus,
\begin{equation}
\|\mathcal{K}\|_{L^\infty(\mathbb{R}^2)\circlearrowleft}\le\frac{1}{\pi}\|\mathbf{W}\|_{L^\infty(\mathbb{R}^2)}
\sup_{(\kr,\ki)\in\mathbb{R}^2}\iint_{\Omega^+\cup\Omega^-}\frac{dA(\kr',\ki')}{|k'-k|}.
\end{equation}
The latter supremum is finite and depends only on the bounded domain $\Omega^+\cup\Omega^-$ as the double integral is continuous and decays as $|k|^{-1}$ as $(\kr,\ki)\to\infty$ in $\mathbb{R}^2$.
From the bound $\|\mathbf{W}\|_{L^\infty(\mathbb{R}^2)}=\mathcal{O}(\eps)$ it follows also that 
$\|\mathcal{K}\|_{L^\infty(\mathbb{R}^2)\circlearrowleft}=\mathcal{O}(\eps)$ as $\eps\downarrow 0$.
It follows that for $\eps>0$ sufficiently small, the operator $1-\mathcal{K}$ is invertible by Neumann series convergent in $L^\infty(\mathbb{R}^2)$.  Since each term of the series is continuous, so is the sum $\dot{\mathbf{O}}(\kr,\ki)$ of the series for $(1-\mathcal{K})^{-1}$ applied to $\mathbb{I}$.  Furthermore, we obtain the important estimate $\|\dot{\mathbf{O}}-\mathbb{I}\|_{L^\infty(\mathbb{R}^2)}=\mathcal{O}(\eps)$, which in particular implies that $\dot{\mathbf{O}}(\kr,\ki)^{-1}$ exists for sufficiently small $\eps$ as a continuous function on $\mathbb{R}^2$ that satisfies $\|\dot{\mathbf{O}}^{-1}-\mathbb{I}\|_{L^\infty(\mathbb{R}^2)}=\mathcal{O}(\eps)$.  Finally, compact support of $\mathbf{W}$ ensures that $\dot{\mathbf{O}}(\kr,\ki)$ has a convergent Laurent series in descending powers of $k$ for $|k|$ sufficiently large, and in particular we obtain $\dot{\mathbf{O}}(\kr,\ki)=\mathbb{I}+k^{-1}\dot{\mathbf{O}}_1 + \mathcal{O}(k^{-2})$ as $k\to\infty$, where $\|\dot{\mathbf{O}}_1\|=\mathcal{O}(\eps)$.

We now use the unique solution $\dot{\mathbf{O}}(\kr,\ki)$ of $\dbar$ Problem~\ref{dbar-1} as obtained above to convert the hybrid Riemann-Hilbert-$\dbar$ Problem~\ref{rhpdbar-small-norm} into a standard Riemann-Hilbert problem that we can show is of $L^2$ small norm type.  Indeed, consider the matrix function $\mathbf{E}(\kr,\ki)$ defined in terms of $\mathbf{O}(\kr,\ki)$ solving Riemann-Hilbert-$\dbar$ Problem~\ref{rhpdbar-small-norm} and $\dot{\mathbf{O}}(\kr,\ki)$ solving $\dbar$ Problem~\ref{dbar-1} by
\begin{equation}
\mathbf{E}(\kr,\ki):=\mathbf{O}(\kr,\ki)\dot{\mathbf{O}}(\kr,\ki)^{-1},\quad (\kr,\ki)\in\mathbb{R}^2\setminus\Sigma.
\end{equation}
By direct calculation,
\begin{equation}
\begin{split}
\dbar\mathbf{E}(\kr,\ki)&=\left[\dbar\mathbf{O}(\kr,\ki)\right]\dot{\mathbf{O}}(\kr,\ki)^{-1}-
\mathbf{O}(\kr,\ki)\dot{\mathbf{O}}(\kr,\ki)^{-1}\left[\dbar\dot{\mathbf{O}}(\kr,\ki)\right]\dot{\mathbf{O}}(\kr,\ki)^{-1}\\
&=\mathbf{O}(\kr,\ki)\mathbf{W}(\kr,\ki)\dot{\mathbf{O}}(\kr,\ki)^{-1}-
\mathbf{O}(\kr,\ki)\mathbf{W}(\kr,\ki)\dot{\mathbf{O}}(\kr,\ki)^{-1}\\
&=\mathbf{0},\quad (\kr,\ki)\in\mathbb{R}^2\setminus\Sigma,
\end{split}
\end{equation}
and hence $\mathbf{E}$ is analytic in each connected component of $\mathbb{R}^2\setminus\Sigma$.  In light of this result, we will henceforth use the notation $\mathbf{E}=\mathbf{E}(k)$ with $k=\kr+\myi \ki$.
It is a direct matter to calculate the jump conditions satisfied by $\mathbf{E}$ across the arcs of the contour $\Sigma$ in terms of the jump matrix $\mathbf{J}_0$ for $\mathbf{O}$ and the function $\dot{\mathbf{O}}$ restricted to $\Sigma$, and to calculate the asymptotic behavior of $\mathbf{E}$ as $k\to\infty$.  We deduce that $\mathbf{E}$ satisfies the following (pure) Riemann-Hilbert problem:
\begin{rhp}
Find a $2\times 2$ matrix $\mathbf{E}(k)$ with the following properties:
\begin{itemize}
\item[]\textbf{Analyticity:}  $\mathbf{E}$ is analytic in each connected component of $\mathbb{C}\setminus\Sigma$, and takes continuous boundary values $\mathbf{E}_+(k)$ ($\mathbf{E}_-(k)$) from the left (right) at each non-self-intersection point $k$ of $\Sigma$.
\item[]\textbf{Jump condition:}  On each oriented arc of $\Sigma$ the boundary values are related by the jump condition
$\mathbf{E}_+(k)=\mathbf{E}_-(k)\mathbf{J}(\kr,\ki)$, where 
\begin{equation}
\mathbf{J}(\kr,\ki):=\dot{\mathbf{O}}(\kr,\ki)\mathbf{J}_0(\kr,\ki)\dot{\mathbf{O}}(\kr,\ki)^{-1}.
\end{equation}
\item[]\textbf{Normalization:}  $\mathbf{E}(k)\to\mathbb{I}$ as $k\to\infty$ in $\mathbb{C}$.
\end{itemize}
\label{rhp-E-1}
\end{rhp}
Since $\dot{\mathbf{O}}(\kr,\ki)$ and $\dot{\mathbf{O}}(\kr,\ki)^{-1}$ are uniformly bounded independent of $\eps$ for $\eps>0$ sufficiently small, it follows immediately from the estimate $\|\mathbf{J}_0-\mathbb{I}\|_{L^\infty(\Sigma)}=\mathcal{O}((\log(\eps^{-1}))^{-1/2})$ that also $\|\mathbf{J}-\mathbb{I}\|_{L^\infty(\Sigma)}=\mathcal{O}((\log(\eps^{-1}))^{-1/2})$ as $\eps\downarrow 0$.  Since $\Sigma$ is compact, this condition implies the unique solvability of Riemann-Hilbert Problem~\ref{rhp-E-1} as a small-norm problem in the $L^2$ sense.  See \cite{Deift} or \cite[Appendix B]{BuckinghamM13} for details.  In particular, $\mathbf{E}(k)=\mathbb{I}+k^{-1}\mathbf{E}_1 + \mathcal{O}(k^{-2})$ as $k\to\infty$ with $\|\mathbf{E}_1\|=\mathcal{O}((\log(\eps^{-1}))^{-1/2})$.

Finally, we consider the matrix $\tilde{\mathbf{M}}(k)$ solving Riemann-Hilbert Problem~\ref{rhp-M-tilde} for large $|k|$.  We obtain the exact formula
\begin{equation}
\tilde{\mathbf{M}}(k)=\mathbf{O}(\kr,\ki)=\mathbf{E}(k)\dot{\mathbf{O}}(\kr,\ki),\quad\text{for $|k|$ sufficiently large},
\end{equation}
from which we compute 
\begin{equation}
\tilde{q}^\eps(x,0)=2\myi\lim_{k\to\infty} k[\mathbf{E}(k)\dot{\mathbf{O}}(\kr,\ki)]_{12} = 2i[\mathbf{E}_1+\dot{\mathbf{O}}_1]_{12} = \mathcal{O}((\log(\eps^{-1}))^{-1/2}),\quad\eps\downarrow 0.
\end{equation}
As the error term is uniform for $x\ge x_0>0$, the proof is complete.

\subsubsection{Proof of Corollary~\ref{corollary-vacuum}}
\label{sec:vacuum-domain}
To prove Corollary~\ref{corollary-vacuum}, we simply observe that 
the only dependence on $(x,t)\in\mathbb{R}^2$ in the proof of Theorem~\ref{theorem:initial-condition-small} given in \S\ref{sec:initial-condition-small-proof} involved the inequality \eqref{eq:basic-inequality-1}, and it is not hard to see that this inequality holds also for some nonzero $t$.  More generally,
\begin{equation}
\theta'(k;x,t)-\Phi'(k)=x+4tk-\Phi'(k),
\end{equation}
and as $\Phi'(\cdot)$ is a function with maximum value zero at $k=k_0$ only and tending to $-\infty$ as $k\downarrow k_\mathfrak{a}$ and $k\uparrow k_\mathfrak{b}$, given $t>0$ there will be some finite $X(t)>0$ such that $\theta'(k;x,t)-\Phi'(k)>0$ holds strictly on $k_\mathfrak{a}<k<k_\mathfrak{b}$ for $x>X(t)$ but fails for some $k\in (k_\mathfrak{a},k_\mathfrak{b})$ if $x\le X(t)$.  The rest of the proof of Theorem~\ref{theorem:initial-condition-small} then goes through unchanged, with the same result, and the proof of Corollary~\ref{corollary-vacuum} is complete.

We conclude this short section by obtaining explicit and simple asymptotic formulae for the boundary curve $x=X(t)$ valid for small and large $t>0$.  We note firstly that $-\Phi'(k)$ is certainly locally convex (i) for $k-k_0$ small, because $\Phi'''(k_0)=16k_0h_0^{-2}<0$ (see \eqref{eq:Phi-Three-k0}) and according to Lemma~\ref{lemma-Phi-interior}, $\Phi'''(\cdot)$ is continuous on $(k_\mathfrak{a},k_\mathfrak{b})$, and also (ii) for $k-k_\mathfrak{a}$ small, where according to Lemma~\ref{lemma-Phi-endpoints}
we have $\Phi'(k)\sim C_\mathfrak{a}\log(k-k_\mathfrak{a})$ where $C_\mathfrak{a}>0$ is given in \eqref{eq:Phi-k-a}.  In the former case, the slope of the tangent line of $-\Phi'(k)$ is small, while in the latter case the slope is large.  Therefore, for small or large positive $t$ we can apply the following steps to obtain $X(t)$:  firstly solve the equation $\Phi''(k)=4t$ for $k=k(t)$, and then obtain $X(t)=\Phi'(k(t))-4tk(t)$.  

When $t>0$ is small, we expect $k-k_0$ to be small, and since
\begin{equation}
\Phi''(k)=\Phi'''(k_0)(k-k_0) + o(k-k_0)= 16k_0h_0^{-2}(k-k_0)+o(k-k_0),\quad k\to k_0,
\label{eq:PhiPP-near-k0}
\end{equation}
from $\Phi''(k)=4t$ we obtain
\begin{equation}
k(t)=k_0+\frac{4}{\Phi'''(k_0)}t+o(t)=k_0+\frac{h_0^2}{4k_0}t+o(t),\quad t\downarrow 0.
\label{eq:k-t-small}
\end{equation}
Since $\Phi'(k_0)=0$, integrating \eqref{eq:PhiPP-near-k0} and substituting from \eqref{eq:k-t-small} yields
\begin{equation}
X(t)=\Phi'(k(t))-4tk(t)=X_0(t)+o(t^2),\quad t\downarrow 0,
\end{equation}
where the asymptote to $X(t)$ for small $t$ is defined by
\begin{equation}
X_0(t):=-4k_0t-\frac{h_0^2}{2k_0}t^2.
\label{eq:X-asymptote-t-small}
\end{equation}

On the other hand, if $t>0$ is large, then we expect $k-k_\mathfrak{a}$ will be small.  The equation to be solved for $k=k(t)$ in this case is then $\Phi''(k)=4t$, where now
\begin{equation}
\Phi''(k)=\frac{C_\mathfrak{a}}{k-k_\mathfrak{a}} + \mathcal{O}(1),\quad k\downarrow k_\mathfrak{a}.
\end{equation}
Therefore
\begin{equation}
k(t)=k_\mathfrak{a}+\frac{1}{4}C_\mathfrak{a}t^{-1}+\mathcal{O}(t^{-2}),\quad t\to +\infty.
\end{equation}
Using $\Phi'(k)=C_\mathfrak{a}\log(k-k_\mathfrak{a})+\mathcal{O}(1)$  as $k\downarrow k_\mathfrak{a}$, we then have
\begin{equation}
X(t)=\Phi'(k(t))-4tk(t)=X_\infty(t)+\mathcal{O}(1),\quad t\to +\infty,
\end{equation}
where the asymptote to $X(t)$ for large $t$ is given by
\begin{equation}
X_\infty(t):=-4k_\mathfrak{a}t-C_\mathfrak{a}\log(t).
\label{eq:X-asymptote-t-large}
\end{equation}

\subsection{Asymptotic behavior of $\tilde{q}^\eps(0,t)$ for $t>0$ and related analysis}
\subsubsection{General methodology.  The complex phase function $g$}
\label{sec:g-function}
A general strategy to the analysis of the solution of Riemann-Hilbert Problem~\ref{rhp-M-tilde} in the semiclassical limit $\eps\downarrow 0$ follows the basic approach outlined in \cite{DVZ}, which is based on the introduction of a scalar \emph{complex phase function} $g=g(k)$ having the following basic properties:
\begin{itemize}
\item $g$ is analytic for $k\in\mathbb{C}\setminus[k_\mathfrak{a},k_\mathfrak{b}]$ and takes continuous boundary values $g_\pm$ on $(k_\mathfrak{a},k_\mathfrak{b})$ from $\mathbb{C}_\pm$,
\item $g(k)\to 0$ as $k\to\infty$, 
\item $g(k^*)=g(k)^*$ (Schwarz symmetry), and very importantly,
\item $g$ is independent of $\eps$ (although it will generally depend on $x$ and $t$).
\end{itemize}
One introduces such a function $g$ into Riemann-Hilbert Problem~\ref{rhp-M-tilde} by making
the substitution
\begin{equation}
\tilde{\mathbf{M}}(k):=\mathbf{N}(k)e^{\myi g(k)\sigma_3/\eps}.
\label{eq:N-from-g}
\end{equation}
The basic properties of $g$ listed above are by no means sufficient to determine $g$ (this is why we do not formulate them as a proper Riemann-Hilbert problem), and the point is that one should use the freedom of choice of $g$ to try to bring $\mathbf{N}(k)$ into a form amenable for asymptotic analysis in the limit $\eps\downarrow 0$.  

The transformation \eqref{eq:N-from-g} implies that $\mathbf{N}$ is analytic where $\tilde{\mathbf{M}}$ is, takes boundary values in the same way, satisfies exactly the same normalization condition as $k\to\infty$ as does $\tilde{\mathbf{M}}$, and satisfies a modified jump condition:
\begin{equation}
\mathbf{N}_+(k)=\mathbf{N}_-(k)\begin{bmatrix} 
e^{2(\Delta(k)-\tau(k))/\eps} & -Y^\eps(k)e^{-2\myi\phi(k)/\eps}\\
Y^\eps(k)e^{2\myi\phi(k)/\eps} & e^{-2\Delta(k)/\eps}\end{bmatrix},\quad
k_\mathfrak{a}<k<k_\mathfrak{b},
\label{eq:N-jump}
\end{equation}
where
\begin{equation}
\phi(k):=\theta(k;x,t)-\Phi(k)-\frac{1}{2}(g_+(k)+g_-(k))\in\mathbb{R}\quad\text{and}\quad
\Delta(k):=-\myi\frac{1}{2}(g_+(k)-g_-(k))\in\mathbb{R}.
\label{eq:phi-Delta-define}
\end{equation}
The strategy of \cite{DVZ} is to try to choose $g$ so that the interval $(k_\mathfrak{a},k_\mathfrak{b})$ splits into a finite number of subintervals of three distinct types:
\begin{itemize}
\item[]\textbf{Voids:}  intervals in which $\Delta(k)\equiv 0$ and $\phi'(k)>0$.
\item[]\textbf{Bands:}  intervals in which $0<\Delta(k)<\tau(k)$ and $\phi'(k)\equiv 0$.
\item[]\textbf{Saturated regions:}  intervals in which $\Delta(k)\equiv\tau(k)$ and $\phi'(k)<0$.
\end{itemize}
In each void interval $\mathrm{V}$, the modified jump matrix admits an ``upper-lower'' factorization because $\Delta(k)\equiv 0$:
\begin{equation}
\begin{bmatrix} 
e^{2(\Delta(k)-\tau(k))/\eps} & -Y^\eps(k)e^{-2\myi\phi(k)/\eps}\\
Y^\eps(k)e^{2\myi\phi(k)/\eps} & e^{-2\Delta(k)/\eps}\end{bmatrix}=
\begin{bmatrix}1 & -Y^\eps(k)e^{-2\myi\phi(k)/\eps} \\ 0 & 1\end{bmatrix}
\begin{bmatrix}1 & 0\\Y^\eps(k)e^{2\myi\phi(k)/\eps} & 1\end{bmatrix},\quad k\in \mathrm{V},
\label{eq:Void-Trick}
\end{equation}
and the monotonicity condition $\phi'(k)>0$ suggests that the first (second) factor has a continuation into the lower (upper) half-plane that is exponentially close to the identity matrix.  In each band interval $\mathrm{B}$, the modified jump matrix is obviously exponentially close to a constant off-diagonal matrix due to the inequalities $0<\Delta(k)<\tau(k)$:
\begin{equation}
\begin{bmatrix} 
e^{2(\Delta(k)-\tau(k))/\eps} & -Y^\eps(k)e^{-2\myi\phi(k)/\eps}\\
Y^\eps(k)e^{2\myi\phi(k)/\eps} & e^{-2\Delta(k)/\eps}\end{bmatrix}=\begin{bmatrix}0 & -e^{-2\myi\phi_\mathrm{B}/\eps}\\
e^{2\myi\phi_\mathrm{B}/\eps} & 0\end{bmatrix} + \text{exponentially small terms},\quad k\in \mathrm{B},
\label{eq:Band-Trick}
\end{equation}
where $\phi_\mathrm{B}$ denotes the constant value of $\phi(k)$ in the band $\mathrm{B}$.  Finally, in each saturated interval $\mathrm{S}$, the modified jump matrix admits a ``lower-upper'' factorization because $\Delta(k)\equiv\tau(k)$:
\begin{equation}
\begin{bmatrix} 
e^{2(\Delta(k)-\tau(k))/\eps} & -Y^\eps(k)e^{-2\myi\phi(k)/\eps}\\
Y^\eps(k)e^{2\myi\phi(k)/\eps} & e^{-2\Delta(k)/\eps}\end{bmatrix}=
\begin{bmatrix}1 & 0\\Y^\eps(k)e^{2\myi\phi(k)/\eps} & 1\end{bmatrix}
\begin{bmatrix}1 & -Y^\eps(k)e^{-2\myi\phi(k)/\eps} \\ 0 & 1\end{bmatrix},\quad k\in \mathrm{S}.
\label{eq:Saturated-Region-Trick}
\end{equation}
The inequality $\phi'(k)<0$ then suggests that the first (second) factor can be continued into the lower (upper) half-plane, becoming an exponentially small perturbation of the identity matrix $\mathbb{I}$.

The function $g$ may be constructed by temporarily setting aside the inequalities involved with the voids, bands, and saturated regions.  We suppose that there are $N+1$ bands in $(k_\mathfrak{a},k_\mathfrak{b})$ that we denote by $\mathrm{B}_j:=[\alpha_j,\beta_j]$ with $k_\mathfrak{a}<\alpha_0<\beta_0<\alpha_1<\beta_1<\cdots<\alpha_N<\beta_N<k_\mathfrak{b}$.  The complementary intervals are either voids or saturated regions.  The boundary values of the function $g(k)$ then satisfy
\begin{itemize}
\item $g_+(k)-g_-(k)=0$ which implies $g_+'(k)-g_-'(k)=0$ for $k$ in voids.
\item $g_+'(k)+g_-'(k)=2\theta'(k;x,t)-2\Phi'(k)$ for $k$ in bands.
\item $g_+(k)-g_-(k)=2\myi\tau(k)$ which implies $g_+'(k)-g_-'(k)=2\myi \tau'(k)$ for $k$ in saturated regions.
\end{itemize}
Therefore, we know the value of $g_+'(k)-g_-'(k)$ everywhere in the interval $(k_\mathfrak{a},k_\mathfrak{b})$ with the exception of the band intervals, where instead we know $g_+'(k)+g_-'(k)$.  Denoting by $r(k)$ the function analytic for $k\in\mathbb{C}\setminus (\mathrm{B}_0\cup \mathrm{B}_1\cup\cdots\cup \mathrm{B}_N)$ that satisfies $r(k)^2=(k-\alpha_0)(k-\beta_0)\cdots (k-\alpha_N)(k-\beta_N)$ and $r(k)=k^{N+1}+\mathcal{O}(k^N)$ as $k\to\infty$, we may consider instead of $g'(k)$ the related function $m(k):=g'(k)/r(k)$.  This function is analytic for $k\in\mathbb{C}\setminus[k_\mathfrak{a},k_\mathfrak{b}]$ and since $r$ changes sign across the band intervals and is otherwise analytic, $m$ satisfies
\begin{equation}
m_+(k)-m_-(k)=\begin{cases}0,&\quad \text{$k$ in voids (and outside of $[k_\mathfrak{a},k_\mathfrak{b}]$)}\\
\displaystyle\frac{2\theta'(k;x,t)-2\Phi'(k)}{r_+(k)},&\quad \text{$k$ in bands}\\
\displaystyle\frac{2\myi \tau'(k)}{r(k)},&\quad\text{$k$ in saturated regions}.
\end{cases}
\label{eq:m-difference-boundary-values}
\end{equation}
Note that $g'(k)$ must decay as $\mathcal{O}(k^{-2})$ as $k\to\infty$ because $g(k)\to 0$ in this limit; this implies that $m(k)=\mathcal{O}(k^{-(N+3)})$ for large $k$, and in particular $m(k)=o(1)$.  Therefore $m$ is necessarily given in terms of the difference of its boundary values explicitly written in \eqref{eq:m-difference-boundary-values} by the Plemelj formula, which implies that
\begin{equation}
g'(k)=\frac{r(k)}{\pi \myi}\int_\mathrm{B}\frac{(\theta'(l;x,t)-\Phi'(l))\,dl}{r_+(l)(l-k)} +\frac{r(k)}{\pi}\int_\mathrm{S}
\frac{\tau'(l)\,dl}{r(l)(l-k)},
\label{eq:gprime}
\end{equation}
where here $\mathrm{B}$ denotes the union of all bands and $\mathrm{S}$ denotes the union of all saturated regions.
Expanding the Cauchy kernel $(l-k)^{-1}$ in geometric series for large $k$, we see that the condition $g'(k)=\mathcal{O}(k^{-2})$ as $k\to\infty$ is equivalent to the following \emph{moment conditions}:
\begin{equation}
m_n:=\int_\mathrm{B}\frac{(\theta'(k;x,t)-\Phi'(k))k^{n-1}\,dk}{\myi r_+(k)} +\int_\mathrm{S}\frac{\tau'(k)k^{n-1}\,dk}{r(k)}=0,\quad n=1,2,\dots,N+2.
\label{eq:moments}
\end{equation}
Subject to these conditions, $g'(k)$ is integrable at infinity, and $g(k)$ may be expressed as a contour integral:
\begin{equation}
g(k)=\int_\infty^k g'(l)\,dl
\label{eq:g-gprime-integrate}
\end{equation}
where $g'(k)$ is explicitly given by \eqref{eq:gprime}.
Equations \eqref{eq:moments} are $N+2$ conditions on the $2N+2$ unknown endpoints of the bands $\mathrm{B}_0,\dots,\mathrm{B}_N$.  In general, additional conditions arise in order to get the integration constants right so that instead of just $g_+'(k)-g_-'(k)=0$ in voids we actually have $g_+(k)-g_-(k)=0$, and so that instead of just $g_+'(k)-g_-'(k)=2\myi\tau'(k)$ in saturated regions we actually have $g_+(k)-g_-(k)=2\myi\tau(k)$.  Since $\tau(k_{\mathfrak{a},\mathfrak{b}})=0$, no additional conditions are required if there is only one band, i.e., $N=0$, in which case the moment conditions \eqref{eq:moments} may determine the band endpoints $\alpha=\alpha_0$ and $\beta=\beta_0$.  For the purposes of the proofs of Theorem~\ref{theorem:boundary-condition-recover} and Corollary~\ref{corollary:plane-wave}, we only consider this case.

Because $\theta'(k;x,t)$ is entire in $k$, the corresponding integral in the expression \eqref{eq:moments} for $m_n$ can always be expressed in closed form by a residue calculation at $k=\infty$:
\begin{equation}
\int_\mathrm{B}\frac{\theta'(k;x,t)k^{n-1}\,dk}{\myi r_+(k)} = -\frac{1}{2\myi}\oint_L\frac{(x+4kt)k^{n-1}\,dk}{r(k)}
\end{equation}
where $L$ is a large, positively-oriented circular contour that encloses all of the bands.  With the help of the expansion
\begin{equation}
\frac{1}{r(k)}=\frac{1}{k}+\frac{\alpha+\beta}{2k^2} +\frac{3\alpha^2+2\alpha\beta+3\beta^2}{8k^3} + \mathcal{O}(k^{-4}),\quad k\to\infty,\quad N=0,
\end{equation}
we therefore obtain in the case $N=0$ that the moment conditions \eqref{eq:moments} take the form
\begin{equation}
\begin{split}
m_1(\alpha,\beta)&=I_1(\alpha,\beta)-\pi\left(x+2(\alpha+\beta)t\right)=0\\
m_2(\alpha,\beta)&=I_2(\alpha,\beta)-\frac{\pi}{2}
\left((\alpha+\beta)x +(3\alpha^2+2\alpha\beta+3\beta^2)t\right)=0,
\end{split}
\label{eq:moments-genus-zero}
\end{equation}
where
\begin{equation}
I_p(\alpha,\beta):=\int_S\frac{\tau'(k)k^{p-1}\,dk}{r(k)}-\int_\alpha^\beta\frac{\Phi'(k)k^{p-1}\,dk}{\myi r_+(k)},\quad p=1,2.
\label{eq:Ip-formula}
\end{equation}
\begin{lemma}
Suppose that $x=0$.  The equations \eqref{eq:moments-genus-zero} are satisfied for $t>0$ by
$\alpha=\mathfrak{a}(t)$ and $\beta=\mathfrak{b}(t)$ as long as $(k_\mathfrak{a},\mathfrak{a}(t))$
is a void (saturated region) if $\mathfrak{a}'(t)<0$ ($\mathfrak{a}'(t)>0$) and $(\mathfrak{b}(t),k_\mathfrak{b})$ is a void (saturated region) if $\mathfrak{b}'(t)>0$ ($\mathfrak{b}'(t)<0$).
\label{lemma:endpoints-x-zero}
\end{lemma}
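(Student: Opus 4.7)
Under $x=0$, $\alpha=\mathfrak{a}(t)$, $\beta=\mathfrak{b}(t)$, the definitions \eqref{eq:a-b-define} give $\alpha+\beta=-U(t)$ and $\alpha\beta=\tfrac{1}{4}U(t)^2-H(t)^2$, so that $3\alpha^2+2\alpha\beta+3\beta^2=2U(t)^2+4H(t)^2$. The equations \eqref{eq:moments-genus-zero} therefore reduce to proving
\begin{equation*}
I_1(\mathfrak{a}(t),\mathfrak{b}(t))=-2\pi U(t)t,\qquad I_2(\mathfrak{a}(t),\mathfrak{b}(t))=\pi(U(t)^2+2H(t)^2)t.
\end{equation*}

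My plan is to verify both identities directly by interchanging the order of integration in \eqref{eq:Ip-formula}. The inputs are the representation \eqref{eq:Phi-prime-real-1} of $\Phi'(k)$ as an $s$-integral over $(0,t_-(k))$, together with a parallel formula for $\tau'(k)$ obtained by differentiating \eqref{eq:tau-define-1} under the integral sign (the boundary contributions at $s=t_\pm(k)$ vanish since the integrand has a square-root zero there):
\begin{equation*}
\tau'(k)=\int_{t_-(k)}^{t_+(k)}\frac{2(2k^2+U(s)k-H(s)^2)}{\sqrt{(k-\mathfrak{a}(s))(\mathfrak{b}(s)-k)}}\,ds.
\end{equation*}
The key structural observation is that both integrands share the same quadratic numerator $2k^2+U(s)k-H(s)^2$. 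Substituting into \eqref{eq:Ip-formula} and swapping orders via Fubini, with $s$ as the outer variable, pools the $\Phi'$-contribution (from $s<t$) and the $\tau'$-contribution (from $s>t$) into a single $s$-integral on $(0,\infty)$. For each fixed $s$ the inner $k$-integrand is algebraic on a two-sheeted cover determined by the four branch points $\mathfrak{a}(t),\mathfrak{b}(t),\mathfrak{a}(s),\mathfrak{b}(s)$; deforming the $k$-contour to a large circle and reading off the residue at $k=\infty$ reduces it to a polynomial of low degree in these endpoint data. Inserting the result back into the outer integral and using the identities $\mathfrak{a}(s)+\mathfrak{b}(s)=-U(s)$ and $\mathfrak{b}(s)-\mathfrak{a}(s)=2H(s)$, the remaining $s$-integral is elementary and evaluates to the asserted right-hand sides.

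The principal difficulty, and the reason for the void/saturated hypothesis, lies in the Fubini swap itself. For $s<t$ the region $\{(k,s):\mathfrak{a}(t)<k<\mathfrak{b}(t),\,0<s<t_-(k)\}$ splits into a $k>k_0$ piece, on which $s<t_-(k)$ is equivalent to $\mathfrak{b}(s)<k$, and a $k<k_0$ piece, on which it is equivalent to $k<\mathfrak{a}(s)$. Whether the curves $\mathfrak{a}(s)$ and $\mathfrak{b}(s)$ cross the fixed endpoints $\mathfrak{a}(t),\mathfrak{b}(t)$ as $s$ ranges in the relevant interval is controlled precisely by the signs of $\mathfrak{a}'(t)$ and $\mathfrak{b}'(t)$. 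The assignment of voids versus saturated regions specified in the lemma is exactly the one that makes the $s>t$ contribution from $\tau'$ patch continuously with the $s<t$ contribution from $\Phi'$ into a unified $s$-integral with no extraneous boundary pieces; under any other assignment one either produces uncancelled boundary contributions or demands a saturated-region integrand where none is present. Verifying this geometric compatibility for the four sign combinations of $\mathfrak{a}'(t),\mathfrak{b}'(t)$ is where the bulk of the bookkeeping lies, even though the remaining algebraic computations are routine.
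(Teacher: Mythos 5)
Your overall strategy is the same as the paper's: represent $\Phi'(k)$ and $\tau'(k)$ as $s$-integrals sharing the numerator $4k^2+2U(s)k-2H(s)^2$, interchange the order of integration in $I_p$, evaluate the inner $k$-integral as a residue at $k=\infty$ on the two-sheeted cover with branch points $\mathfrak{a}(t),\mathfrak{b}(t),\mathfrak{a}(s),\mathfrak{b}(s)$, and then perform the outer $s$-integration, with the four sign configurations of $\mathfrak{a}'(t),\mathfrak{b}'(t)$ handled case by case. However, two steps as you describe them are wrong or missing. First, the geometry of the Fubini swap is misstated: for $k$ in a saturated region at time $t$ one has $t_+(k)<t$, so the $\tau'$-contribution is supported on $t_-(k)<s<t_+(k)\subset(0,t)$, not on $s>t$, and the pooled outer integral runs over $(0,t)$, not $(0,\infty)$. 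This is not cosmetic, because the inner $k$-integral turns out to be independent of $s$; an outer integral over $(0,\infty)$ would then diverge, whereas it is exactly the length $t$ of the correct outer interval that produces the factors $-2\pi U(t)\,t$ and $\pi(U(t)^2+2H(t)^2)\,t$ you are trying to reach. The correct picture is that for each fixed $s\in(0,t)$ the inner integral runs over the two intervals obtained by pairing the two most negative and the two least negative of the four values $\mathfrak{a}(t),\mathfrak{a}(s),\mathfrak{b}(t),\mathfrak{b}(s)$, and the void/saturated hypothesis combined with the signs of $\mathfrak{a}'(t)$ and $\mathfrak{b}'(t)$ is precisely what guarantees this ordering and the consistency of the square-root signs in all four configurations.

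Second, and more seriously, you never establish the fact on which the whole computation turns: for $p=1,2$ the residue at $k=\infty$ of $(2k^{p+1}+U(s)k^p-H(s)^2k^{p-1})/S(k;s,t)$ is \emph{independent of $s$}. The terms $U(s)$ and $H(s)^2$ in the numerator cancel exactly against the elementary symmetric functions of $\mathfrak{a}(s),\mathfrak{b}(s)$ arising from the large-$k$ expansion of $1/S$ (for $p=1$ the relevant coefficient is proportional to $U(s)+e_1$ with $e_1=-U(t)-U(s)$, so the $s$-dependence drops out), leaving $2\pi(\mathfrak{a}(t)+\mathfrak{b}(t))$ for $p=1$ and $\tfrac{\pi}{2}(3\mathfrak{a}(t)^2+2\mathfrak{a}(t)\mathfrak{b}(t)+3\mathfrak{b}(t)^2)$ for $p=2$. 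Your phrasing --- a ``polynomial of low degree in these endpoint data'' followed by an appeal to the identities $\mathfrak{a}(s)+\mathfrak{b}(s)=-U(s)$ and $\mathfrak{b}(s)-\mathfrak{a}(s)=2H(s)$ --- leaves open an $s$-dependent residue, in which case the outer integral would be $\int_0^t F(U(s),H(s))\,ds$ and would not reduce to the asserted right-hand sides, which involve only $U(t)$ and $H(t)$. Until this cancellation is verified, the concluding sentence that ``the remaining $s$-integral is elementary and evaluates to the asserted right-hand sides'' is unsupported.
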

\begin{proof}
Suppose that for some $t'>0$ we have $\alpha=\mathfrak{a}(t')$ and $\beta=\mathfrak{b}(t')$.  Let us evaluate $I_p(\mathfrak{a}(t'),\mathfrak{b}(t'))$ in four cases:
\begin{itemize}
\item[]\textbf{VBV:}  in this case we assume that $\mathfrak{a}'(t')<0$, $\mathfrak{b}'(t')>0$, and both intervals $(k_\mathfrak{a},\mathfrak{a}(t'))$ and $(\mathfrak{b}(t'),k_\mathfrak{b})$ are voids.
\item[]\textbf{VBS:}  in this case we assume that $\mathfrak{a}'(t')<0$ and $\mathfrak{b}'(t')<0$, and that $(k_\mathfrak{a},\mathfrak{a}(t'))$ is a void but $(\mathfrak{b}(t'),k_\mathfrak{b})$ is a saturated region.
\item[]\textbf{SBV:}  in this case we assume that $\mathfrak{a}'(t')>0$ and $\mathfrak{b}'(t')>0$, and that $(\mathfrak{b}(t'),k_\mathfrak{b})$ is a void but $(k_\mathfrak{a},\mathfrak{a}(t'))$ is a saturated region.
\item[]\textbf{SBS:}  in this case we assume that $\mathfrak{a}'(t')>0$ and $\mathfrak{b}'(t')<0$, and that both intervals $(k_\mathfrak{a},\mathfrak{a}(t'))$ and $(\mathfrak{b}(t'),k_\mathfrak{b})$ are saturated regions.
\end{itemize}
\begin{figure}[h]
\includegraphics{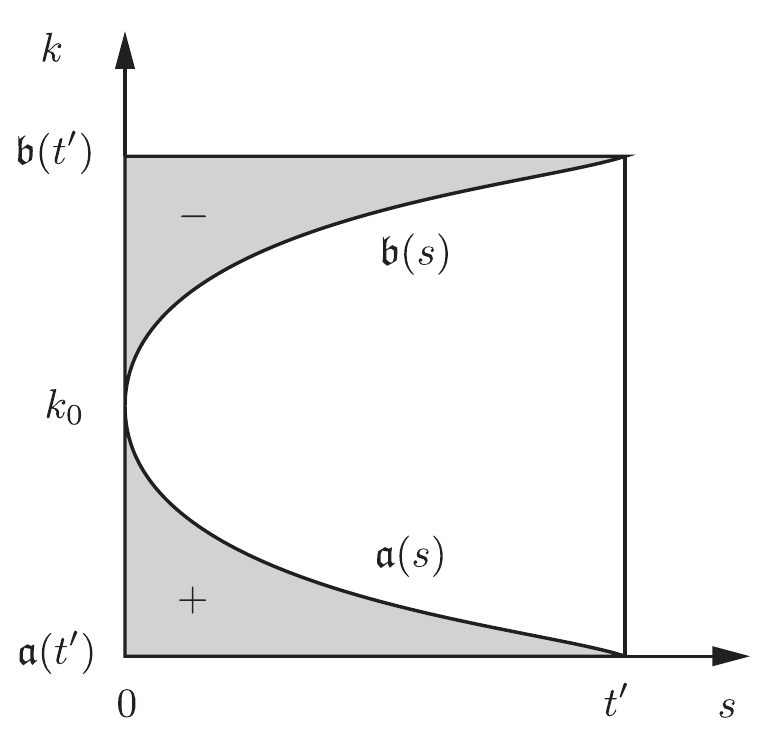}
\caption{The region of integration in the $(s,k)$-plane in the VBV case.}
\label{fig:VBV}
\end{figure}
\begin{figure}[h]
\includegraphics{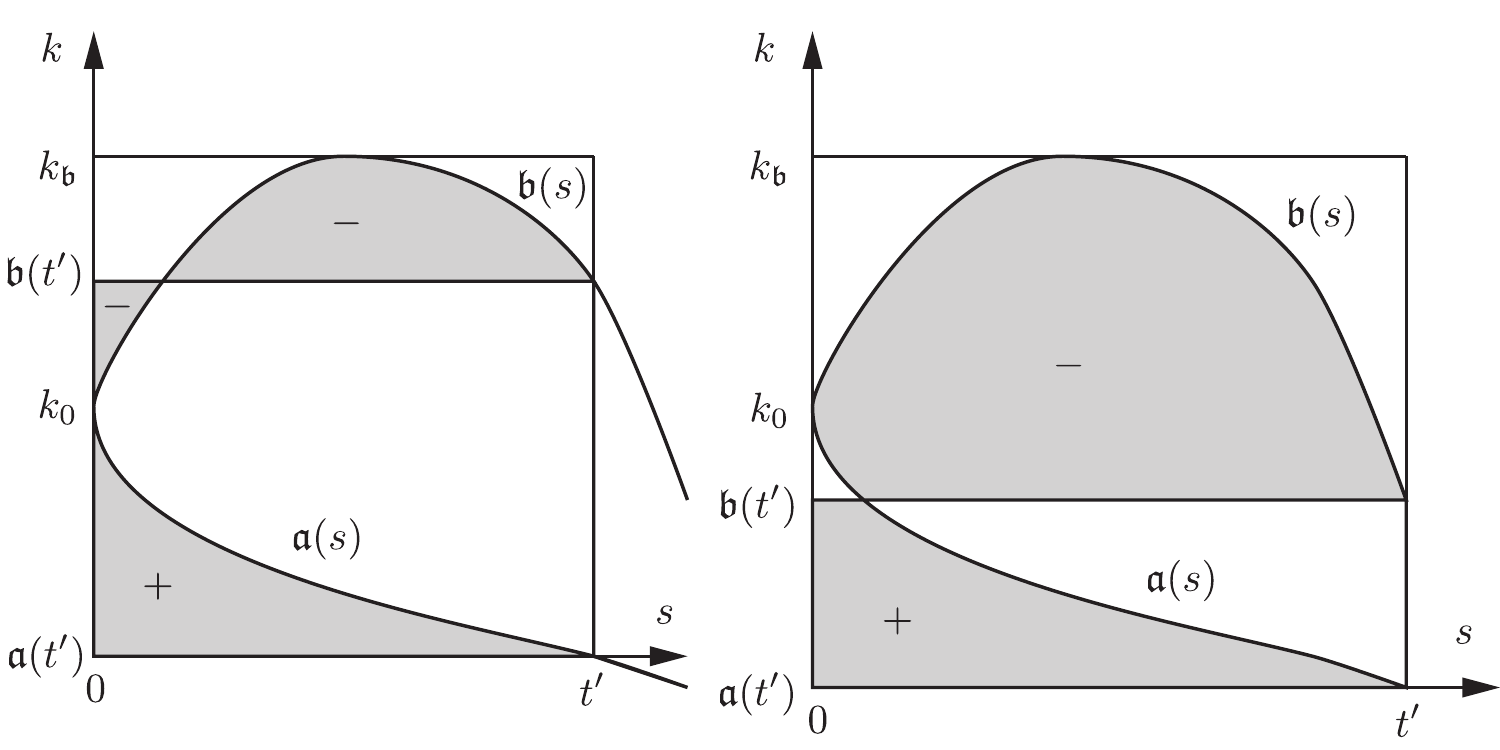}
\caption{The region of integration in the $(s,k)$-plane in the VBS case.  Left:  $\mathfrak{b}(t')>k_0$.  Right:  $\mathfrak{b}(t')<k_0$.}
\label{fig:VBS}
\end{figure}
\begin{figure}[h]
\includegraphics{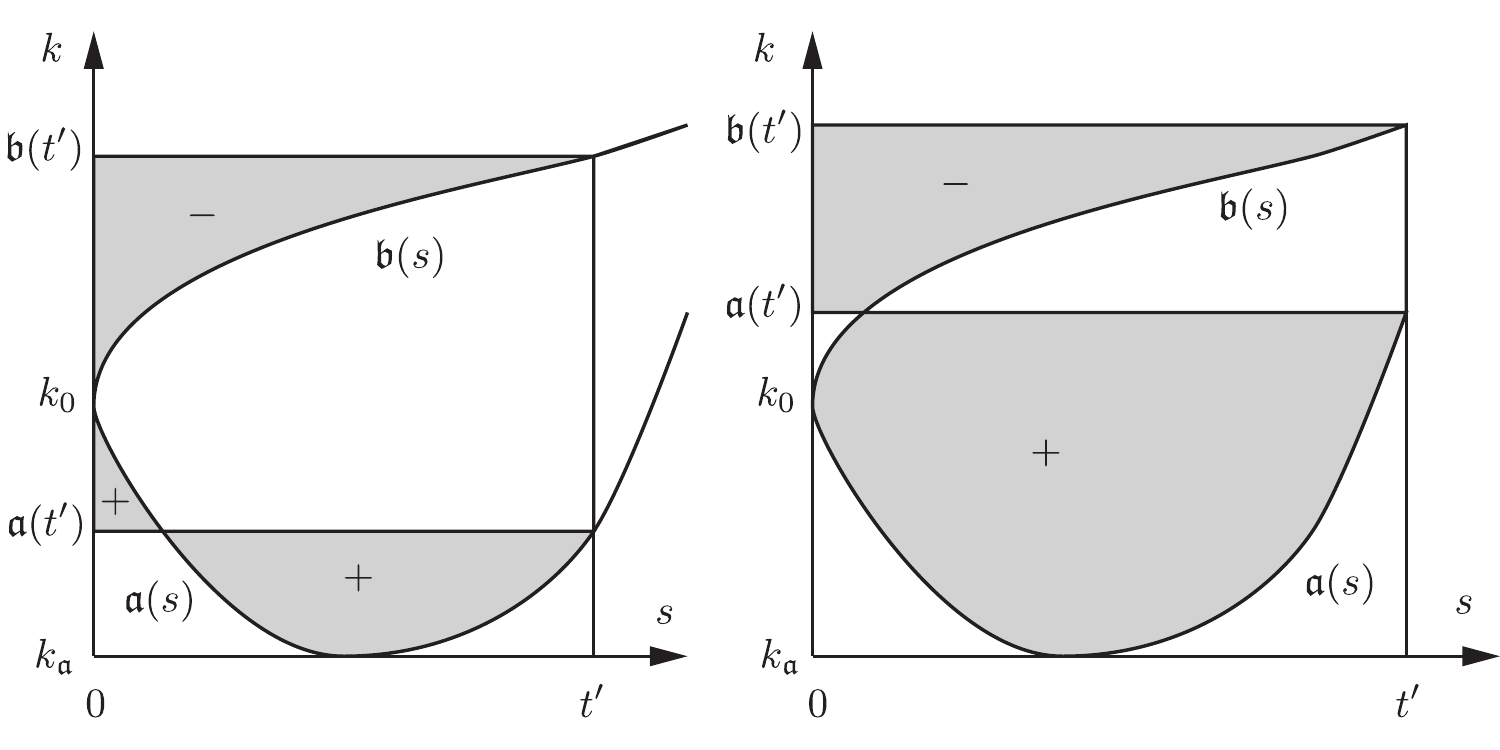}
\caption{The region of integration in the $(s,k)$-plane in the SBV case.  Left:  $\mathfrak{a}(t')<k_0$.  Right:  $\mathfrak{a}(t')>k_0$.}
\label{fig:SBV}
\end{figure}
\begin{figure}[h]
\includegraphics{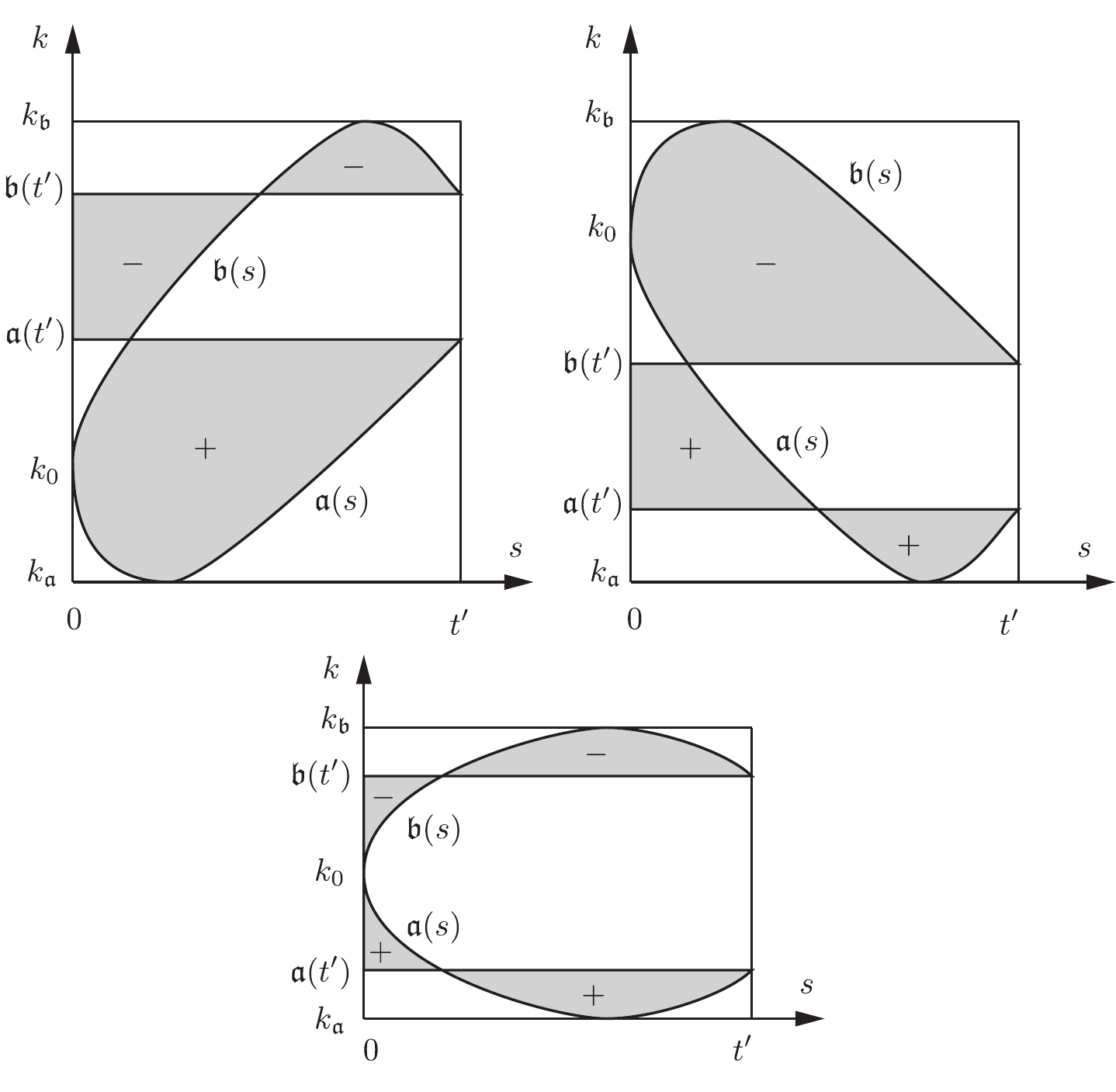}
\caption{The region of integration in the $(s,k)$-plane in the SBS case.  Top left: $\mathfrak{a}(t')>k_0$.   Top right:  $\mathfrak{b}(t')<k_0$.  Bottom:  $\mathfrak{a}(t')<k_0<\mathfrak{b}(t')$.}
\label{fig:SBS}
\end{figure}
We may substitute into \eqref{eq:Ip-formula} from \eqref{eq:tau-define-1} (after differentiating $\tau(k)$ by Leibniz' rule taking into account that the integrand vanishes at both endpoints) and from \eqref{eq:Phi-prime-real-1}.  We may also simplify $\myi r_+(k)$ for $\alpha<k<\beta$ as $-\sqrt{(k-\alpha)(\beta-k)}<0$, while in the integral over $\mathrm{S}$ we have either $r(k)=\sqrt{(k-\alpha)(k-\beta)}>0$ if $\mathrm{S}=(\beta,k_\mathfrak{b})$ or $r(k)=-\sqrt{(\alpha-k)(\beta-k)}<0$ if $\mathrm{S}=(k_\mathfrak{a},\alpha)$.  The first remarkable fact is that the result can be written in a uniform way in all four cases (including all sub-cases related to where $k_0$ falls with respect to $\alpha=\mathfrak{a}(t')$ and $\beta=\mathfrak{b}(t')$ as illustrated in Figures~\ref{fig:VBV}--\ref{fig:SBS}).  Namely, we have
\begin{multline}
I_p(\mathfrak{a}(t'),\mathfrak{b}(t'))=\iint_{D_+}\frac{4k^{p+1}+2U(s)k^p-2H(s)^2k^{p-1}}{
\sqrt{-(k-\mathfrak{a}(t'))(k-\mathfrak{a}(s))(k-\mathfrak{b}(t'))(k-\mathfrak{b}(s))}}\,dA(s,k) \\{}-
\iint_{D_-}\frac{4k^{p+1}+2U(s)k^p-2H(s)^2k^{p-1}}{
\sqrt{-(k-\mathfrak{a}(t'))(k-\mathfrak{a}(s))(k-\mathfrak{b}(t'))(k-\mathfrak{b}(s))}}\,dA(s,k),
\label{eq:Ip-formula-1}
\end{multline}
where $dA(s,k)$ denotes the positive area element and where the domains $D_+$ and $D_-$ are the indicated shaded regions in Figures~\ref{fig:VBV}--\ref{fig:SBS}.  It is now obvious that the original order of integration is easily reversed in all cases, with the outer $s$ integral over the interval $0<s<t'$
and the inner integrals over the intervals with endpoints being the two most negative (for the domain $D_+$) and the two least negative (for the domain $D_-$) among the four values $\mathfrak{a}(t')$, $\mathfrak{a}(s)$, $\mathfrak{b}(t')$, and $\mathfrak{b}(s)$.  Carrying out this reinterpretation of the formula \eqref{eq:Ip-formula-1}, we may further combine the inner $k$ integrals with the introduction of the function $S(k;s,t')$
that is analytic for $k$ in the complex plane with branch cuts lying in the two intervals of integration omitted, whose square is $S(k;s,t')^2=-(k-\mathfrak{a}(t'))(k-\mathfrak{a}(s))(k-\mathfrak{b}(t'))(k-\mathfrak{b}(s))$, and that satisfies $S(k;s,t')=-\myi k^2 + \mathcal{O}(k)$ as $k\to\infty$.  The result is
\begin{equation}
I_p(\mathfrak{a}(t'),\mathfrak{b}(t'))=\int_0^{t'}\oint_L\frac{2k^{p+1}+U(s)k^{p}-H(s)^2k^{p-1}}{S(k;s,t')}\,dk\,ds,
\label{eq:Ip-formula-2}
\end{equation}
where $L$ is a positively-oriented loop that encloses both branch cuts of $S$.  The inner $k$ integral may now be computed by residues for each $s\in (0,t')$.  The second remarkable fact is that for $p=1,2$ the inner $k$-integral is independent of $s$:
\begin{equation}
I_0(\mathfrak{a}(t'),\mathfrak{b}(t'))=\int_0^{t'}2\pi (\mathfrak{a}(t')+\mathfrak{b}(t'))\,ds = 2\pi (\mathfrak{a}(t')+\mathfrak{b}(t'))t',
\end{equation}
and
\begin{equation}
I_1(\mathfrak{a}(t'),\mathfrak{b}(t'))=\int_0^{t'}\frac{\pi}{2}(3\mathfrak{a}(t')^2+2\mathfrak{a}(t')\mathfrak{b}(t')+3\mathfrak{b}(t')^2)\,dt'=\frac{\pi}{2}(3\mathfrak{a}(t')^2+2\mathfrak{a}(t')\mathfrak{b}(t')+3\mathfrak{b}(t')^2)t'.
\end{equation}
It therefore follows by inspection that in all four cases, the equations $m_1(\alpha,\beta)=0$ and $m_2(\alpha,\beta)=0$ written in the form \eqref{eq:moments-genus-zero} are satisfied for $x=0$ and $t>0$ by taking $\alpha=\mathfrak{a}(t)$ and $\beta=\mathfrak{b}(t)$.
\end{proof}

The locations of the voids, bands, and saturated regions are indicated for the boundary data from Figure~\ref{fig:Assumption1} in Figure~\ref{fig:Intervals}.
\begin{figure}[h]
\includegraphics{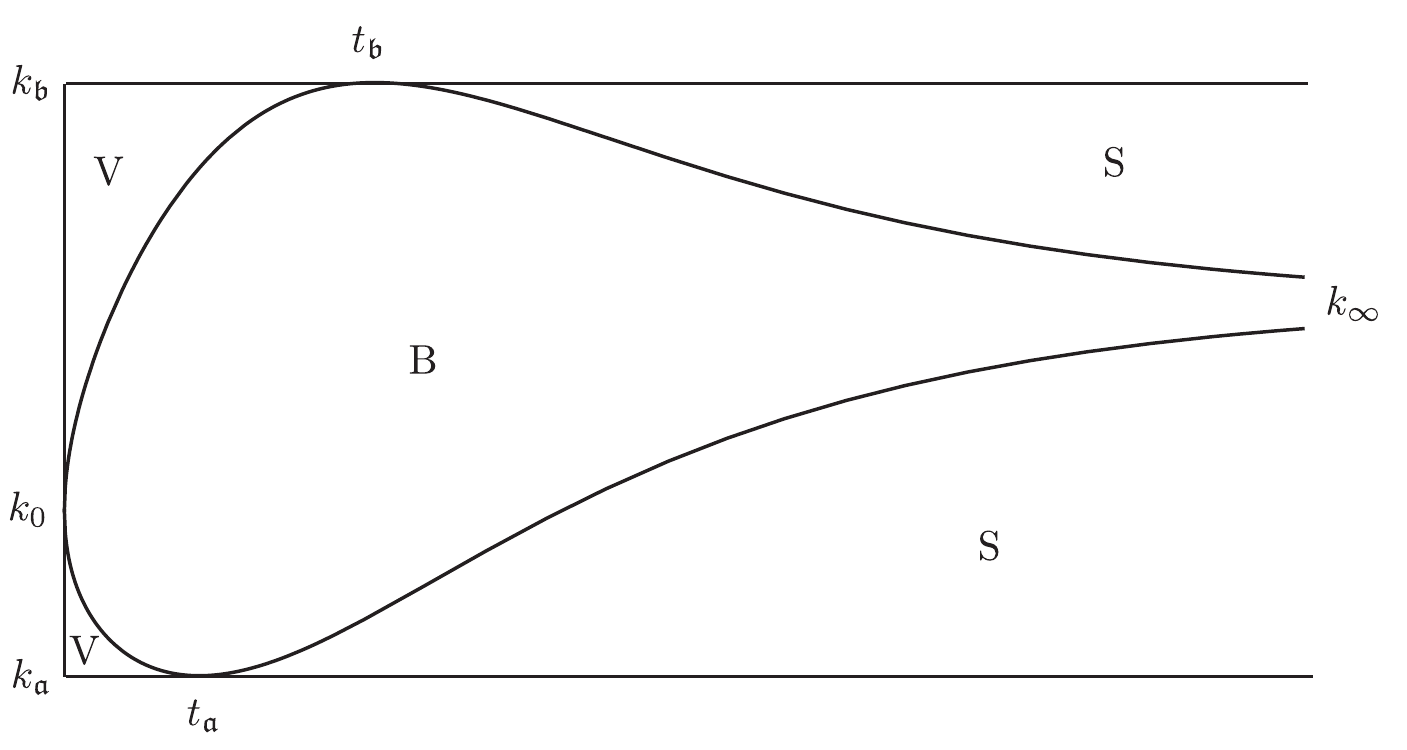}
\caption{The curves $k=\mathfrak{a}(t)$ and $k=\mathfrak{b}(t)$ from Figure~\ref{fig:Assumption1} and the corresponding voids (V), bands (B), and saturated regions (S).}
\label{fig:Intervals}
\end{figure}
We therefore find for each $t>0$ a well-defined candidate for $g(k)$ that we will denote by $g(k;t)$, with corresponding functions $\phi(k;t)$ and $\Delta(k;t)$ defined by \eqref{eq:phi-Delta-define}, and it remains only to confirm the inequalities that were dropped earlier.  Indeed, we will now prove the following.
\begin{lemma}
Let $x=0$ and $t>0$, and let the function $g(k;t)$ be determined from the values $\alpha=\mathfrak{a}(t)$ and $\beta=\mathfrak{b}(t)$ and the configuration of voids and saturated regions as described in Lemma~\ref{lemma:endpoints-x-zero}.  The functions $\phi(k;t)$ and $\Delta(k;t)$ given by \eqref{eq:phi-Delta-define} in terms of $g$ satisfy the following inequalities:
\begin{equation}
0<\Delta(k;t)<\tau(k),\quad \mathfrak{a}(t)<k<\mathfrak{b}(t),
\label{eq:Band-Inequality}
\end{equation}
\begin{equation}
\phi'(k;t)>0,\quad k\in \mathrm{V},
\label{eq:Void-Inequality}
\end{equation}
and 
\begin{equation}
\phi'(k;t)<0,\quad k\in \mathrm{S},
\label{eq:Saturated-Region-Inequality}
\end{equation}
where $\mathrm{V}$ denotes the union of the voids, $\mathrm{S}$ denotes the union of the saturated regions, and the prime denotes differentiation in $k$ for fixed $t$.  
\label{lemma:inequalities-x-zero}
\end{lemma}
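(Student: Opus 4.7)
The strategy is to verify all three inequalities by the same mechanism that drove the proof of Lemma~\ref{lemma:endpoints-x-zero}: substitute the integral representation of $\Phi'$ from \eqref{eq:Phi-prime-real-1} and of $\tau'$ obtained by Leibniz-rule differentiation of \eqref{eq:tau-define-1} into the explicit formula \eqref{eq:gprime} for $g'(k;t)$, then swap the order of the resulting double integrals, evaluate the inner integral in $l$ by residues (as in \eqref{eq:Ip-formula-2}), and read off the sign from the remaining single integral over $s\in(0,t)$. In effect, the plan is to re-express $\Delta(k;t)$ and $\phi'(k;t)$ as $s$-averages of quantities built from the surd $\sqrt{(k-\mathfrak{a}(s))(k-\mathfrak{b}(s))}$, whose branch structure directly encodes whether at intermediate time $s$ the value $k$ lies inside $(\mathfrak{a}(s),\mathfrak{b}(s))$ (classically forbidden) or outside (classically allowed).

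For the band estimate $0<\Delta(k;t)<\tau(k)$, I would first fix the integration constant by integrating $\Delta'(\cdot;t)$ away from whichever endpoint of $[\mathfrak{a}(t),\mathfrak{b}(t)]$ adjoins a void, so that $\Delta$ vanishes there by construction. The order-exchange step should collapse the result to
\[
\Delta(k;t)=\int_{\sigma(k,t)}(U(s)-2k)\sqrt{(k-\mathfrak{a}(s))(\mathfrak{b}(s)-k)}\,ds,
\]
where $\sigma(k,t)\subseteq(t_-(k),t_+(k))$ is the subset of intermediate times $s\in(0,t)$ at which $k$ is bracketed by the turning-point curve. Comparing this with the defining formula \eqref{eq:tau-define-1} and using that $U(s)-2k>0$ (since $k<0$ while $U(s)>2H(s)\ge 0$), together with strict positivity of the square root on the interior of $\sigma(k,t)$, both $\Delta(k;t)$ and $\tau(k)-\Delta(k;t)$ become integrals of strictly positive integrands over nonempty subsets of $(t_-(k),t_+(k))$ for each $k\in(\mathfrak{a}(t),\mathfrak{b}(t))$, which yields the two strict band inequalities simultaneously.

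The void and saturated-region inequalities for $\phi'$ are handled in parallel: for $k$ outside the band the same order-exchange reduces $\phi'(k;t)=4kt-\Phi'(k)-\tfrac{1}{2}(g_+'+g_-')(k;t)$ to an integral of $(U(s)-2k)$ against the real signed square root $\sqrt{(k-\mathfrak{a}(s))(k-\mathfrak{b}(s))}$ over an explicit subset of $(0,t)$; the sign of the integrand is positive in the void case (where $k$ is classically allowed for the relevant intermediate $s$) and negative in the saturated case. The main obstacle will be clean case-by-case bookkeeping across the four configurations VBV, VBS, SBV, SBS of Lemma~\ref{lemma:endpoints-x-zero}: in each case one must verify that the integration constant fixing $\Delta$ at one band endpoint yields $\Delta=\tau$ (rather than $\Delta=0$) at the opposite endpoint precisely when that endpoint adjoins a saturated region, and that the endpoint behavior of $\Delta$ and $\phi'$ interpolates continuously with the adjacent interval. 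The nondegeneracy conditions $\mathfrak{a}''(t_\mathfrak{a})>0$ and $\mathfrak{b}''(t_\mathfrak{b})<0$ from Assumption~\ref{assumption:data}, exploited as in Lemma~\ref{lemma-tau}, should ensure that the strict inequalities persist in small neighborhoods of the band endpoints.
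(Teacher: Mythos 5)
Your plan has the right target formulas in mind --- the expression you write for $\Delta(k;t)$ is exactly what the paper arrives at --- but the route you propose to get there has two concrete problems, and the paper's actual argument sidesteps both by differentiating in $t$ rather than in $k$. The paper observes that since $\tau$ and $\Phi$ do not depend on $t$, the function $g_t(k;t)$ has a jump only across the single band $[\mathfrak{a}(t),\mathfrak{b}(t)]$, with the elementary additive jump $g_{t+}+g_{t-}=4k^2-2\phi_\mathrm{B}'(t)$; this makes $g_t$ computable in closed form by one residue evaluation, $g_t(k;t)=2k^2-\phi_\mathrm{B}'(t)+(U(t)-2k)r(k;t)$, and then the fundamental theorem of calculus in $t$, with the turning points $t_\pm(k)$ as base points (where $\Delta=0$ and $\phi'=0$ hold automatically because $k$ is a band endpoint at those instants), delivers all three inequalities at once. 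Your route instead differentiates in $k$, substitutes the $s$-integral representations of $\Phi'$ and $\tau'$ into \eqref{eq:gprime}, and exchanges the order of integration. Unlike the proof of Lemma~\ref{lemma:endpoints-x-zero}, where the inner integral carries a polynomial weight $k^{n-1}$ and is killed by a residue at infinity, here the inner integral carries the Cauchy kernel $1/(l-k)$ and, for $k$ in the band, is a principal value; the residue at $l=k$ must be extracted carefully and the computation is genuinely more delicate than you indicate. Moreover your prescription for fixing the integration constant of $\Delta$ --- ``integrate away from whichever endpoint adjoins a void'' --- has no starting point in the SBS configuration, where both complementary intervals are saturated regions and $\Delta=\tau>0$ at both band endpoints.

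The second, more substantive error is in your sign analysis for $\phi'$. The correct reduction (the paper's \eqref{eq:phi-prime-integral}) is $\phi'(k;t)=\int_{t_\pm(k)}^{t}\bigl(4k^2+2U(s)k-2H(s)^2\bigr)r(k;s)^{-1}\,ds$, i.e.\ the square root appears in the \emph{denominator}; your claimed integrand ``$(U(s)-2k)$ against the signed square root'' is the one for $\Delta$ and $\tau$, not for $\phi'$. More importantly, the sign dichotomy between voids and saturated regions does not come from the integrand changing sign according to whether $k$ is ``classically allowed'' at the intermediate time $s$: the paper shows by factoring (using $k_\pm(s)$ from \eqref{eq:k-plus-k-minus} on one side and the identity following \eqref{eq:phi-prime-integral} on the other) that the integrand is \emph{always negative} whenever $k$ lies outside $[\mathfrak{a}(s),\mathfrak{b}(s)]$. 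The sign of $\phi'(k;t)$ is then determined by the orientation of the $s$-integral --- from $t_-(k)$ backwards to $t<t_-(k)$ in a void (giving $\phi'>0$), or from $t_+(k)$ forwards to $t>t_+(k)$ in a saturated region (giving $\phi'<0$). Without this orientation argument and the always-one-sign factoring of the numerator, your proposed sign-reading of the final integral does not go through.
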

\begin{proof}
The proof of these three statements involves 
the same object, namely the partial derivative of $g$ in $t$ for fixed $k$, denoted $g_t(k;t)$.  We may construct $g_t(k;t)$ explicitly as follows.  Consider differentiation with respect to $t$ of the three equations $g_+(k;t)-g_-(k;t)=0$ for $k\in \mathrm{V}$, $g_+(k;t)-g_-(k;t)=2\myi\tau(k)$ for $k\in \mathrm{S}$, and $g_+(k;t)+g_-(k;t)=2\theta(k;0,t)-2\Phi(k)-2\phi_B(t)$ for $k\in \mathrm{B}=(\mathfrak{a}(t),\mathfrak{b}(t))$.  Since neither $\tau$ nor $\Phi$ depend on $t$, we find that $g_t(k;t)$ is analytic for $k\in\mathbb{C}\setminus [\mathfrak{a}(t),\mathfrak{b}(t)]$, and on the cut the equation 
\begin{equation}
g_{t+}(k;t)+g_{t-}(k;t)=4k^2-2\phi_{\mathrm{B}}'(t),\quad \mathfrak{a}(t)<k<\mathfrak{b}(t)
\end{equation}
holds.  Keeping in mind the condition $g_t(k;t)=\mathcal{O}(k^{-1})$ as $k\to\infty$, it therefore follows by similar arguments as led to \eqref{eq:gprime} that
\begin{equation}
g_t(k;t)=\frac{r(k;t)}{2\pi i}\int_{\mathfrak{a}(t)}^{\mathfrak{b}(t)}\frac{4l^2-2\phi_{\mathrm{B}}'(t)}{r_+(l;t)(l-k)}\,dl,
\end{equation}
where the notation $r(k;t)$ reminds us that the branch points are $\alpha=\mathfrak{a}(t)$ and $\beta=\mathfrak{b}(t)$,
and the integral can be evaluated in closed form by residues at $l=k$ and $l=\infty$:
\begin{equation}
g_t(k;t)=2k^2-\phi_{\mathrm{B}}'(t)+(U(t)-2k)r(k;t),\quad k\in\mathbb{C}\setminus [\mathfrak{a}(t),\mathfrak{b}(t)],
\label{eq:gt}
\end{equation}
where we used the identity $\mathfrak{a}(t)+\mathfrak{b}(t)=-U(t)$.  Now we prove \eqref{eq:Band-Inequality}.  From \eqref{eq:phi-Delta-define} and \eqref{eq:gt} it follows that
\begin{equation}
\begin{split}
\Delta_t(k;t)&=-\frac{1}{2}\myi\left(g_{t+}(k;t)-g_{t-}(k;t)\right)\\ &=-\myi (U(t)-2k)r_+(k;t)\\ &=(U(t)-2k)\sqrt{(k-\mathfrak{a}(t))(\mathfrak{b}(t)-k)},\quad \mathfrak{a}(t)<k<\mathfrak{b}(t).
\end{split}
\end{equation}
For a given $k\in (\mathfrak{a}(t),\mathfrak{b}(t))$, we use the fundamental theorem of calculus to write
\begin{equation}
\Delta(k;t)=\Delta(k;t_-(k)) +\int_{t_-(k)}^t(U(s)-2k)\sqrt{(k-\mathfrak{a}(s))(\mathfrak{b}(s)-k)}\,ds,
\label{eq:Delta-integral}
\end{equation}
a formula that makes use of the fact that $\mathfrak{a}(s)<k<\mathfrak{b}(s)$ holds for $t_-(k)<s<t$.
But since $t=t_-(k)$ always corresponds to the boundary between a band and a void, we have
$\Delta(k;t_-(k))=0$.  Taking this into account and comparing \eqref{eq:Delta-integral} with \eqref{eq:tau-define-1} completes the proof of \eqref{eq:Band-Inequality}, since for $\mathfrak{a}(t)<k<\mathfrak{b}(t)$ we have
\begin{equation}
\begin{split}
\Delta(k;t)&=\int_{t_-(k)}^t (U(s)-2k)\sqrt{(k-\mathfrak{a}(s))(\mathfrak{b}(s)-k)}\,ds>0\\
\tau(k)-\Delta(k;t)&=\int_t^{t_+(k)}(U(s)-2k)\sqrt{(k-\mathfrak{a}(s))(\mathfrak{b}(s)-k)}\,ds>0.
\end{split}
\end{equation}
Next we consider together \eqref{eq:Void-Inequality} and \eqref{eq:Saturated-Region-Inequality}.
From \eqref{eq:phi-Delta-define} and \eqref{eq:gt} we have
\begin{equation}
\begin{split}
\phi_t(k;t)&=\theta_t(k;0,t)-\frac{1}{2}(g_{t+}(k;t)+g_{t-}(k;t))\\
&=2k^2-g_t(k;t),\quad k\in (k_\mathfrak{a},\mathfrak{a}(t))\cup (\mathfrak{b}(t),k_\mathfrak{b}),\\
&=\phi_{B}'(t)-(U(t)-2k)r(k;t).
\end{split}
\end{equation}
Differentiation with respect to $k$ using $r(k;t)^2=(k-\mathfrak{a}(t))(k-\mathfrak{b}(t))$ then gives
\begin{equation}
\phi'_t(k;t)=\frac{4k^2+2U(t)k-2H(t)^2}{r(k;t)},\quad k\in (k_\mathfrak{a},\mathfrak{a}(t))\cup(\mathfrak{b}(t),k_\mathfrak{b}).
\end{equation}
Now we apply the fundamental theorem of calculus to obtain
\begin{equation}
\phi'(k;t)=\phi'(k;t_\pm(k)) +\int_{t_\pm(k)}^t\frac{4k^2+2U(s)k-2H(s)^2}{r(k;s)}\,ds.
\label{eq:phi-prime-integral}
\end{equation}
Here the turning point $t_\pm$ is selected so that either $k<\mathfrak{a}(s)$ or $k>\mathfrak{b}(s)$ holds for all $s$ in the interval of integration.  Since $k$ lies on the boundary of the band 
$(\mathfrak{a}(t_\pm(k)),\mathfrak{b}(t_\pm(k)))$, we have $\phi'(k;t_\pm(k))=0$, so it remains to determine the sign of the integral.  First observe that $4k^2+2U(s)k-2H(s)^2$ and $r(k;s)$ always have opposite signs, regardless of whether $k<\mathfrak{a}(s)$ or $k>\mathfrak{b}(s)$.  Indeed,
if $k<\mathfrak{a}(s)$, then $r(k;s)=-\sqrt{(\mathfrak{a}(s)-k)(\mathfrak{b}(s)-k)}<0$, and by the identity 
\begin{equation}
4k^2+2U(s)k-2H(s)^2=2(\mathfrak{a}(s)-k)(\mathfrak{b}(s)-k)+\frac{1}{2}(U(s)-2k)((\mathfrak{a}(s)-k)+(\mathfrak{b}(s)-k))
\end{equation}
one has $4k^2+2U(s)k-2H(s)^2>0$.  On the other hand, if $k>\mathfrak{b}(s)$, then $r(k;s)=\sqrt{(k-\mathfrak{a}(s))(k-\mathfrak{b}(s))}>0$ and by the identity $4k^2+2U(s)k-2H(s)^2=4(k-k_+(s))(k-k_-(s))$ where $k_\pm(\cdot)$ are defined by \eqref{eq:k-plus-k-minus} one sees that $4k^2+2U(s)k-2H(s)^2<0$, because $k_+(s)>0$ which implies $k-k_+(s)<0$, and also $k_-(s)<\mathfrak{b}(s)$ which implies $k-k_-(s)>k-\mathfrak{b}(s)>0$.  Therefore, we conclude that $\phi'(k;t)>0$ if $t<t_-(k)$, which characterizes $k$ lying in a void, and $\phi'(k;t)<0$ if $t>t_+(k)$, which characterizes $k$ lying in a saturated region.  This completes the proof of the inequalities \eqref{eq:Void-Inequality} and \eqref{eq:Saturated-Region-Inequality}.
\end{proof}

The expression \eqref{eq:gt} for $g_t(k;t)$ from this proof actually leads to a complete characterization of the constant $\phi_\mathrm{B}(t)$ as the following result shows.
\begin{lemma}
Let $x=0$ and $t>0$.  Then $\phi_\mathrm{B}(t)=-\tfrac{1}{2}S(t)$.
\label{lemma-phi-B-S}
\end{lemma}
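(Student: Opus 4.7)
The plan is to use the explicit formula for $g_t(k;t)$ derived in the proof of the preceding lemma to determine $\phi_\mathrm{B}'(t)$ from the large-$k$ behavior of $g$, and then pin down the constant of integration by a limiting argument as $t\downarrow 0$, where the band collapses.

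First, recall from \eqref{eq:gt} that
\[
g_t(k;t)=2k^2-\phi_\mathrm{B}'(t)+(U(t)-2k)r(k;t),\qquad k\in\mathbb{C}\setminus[\mathfrak{a}(t),\mathfrak{b}(t)].
\]
Since $\mathfrak{a}(t)+\mathfrak{b}(t)=-U(t)$ and $\mathfrak{a}(t)\mathfrak{b}(t)=\tfrac{1}{4}U(t)^2-H(t)^2$, the standard binomial expansion yields
\[
r(k;t)=k+\tfrac{1}{2}U(t)-\tfrac{1}{2}H(t)^2 k^{-1}+\mathcal{O}(k^{-2}),\quad k\to\infty,
\]
and consequently $(U(t)-2k)r(k;t)=-2k^2+\tfrac{1}{2}U(t)^2+H(t)^2+\mathcal{O}(k^{-1})$. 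Since $g(k;t)\to 0$ as $k\to\infty$ for each fixed $t$, the same is true of $g_t(k;t)$, so the $\mathcal{O}(1)$ terms in the large-$k$ expansion of $g_t(k;t)$ must cancel. This gives
\[
\phi_\mathrm{B}'(t)=\tfrac{1}{2}U(t)^2+H(t)^2=-\tfrac{1}{2}S'(t),
\]
where the last equality is \eqref{eq:SprimeUH}. Therefore $\phi_\mathrm{B}(t)+\tfrac{1}{2}S(t)$ is constant in $t$.

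To fix the constant, I would examine the limit $t\downarrow 0$. By Assumption~\ref{assumption:data} the band $[\mathfrak{a}(t),\mathfrak{b}(t)]$ collapses to the single point $\{k_0\}$ as $t\downarrow 0$, and the integral representation \eqref{eq:gprime}--\eqref{eq:g-gprime-integrate} shows that $g(\,\cdot\,;t)\to 0$ uniformly on compact subsets of $\mathbb{C}\setminus\{k_0\}$ (the relevant integrand is bounded and the interval of integration shrinks). Evaluating the band identity $g_+(k;t)+g_-(k;t)=4k^2t-2\Phi(k)-2\phi_\mathrm{B}(t)$ at, say, $k=\tfrac{1}{2}(\mathfrak{a}(t)+\mathfrak{b}(t))=-\tfrac{1}{2}U(t)$ and letting $t\downarrow 0$, the left-hand side tends to $0$ while on the right-hand side $4k^2t\to 0$ and $\Phi(k)\to\Phi(k_0)=\tfrac{1}{2}S(0)$ by Lemma~\ref{lemma-Phi-interior} and the computation at $k=k_0$. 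Hence $\phi_\mathrm{B}(0^+)=-\tfrac{1}{2}S(0)$, and the constant $\phi_\mathrm{B}(t)+\tfrac{1}{2}S(t)$ vanishes, yielding $\phi_\mathrm{B}(t)=-\tfrac{1}{2}S(t)$ as claimed.

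The main obstacle is justifying the $t\downarrow 0$ limit rigorously, specifically the statement that $g_\pm(k;t)\to 0$ as the band collapses. This should follow from a careful analysis of the contour integral \eqref{eq:gprime} (with $N=0$, $\mathrm{B}=(\mathfrak{a}(t),\mathfrak{b}(t))$, and $\mathrm{S}$ a fixed finite union of saturated regions that are empty for small $t$ by Assumption~\ref{assumption:data}): the length of the band is $\mathfrak{b}(t)-\mathfrak{a}(t)=2H(t)\sim 2h_0 t^{1/2}$, and the square-root factor $r(k;t)$ also scales in a controlled way, making the Cauchy integrand uniformly small as $t\downarrow 0$. Once this estimate is established, the rest of the argument is direct.
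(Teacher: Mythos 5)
Your first step --- reading off $\phi_\mathrm{B}'(t)=\tfrac12 U(t)^2+H(t)^2=-\tfrac12 S'(t)$ from the vanishing of the constant term in the large-$k$ Laurent expansion of $g_t(k;t)$ in \eqref{eq:gt} --- is exactly the paper's argument. Where you diverge is in fixing the integration constant. The paper does not pass to the limit of $g$ itself; instead it exploits the boundedness of $g$ at the band endpoints $k=\alpha,\beta$ to derive a $2\times 2$ linear system for $g_1$ and $\phi_\mathrm{B}$, solves it by residues to obtain the exact formula
\[
\phi_\mathrm{B}=\frac{1}{\pi\myi}\int_\alpha^\beta\frac{\Phi(l)-\theta(l;x,t)}{r_+(l)}\,dl
=-\frac12 tS'(t)-\frac12 xU(t)+\frac{1}{\pi\myi}\int_\alpha^\beta\frac{\Phi(l)\,dl}{r_+(l)},
\]
and then takes $t\downarrow 0$ at $x=0$ using $\Phi(l)\to\tfrac12 S(0)$ on the collapsing band and $\frac{1}{\pi\myi}\int_\alpha^\beta\frac{dl}{r_+(l)}=-1$. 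This is a purely algebraic limit of an explicit formula.

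Your alternative --- evaluating the band identity $g_++g_-=2\theta-2\Phi-2\phi_\mathrm{B}$ at the band midpoint and sending $t\downarrow 0$ --- has a genuine gap that you partly flag yourself. The convergence $g(\cdot;t)\to 0$ that you invoke is stated to hold uniformly on compact subsets of $\mathbb{C}\setminus\{k_0\}$, but your evaluation point $k=-\tfrac12 U(t)$ converges to $k_0$, which is precisely the point excluded, and it lies \emph{on} the band, so you need control of the boundary values $g_\pm$ on the shrinking interval $[\mathfrak{a}(t),\mathfrak{b}(t)]$ itself. That control is true, but it requires a local scaling analysis near $k_0$: writing $l=k_0+H(t)\mu$, using $\mathfrak{b}(t)-\mathfrak{a}(t)=2H(t)\sim 2h_0t^{1/2}$, $r_\pm(l;t)=\mathcal{O}(H(t))$, and the cubic vanishing $\Phi'(l)=\mathcal{O}((l-k_0)^2)$ from Lemma~\ref{lemma-Phi-interior}, one finds $\theta'(l;0,t)-\Phi'(l)=\mathcal{O}(H(t)^2)$ on the band and hence $g_\pm=o(1)$ there. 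Your closing paragraph gestures at exactly this estimate but does not carry it out, and without it the key step $g_\pm(-\tfrac12 U(t);t)\to 0$ is unproved. So your route can be completed, but it trades the paper's one-line residue computation for a nontrivial uniform estimate near the degenerating band; if you keep your approach, you must supply that estimate explicitly rather than cite convergence away from $k_0$.
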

\begin{proof}
Since $g(k;t)=\mathcal{O}(k^{-1})$ as $k\to\infty$ for all $t>0$ (by \eqref{eq:g-gprime-integrate}), it follows that also $g_t(k;t)=\mathcal{O}(k^{-1})$ as $k\to\infty$.  But if we examine the explicit expression for $g_t(k;t)$ given by \eqref{eq:gt}, we observe that there is a constant leading term in the Laurent series of $g_t(k;t)$ for large $|k|$.  This constant term therefore must vanish, and this gives rise to an identity expressing $\phi_\mathrm{B}'(t)$ explicitly in terms of $\mathfrak{a}(t)$ and $\mathfrak{b}(t)$ (which can be simplified further with the help of \eqref{eq:a-b-define} and \eqref{eq:SprimeUH}):
\begin{equation}
\phi_{\mathrm{B}}'(t)=\frac{1}{4}\left(3\mathfrak{a}(t)^2+2\mathfrak{a}(t)\mathfrak{b}(t)+3\mathfrak{b}(t)^2\right)=\frac{1}{2}U(t)^2+H(t)^2=-\frac{1}{2}S'(t).
\label{eq:phi-prime-t}
\end{equation}

Therefore it remains to determine an integration constant.  Suppose that $t>0$ is sufficiently small that both $\mathfrak{a}'(t)<0$ and $\mathfrak{b}'(t)>0$, i.e., we have a VBV configuration for $g(k;t)$.  Then $g(k)=g(k;t)$ is analytic for $k\in\mathbb{C}\setminus [\alpha,\beta]$, and for $\alpha<k<\beta$ we have $\phi(k)=\phi_\mathrm{B}$, so using \eqref{eq:phi-Delta-define} we get
\begin{equation}
g_+(k)+g_-(k)=2\theta(k;x,t)-2\Phi(k)-2\phi_\mathrm{B},\quad \alpha<k<\beta.
\end{equation}
It follows from these considerations that $g(k)$ must be given by the formula
\begin{equation}
g(k)=\frac{1}{r(k)}\left[g_1 +\frac{1}{\pi\myi} \int_\alpha^\beta\frac{(\theta(l;x,t)-\Phi(l)-\phi_\mathrm{B})r_+(l)}{l-k}\,dl\right]
\end{equation}
where $g_1$ is an additional constant ($g(k)=g_1k^{-1}+\mathcal{O}(k^{-2})$ as $k\to\infty$).  But $g(k)$ is known to be bounded at the band endpoints $k=\alpha$ and $k=\beta$, so the expression in square brackets must be made to vanish for these values of $k$, resulting in a system of linear equations for $g_1$ and $\phi_\mathrm{B}$:
\begin{equation}
\begin{bmatrix}1 & -\displaystyle\frac{1}{\pi\myi}\int_\alpha^\beta\frac{r_+(l)\,dl}{l-\alpha}\\
1 & \displaystyle -\frac{1}{\pi \myi}\int_\alpha^\beta\frac{r_+(l)\,dl}{l-\beta}\end{bmatrix}
\begin{bmatrix}g_1\\\phi_\mathrm{B}\end{bmatrix}=\begin{bmatrix}
\displaystyle\frac{1}{\pi\myi}\int_\alpha^\beta\frac{(\Phi(l)-\theta(l;x,t))r_+(l)}{l-\alpha}\,dl\\
\displaystyle\frac{1}{\pi\myi}\int_\alpha^\beta\frac{(\Phi(l)-\theta(l;x,t))r_+(l)}{l-\beta}\,dl
\end{bmatrix}.
\end{equation}
The integrals in the coefficient matrix can be calculated by residues at $l=\infty$ and the system solved for $\phi_\mathrm{B}$:
\begin{equation}
\phi_\mathrm{B}=\frac{1}{\pi\myi}\int_\alpha^\beta\frac{\Phi(l)-\theta(l;x,t)}{r_+(l)}\,dl.
\end{equation}
Likewise, the integral involving $\theta(\ell;x,t)$ can be evaluated by a residue at $l=\infty$, yielding
\begin{equation}
\phi_\mathrm{B}=-\frac{1}{2}tS'(t)-\frac{1}{2}xU(t)+\frac{1}{\pi i}\int_\alpha^\beta\frac{\Phi(l)\,dl}{r_+(l)},
\end{equation}
where we have also used \eqref{eq:a-b-define} and \eqref{eq:SprimeUH}.  Finally, we set $x=0$ and consider the limit $t\downarrow 0$, in which $\alpha=\mathfrak{a}(t)$ and $\beta=\mathfrak{b}(t)$ converge to $k_0$.  Using the fact, as shown in the proof of Lemma~\ref{lemma-Phi-interior},
that $\Phi(k)=\tfrac{1}{2}S(0)+\mathcal{O}((k-k_0)^3)$ as $k\to k_0$, we replace $\Phi(l)$ in the integral by its limiting value and calculate the resulting integral by a residue at $l=\infty$.  Therefore
\begin{equation}
\lim_{t\downarrow 0}\phi_\mathrm{B}(t)=-\frac{1}{2}S(0)
\end{equation}
and the proof is complete.
\end{proof}

\begin{lemma}
Let $x=0$ and $t>0$.  The function $\phi(k;t)$ is analytic for $k_\mathfrak{a}<k<\mathfrak{a}(t)$
and for $\mathfrak{b}(t)<k<k_\mathfrak{b}$.
\label{lemma:phi-analytic-gaps}
\end{lemma}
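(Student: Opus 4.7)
The plan is to bypass the possible non-analyticity of $\phi(k;t)=\theta(k;x,t)-\Phi(k)-\tfrac{1}{2}(g_+(k;t)+g_-(k;t))$ inherited from $\Phi$ at $k=k_0$ (where $\Phi$ is only $C^3$, by Lemma~\ref{lemma-Phi-interior}) by using an alternative representation built from the explicit $t$-derivative formula \eqref{eq:gt}.  Specifically, for $k$ outside the band at time $t$ one has $g_{t+}=g_{t-}=g_t$ (because the jump of $g$ off the band is either $0$ or the $t$-independent quantity $-2\myi\tau(k)$), and therefore, as already used in the proof of Lemma~\ref{lemma:inequalities-x-zero},
\begin{equation}
\phi_t(k;t)=2k^2-g_t(k;t)=\phi_\mathrm{B}'(t)-(U(t)-2k)r(k;t).
\end{equation}
This expression is analytic in $k$ on a complex neighborhood of any real point outside $[\mathfrak{a}(t),\mathfrak{b}(t)]$, and crucially contains no $\Phi$.

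For concreteness I would consider the interval $(k_\mathfrak{a},\mathfrak{a}(t))$ with $t\ne t_\mathfrak{a}$.  For each real $k$ in this interval, let $t_*(k)$ denote the unique time on the appropriate monotone branch of $\mathfrak{a}$ (descending if $t<t_\mathfrak{a}$, ascending if $t>t_\mathfrak{a}$) with $\mathfrak{a}(t_*(k))=k$, chosen so that $k$ stays outside $[\mathfrak{a}(s),\mathfrak{b}(s)]$ for every $s$ between $t_*(k)$ and $t$.  At the endpoint $s=t_*(k)$ the point $k$ lies on the band boundary, so continuity of $\phi$ together with Lemma~\ref{lemma-phi-B-S} gives $\phi(k;t_*(k))=\phi_\mathrm{B}(t_*(k))=-\tfrac{1}{2}S(t_*(k))$.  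The fundamental theorem of calculus then produces the representation
\begin{equation}
\phi(k;t)=-\tfrac{1}{2}S(t_*(k))+\int_{t_*(k)}^{t}\bigl[\phi_\mathrm{B}'(s)-(U(s)-2k)r(k;s)\bigr]\,ds.
\end{equation}
Each ingredient extends analytically to a complex neighborhood of the real interval: $S$ is analytic; $t_*$ is analytic by the implicit function theorem applied to $\mathfrak{a}(t_*)=k$, using that $\mathfrak{a}'(t_*(k))\ne 0$ on the selected monotone branch; and the integrand extends analytically in $k$ uniformly in $s$ over the compact integration range, because $r(\cdot;s)$ has its branch cut precisely on $[\mathfrak{a}(s),\mathfrak{b}(s)]\subset\mathbb{R}$, which can be avoided by a small perturbation off the real axis.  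Differentiation under the integral sign then yields analyticity of the integral in $k$, and hence of $\phi(k;t)$.  The analogous argument for the interval $(\mathfrak{b}(t),k_\mathfrak{b})$ proceeds identically with $\mathfrak{b}$ in place of $\mathfrak{a}$.

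The main obstacle is the case $k_0\in(k_\mathfrak{a},\mathfrak{a}(t))$, which can occur in the saturated configuration $t>t_\mathfrak{a}$: direct analysis via the definition of $\phi$ would get stuck on the limited regularity of $\Phi$ at $k_0$, whereas the $t$-derivative route above avoids $\Phi$ altogether.  A secondary point, easily checked, is analyticity of $t_*$ at $k=k_0$ in this case:  since $\mathfrak{a}$ is analytic and strictly increasing on $(t_\mathfrak{a},\infty)$ and $t_*(k_0)\in(t_\mathfrak{a},\infty)$ is an interior point at which $\mathfrak{a}'>0$, the standard implicit function theorem applies.  Once these two points are addressed, the argument reduces to bookkeeping of the explicit formulae above.
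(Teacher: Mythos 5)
Your route is genuinely different from the paper's and, up to one step that needs repair, it works.  The paper argues case by case: in a void it writes $\phi=\theta-\Phi-g$ with $g$ analytic across the gap and notes that the only possible singularity, $k=k_0$ (where $\Phi$ is merely $C^3$ by Lemma~\ref{lemma-Phi-interior}), can never lie in a void; in a saturated region it combines $\Phi\mp\myi\tau$ into the single integral $\tfrac{1}{2}S(0)-\int_0^{t_+(k)}(U(s)-2k)r_\pm(k;s)\,ds$, whose only bad point $k=k_\infty$ can never lie in a saturated region.  Your representation
\[
\phi(k;t)=-\tfrac{1}{2}S(t_*(k))+\int_{t_*(k)}^{t}\bigl[\phi_\mathrm{B}'(s)-(U(s)-2k)r(k;s)\bigr]\,ds
\]
handles both configurations in one stroke and never touches $\Phi$ or $\tau$, so the exceptional points $k_0$ and $k_\infty$ simply do not arise.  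What it costs is the fundamental-theorem step down to the band-edge time $s=t_*(k)$, but the paper itself uses exactly this device in \eqref{eq:Delta-integral} and \eqref{eq:phi-prime-integral}, so that is unobjectionable.  (The sign you quote for the jump of $g$ in a saturated region differs from the paper's $g_+-g_-=2\myi\tau$, but only its $t$-independence matters here.)

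The step that does not hold as written is the claim that the integrand "extends analytically in $k$ uniformly in $s$ over the compact integration range."  It does not: as $s\to t_*(k_1)$ the branch cut $[\mathfrak{a}(s),\mathfrak{b}(s)]$ of $r(\cdot;s)$ collapses onto $k_1$ itself (its endpoint $\mathfrak{a}(s)$ tends to $\mathfrak{a}(t_*(k_1))=k_1$), so no fixed complex neighborhood of $k_1$ avoids all the cuts, and "a small perturbation off the real axis" only places $k$ in one half-plane rather than giving two-sided analyticity.  The repair is standard and is the same device the paper uses for \eqref{eq:tau-define-1} and \eqref{eq:Phi-define} (see \eqref{eq:Phi-contour} and \eqref{eq:tau-contour}).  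Since $\mathfrak{a}(t_*(k))=k$ and $\mathfrak{a}'(t_*(k))\neq 0$ on the chosen branch, one has $k-\mathfrak{a}(s)=-(s-t_*(k))\,m(s,k)$ with $m$ analytic and nonvanishing near $(t_*(k_1),k_1)$, while $k-\mathfrak{b}(s)$ is bounded away from zero there; hence $r(k;s)$ is $(s-t_*(k))^{1/2}$ times a jointly analytic nonvanishing factor.  Splitting the integral at $t_*(k_1)+\delta$ and, on the short piece, either substituting $s=t_*(k)+(t-t_*(k))v^2$ or replacing the segment by a small loop around the moving square-root zero then gives analyticity of the integral in $k$, since $t-t_*(k_1)\neq 0$ for $k_1$ interior to the gap.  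With that emendation your argument is complete.
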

\begin{proof}
Suppose first that the interval containing $k$ is a void.  Then $g_+(k;t)=g_-(k;t)$, so $g(k;t)$ is analytic at $k$, and we may write $\phi(k;t)=\theta(k;0,t)-\Phi(k)-g(k;t)$, which is clearly analytic except at $k=k_0$, according to Lemma~\ref{lemma-Phi-interior}.  Since $\mathfrak{a}(0)=\mathfrak{b}(0)=k_0$,
and since $\mathfrak{a}(t)$ is decreasing if $(k_\mathfrak{a},\mathfrak{a}(t))$ is a void while $\mathfrak{b}(t)$ is increasing if $(\mathfrak{b}(t),k_\mathfrak{b})$ is a void, it follows that the point $k_0$ cannot lie in a void for any $t>0$.

Next suppose that the interval containing $k$ is a saturated region.  Then $g_+(k;t)-g_-(k;t)=2\myi \tau(k)$, so we may write $\phi(k;t)$ in two alternate forms:
\begin{equation}
\phi(k;t)=\theta(k;0,t)-g_\pm(k;t)-(\Phi(k)\mp\myi\tau(k)).
\end{equation}
Therefore $\phi(k;t)$ will be the boundary value of a function analytic in $\mathbb{C}_\pm$ if this is true of the function $\Phi(k)\mp\myi\tau(k)$.  From \eqref{eq:tau-define-1} and \eqref{eq:Phi-define},
we have
\begin{equation}
\begin{split}
\Phi(k)\mp \myi\tau(k)&=\frac{1}{2}S(0)+\sgn(k^2-k_0^2)\int_0^{t_-(k)}(U(t)-2k)\sqrt{(k-\mathfrak{a}(t))(k-\mathfrak{b}(t))}\,dt\\
&\quad\quad\quad\quad{}\mp\myi\int_{t_-(k)}^{t_+(k)}(U(t)-2k)\sqrt{(k-\mathfrak{a}(t))(\mathfrak{b}(t)-k)}\,dt\\
&=\frac{1}{2}S(0)-\int_0^{t_-(k)}(U(t)-2k)r(k;t)\,dt -\int_{t_-(k)}^{t_+(k)}(U(t)-2k)r_\pm(k;t)\,dt\\
&=\frac{1}{2}S(0)-\int_0^{t_+(k)}(U(t)-2k)r_\pm(k;t)\,dt.
\end{split}
\end{equation}
But the only point of nonanalyticity of $t_+(k)$ in $(k_\mathfrak{a},k_\mathfrak{b})$ is $k=k_\infty$,
so if $k\neq k_\infty$, then $\Phi(k)\mp\myi\tau(k)$ is the boundary value of a function analytic for $k$ in the half-plane $\mathbb{C}_\pm$ near $k$.  It follows that if $k$ is in a saturated region and $k\neq k_\infty$ then
$\phi(k;t)$ is analytic at $k$, i.e., it can be continued into both half-planes.  But since $\mathfrak{a}(t)$ and $\mathfrak{b}(t)$ both tend to $k_\infty$ as $t\to\infty$ and $\mathfrak{a}(t)$ is increasing if $(k_\mathfrak{a},\mathfrak{a}(t))$ is a saturated region while $\mathfrak{b}(t)$ is decreasing if $(\mathfrak{b}(t),k_\mathfrak{b})$ is a saturated region, it is impossible for $k_\infty$ to lie in a saturated region for any $t>0$.
\end{proof}

We will refer to the analytic function $\phi(k;t)$ defined in the interval $(k_\mathfrak{a},\mathfrak{a}(t))$ (respectively, in the interval $(\mathfrak{b}(t),k_\mathfrak{b})$) as $\phi_\mathfrak{a}(k;t)$ (respectively, $\phi_\mathfrak{b}(k;t)$).  Finally, we require an analogue of Lemma~\ref{lemma-tilde-Gamma-endpoints}.
\begin{lemma}
The functions $Y^\eps(k)e^{\pm 2\myi\phi(k;t)/\eps}$ have analytic continuations into the complex plane
from right and left neighborhoods of $k_\mathfrak{a}$ and $k_\mathfrak{b}$, respectively,
and along small segments with one endpoint $k_\mathfrak{a}$ or $k_\mathfrak{b}$ and the other endpoint having real part in the interior of $(k_\mathfrak{a},k_\mathfrak{b})$ and nonzero imaginary part of the appropriate sign so that $|e^{\pm 2\myi\phi(k;t)/\eps}|\le 1$ along the segment, the uniform estimate $Y^\eps(k)e^{\pm 2\myi\phi(k;t)/\eps}=\mathcal{O}((\log(\eps^{-1}))^{-1/2})$ holds.
\label{lemma-Y-phi-endpoints}
\end{lemma}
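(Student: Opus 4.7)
The plan is to imitate the proof of Lemma~\ref{lemma-tilde-Gamma-endpoints} closely, replacing the role of $\Phi(k)$ by $\phi(k;t)$ throughout. The central new input needed is an analogue of Lemma~\ref{lemma-Phi-endpoints} giving the precise singular structure of $\phi(k;t)$ near $k_\mathfrak{a}$ and $k_\mathfrak{b}$; once this is in place the estimate reduces verbatim to the calculus of the auxiliary function $f^\eps(x)=\eps^{-1/2}x^{1/2}e^{x\log x/\eps}$ already analyzed in that earlier proof.

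First I would establish that for each fixed $t\in (0,+\infty)\setminus\{t_\mathfrak{a},t_\mathfrak{b}\}$, the combination $\tfrac{1}{2}(g_+(k;t)+g_-(k;t))$ extends to a function analytic at $k_\mathfrak{a}$ and $k_\mathfrak{b}$. The key points are (i) $r(k;t)$ has branch points at $\mathfrak{a}(t),\mathfrak{b}(t)$ which are strictly interior to $(k_\mathfrak{a},k_\mathfrak{b})$ for such $t$, so it is analytic near $k_\mathfrak{a}$ and $k_\mathfrak{b}$; and (ii) on any saturated region abutting one of these endpoints, the jump $g_+-g_-=2\myi\tau$ extends analytically through the endpoint because by Lemma~\ref{lemma-tau} the function $\tau$ is analytic at $k_\mathfrak{a},k_\mathfrak{b}$ and vanishes there. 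Combined with the entirety of $\theta(\cdot;x,t)$, this localizes the singularity of $\phi=\theta-\Phi-\tfrac{1}{2}(g_++g_-)$ at each endpoint to that of $-\Phi$ alone, and invoking Lemma~\ref{lemma-Phi-endpoints} I obtain
\[
\phi_\mathfrak{a}(k;t)=\phi_\mathfrak{a}(k_\mathfrak{a};t)-C_\mathfrak{a}(k-k_\mathfrak{a})\log(k-k_\mathfrak{a})+\mathcal{O}(k-k_\mathfrak{a}),\quad k\to k_\mathfrak{a},
\]
and an analogous expansion at $k_\mathfrak{b}$ with coefficient $-C_\mathfrak{b}$ and logarithm $\log(k_\mathfrak{b}-k)$.

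Next I would carry over the sign analysis from Lemma~\ref{lemma-tilde-Gamma-endpoints} (now with the opposite overall sign, since $\phi$ inherits the singularity of $-\Phi$): for $k-k_\mathfrak{a}=re^{\myi\vartheta}$ with $\vartheta\in (0,\pi/2)$ and $r$ small, the dominant contribution to $\Im\{\phi_\mathfrak{a}(k;t)\}$ is $-C_\mathfrak{a} r\sin\vartheta\log r>0$, so $|e^{2\myi\phi(k;t)/\eps}|\le 1$ on such a segment reaching into the upper half-plane, and the mirror statement for $|e^{-2\myi\phi(k;t)/\eps}|$ along a segment into the lower half-plane follows from Schwarz symmetry. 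By Lemma~\ref{lemma-tau}, $Y^\eps(k)=\mathcal{O}(\eps^{-1/2}|k-k_{\mathfrak{a},\mathfrak{b}}|^{1/2})$ on such segments (as $\Re\{\tau(k)\}>0$ there). Multiplying the two estimates and absorbing bounded analytic prefactors, the product $Y^\eps(k)e^{\pm 2\myi\phi(k;t)/\eps}$ is controlled (up to a $t$-dependent constant) by $f^\eps(x)$ with $x$ a suitable rescaling of $r$, and the estimate proved at the end of Lemma~\ref{lemma-tilde-Gamma-endpoints} yields the claimed $\mathcal{O}((\log(\eps^{-1}))^{-1/2})$ bound uniformly for $0\le r\le\delta$.

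The main obstacle I foresee is the bookkeeping in the first step, specifically verifying that no extra logarithmic or branch contributions to $\phi$ arise from the integral representation \eqref{eq:gprime} of $g'$ near $k_\mathfrak{a}$ and $k_\mathfrak{b}$. This is where the hypothesis $t\neq t_\mathfrak{a},t_\mathfrak{b}$ enters decisively: it both ensures that the band is strictly separated from each endpoint and, combined with $\tau(k_{\mathfrak{a},\mathfrak{b}})=0$ from Lemma~\ref{lemma-tau}, that any saturated region meeting an endpoint does so with a tangentially vanishing jump. Once this is confirmed the remainder is a direct transcription of the earlier argument.
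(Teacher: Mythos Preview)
Your overall plan is sound and matches the paper's strategy in spirit, but the first step contains a genuine error in the saturated-region case. You claim that $\tfrac{1}{2}(g_+(k;t)+g_-(k;t))$ extends analytically to $k_\mathfrak{a}$ (and similarly $k_\mathfrak{b}$), citing the fact that the jump $g_+-g_-=2\myi\tau$ vanishes there. This is correct when the interval abutting the endpoint is a void (then $g$ itself is analytic there), but it is \emph{false} when that interval is a saturated region. Linear vanishing of the jump density at an endpoint of a Cauchy integral does not remove the endpoint singularity of the average of the boundary values: from \eqref{eq:gprime}, the term $\frac{r(k)}{\pi}\int_{k_\mathfrak{a}}^{\alpha}\frac{\tau'(l)}{r(l)(l-k)}\,dl$ contributes $-\frac{\tau'(k_\mathfrak{a})}{\pi}\log(k-k_\mathfrak{a})+\mathcal{O}(1)=-2C_\mathfrak{a}\log(k-k_\mathfrak{a})+\mathcal{O}(1)$ to $g'$ (recall $\tau'(k_\mathfrak{a})=2\pi C_\mathfrak{a}$), and the same to $\tfrac{1}{2}(g_+'+g_-')$ via the principal value. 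Combined with $-\Phi'(k)\sim -C_\mathfrak{a}\log(k-k_\mathfrak{a})$, this gives $\phi'(k;t)\sim +C_\mathfrak{a}\log(k-k_\mathfrak{a})$ in the saturated case, the opposite sign from your claimed $-C_\mathfrak{a}$. The sign flip is not cosmetic: it is exactly what makes $\phi'<0$ on saturated regions (consistent with \eqref{eq:Saturated-Region-Inequality}) and hence sends the decay direction of $e^{2\myi\phi/\eps}$ into the \emph{lower} half-plane rather than the upper one, as required by the lower-upper factorization \eqref{eq:Saturated-Region-Trick}.

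The paper avoids this bookkeeping trap by not attempting to split $\phi$ into $-\Phi$ plus an analytic remainder. Instead it appeals directly to the integral representation \eqref{eq:phi-prime-integral} for $\phi'(k;t)$ derived in the proof of Lemma~\ref{lemma:inequalities-x-zero}: the nondegeneracy of the extrema of $\mathfrak{a}$ and $\mathfrak{b}$ forces $r(k;s)^{-1}$ to produce a logarithmic divergence of $\phi'$ at $k_\mathfrak{a}$ and $k_\mathfrak{b}$, uniformly across the four configurations. Integrating gives the $(k-k_{\mathfrak{a},\mathfrak{b}})\log$ behavior of $\phi$ with a nonzero coefficient whose sign is automatically correct in each case, after which your final paragraph (the $f^\eps(x)$ calculus from Lemma~\ref{lemma-tilde-Gamma-endpoints}) goes through verbatim. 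Your argument can be repaired by either computing the extra logarithmic piece of $\tfrac{1}{2}(g_++g_-)$ in the saturated case as above, or---more simply---by switching to the paper's route via \eqref{eq:phi-prime-integral}.
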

\begin{proof}
Applying the nondegeneracy of the extrema of $\mathfrak{a}$ and $\mathfrak{b}$ guaranteed by Assumption~\ref{assumption:data}
to the formula \eqref{eq:phi-prime-integral}, it is easy to see that $\phi'(k;t)$ always diverges logarithmically as $k\downarrow k_\mathfrak{a}$ and as $k\uparrow k_\mathfrak{b}$.  Hence, by integration in $k$ one sees that up to a nonzero constant factor plus an integration constant, the leading-order behavior of $\phi(k;t)$ is the same as that of $\Phi(k)$ as established in Lemma~\ref{lemma-Phi-endpoints}.
The rest of the proof is then exactly the same as that of Lemma~\ref{lemma-tilde-Gamma-endpoints}, playing off the exponential decay of $e^{\pm 2\myi\phi(k;t)/\eps}$ into the appropriate half-plane away from $k_\mathfrak{a}$ or $k_\mathfrak{b}$ against the linear vanishing of $\tau_\mathfrak{a}$ or $\tau_\mathfrak{b}$ to establish the claimed uniform estimate.
\end{proof}

\subsubsection{Proof of Theorem~\ref{theorem:boundary-condition-recover}}
\label{sec:boundary-condition-recover-proof}
To prove Theorem~\ref{theorem:boundary-condition-recover}, 
we apply the steepest descent method to Riemann-Hilbert Problem~\ref{rhp-M-tilde} with the help of the complex phase function $g=g(k;t)$ introduced in \S\ref{sec:g-function}, that is, we exploit the transformation \eqref{eq:N-from-g} from $\tilde{\mathbf{M}}(k)$ to $\mathbf{N}(k)$ and use \eqref{eq:Void-Trick}--\eqref{eq:Saturated-Region-Trick} to handle the jump condition \eqref{eq:N-jump} by opening lenses about the voids and saturated regions.  Some minor modifications are required because $Y^\eps(k)$ is not analytic at $k=k_0$ (which may lie in a saturated region but not a void) or $k=k_\infty$ (which may lie in a void but not a saturated region), however it will not be necessary to introduce any nonanalyticity or deal with $\dbar$ problems as in the proof of Theorem~\ref{theorem:initial-condition-small}.

To open the lenses, we define domains of the complex plane as illustrated in Figure~\ref{fig:FourConfigurations} 
\begin{figure}[h]
\includegraphics{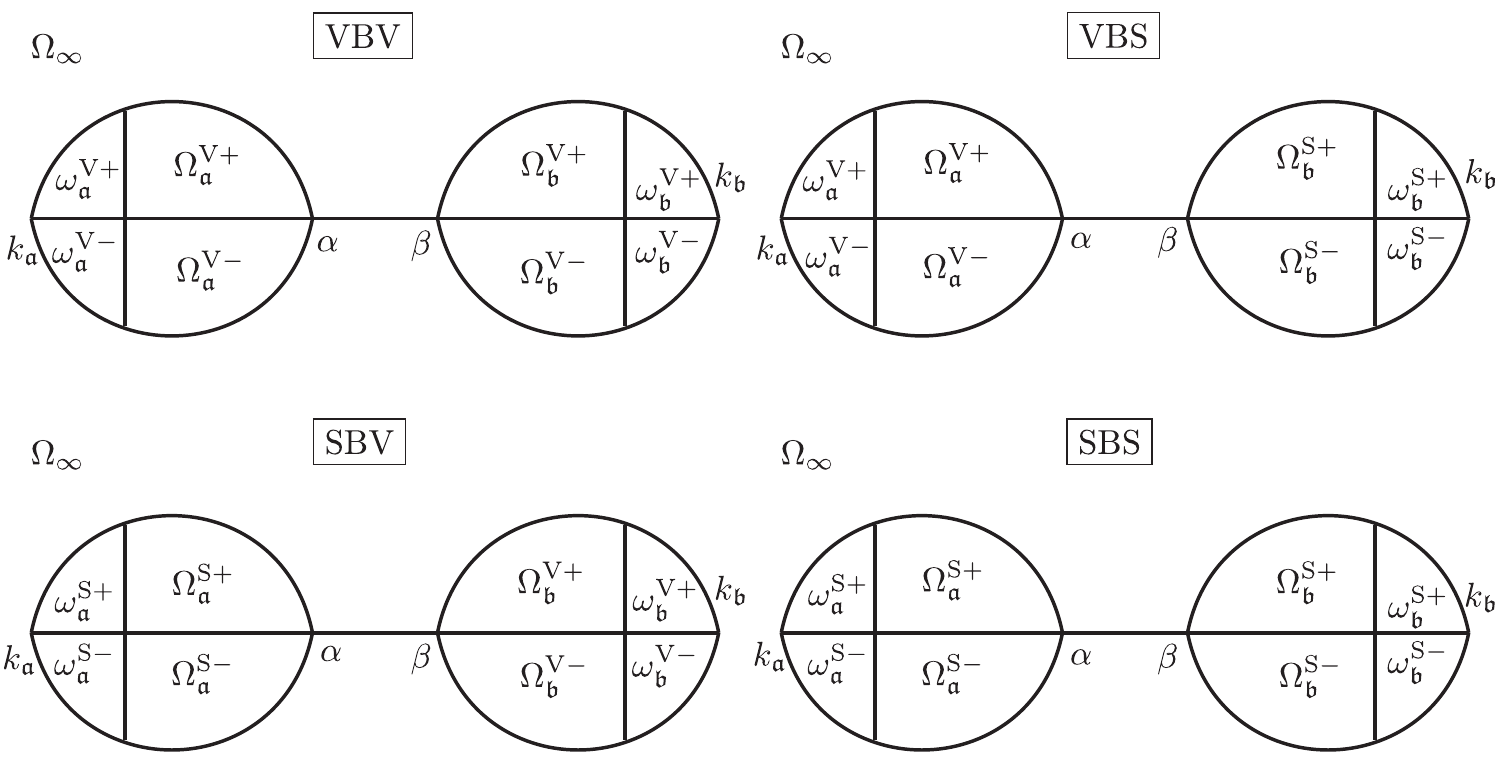}
\caption{The lens domains about the interval $[k_\mathfrak{a},k_\mathfrak{b}]$ in the four configurations of the complex phase function $g$.  Void intervals cannot contain the point $k_0$, and saturated regions cannot contain the point $k_\infty$.  The various domains labeled with the letter $\omega$ are so small as to exclude both of these points, because only near the endpoints of $[k_\mathfrak{a},k_\mathfrak{b}]$ do we need to exploit analyticity of $\tau$ in order to obtain decay along the lens boundaries without installing unusual parametrices but rather by using Lemma~\ref{lemma-Y-phi-endpoints}.}
\label{fig:FourConfigurations}
\end{figure}
and make the following explicit substitution:
\begin{equation}
\mathbf{O}(k):=\mathbf{N}(k)\begin{bmatrix}1 & 0\\-e^{2\myi\phi_{\mathfrak{a},\mathfrak{b}}(k;t)/\eps} & 1\end{bmatrix},\quad
k\in\Omega_{\mathfrak{a},\mathfrak{b}}^{\mathrm{V}+},
\end{equation}
\begin{equation}
\mathbf{O}(k):=\mathbf{N}(k)\begin{bmatrix}1 & -e^{-2\myi\phi_{\mathfrak{a},\mathfrak{b}}(k;t)/\eps}\\ 0 & 1\end{bmatrix},\quad k\in\Omega_{\mathfrak{a},\mathfrak{b}}^{\mathrm{V}-},
\end{equation}
\begin{equation}
\mathbf{O}(k):=\mathbf{N}(k)\begin{bmatrix} 1&e^{-2\myi\phi_{\mathfrak{a},\mathfrak{b}}(k;t)/\eps}\\
0 & 1\end{bmatrix},\quad k\in\Omega_{\mathfrak{a},\mathfrak{b}}^{\mathrm{S}+},
\end{equation}
\begin{equation}
\mathbf{O}(k):=\mathbf{N}(k)\begin{bmatrix} 1 & 0\\ e^{2\myi\phi_{\mathfrak{a},\mathfrak{b}}(k;t)/\eps} & 1\end{bmatrix},\quad k\in\Omega_{\mathfrak{a},\mathfrak{b}}^{\mathrm{S}-},
\end{equation}
\begin{equation}
\mathbf{O}(k):=\mathbf{N}(k)\begin{bmatrix}1 & 0\\ -Y_{\mathfrak{a},\mathfrak{b}}^\eps(k)e^{2\myi\phi_{\mathfrak{a},\mathfrak{b}}(k;t)/\eps} & 1\end{bmatrix},\quad
k\in\omega^{\mathrm{V}+}_{\mathfrak{a},\mathfrak{b}},
\end{equation}
\begin{equation}
\mathbf{O}(k):=\mathbf{N}(k)\begin{bmatrix}1 & -Y_{\mathfrak{a},\mathfrak{b}}^\eps(k)e^{-2\myi\phi_{\mathfrak{a},\mathfrak{b}}(k;t)/\eps}\\0 & 1\end{bmatrix},\quad k\in\omega_{\mathfrak{a},\mathfrak{b}}^{\mathrm{V}-},
\end{equation}
\begin{equation}
\mathbf{O}(k):=\mathbf{N}(k)\begin{bmatrix}1 & Y_{\mathfrak{a},\mathfrak{b}}^\eps(k)e^{-2\myi\phi_{\mathfrak{a},\mathfrak{b}}(k;t)/\eps}\\ 0 & 1\end{bmatrix},\quad k\in\omega_{\mathfrak{a},\mathfrak{b}}^{\mathrm{S}+},
\end{equation}
\begin{equation}
\mathbf{O}(k):=\mathbf{N}(k)\begin{bmatrix} 1 & 0\\Y_{\mathfrak{a},\mathfrak{b}}^\eps(k)
e^{2\myi\phi_{\mathfrak{a},\mathfrak{b}}(k;t)/\eps} & 1\end{bmatrix},\quad k\in\omega^{\mathrm{S}-}_{\mathfrak{a},\mathfrak{b}},
\end{equation}
and in the unbounded domain $\Omega_\infty$ we set $\mathbf{O}(k):=\mathbf{N}(k)$.  The matrix $\mathbf{O}(k)$ satisfies the conditions of the following Riemann-Hilbert problem.
\begin{rhp}
Find a $2\times 2$ matrix $\mathbf{O}(k)$ with the following properties:
\begin{itemize}
\item[]\textbf{Analyticity:}  $\mathbf{O}(k)$ is analytic for $k\in\mathbb{C}\setminus\Sigma$, where $\Sigma$ is the contour illustrated in Figure~\ref{fig:FourContours}
\begin{figure}[h]
\includegraphics{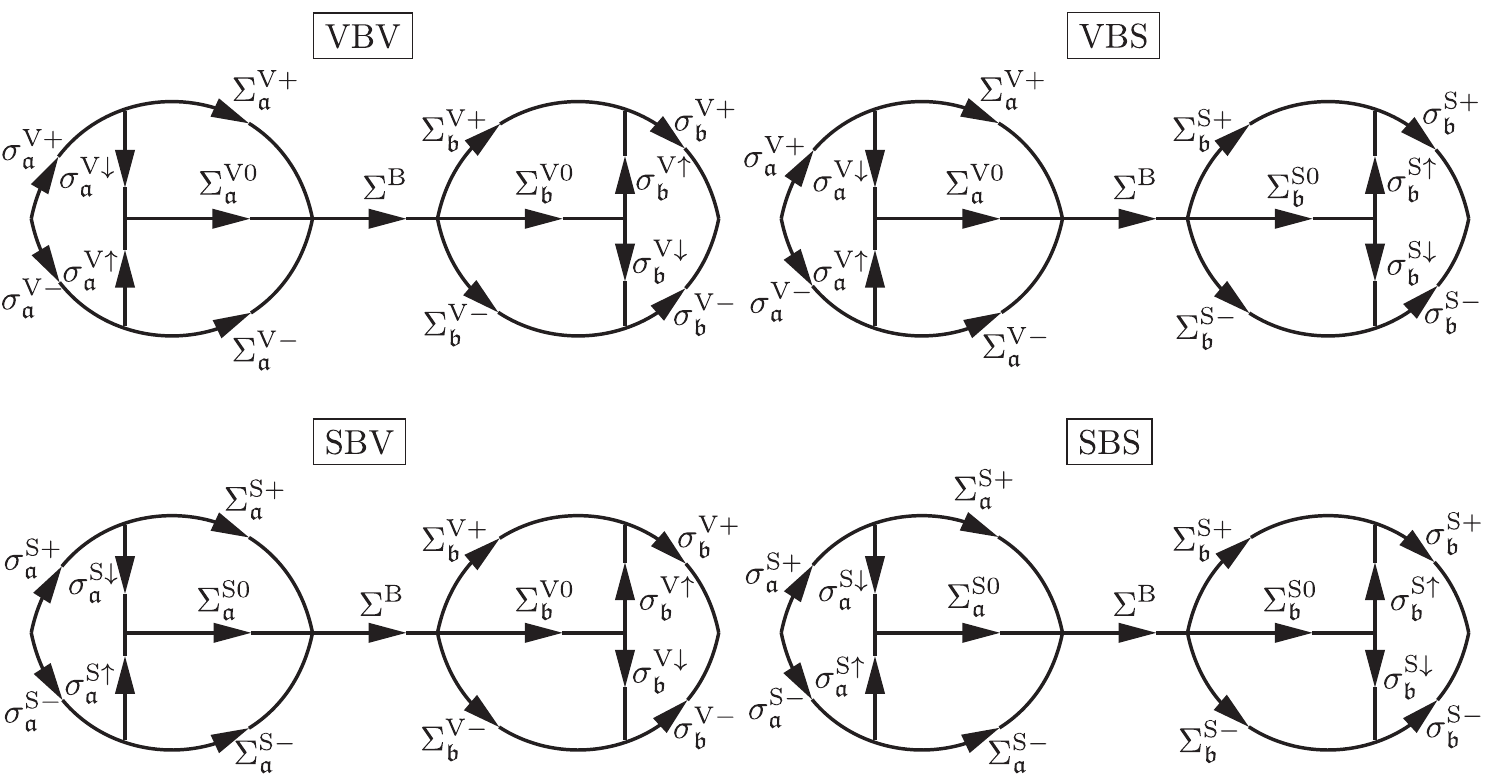}
\caption{The oriented arcs of the contour $\Sigma$ for Riemann-Hilbert Problem~\ref{rhp:t-positive-open-lenses} in the four cases for the complex phase function $g$.}
\label{fig:FourContours}
\end{figure}
and $\mathbf{O}$ takes continuous boundary values on each oriented arc of $\Sigma$, $\mathbf{O}_+(k)$ from the left and $\mathbf{O}_-(k)$ from the right.
\item[]\textbf{Jump Condition:}  The boundary values on each oriented arc of $\Sigma$ are related by $\mathbf{O}_+(k)=\mathbf{O}_-(k)\mathbf{J}(k)$ (see below for the definition of $\mathbf{J}$).
\item[]\textbf{Normalization:}  $\mathbf{O}(k)\to\mathbb{I}$ as $k\to\infty$.
\end{itemize}
\label{rhp:t-positive-open-lenses}
\end{rhp}
The jump matrix $\mathbf{J}$ is defined on $\Sigma$ as follows:
\begin{equation}
\mathbf{J}(k):=\begin{bmatrix}1 & 0\\Y_{\mathfrak{a},\mathfrak{b}}^\eps(k)e^{2\myi\phi_{\mathfrak{a},\mathfrak{b}}(k;t)/\eps} & 1
\end{bmatrix},\quad k\in\sigma_{\mathfrak{a},\mathfrak{b}}^{\mathrm{V}+}\cup\sigma_{\mathfrak{a},\mathfrak{b}}^{\mathrm{S}-},
\end{equation}
\begin{equation}
\mathbf{J}(k):=\begin{bmatrix}1 & -Y_{\mathfrak{a},\mathfrak{b}}^\eps(k)e^{-2\myi\phi_{\mathfrak{a},\mathfrak{b}}(k;t)/\eps}\\0 & 1\end{bmatrix},\quad k\in\sigma_{\mathfrak{a},\mathfrak{b}}^{\mathrm{V}-}\cup\sigma_{\mathfrak{a},\mathfrak{b}}^{\mathrm{S}+},
\end{equation}
\begin{equation}
\mathbf{J}(k):=\begin{bmatrix}1 & 0\\e^{2\myi\phi_{\mathfrak{a},\mathfrak{b}}(k;t)/\eps} & 1
\end{bmatrix},\quad k\in\Sigma_{\mathfrak{a},\mathfrak{b}}^{\mathrm{V}+}\cup\Sigma_{\mathfrak{a},\mathfrak{b}}^{\mathrm{S}-},
\end{equation}
\begin{equation}
\mathbf{J}(k):=\begin{bmatrix}1 & -e^{-2\myi\phi_{\mathfrak{a},\mathfrak{b}}(k;t)/\eps}\\0 & 1\end{bmatrix},\quad k\in\Sigma_{\mathfrak{a},\mathfrak{b}}^{\mathrm{V}-}\cup\Sigma_{\mathfrak{a},\mathfrak{b}}^{\mathrm{S}+},
\end{equation}
\begin{equation}
\mathbf{J}(k):=\begin{bmatrix}1 & 0\\(Y_{\mathfrak{a},\mathfrak{b}}^\eps(k)-1)e^{2\myi\phi_{\mathfrak{a},\mathfrak{b}}(k;t)/\eps} & 1
\end{bmatrix},\quad k\in\sigma_{\mathfrak{a},\mathfrak{b}}^{\mathrm{V}\downarrow}\cup\sigma_{\mathfrak{a},\mathfrak{b}}^{\mathrm{S}\downarrow},
\end{equation}
\begin{equation}
\mathbf{J}(k):=\begin{bmatrix}1 & (1-Y_{\mathfrak{a},\mathfrak{b}}^\eps(k))e^{-2\myi\phi_{\mathfrak{a},\mathfrak{b}}(k;t)/\eps}\\0 & 1\end{bmatrix},\quad k\in\sigma_{\mathfrak{a},\mathfrak{b}}^{\mathrm{V}\uparrow}\cup\sigma_{\mathfrak{a},\mathfrak{b}}^{\mathrm{S}\uparrow},
\end{equation}
\begin{equation}
\mathbf{J}(k):=\begin{bmatrix}1 & (1-Y^\eps(k))e^{-2\myi\phi(k;t)/\eps}\\0 & 1\end{bmatrix}
\begin{bmatrix}1 & 0\\ (Y^\eps(k)-1)e^{2\myi\phi(k;t)/\eps} & 1\end{bmatrix},\quad k\in
\Sigma_\mathfrak{a}^{\mathrm{V}0}\cup\Sigma_\mathfrak{b}^{\mathrm{V}0},
\end{equation}
\begin{equation}
\mathbf{J}(k):=
\begin{bmatrix}1 & 0\\(Y^\eps(k)-1)e^{2\myi\phi(k;t)/\eps} & 1\end{bmatrix}\begin{bmatrix}1 & (1-Y^\eps(k))e^{-2\myi\phi(k;t)/\eps}\\0 & 1\end{bmatrix},\quad k\in
\Sigma_\mathfrak{a}^{\mathrm{S}0}\cup\Sigma_\mathfrak{b}^{\mathrm{S}0},
\end{equation}
and
\begin{equation}
\mathbf{J}(k):=\begin{bmatrix}e^{2(\Delta(k;t)-\tau(k))/\eps} & -Y^\eps(k)e^{-2\myi\phi_\mathrm{B}(t)/\eps}\\
Y^\eps(k)e^{2\myi\phi_\mathrm{B}(t)/\eps} & e^{-2\Delta(k;t)/\eps}\end{bmatrix},\quad k\in\Sigma^\mathrm{B}.
\end{equation}
It follows from \eqref{eq:Void-Inequality} and \eqref{eq:Saturated-Region-Inequality} in Lemma~\ref{lemma:inequalities-x-zero}, from Lemma~\ref{lemma:phi-analytic-gaps},  and from Lemma~\ref{lemma-Y-phi-endpoints} that $\mathbf{J}-\mathbb{I}$ is uniformly small on $\sigma_{\mathfrak{a},\mathfrak{b}}^{\mathrm{V}\pm}\cup\sigma_{\mathfrak{a},\mathfrak{b}}^{\mathrm{S}\pm}\cup\Sigma_{\mathfrak{a},\mathfrak{b}}^{\mathrm{V}\pm}\cup\Sigma_{\mathfrak{a},\mathfrak{b}}^{\mathrm{S}\pm}$ (that is, all non-vertical and non-horizontal arcs of $\Sigma$) omitting only neighborhoods of the band endpoints $\alpha=\mathfrak{a}(t)$ and $\beta=\mathfrak{b}(t)$.  The rate of decay is determined from neighborhoods of $k_\mathfrak{a}$ and $k_\mathfrak{b}$ according to Lemma~\ref{lemma-Y-phi-endpoints}, namely $\mathcal{O}((\log(\eps^{-1}))^{-1/2})$, but away from these points one has exponential decay.    Likewise,
from the fact that $\tau(k)>0$ is uniformly bounded away from zero on compact subsets of $(k_\mathfrak{a},k_\mathfrak{b})$ means that $\mathbf{J}-\mathbb{I}$ is also uniformly exponentially small on $\Sigma_{\mathfrak{a},\mathfrak{b}}^{\mathrm{V}0}\cup\Sigma_{\mathfrak{a},\mathfrak{b}}^{\mathrm{S}0}\cup\sigma_{\mathfrak{a},\mathfrak{b}}^{\mathrm{V}\downarrow}\cup\sigma_{\mathfrak{a},\mathfrak{b}}^{\mathrm{V}\uparrow}\cup\sigma_{\mathfrak{a},\mathfrak{b}}^{\mathrm{S}\downarrow}\cup\sigma_{\mathfrak{a},\mathfrak{b}}^{\mathrm{S}\uparrow}$, that is,
on all vertical arcs of $\Sigma$ and on all horizontal arcs except the band $\mathrm{B}$.

Riemann-Hilbert Problem~\ref{rhp:t-positive-open-lenses} is \emph{not}, however, a small-norm problem in the semiclassical limit $\eps\downarrow 0$, because $\mathbf{J}-\mathbb{I}$ is not decaying with $\eps$ on the band $B$, nor is the decay on the non-real arcs of $\Sigma$ that meet at $k=\alpha$ and $k=\beta$ uniform near these band endpoints.  We will now remedy this situation by constructing an explicit parametrix for $\mathbf{O}(k)$ in a standard fashion.  First, we exhibit a matrix solving the limiting form of the jump condition on the band $\mathrm{B}$:
\begin{equation}
\dot{\mathbf{O}}^{(\mathrm{out})}(k):=e^{-\myi\phi_\mathrm{B}(t)\sigma_3/\eps}
\mathbf{S}\left(\frac{(k-\beta)^{1/4}}{(k-\alpha)^{1/4}}\right)^{\sigma_3}
\mathbf{S}^{-1}
e^{\myi\phi_\mathrm{B}(t)\sigma_3/\eps},\quad \mathbf{S}:=\frac{1}{\sqrt{2}}\begin{bmatrix}1 & -\myi\\ -\myi & 1\end{bmatrix},
\end{equation}
where $(k-\beta)^{1/4}$ and $(k-\alpha)^{1/4}$ denote the principal branches (and hence the ratio may be considered to be well-defined on the interval $k<\alpha$ common to both branch cuts).  It is easy to confirm that this \emph{outer parametrix} has the following properties:
\begin{itemize}
\item $\dot{\mathbf{O}}^{(\mathrm{out})}(k)$ is analytic for $k\in\mathbb{C}\setminus [\alpha,\beta]$,
\item $\dot{\mathbf{O}}^{(\mathrm{out})}(k)$ satisfies the jump condition
\begin{equation}
\dot{\mathbf{O}}^{(\mathrm{out})}_+(k)=\dot{\mathbf{O}}^{(\mathrm{out})}_-(k)\begin{bmatrix}0 & -e^{-2\myi\phi_\mathrm{B}(t)/\epsilon}\\e^{2\myi\phi_\mathrm{B}(t)/\epsilon} & 0\end{bmatrix},\quad \alpha<k<\beta,
\end{equation}
\item $\det(\dot{\mathbf{O}}^{(\mathrm{out})}(k))=1$,
\item $\dot{\mathbf{O}}^{(\mathrm{out})}(k)\to\mathbb{I}$ as $k\to\infty$, and
\item For $k$ bounded away from $\{\alpha,\beta\}$, $\dot{\mathbf{O}}^{(\mathrm{out})}(k)$ is uniformly bounded independent of $\eps$.
\end{itemize}
The outer parametrix will turn out to be a good approximation of $\mathbf{O}(k)$ away from the 
points $k=\alpha,\beta$, but it blows up at these two points and fails to even approximately satisfy the non-negligible jump conditions on the complex contours nearby.  For now, we record what will be a useful formula later on:
\begin{equation}
\dot{\mathbf{O}}^{(\mathrm{out})}(k)=\mathbb{I} +k^{-1}\dot{\mathbf{O}}_1+k^{-2}\dot{\mathbf{O}}_2 +\mathcal{O}(k^{-3}),\quad k\to\infty,
\label{eq:dot-O-out-expansion}
\end{equation}
where
\begin{equation}
\begin{split}
\dot{\mathbf{O}}_1&:=\frac{\beta-\alpha}{4}\begin{bmatrix}0 & -\myi e^{-2\myi\phi_\mathrm{B}(t)/\eps}\\\myi e^{2\myi\phi_\mathrm{B}(t)/\eps} & 0\end{bmatrix}\\
\dot{\mathbf{O}}_2&:=\frac{\beta-\alpha}{32}\begin{bmatrix}\beta-\alpha & -4\myi (\beta+\alpha)e^{-2\myi\phi_\mathrm{B}(t)/\eps}\\4\myi(\beta+\alpha)e^{2\myi\phi_\mathrm{B}(t)/\eps} &
\beta-\alpha\end{bmatrix}.
\end{split}
\label{eq:dot-O-moments}
\end{equation}

Let $D_\alpha$ and $D_\beta$ be open disks centered at $k=\alpha$ and $k=\beta$ respectively, with radius $\delta>0$ sufficiently small, but independent of $\eps$.  We shall construct an \emph{inner parametrix} in each of these disks, an approximation that will locally be far superior to the outer parametrix.

First consider the disk $D_\alpha$.  If 
the interval $(k_\mathfrak{a},\alpha)$, $\alpha=\mathfrak{a}(t)$, is a void $\mathrm{V}$, then  we claim that the function $w_\mathfrak{a}^\mathrm{V}(k;t)$ defined for $k\in\mathrm{B}$ near $\alpha$ by
$w_\mathfrak{a}^\mathrm{V}(k;t):=(2\Delta(k;t))^{2/3}$ (positive $2/3$ power) can be analytically continued to a full complex neighborhood of $k=\alpha$ as a function that satisfies $w_\mathfrak{a}^{\mathrm{V}\prime}(\alpha;t)>0$.  This is a simple consequence of the fact that $\Delta(\mathfrak{a}(t);t)=0$ and that $\Delta'(k;t)$ vanishes like a square root and no higher power at $k=\mathfrak{a}(t)$.  It follows that $w_\mathfrak{a}^\mathrm{V}(\cdot;t)$ defines a conformal mapping from $D_\alpha$ onto a neighborhood of the origin.  The outer parametrix may be represented locally near $k=\alpha$ in terms of $w_\mathfrak{a}^\mathrm{V}(k;t)$ as follows:
\begin{equation}
\dot{\mathbf{O}}^{(\mathrm{out})}(k)=\mathbf{H}_\mathfrak{a}^\mathrm{V}(k)(-w_\mathfrak{a}^\mathrm{V}(k;t))^{-\sigma_3/4}\mathbf{S}^{-1}e^{\myi\phi_\mathrm{B}(t)\sigma_3/\eps},
\end{equation}
where $\mathbf{H}_\mathfrak{a}^\mathrm{V}(k)$ is a well-defined unimodular matrix function holomorphic near $k=\alpha$ that is obviously uniformly bounded in $D_\alpha$ independent of $\eps$.  It will be useful later to write this in the equivalent form
\begin{equation}
\dot{\mathbf{O}}^{(\mathrm{out})}(k)=\mathbf{H}_\mathfrak{a}^\mathrm{V}(k)\eps^{-\sigma_3/6}
(-\zeta)^{-\sigma_3/4}\mathbf{S}^{-1}e^{\myi\phi_\mathrm{B}(t)\sigma_3/\eps},\quad k\in D_\alpha,\quad
\zeta:=\eps^{-2/3}w_\mathfrak{a}^\mathrm{V}(k;t).
\end{equation}
Let an auxiliary matrix function $\mathbf{P}(\zeta)$ be defined as follows ($\xi:=(\tfrac{3}{4})^{2/3}\zeta$):
\begin{equation}
\mathbf{P}(\zeta):=\sqrt{2\pi}\left(\frac{3}{4}\right)^{\tfrac{1}{6}\sigma_3}e^{-\tfrac{1}{4}\pi\myi\sigma_3}
\begin{bmatrix}
-\myi e^{\tfrac{2}{3}\pi\myi}\mathrm{Ai}(\xi e^{\tfrac{2}{3}\pi\myi}) & -\myi e^{-\tfrac{2}{3}\pi\myi}\mathrm{Ai}(\xi e^{-\tfrac{2}{3}\pi\myi})\\
e^{-\tfrac{2}{3}\pi\myi}\mathrm{Ai}'(\xi e^{\tfrac{2}{3}\pi\myi}) & e^{\tfrac{2}{3}\pi\myi}\mathrm{Ai}'(\xi e^{-\tfrac{2}{3}\pi\myi})\end{bmatrix}
e^{-\tfrac{2}{3}\myi(-\xi)^{3/2}\sigma_3},\quad |\arg(-\zeta)|<\frac{\pi}{3},
\end{equation}
\begin{equation}
\mathbf{P}(\zeta):=
\sqrt{2\pi}\left(\frac{3}{4}\right)^{\tfrac{1}{6}\sigma_3}e^{-\tfrac{1}{4}\pi\myi\sigma_3}\begin{bmatrix}
\myi\mathrm{Ai}(\xi) & -\myi e^{-\tfrac{2}{3}\pi\myi}\mathrm{Ai}(\xi e^{-\tfrac{2}{3}\pi\myi})\\
-\mathrm{Ai}'(\xi) & e^{\tfrac{2}{3}\pi\myi}\mathrm{Ai}'(\xi e^{-\tfrac{2}{3}\pi \myi})\end{bmatrix}
e^{\tfrac{2}{3}\xi^{3/2}\sigma_3},\quad 0<\arg(\zeta)<\frac{2\pi}{3},
\end{equation}
and
\begin{equation}
\mathbf{P}(\zeta):=\sqrt{2\pi}\left(\frac{3}{4}\right)^{\tfrac{1}{6}\sigma_3}e^{-\tfrac{1}{4}\pi\myi\sigma_3}
\begin{bmatrix} -\myi e^{\tfrac{2}{3}\pi\myi}\mathrm{Ai}(\xi e^{\tfrac{2}{3}\pi \myi}) & \myi\mathrm{Ai}(\xi)\\
e^{-\tfrac{2}{3}\pi \myi}\mathrm{Ai}'(\xi e^{\tfrac{2}{3}\pi\myi}) & -\mathrm{Ai}'(\xi)\end{bmatrix}
e^{-\tfrac{2}{3}\xi^{3/2}\sigma_3},\quad -\frac{2\pi}{3}<\arg(\zeta)<0.
\end{equation}
Using well-documented formulae involving the Airy function $\mathrm{Ai}$ and its derivative \cite{DLMF}, it follows that $\mathbf{P}(\zeta)$ is analytic in the three sectors of its definition,
that across the rays bounding the sectors one has
\begin{equation}
\lim_{\mu\downarrow 0}\mathbf{P}(\zeta e^{\myi\mu}) = \lim_{\mu\downarrow 0}\mathbf{P}(\zeta e^{-\myi \mu})\begin{bmatrix}0 & -1\\ 1 & e^{-\zeta^{3/2}}\end{bmatrix},\quad \arg(\zeta)=0,
\end{equation}
\begin{equation}
\lim_{\mu\downarrow 0}\mathbf{P}(\zeta e^{\myi\mu}) = \lim_{\mu\downarrow 0}\mathbf{P}(\zeta e^{-\myi \mu})\begin{bmatrix}1 & 0\\-e^{\zeta^{3/2}} & 1\end{bmatrix},\quad \arg(\zeta)=\frac{2\pi}{3},
\end{equation}
and
\begin{equation}
\lim_{\mu\downarrow 0}\mathbf{P}(\zeta e^{\myi\mu}) = \lim_{\mu\downarrow 0}\mathbf{P}(\zeta e^{-\myi \mu})\begin{bmatrix}1 & e^{\zeta^{3/2}}\\0 & 1\end{bmatrix},\quad \arg(\zeta)=-\frac{2\pi}{3},
\end{equation}
and that 
\begin{equation}
\mathbf{P}(\zeta)\mathbf{U}(-\zeta)^{\sigma_3/4}=\mathbb{I}+\begin{bmatrix}
\mathcal{O}(\zeta^{-3}) & \mathcal{O}(\zeta^{-2})\\
\mathcal{O}(\zeta^{-1}) & \mathcal{O}(\zeta^{-3})\end{bmatrix},\quad \zeta\to\infty,
\label{eq:Airy-asymptotic}
\end{equation}
with the asymptotics being uniform with respect to direction in the complex plane, including along the sector boundary rays.
Then, in terms of $\mathbf{P}(\zeta)$ and $\mathbf{H}_\mathfrak{a}^\mathrm{V}(k)$ we define an inner parametrix near $k=\alpha$ by setting
\begin{equation}
\dot{\mathbf{O}}_\mathfrak{a}^\mathrm{V}(k):=\mathbf{H}_\mathfrak{a}^\mathrm{V}(k)
\eps^{-\sigma_3/6}
\mathbf{P}(\eps^{-2/3}w_\mathfrak{a}^\mathrm{V}(k;t))e^{\myi\phi_\mathrm{B}(t)\sigma_3/\eps},\quad
k\in D_\alpha.
\end{equation}
Assuming without loss of generality that for $k\in D_\alpha$ the contours $\Sigma_\mathfrak{a}^{\mathrm{V}\pm}$ coincide with radial segments in the $w_\mathfrak{a}^\mathrm{V}$-plane with angles $\pm 2\pi/3$, one can check that the following facts hold true:
\begin{itemize}
\item $\dot{\mathbf{O}}_\mathfrak{a}^\mathrm{V}(k)$ is analytic for $k\in D_\alpha\setminus (\Sigma_\mathfrak{a}^{\mathrm{V}+}\cup\Sigma_\mathfrak{a}^{\mathrm{V}-}\cup\Sigma^\mathrm{B})$ (there is no jump across $\Sigma_\mathfrak{a}^{\mathrm{V}0}$).
\item $\dot{\mathbf{O}}_\mathfrak{a}^\mathrm{V}(k)$ satisfies the jump conditions
\begin{equation}
\dot{\mathbf{O}}_{\mathfrak{a}+}^\mathrm{V}(k)=\dot{\mathbf{O}}_{\mathfrak{a}-}^\mathrm{V}(k)
\begin{bmatrix}1 & 0\\e^{2\myi\phi_\mathfrak{a}(k;t)/\eps} & 1\end{bmatrix},\quad k\in\Sigma_\mathfrak{a}^{\mathrm{V}+}\cap D_\alpha,
\end{equation}
\begin{equation}
\dot{\mathbf{O}}_{\mathfrak{a}+}^\mathrm{V}(k)=\dot{\mathbf{O}}_{\mathfrak{a}-}^\mathrm{V}(k)
\begin{bmatrix}1 & -e^{-2\myi\phi_\mathfrak{a}(k;t)/\eps}\\0 & 1\end{bmatrix},\quad
k\in\Sigma_\mathfrak{a}^{\mathrm{V}-}\cap D_\alpha,
\end{equation}
and
\begin{equation}
\dot{\mathbf{O}}_{\mathfrak{a}+}^\mathrm{V}(k)=\dot{\mathbf{O}}_{\mathfrak{a}-}^\mathrm{V}(k)
\begin{bmatrix}0 & -e^{-2\myi\phi_\mathrm{B}(t)/\eps}\\
e^{2\myi\phi_\mathrm{B}(t)/\eps} & e^{-2\Delta(k;t)/\eps}\end{bmatrix},\quad k\in \Sigma^\mathrm{B}\cap D_\alpha.
\end{equation}
\item Inner and outer parametrices match well on the disk boundary:
\begin{equation}
\dot{\mathbf{O}}_\mathfrak{a}^\mathrm{V}(k)\dot{\mathbf{O}}^{(\mathrm{out})}(k)^{-1}=
\mathbb{I}+\mathcal{O}(\eps),\quad k\in\partial D_\alpha.
\label{eq:good-match-a-V}
\end{equation}
\item $\det(\dot{\mathbf{O}}_\mathfrak{a}^\mathrm{V}(k))=1$ and $\dot{\mathbf{O}}_\mathfrak{a}^{\mathrm{V}}(k)=\mathcal{O}(\eps^{-1/6})$ holds uniformly for $k\in D_\alpha$.
\end{itemize}
To check the jump conditions one should first express the analytic function $\phi_\mathfrak{a}(k;t)$ on the arcs $\Sigma_\mathfrak{a}^{\mathrm{V}\pm}$ in terms of the conformal coordinate $w_\mathfrak{a}^\mathrm{V}(k;t)$ (this has already been done for $\Delta(k;t)$, really by definition).  This is accomplished by noting   
that  
\begin{equation}
\begin{split}
g_\pm(k;t)+\Phi(k)-\theta(k;x,t)&=\frac{1}{2}(g_+(k;t)+g_-(k;t))\pm\frac{1}{2}(g_+(k;t)-g_-(k;t))+\Phi(k)-\theta(k;x,t) \\ &= 
-\phi(k;t) \pm \myi\Delta(k;t)
\end{split}
\label{eq:branch-point-continue}
\end{equation}
holds at every point of $(k_\mathfrak{a},k_\mathfrak{b})$.  Therefore, since $(k_\mathfrak{a},\mathfrak{a}(t))$ is a void $\mathrm{V}$, then for $t>0$, $\Phi$ is analytic at $k=\mathfrak{a}(t)$
and it follows that the function $-\phi_\mathfrak{a}(k;t)$ for $k_\mathfrak{a}<k<\mathfrak{a}(t)$
is the analytic continuation through $\mathbb{C}_\pm$ of the function $-\phi_\mathrm{B}(t)\pm \myi\Delta(k;t)$ for $\mathfrak{a}(t)<k<\mathfrak{b}(t)$.  Thus, for $k\in\Sigma^\mathrm{B}\cap D_\alpha$ we have $2\Delta(k;t)=w_\mathfrak{a}^\mathrm{V}(k;t)^{3/2}$, and for $k\in\Sigma_\mathfrak{a}^{\mathrm{V}\pm}\cap D_\alpha$ we have $\pm 2\myi(\phi_\mathfrak{a}(k;t)-\phi_\mathrm{B}(t))=w_\mathfrak{a}^\mathrm{V}(k;t)^{3/2}$.  To confirm the matching between the inner and outer parametrices, one uses the asymptotic condition \eqref{eq:Airy-asymptotic} and the fact that $k\in\partial D_\alpha$ means $|\zeta|\sim\eps^{-2/3}$.  The $\mathcal{O}(\eps^{-1/6})$ bound within the disk can be proved similarly.

If instead $(k_\mathfrak{a},\alpha)$ is a saturated region $\mathrm{S}$, then one defines a conformal mapping $w_\mathfrak{a}^\mathrm{S}(k;t)$ taking $D_\alpha$ to a neighborhood of the origin by the analytic continuation from $k\in\Sigma^\mathrm{B}$ of 
$w_\mathfrak{a}^\mathrm{S}(k;t):=(2\tau(k)-2\Delta(k;t))^{2/3}>0$.  One next defines a uniformly bounded unimodular holomorphic matrix function $\mathbf{H}_\mathfrak{a}^\mathrm{S}(k)$ near $k=\alpha$ by writing the outer parametrix in the form
\begin{equation}
\dot{\mathbf{O}}^{(\mathrm{out})}(k)=\mathbf{H}_\mathfrak{a}^\mathrm{S}(k)
(-w_\mathfrak{a}^\mathrm{S}(k;t))^{-\sigma_3/4}\mathbf{S}^{-1}e^{\myi\phi_\mathrm{B}\sigma_3/\eps},\quad k\in D_\alpha.
\end{equation}
Finally, one defines
\begin{equation}
\dot{\mathbf{O}}_\mathfrak{a}^\mathrm{S}(k):=\mathbf{H}_\mathfrak{a}^\mathrm{S}(k)
\eps^{-\sigma_3/6}(\myi\sigma_3)\mathbf{P}(\eps^{-2/3}w_\mathfrak{a}^\mathrm{S}(k;t))(\myi\sigma_2) e^{\myi\phi_\mathrm{B}(t)\sigma_3/\eps},\quad k\in D_\alpha.
\end{equation}
Assuming that for $k\in D_\alpha$ the contours $\Sigma_\mathfrak{a}^{\mathrm{S}\pm}$
coincide with $\arg(w_\mathfrak{a}^\mathrm{S}(k;t))=\pm 2\pi/3$, one has the following.
\begin{itemize}
\item $\dot{\mathbf{O}}_\mathfrak{a}^\mathrm{S}(k)$ is analytic for $k\in D_\alpha\setminus(\Sigma_\mathfrak{a}^{\mathrm{S}+}\cup\Sigma_\mathfrak{a}^{\mathrm{S}-}\cup\Sigma^\mathrm{B})$ (there is no jump across $\Sigma_\mathfrak{a}^{\mathrm{S}0}$).
\item $\dot{\mathbf{O}}_\mathfrak{a}^\mathrm{S}(k)$ satisfies the jump conditions
\begin{equation}
\dot{\mathbf{O}}_{\mathfrak{a}+}^\mathrm{S}(k)=\dot{\mathbf{O}}_{\mathfrak{a}-}^\mathrm{S}(k)
\begin{bmatrix}1 & -e^{-2\myi\phi_\mathfrak{a}(k;t)/\eps}\\0 & 1\end{bmatrix},\quad
k\in\Sigma_\mathbf{a}^{\mathrm{S}+}\cap D_\alpha,
\end{equation}
\begin{equation}
\dot{\mathbf{O}}_{\mathfrak{a}+}^\mathrm{S}(k)=\dot{\mathbf{O}}_{\mathfrak{a}-}^\mathrm{S}(k)
\begin{bmatrix}1 &0\\e^{2\myi\phi_\mathfrak{a}(k;t)/\eps} & 1\end{bmatrix},\quad
k\in\Sigma_\mathbf{a}^{\mathrm{S}-}\cap D_\alpha,
\end{equation}
and
\begin{equation}
\dot{\mathbf{O}}_{\mathfrak{a}+}^\mathrm{S}(k)=\dot{\mathbf{O}}_{\mathfrak{a}-}^\mathrm{S}(k)
\begin{bmatrix}e^{2(\Delta(k;t)-\tau(k))/\eps} & -e^{2\myi\phi_\mathrm{B}(t)/\eps}\\
e^{2\myi\phi_\mathrm{B}(t)/\eps} & 0
\end{bmatrix},\quad
k\in\Sigma^\mathrm{B}\cap D_\alpha.
\end{equation}
\item Inner and outer parametrices match well on the disk boundary:
\begin{equation}
\dot{\mathbf{O}}_\mathfrak{a}^\mathrm{S}(k)\dot{\mathbf{O}}^{(\mathrm{out})}(k)^{-1}=\mathbb{I}+\mathcal{O}(\eps),\quad k\in\partial D_\alpha.
\label{eq:good-match-a-S}
\end{equation}
\item $\det(\dot{\mathbf{O}}_\mathfrak{a}^\mathrm{S}(k))=1$ and $\dot{\mathbf{O}}_\mathfrak{a}^\mathrm{S}(k)=\mathcal{O}(\eps^{-1/6})$ holds uniformly for $k\in D_\alpha$.
\end{itemize}
The proof is virtually the same as before, with the main difference being that the identity \eqref{eq:branch-point-continue} implies that if
$(k_\mathfrak{a},\mathfrak{a}(t))$
is a saturated region $\mathrm{S}$, then the function $-\phi_\mathfrak{a}(k;t)\pm \myi\tau(k)$ for $k\in \mathrm{S}$ is the analytic continuation through $\mathbb{C}_\pm$ of the function $-\phi_\mathrm{B}(t)\pm \myi\Delta(k;t)$.  

Now consider the disk $D_\beta$.
If $(\beta,k_\mathfrak{b})$ is a void $\mathrm{V}$, we define a conformal map $w_\mathfrak{b}^\mathrm{V}:D_\beta\to\mathbb{C}$
by continuation from $\mathrm{B}$ of the formula $w_\mathfrak{b}^\mathrm{V}(k;t):=(2\Delta(k;t))^{2/3}>0$.  Introduce a uniformly bounded, unimodular, and holomorphic matrix $\mathbf{H}_\mathfrak{b}^\mathrm{V}(k)$ for $k\in D_\beta$ by writing
\begin{equation}
\dot{\mathbf{O}}^{(\mathrm{out})}(k)=\mathbf{H}_\mathfrak{b}^\mathrm{V}(k)(-w_\mathfrak{b}^\mathrm{V}(k;t))^{\sigma_3/4}\mathbf{S}^{-1}e^{\myi\phi_\mathrm{B}(t)\sigma_3/\eps},\quad k\in D_\beta.
\end{equation}
Then define an inner parametrix in $D_\beta$ by the formula
\begin{equation}
\dot{\mathbf{O}}_\mathfrak{b}^\mathrm{V}(k):=\mathbf{H}_\mathfrak{b}^\mathrm{V}(k)\eps^{\sigma_3/6}(-\myi\sigma_1)\mathbf{P}(\eps^{-2/3}w_\mathfrak{b}^\mathrm{V}(k))(\myi\sigma_1)
e^{\myi\phi_\mathrm{B}(t)\sigma_3/\eps},\quad k\in D_\beta.
\end{equation}
Assuming that for $k\in D_\beta$ the contours $\Sigma_\mathfrak{b}^{\mathrm{V}\pm}$ coincide
with segments with angles $\arg(w_\mathfrak{b}^\mathrm{V})=\mp 2\pi/3$, this parametrix satisfies the following conditions.
\begin{itemize}
\item $\dot{\mathbf{O}}_\mathfrak{b}^\mathrm{V}(k)$  is analytic for $k\in D_\beta\setminus (\Sigma_\mathfrak{b}^{\mathrm{V}+}\cup\Sigma_\mathfrak{b}^{\mathrm{V}-}\cup
\Sigma^\mathrm{B})$ (there is no jump across $\Sigma_\mathrm{b}^{\mathrm{V}0}$).
\item $\dot{\mathbf{O}}_\mathfrak{b}^\mathrm{V}(k)$ satisfies the jump conditions
\begin{equation}
\dot{\mathbf{O}}_{\mathfrak{b}+}^\mathrm{V}(k)=
\dot{\mathbf{O}}_{\mathfrak{b}-}^\mathrm{V}(k)\begin{bmatrix}
0 & -e^{-2\myi\phi_\mathrm{B}(t)/\eps}\\
e^{2\myi\phi_\mathrm{B}(t)/\eps} & e^{-2\Delta(k;t)/\eps}\end{bmatrix},\quad
k\in\Sigma^\mathrm{B}\cap D_\beta,
\end{equation}
\begin{equation}
\dot{\mathbf{O}}_{\mathfrak{b}+}^\mathrm{V}(k)=
\dot{\mathbf{O}}_{\mathfrak{b}-}^\mathrm{V}(k)\begin{bmatrix}
1 & 0\\
e^{2\myi\phi_\mathfrak{b}(k;t)/\eps} & 1\end{bmatrix},\quad
k\in\Sigma_\mathfrak{b}^{\mathrm{V}+}\cap D_\beta,
\end{equation}
and
\begin{equation}
\dot{\mathbf{O}}_{\mathfrak{b}+}^\mathrm{V}(k)=
\dot{\mathbf{O}}_{\mathfrak{b}-}^\mathrm{V}(k)\begin{bmatrix}
1 & -e^{-2\myi\phi_\mathfrak{b}(k;t)/\eps}\\
0 & 1\end{bmatrix},\quad
k\in\Sigma_\mathfrak{b}^{\mathrm{V}-}\cap D_\beta.
\end{equation}
\item Inner and outer parametrices match well on the disk boundary:
\begin{equation}
\dot{\mathbf{O}}_\mathfrak{b}^\mathrm{V}(k)\dot{\mathbf{O}}^{(\mathrm{out})}(k)^{-1}=\mathbb{I}+\mathcal{O}(\eps),\quad k\in \partial D_\beta.
\label{eq:good-match-b-V}
\end{equation}
\item $\det(\dot{\mathbf{O}}_{\mathfrak{b}}^\mathrm{V}(k))=1$ and $\dot{\mathbf{O}}_\mathfrak{b}^\mathrm{V}(k)=\mathcal{O}(\eps^{-1/6})$ holds uniformly for $k\in D_\beta$.
\end{itemize}

If instead  $(\beta,k_\mathfrak{b})$ is a saturated region $\mathrm{S}$, then start with the conformal map defined in $D_\beta$ by the continuation from $\mathrm{B}$ of the formula $w_\mathfrak{b}^\mathrm{S}(k;t):=(2\tau(k)-2\Delta(k;t))^{2/3}>0$.  Introduce the matrix $\mathbf{H}_\mathfrak{b}^\mathrm{S}(k)$ bounded, holomorphic, and unimodular in $D_\beta$, by
\begin{equation}
\dot{\mathbf{O}}^{(\mathrm{out})}(k)=\mathbf{H}_\mathfrak{b}^\mathrm{S}(k)(-w_\mathfrak{b}^\mathrm{S}(k;t))^{\sigma_3/4}\mathbf{S}^{-1}e^{\myi\phi_\mathrm{B}\sigma_3/\eps},\quad k\in D_\beta.
\end{equation}
Then set
\begin{equation}
\dot{\mathbf{O}}_\mathfrak{b}^\mathrm{S}(k):= \mathbf{H}_\mathfrak{b}^\mathrm{S}(k)\eps^{\sigma_3/6}(-\myi\sigma_2)\mathbf{P}(\eps^{-2/3}w_\mathfrak{b}^\mathrm{S}(k;t))(\myi\sigma_3)e^{2\myi\phi_\mathrm{B}(t)\sigma_3/\eps},\quad k\in D_\beta.
\end{equation}
Assuming that for $k\in D_\beta$ the contours $\Sigma_\mathfrak{b}^{\mathrm{S}\pm}$ coincide
with segments with angles $\arg(w_\mathfrak{b}^\mathrm{S})=\mp 2\pi/3$, this parametrix satisfies these conditions.
\begin{itemize}
\item $\dot{\mathbf{O}}_{\mathfrak{b}}^\mathrm{S}(k)$ is analytic for $k\in D_\beta\setminus (\Sigma_\mathfrak{b}^{\mathrm{S}+}\cup\Sigma_\mathfrak{b}^{\mathrm{S}-}\cup\Sigma^\mathrm{B})$ (there is no jump across $\Sigma_\mathfrak{b}^{\mathrm{S}0}$).
\item $\dot{\mathbf{O}}_{\mathfrak{b}}^\mathrm{S}(k)$ satisfies the jump conditions
\begin{equation}
\dot{\mathbf{O}}_{\mathfrak{b}+}^\mathrm{S}(k)=
\dot{\mathbf{O}}_{\mathfrak{b}-}^\mathrm{S}(k)
\begin{bmatrix} e^{2(\Delta(k;t)-\tau(k))/\eps} & -e^{-2\myi\phi_\mathrm{B}(t)/\eps}\\
e^{2\myi\phi_\mathrm{B}(t)/\eps} & 0\end{bmatrix},\quad k\in\Sigma^\mathrm{B}\cap D_\beta,
\end{equation}
\begin{equation}
\dot{\mathbf{O}}_{\mathfrak{b}+}^\mathrm{S}(k)=
\dot{\mathbf{O}}_{\mathfrak{b}-}^\mathrm{S}(k)
\begin{bmatrix}1 & -e^{-2\myi\phi_\mathfrak{b}(k;t)/\eps}\\0 & 1\end{bmatrix},\quad
k\in\Sigma_\mathfrak{b}^{\mathrm{S}+}\cap D_\beta,
\end{equation}
and
\begin{equation}
\dot{\mathbf{O}}_{\mathfrak{b}+}^\mathrm{S}(k)=
\dot{\mathbf{O}}_{\mathfrak{b}-}^\mathrm{S}(k)
\begin{bmatrix}1 & 0\\e^{2\myi\phi_\mathfrak{b}(k;t)/\eps} & 1\end{bmatrix},\quad
k\in\Sigma_\mathfrak{b}^{\mathrm{S}-}\cap D_\beta.
\end{equation}
\item
Inner and outer parametrices match well on the disk boundary:
\begin{equation}
\dot{\mathbf{O}}_\mathfrak{b}^{\mathrm{S}}(k)\dot{\mathbf{O}}^{(\mathrm{out})}(k)^{-1}=
\mathbb{I}+\mathcal{O}(\eps),\quad k\in\partial D_\beta.
\label{eq:good-match-b-S}
\end{equation}
\item $\det(\dot{\mathbf{O}}_\mathfrak{b}^\mathrm{S}(k))=1$ and $\dot{\mathbf{O}}_\mathfrak{b}^\mathrm{S}(k)=\mathcal{O}(\eps^{-1/6})$ holds uniformly for $k\in D_\beta$.
\end{itemize}

Now we combine the outer and inner parametrices into a \emph{global parametrix} $\dot{\mathbf{O}}(k)$ defined as follows.  If $\mathfrak{a}'(t)<0$ and $\mathfrak{b}'(t)>0$ (case VBV), then
\begin{equation}
\dot{\mathbf{O}}(k):=\begin{cases}
\dot{\mathbf{O}}_\mathfrak{a}^\mathrm{V}(k),&\quad k\in D_\alpha\\
\dot{\mathbf{O}}_\mathfrak{b}^\mathrm{V}(k),&\quad k\in D_\beta\\
\dot{\mathbf{O}}^{(\mathrm{out})}(k),&\quad k\in\mathbb{C}\setminus (\overline{D}_\alpha\cup\overline{D}_\beta).
\end{cases}
\label{eq:global-parametrix-VBV}
\end{equation}
If $\mathfrak{a}'(t)<0$ and $\mathfrak{b}'(t)<0$ (case VBS), then
\begin{equation}
\dot{\mathbf{O}}(k):=\begin{cases}
\dot{\mathbf{O}}_\mathfrak{a}^\mathrm{V}(k),&\quad k\in D_\alpha\\
\dot{\mathbf{O}}_\mathfrak{b}^\mathrm{S}(k),&\quad k\in D_\beta\\
\dot{\mathbf{O}}^{(\mathrm{out})}(k),&\quad k\in\mathbb{C}\setminus (\overline{D}_\alpha\cup\overline{D}_\beta).
\end{cases}
\label{eq:global-parametrix-VBS}
\end{equation}
If $\mathfrak{a}'(t)>0$ and $\mathfrak{b}'(t)>0$ (case SBV), then
\begin{equation}
\dot{\mathbf{O}}(k):=\begin{cases}
\dot{\mathbf{O}}_\mathfrak{a}^\mathrm{S}(k),&\quad k\in D_\alpha\\
\dot{\mathbf{O}}_\mathfrak{b}^\mathrm{V}(k),&\quad k\in D_\beta\\
\dot{\mathbf{O}}^{(\mathrm{out})}(k),&\quad k\in\mathbb{C}\setminus (\overline{D}_\alpha\cup\overline{D}_\beta).
\end{cases}
\label{eq:global-parametrix-SBV}
\end{equation}
Finally, if $\mathfrak{a}'(t)>0$ and $\mathfrak{b}'(t)<0$ (case SBS), then
\begin{equation}
\dot{\mathbf{O}}(k):=\begin{cases}
\dot{\mathbf{O}}_\mathfrak{a}^\mathrm{S}(k),&\quad k\in D_\alpha\\
\dot{\mathbf{O}}_\mathfrak{b}^\mathrm{S}(k),&\quad k\in D_\beta\\
\dot{\mathbf{O}}^{(\mathrm{out})}(k),&\quad k\in\mathbb{C}\setminus (\overline{D}_\alpha\cup\overline{D}_\beta).
\end{cases}
\label{eq:global-parametrix-SBS}
\end{equation}
The global parametrix is intended to be a good model for $\mathbf{O}(k)$ in the whole complex plane.  To evaluate this claim, consider the error $\mathbf{E}(k)$ defined as
\begin{equation}
\mathbf{E}(k):=\mathbf{O}(k)\dot{\mathbf{O}}(k)^{-1}.
\end{equation}
Because $\mathbf{O}(k)$ satisfies the conditions of Riemann-Hilbert Problem~\ref{rhp:t-positive-open-lenses} while $\dot{\mathbf{O}}(k)$ is known explicitly, $\mathbf{E}(k)$ solves a Riemann-Hilbert problem equivalent to that for $\mathbf{O}(k)$.  This problem is the following.
\begin{rhp}
Find a $2\times 2$ matrix $\mathbf{E}(k)$ with the following properties:
\begin{itemize}
\item[]\textbf{Analyticity:}  $\mathbf{E}(k)$ is analytic for $k\in\mathbb{C}\setminus\Sigma^\mathbf{E}$, where $\Sigma^\mathbf{E}$ is the contour illustrated in Figure~\ref{fig:SmallNormContour},
\begin{figure}[h]
\includegraphics{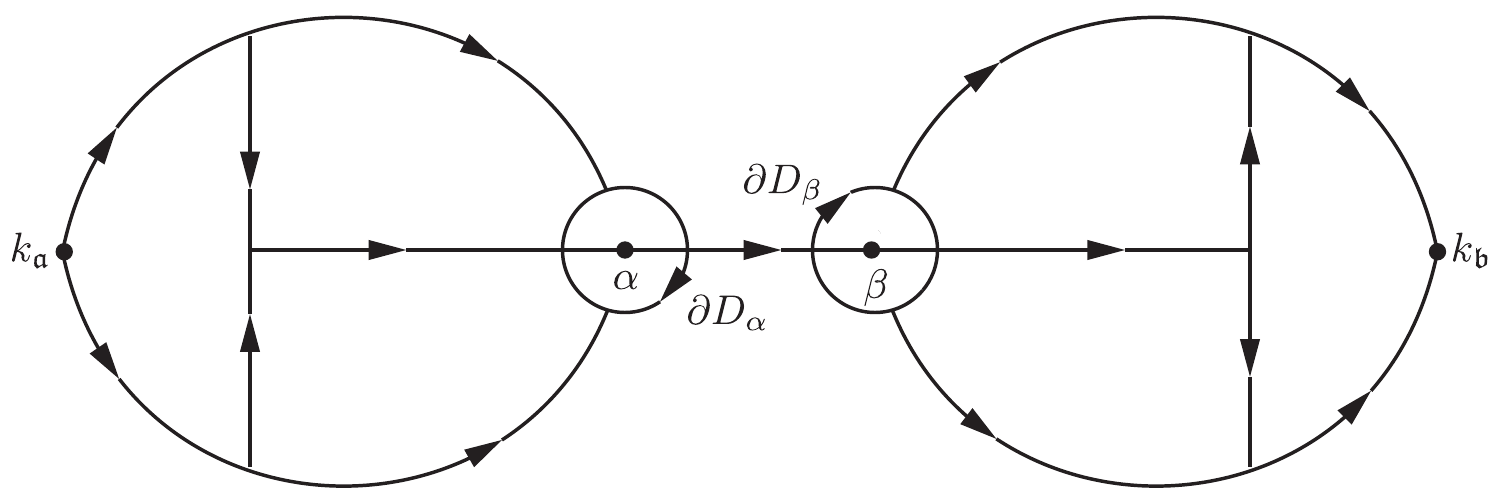}
\caption{The jump contour $\Sigma^{\mathbf{E}}$ for the error matrix $\mathbf{E}(k)$.  All contour arcs are labeled as in Figure~\ref{fig:FourContours} (different labels in the four cases) with the exception of the disk boundaries $\partial D_\alpha$ and $\partial D_\beta$ (both oriented clockwise).  Note that the sub-arcs of $\Sigma_\mathfrak{a}^{\mathrm{V}\pm}$ or $\Sigma_\mathfrak{a}^{\mathrm{S}\pm}$ within $D_\alpha$ and the sub-arcs of $\Sigma_\mathfrak{b}^{\mathrm{V}\pm}$ or $\Sigma_\mathfrak{b}^{\mathrm{S}\pm}$ within $D_\beta$ are absent
from $\Sigma^\mathbf{E}$, because the inner parametrices satisfy exactly the jump conditions of $\mathbf{O}(k)$ on these sub-arcs.}
\label{fig:SmallNormContour}
\end{figure}
and $\mathbf{E}(k)$ takes continuous boundary values $\mathbf{E}_+(k)$ and $\mathbf{E}_-(k)$ on each oriented arc of $\Sigma^\mathbf{E}$ from the left and right, respectively.
\item[]\textbf{Jump Condition:}  The boundary values on each oriented arc of $\Sigma^\mathbf{E}$ are related by $\mathbf{E}_+(k)=\mathbf{E}_-(k)\mathbf{J}^\mathbf{E}(k)$ (see below for a complete explicit characterization of  the jump matrix $\mathbf{J}^\mathbf{E}(k)$).
\item[]\textbf{Normalization:}  $\mathbf{E}(k)\to\mathbb{I}$ as $k\to\infty$.
\end{itemize}
\label{rhp:small-norm}
\end{rhp}
On all arcs of $\Sigma^\mathbf{E}$ with the exception of (i) the disk boundaries $\partial D_\alpha$ and $\partial D_\beta$, (ii) the real arcs of $\Sigma_\mathfrak{a}^{\mathrm{V}0}$ or $\Sigma_\mathfrak{a}^{\mathrm{S}0}$ within $D_\alpha$, (iii) the real arcs of $\Sigma_\mathfrak{b}^{\mathrm{V}0}$ or $\Sigma_\mathfrak{b}^{\mathrm{S}0}$ within $D_\beta$, and (iv) the real arc $\Sigma^\mathrm{B}$ (including parts inside and outside the disks), the jump matrix $\mathbf{J}^\mathbf{E}(k)$ is given by 
\begin{equation}
\mathbf{J}^\mathbf{E}(k):=\dot{\mathbf{O}}^{(\mathrm{out})}(k)\mathbf{J}(k)\dot{\mathbf{O}}^{(\mathrm{out})}(k)^{-1}.
\end{equation}
Here $\mathbf{J}(k)$ is the jump matrix for Riemann-Hilbert Problem~\ref{rhp:t-positive-open-lenses} characterizing $\mathbf{O}(k)$, and it has already been shown that $\mathbf{J}(k)-\mathbb{I}$ is uniformly $\mathcal{O}((\log(\eps^{-1}))^{-1/2})$ on these arcs of $\Sigma^\mathbf{E}$.  Since $\dot{\mathbf{O}}^{(\mathrm{out})}(k)$ is unimodular and bounded independently of $\eps$ away from $k=\alpha$ and $k=\beta$, it follows that also $\mathbf{J}^\mathbf{E}(k)-\mathbb{I}=\mathcal{O}((\log(\eps^{-1}))^{-1/2})$ holds uniformly on these arcs.

The discontinuity of $\mathbf{E}(k)$ across the boundary of the two disks is caused by the mismatch of the inner and outer parametrices, since $\mathbf{O}(k)$ has no jump. Letting $\dot{\mathbf{O}}_\mathfrak{a}(k)$ denote the inner parametrix installed in $D_\alpha$ and 
$\dot{\mathbf{O}}_\mathfrak{b}(k)$ denote the inner parametrix installed in $D_\beta$, a calculation shows that
\begin{equation}
\mathbf{J}^{\mathbf{E}}(k)=\dot{\mathbf{O}}_{\mathfrak{a},\mathfrak{b}}(k)\dot{\mathbf{O}}^{(\mathrm{out})}(k)^{-1},\quad k\in\partial D_{\alpha,\beta}.
\end{equation}
But using \eqref{eq:good-match-a-V} or \eqref{eq:good-match-a-S} for $k\in\partial D_\alpha$, and using \eqref{eq:good-match-b-V} or \eqref{eq:good-match-b-S} for $k\in D_\beta$, we conclude that $\mathbf{J}^{\mathbf{E}}(k)-\mathbb{I}=\mathcal{O}(\eps)$ holds uniformly on both $\partial D_\alpha$ and $\partial D_\beta$.

If $k$ lies on the real segment of $\Sigma_\mathfrak{a}^{\mathrm{V}0}$ or $\Sigma_\mathfrak{a}^{\mathrm{S}0}$ within $D_\alpha$, then
\begin{equation}
\mathbf{J}^\mathbf{E}(k)=\dot{\mathbf{O}}_\mathfrak{a}(k)\mathbf{J}(k)\dot{\mathbf{O}}_\mathfrak{a}(k)^{-1},\quad k\in (\Sigma_\mathfrak{a}^{\mathrm{V}0}\cup\Sigma_\mathfrak{a}^{\mathrm{S}0})\cap D_\alpha,
\end{equation}
and similarly
\begin{equation}
\mathbf{J}^\mathbf{E}(k)=\dot{\mathbf{O}}_\mathfrak{b}(k)\mathbf{J}(k)\dot{\mathbf{O}}_\mathfrak{b}(k)^{-1},\quad k\in (\Sigma_\mathfrak{b}^{\mathrm{V}0}\cup\Sigma_\mathfrak{b}^{\mathrm{S}0})\cap D_\beta,
\end{equation}
since the inner parametrix has no jump.  In all four cases we have shown that $\mathbf{J}(k)-\mathbb{I}$ is exponentially small in the limit $\eps\downarrow 0$, so even though each conjugating factor amplifies this by $\eps^{-1/6}$, we still have decay beyond all orders in $\eps$.

If $k$ lies on the band $\Sigma^\mathrm{B}$ within either disk, then both $\mathbf{O}(k)$ and the (inner) parametrix $\dot{\mathbf{O}}(k)$ are discontinuous.  If the band edge abuts a void, a calculation shows that
\begin{equation}
\mathbf{J}^\mathbf{E}(k)=\dot{\mathbf{O}}_{\mathfrak{a},\mathfrak{b}-}^\mathrm{V}(k)
\begin{bmatrix}Y^\eps(k)+e^{-2\tau(k)/\eps} & e^{-2\myi\phi_\mathrm{B}(t)/\eps}e^{2(\Delta(k;t)-\tau(k))/\eps}\\(Y^\eps(k)-1)e^{2\myi\phi_\mathrm{B}(t)/\eps} e^{-2\Delta(t;k)/\eps} & Y^\eps(k)\end{bmatrix}
\dot{\mathbf{O}}_{\mathfrak{a},\mathfrak{b}-}^\mathrm{V}(k)^{-1}.
\end{equation}
The central factor is an exponentially small perturbation of $\mathbb{I}$ because $Y^\eps(k)-1$ is exponentially small and both $\tau(k)$ and $\tau(k)-\Delta(k;t)$ are strictly positive.  The conjugating factors amplify this by $\eps^{-1/3}$, but this remains beyond all orders small.  Similarly, if the band edge abuts a saturated region, then
\begin{equation}
\mathbf{J}^\mathbf{E}(k)=\dot{\mathbf{O}}_{\mathfrak{a},\mathfrak{b}-}^\mathrm{S}(k)
\begin{bmatrix}Y^\eps(k) & (1-Y^\eps(k))e^{-2\myi\phi_\mathrm{B}(t)/\eps}e^{2(\Delta(k;t)-\tau(k))/\eps}\\
-e^{2\myi\phi_\mathrm{B}(t)/\eps}e^{-2\Delta(k;t)/\eps} & Y^\eps(k)+e^{-2\tau(k)/\eps}\end{bmatrix}
\dot{\mathbf{O}}_{\mathfrak{a},\mathfrak{b}-}^\mathrm{S}(k)^{-1},
\end{equation}
and again $\mathbf{J}^\mathbf{E}-\mathbb{I}$ is small beyond all orders despite the conjugating factors algebraically large size.

Finally, consider $k\in \Sigma^\mathrm{B}$ outside both disks.  Again both $\mathbf{O}(k)$ and the 
(outer) parametrix $\dot{\mathbf{O}}(k)$ have jump discontinuities across this segment, and a calculation shows that
\begin{equation}
\mathbf{J}^\mathbf{E}(k)=\dot{\mathbf{O}}^{(\mathrm{out})}_-(k)
\begin{bmatrix} Y^\eps(k) & -e^{-2\myi\phi_\mathrm{B}(t)/\eps}e^{-2\Delta(k;t)/\eps}\\
e^{2\myi\phi_\mathrm{B}(t)/\eps}e^{2(\Delta(k;t)-\tau(k))/\eps} & Y^\eps(k)\end{bmatrix}
\dot{\mathbf{O}}^{(\mathrm{out})}_-(k)^{-1},
\end{equation}
which is exponentially close to $\mathbb{I}$ because the outer parametrix and its inverse are uniformly bounded away from $k=\alpha$ and $k=\beta$, while $Y^\eps(k)-1$ is exponentially small and 
$\Delta(k;t)$ and $\tau(k)-\Delta(k;t)$ are both strictly positive as $k$ lies on a compact subset of the interior of the band $\mathrm{B}$.

These considerations prove that $\mathbf{J}^\mathbf{E}(k)-\mathbb{I}$ is uniformly $\mathcal{O}((\log(\eps^{-1}))^{-1/2})$ on the $\eps$-independent contour $\Sigma^\mathbf{E}$.  Therefore,
in the $\eps\downarrow 0$ limit, Riemann-Hilbert Problem~\ref{rhp:small-norm} is a small-norm Riemann-Hilbert problem in the $L^2(\Sigma^\mathbf{E})$ sense.  This implies that $\mathbf{E}(k)$ exists for $\eps>0$ sufficiently small and has a convergent (because $\Sigma^\mathbf{E}$ is bounded) Laurent expansion for sufficiently large $|k|$ of the form 
\begin{equation}
\mathbf{E}(k)=\mathbb{I}+k^{-1}\mathbf{E}_1 + k^{-2}\mathbf{E}_2+\mathcal{O}(k^{-3}),\quad k\to\infty, 
\label{eq:E-series}
\end{equation}
with the first two moments satisfying
\begin{equation}
\mathbf{E}_1=\mathcal{O}((\log(\eps^{-1}))^{-1/2})\quad\text{and}\quad\mathbf{E}_2=\mathcal{O}((\log(\eps^{-1}))^{-1/2}),\quad \eps\downarrow 0.  
\label{eq:E-moment-bound}
\end{equation}

At last, we obtain the exact formula for $\tilde{\mathbf{M}}(k)$ valid for sufficiently large $|k|$:
\begin{equation}
\tilde{\mathbf{M}}(k)=\mathbf{N}(k)e^{\myi g(k)\sigma_3/\eps}=\mathbf{O}(k)e^{\myi g(k)\sigma_3/\eps}
=\mathbf{E}(k)\dot{\mathbf{O}}^{(\mathrm{out})}(k)e^{\myi g(k)\sigma_3/\eps}.
\end{equation}
Taking into account the Laurent expansion $g(k;t)=k^{-1}g_1(t)+k^{-2}g_2(t)+\mathcal{O}(k^{-3})$ as $k\to\infty$, using \eqref{eq:dot-O-out-expansion} with \eqref{eq:dot-O-moments}, and using \eqref{eq:E-series} with \eqref{eq:E-moment-bound},
it therefore follows from \eqref{eq:tilde-q-qx-moments} that in the limit $\eps\downarrow 0$,
\begin{equation}
\begin{split}
\tilde{q}^\eps(0,t)&=\frac{\beta-\alpha}{2}e^{-2\myi\phi_\mathrm{B}(t)/\eps} +\mathcal{O}((\log(\eps^{-1}))^{-1/2})\\
\eps\tilde{q}^\eps_x(0,t)&=-\frac{1}{2}\myi(\beta^2-\alpha^2)e^{-2\myi\phi_\mathrm{B}(t)/\eps}+
\mathcal{O}((\log(\eps^{-1}))^{-1/2}),
\end{split}
\end{equation}
where we have used \eqref{eq:dot-O-out-expansion} and \eqref{eq:E-moment-bound}.  Since $\alpha=\mathfrak{a}(t)$ and $\beta=\mathfrak{b}(t)$, using \eqref{eq:a-b-define} and Lemma~\ref{lemma-phi-B-S} we obtain
\begin{equation}
\tilde{q}^\eps(0,t)=H(t)e^{\myi S(t)/\eps}+\mathcal{O}((\log(\eps^{-1}))^{-1/2})\quad\text{and}\quad
\eps\tilde{q}^\eps_x(0,t)=\myi U(t)H(t)e^{\myi S(t)/\eps}+\mathcal{O}((\log(\eps^{-1}))^{-1/2}).
\end{equation}
The error terms are uniform on compact subintervals of $t\in (0,+\infty)\setminus\{t_\mathfrak{a},t_\mathfrak{b}\}$.   Indeed, the $L^\infty(\Sigma^\mathbf{E})$ estimate on $\mathbf{J}^\mathbf{E}(k)-\mathbb{I}$ fails only as $t\to 0$ or $t\to\infty$ (in which case the band $B$ shrinks to a point), as $t\to t_\mathfrak{a}$ (in which case the void or saturated region on the left of the band shrinks to a point), and as $t\to t_\mathfrak{b}$ (in which case the void or saturated region on the right of the band shrinks to a point).  Furthermore, by a slight modification of the preceding arguments in which the disks $D_\alpha$ and $D_\beta$ are allowed to be slightly off-center from the points $\alpha$ and $\beta$, it can be shown that for $t$ in a given compact subinterval of $(0,+\infty)\setminus\{t_\mathfrak{a},t_\mathfrak{b}\}$, Riemann-Hilbert Problem~\ref{rhp:small-norm} may be formulated with a finite number of different contours $\Sigma^\mathbf{E}$.  Therefore the norm of the Cauchy projection operator $\mathcal{C}_-^{\Sigma^\mathbf{E}}:L^2(\Sigma^\mathbf{E})\to L^2(\Sigma^\mathbf{E})$ is uniformly bounded (independent of $\eps$, of course) as a maximum over a finite number of values (see \cite[Proposition 3 of \S3.6, and \S4.6]{BuckinghamM13} for further information and more details about this procedure).  This completes the proof of Theorem~\ref{theorem:boundary-condition-recover}.

\subsubsection{Proof of Corollary~\ref{corollary:plane-wave}}
\label{sec:plane-wave}
The main idea of the proof is to observe that 
the construction of $g(k;t)$ for $x=0$ and $t>0$ presented in \S\ref{sec:g-function} may be subjected to continuation for small $x>0$.  Consider the prospect of continuation of the solution $(\alpha,\beta)=(\alpha(0,t),\beta(0,t))=(\mathfrak{a}(t),\mathfrak{b}(t))$ of the equations  \eqref{eq:moments-genus-zero}
for $x=0$ to nonzero $x$ by means of the implicit function theorem.  This requires calculating the Jacobian of the system \eqref{eq:moments-genus-zero}.  We first establish the following.
\begin{lemma}
The functions $m_j(\alpha,\beta;x,t)$ for $j=1,2$ defined by \eqref{eq:moments-genus-zero}
satisfy the identities
\begin{equation}
\frac{\partial m_2}{\partial \alpha}=\frac{1}{2}m_1 +\alpha\frac{\partial m_1}{\partial\alpha}\quad
\text{and}\quad
\frac{\partial m_2}{\partial\beta}=\frac{1}{2}m_1 +\beta\frac{\partial m_1}{\partial\beta}.
\end{equation}
\label{lemma-moment-derivatives}
\end{lemma}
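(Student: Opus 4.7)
Set $P_1:=\pi(x+2(\alpha+\beta)t)$ and $P_2:=\tfrac{\pi}{2}((\alpha+\beta)x+(3\alpha^2+2\alpha\beta+3\beta^2)t)$, so that $m_j=I_j-P_j$.  A one-line direct computation shows that the polynomial parts obey the claimed relations, i.e., $\partial_\alpha P_2=\tfrac12 P_1+\alpha\,\partial_\alpha P_1$ and $\partial_\beta P_2=\tfrac12 P_1+\beta\,\partial_\beta P_1$.  Hence the lemma reduces to proving the analogous identities
\begin{equation}
\partial_\alpha I_2=\tfrac12 I_1+\alpha\,\partial_\alpha I_1\qquad\text{and}\qquad \partial_\beta I_2=\tfrac12 I_1+\beta\,\partial_\beta I_1
\label{eq:plan-goal}
\end{equation}
for the integral pieces defined in \eqref{eq:Ip-formula}.

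The mechanism behind \eqref{eq:plan-goal} is a pointwise algebraic identity for the radical $r(k)$.  Differentiating $r(k)^2=(k-\alpha)(k-\beta)$ gives $\partial_\alpha(1/r(k))=1/(2r(k)(k-\alpha))$, and then the polynomial division $k/(k-\alpha)=1+\alpha/(k-\alpha)$ yields
\begin{equation}
\partial_\alpha\left[\frac{k}{r(k)}\right]=\frac{1}{2r(k)}+\alpha\,\partial_\alpha\left[\frac{1}{r(k)}\right],
\label{eq:plan-pointwise}
\end{equation}
together with the analogous formula with $r$ replaced by the boundary value $r_+$ or with $(\alpha,\partial_\alpha)$ replaced by $(\beta,\partial_\beta)$.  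My plan is to insert \eqref{eq:plan-pointwise} under the integral sign in each piece of $I_p$, having first recast the real-axis integrations as integrals over contours independent of $\alpha$ and $\beta$ (on the small neighborhood of $(\alpha,\beta)=(\mathfrak{a}(t_0),\mathfrak{b}(t_0))$ in which the implicit function theorem will ultimately be applied).  For the band integral this is accomplished by the identity
\begin{equation}
-\int_\alpha^\beta\frac{\Phi'(k)k^{p-1}}{\myi r_+(k)}\,dk = -\frac{\myi}{2}\oint_L\frac{\Phi'(k)k^{p-1}}{r(k)}\,dk,
\end{equation}
where $L$ is a fixed positively-oriented loop surrounding $[\alpha,\beta]$; since $\Phi'$ is analytic on $(k_\mathfrak{a},k_\mathfrak{b})\setminus\{k_0\}$ by Lemma~\ref{lemma-Phi-interior}, we may always choose $L$ (if necessary as two disjoint loops) so as to avoid $k_0$.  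For the saturated-region integral, the function $\tau$ is real analytic on a complex neighborhood of $\overline{\mathrm{S}}$: neither $k_0$ nor $k_\infty$ lies in $\mathrm{S}$ (the former because at the base point $k_0$ is interior to the band, the latter by the argument at the end of the proof of Lemma~\ref{lemma:phi-analytic-gaps}), and $\tau$ is analytic at the real endpoints $k_\mathfrak{a}, k_\mathfrak{b}$ by Lemma~\ref{lemma-tau}.  Thus we may deform $\mathrm{S}$ slightly off the real axis onto a fixed complex arc with endpoints chosen independent of $\alpha$ and $\beta$.  With all contours fixed, differentiation under the integral sign is unrestricted, and inserting \eqref{eq:plan-pointwise} together with an obvious regrouping of the $1/(2r)$ terms delivers \eqref{eq:plan-goal}.

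The main obstacle is precisely the shared endpoint of the band and any adjacent saturated region: a naive Leibniz differentiation applied on the real axis would produce a surface term proportional to the divergent quantity $r_+(\alpha)^{-1}$ that has to cancel against an equally divergent interior-derivative contribution arising from $\partial_\alpha[1/r_+(k)] = 1/(2r_+(k)(k-\alpha))$.  The contour reformulation described above bypasses this cancellation entirely by placing each integrand on a path along which it depends smoothly on $(\alpha,\beta)$ throughout a neighborhood of the base point, so that differentiation and integration commute without further analysis.
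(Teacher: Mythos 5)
Your argument is in substance the paper's: you reduce the claimed identities to the corresponding identities for the integral pieces $I_p$ of \eqref{eq:Ip-formula} by checking the polynomial parts of \eqref{eq:moments-genus-zero} separately, and you obtain the $I_p$ identities by trading the real-axis integrals for contours independent of $(\alpha,\beta)$ and differentiating under the integral sign. The pointwise identity $\partial_\alpha[k/r(k)]=\tfrac12 r(k)^{-1}+\alpha\,\partial_\alpha[r(k)^{-1}]$ is exactly the mechanism the paper leaves unstated, and since the densities $\tau'(k)$ and $\Phi'(k)$ carry no $(\alpha,\beta)$-dependence it does produce the desired relations once the differentiation is justified.

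One of your justifications is incorrect, though the argument is easily repaired. It is not true that $k_0$ always lies in the interior of the band: the paper only excludes $k_0$ from \emph{voids} and $k_\infty$ from saturated regions (Lemma~\ref{lemma:phi-analytic-gaps}; see also the caption of Figure~\ref{fig:FourConfigurations}). When $k_\infty>k_0$ the increasing branch of $\mathfrak{a}$ eventually crosses $k_0$, after which $k_0$ lies inside the saturated region $(k_\mathfrak{a},\mathfrak{a}(t))$, where $\tau$ is only known to be continuous (Lemma~\ref{lemma:tau-continuous-positive}); your proposed deformation of the whole of $\mathrm{S}$ onto a complex arc is then unavailable, and likewise a loop $L$ enclosing all of $[\alpha,\beta]$ cannot avoid $k_0$ when $k_0$ is interior to the band (nor can the band integral be split over two disjoint closed loops, since the cut of $r$ is all of $[\alpha,\beta]$). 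The repair is standard: only the parts of $\mathrm{B}$ and $\mathrm{S}$ within a fixed small distance of the moving endpoints $\alpha,\beta$ need to be replaced by complex contours, since these are the only places where $1/r_+$ or $1/r$ fails to be uniformly bounded; on the complementary fixed compact pieces the integrand depends on $(\alpha,\beta)$ only through $1/r$, which is there a nonvanishing analytic function of $(\alpha,\beta)$ with locally uniform bounds, so one differentiates under the integral sign in place, requiring no analyticity of $\Phi'$ or $\tau'$ in $k$ at all. (At the isolated time where $\mathfrak{a}(t)=k_0$ the moving endpoint itself is a point of non-analyticity and a further argument is needed; the paper's own one-line proof is equally silent on that degenerate case.)
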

\begin{proof}
Firstly, one establishes the related identities
\begin{equation}
\frac{\partial I_2}{\partial\alpha}=\frac{1}{2}I_1+\alpha\frac{\partial I_1}{\partial\alpha}\quad\text{and}\quad
\frac{\partial I_2}{\partial\beta}=\frac{1}{2}I_1+\beta\frac{\partial I_1}{\partial\beta}
\end{equation}
by replacing the integrals in \eqref{eq:Ip-formula} in a standard way by contour integrals over contours locally independent of $\alpha$ and $\beta$ that are bounded away from these points,
and then differentiating under the integral sign.  Then one eliminates $I_1(\alpha,\beta)$ and $I_2(\alpha,\beta)$ in favor of $m_1(\alpha,\beta;x,t)$ and $m_2(\alpha,\beta;x,t)$ using \eqref{eq:moments-genus-zero} to finish the proof.
\end{proof}
Whenever $(\alpha,\beta)$ satisfy the equations \eqref{eq:moments-genus-zero}, as is the case for $x=0$ and $t>0$ with $(\alpha,\beta)=(\mathfrak{a}(t),\mathfrak{b}(t))$ by Lemma~\ref{lemma:endpoints-x-zero}, the Jacobian is easily calculated with the help of Lemma~\ref{lemma-moment-derivatives}:
\begin{equation}
\frac{\partial m_1}{\partial\alpha}\frac{\partial m_2}{\partial\beta}-
\frac{\partial m_1}{\partial\beta}\frac{\partial m_2}{\partial\alpha}=(\beta-\alpha)\frac{\partial m_1}{\partial \alpha}\frac{\partial m_1}{\partial\beta}.
\end{equation}
When $x=0$ and $t>0$, we have $\beta-\alpha=2H(t)>0$.  It can be shown that in this situation also
\begin{equation}
\frac{\partial m_1}{\partial \alpha}(\mathfrak{a}(t),\mathfrak{b}(t);0,t)\neq 0\quad\text{and}\quad
\frac{\partial m_1}{\partial\beta}(\mathfrak{a}(t),\mathfrak{b}(t);0,t)\neq 0,\quad t\in (0,+\infty)\setminus\{t_\mathfrak{a},t_\mathfrak{b}\}.
\end{equation}
Indeed, it turns out that $m_{1\alpha}=0$ if and only if the function $\Delta'(k)$ or $\tau'(k)-\Delta'(k)$ vanishes as $k\downarrow \alpha$  to higher order than $(k-\alpha)^{1/2}$ depending on whether $(k_\mathfrak{a},\alpha)$ is a void $V$ or a saturated region $S$.  Similarly, $m_{1\beta}=0$ detects higher-order vanishing of either $\Delta'(k)$ or $\tau'(k)-\Delta'(k)$ at $k=\beta$.  It is easy to see from the explicit construction of $g$ given in \S\ref{sec:g-function} that as long as $t\neq t_\mathfrak{a}$ or $t_\mathfrak{b}$, then $m_{1\alpha}\neq 0$ and $m_{1\beta}\neq 0$.
Therefore, the Jacobian is nonzero, and so the following is true.
\begin{lemma}
Suppose $t_0>0$ and $t\neq t_\mathfrak{a}$, $t\neq t_\mathfrak{b}$, so that the complex phase function $g(k;t_0)$ is well-defined in a particular configuration, VBV, VBS, SBV, or SBS.  Then there is a neighborhood of the point $(0,t_0)$ in the $(x,t)$-plane in which there is a unique solution $(\alpha(x,t),\beta(x,t))$
of the equations \eqref{eq:moments-genus-zero} formulated for the same configuration that satisfies the boundary condition $(\alpha(0,t),\beta(0,t))=(\mathfrak{a}(t),\mathfrak{b}(t))$.  The solution is differentiable and the partial derivatives satisfy the identities \eqref{eq:Whitham-1}.
\label{lemma-endpoints-x-small}
\end{lemma}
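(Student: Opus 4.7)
The plan is to apply the implicit function theorem to the system $m_1(\alpha,\beta;x,t)=0$, $m_2(\alpha,\beta;x,t)=0$ given by \eqref{eq:moments-genus-zero}, at the base point $(x,t)=(0,t_0)$ with $(\alpha,\beta)=(\mathfrak{a}(t_0),\mathfrak{b}(t_0))$. Lemma~\ref{lemma:endpoints-x-zero} guarantees that this is a solution in whichever configuration (VBV, VBS, SBV, or SBS) is appropriate at $t=t_0$, so the only real work is to verify non-degeneracy of the Jacobian and then harvest \eqref{eq:Whitham-1} by implicit differentiation.

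The first step is to compute the Jacobian of $(m_1,m_2)$ with respect to $(\alpha,\beta)$ at the base point. Combining Lemma~\ref{lemma-moment-derivatives} with the fact that $m_1=0$ there gives $\partial m_2/\partial\alpha=\alpha\,\partial m_1/\partial\alpha$ and $\partial m_2/\partial\beta=\beta\,\partial m_1/\partial\beta$, so the Jacobian determinant collapses to $(\beta-\alpha)\,\partial_\alpha m_1\,\partial_\beta m_1$. We have $\beta-\alpha=2H(t_0)>0$ by Assumption~\ref{assumption:data}, and the remarks immediately preceding the statement of the lemma supply $\partial_\alpha m_1(\mathfrak{a}(t_0),\mathfrak{b}(t_0);0,t_0)\neq 0$ and $\partial_\beta m_1(\mathfrak{a}(t_0),\mathfrak{b}(t_0);0,t_0)\neq 0$ whenever $t_0\neq t_\mathfrak{a},t_\mathfrak{b}$. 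The implicit function theorem then produces a unique $C^1$ (in fact smooth) solution $(\alpha(x,t),\beta(x,t))$ in a neighborhood $D_{t_0}$ of $(0,t_0)$, satisfying $(\alpha(0,t),\beta(0,t))=(\mathfrak{a}(t),\mathfrak{b}(t))$ by uniqueness and Lemma~\ref{lemma:endpoints-x-zero}.

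For the Whitham system, I would differentiate the relations $m_1\equiv 0$ and $m_2\equiv 0$ on $D_{t_0}$ implicitly in $x$ and $t$. From \eqref{eq:moments-genus-zero} one reads off $\partial_x m_1=-\pi$, $\partial_t m_1=-2\pi(\alpha+\beta)$, $\partial_x m_2=-\tfrac{\pi}{2}(\alpha+\beta)$, and $\partial_t m_2=-\tfrac{\pi}{2}(3\alpha^2+2\alpha\beta+3\beta^2)$. Invoking Lemma~\ref{lemma-moment-derivatives} once more with $m_1\equiv 0$ reduces each implicit-differentiation system to a $2\times 2$ linear system with coefficients $m_{1\alpha}$, $m_{1\beta}$, $\alpha m_{1\alpha}$, $\beta m_{1\beta}$; solving these (the elimination uses only $\beta-\alpha\neq 0$) yields
\begin{equation}
m_{1\alpha}\alpha_x=m_{1\beta}\beta_x=\tfrac{\pi}{2},\qquad m_{1\alpha}\alpha_t=\tfrac{\pi}{2}(3\alpha+\beta),\qquad m_{1\beta}\beta_t=\tfrac{\pi}{2}(\alpha+3\beta).
\end{equation}
Dividing the $t$-relations by the $x$-relations gives exactly $\alpha_t=(3\alpha+\beta)\alpha_x$ and $\beta_t=(\alpha+3\beta)\beta_x$, which is \eqref{eq:Whitham-1}.

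The only genuine obstacle is justifying the nonvanishing of $\partial_\alpha m_1$ and $\partial_\beta m_1$ at the base point. As indicated in the text, this corresponds to confirming that $\Delta'(k)$ (when the adjacent interval is a void) or $\tau'(k)-\Delta'(k)$ (when it is a saturated region) exhibits genuine square-root vanishing, and not higher-order vanishing, as $k\to\alpha$ or $k\to\beta$. The construction of $g(k;t)$ in \S\ref{sec:g-function}, and in particular the integral formulae analogous to \eqref{eq:Delta-integral} and \eqref{eq:phi-prime-integral}, show that higher-order vanishing at the band endpoint can only occur when that endpoint collides with a critical point of $\mathfrak{a}$ or $\mathfrak{b}$, that is, precisely at $t=t_\mathfrak{a}$ or $t=t_\mathfrak{b}$, which are the values excluded by hypothesis. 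Once this is checked, everything else is a routine unpacking of the implicit function theorem and Lemma~\ref{lemma-moment-derivatives}.
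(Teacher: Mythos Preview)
Your proposal is correct and follows essentially the same route as the paper: the Jacobian computation via Lemma~\ref{lemma-moment-derivatives}, the nonvanishing of $\partial_\alpha m_1$ and $\partial_\beta m_1$ tied to square-root vanishing of $\Delta'$ or $\tau'-\Delta'$ at the band edges, and implicit differentiation for \eqref{eq:Whitham-1} are all exactly what the paper does (the Jacobian discussion appearing in the text just before the lemma, and the proof itself merely invoking implicit differentiation). You have simply made the linear algebra for the Whitham relations explicit where the paper leaves it to the reader.
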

\begin{proof}
It only remains to establish the identities \eqref{eq:Whitham-1}, but this is done simply by implicit differentiation of the equations $m_1(\alpha(x,t),\beta(x,t);x,t)=m_2(\alpha(x,t),\beta(x,t);x,t)=0$
with respect to $x$ and $t$, using Lemma~\ref{lemma-moment-derivatives} with $m_1=m_2=0$.
\end{proof}
This result allows us to define a candidate complex phase function $g=g(k;x,t)$ given $\alpha$ and $\beta$ as described in \S\ref{sec:g-function}.  Moreover, it is easy to see that the inequalities, guaranteed for $x=0$ in the particular configuration of voids and saturated regions valid for $t=t_0$
according to Lemma~\ref{lemma:inequalities-x-zero}, persist for $g(k;x,t)$ in the same configuration for small $x\neq 0$.  The existence of an appropriate complex phase function $g$ was the only essential ingredient in the proof of Theorem~\ref{theorem:boundary-condition-recover}.  Therefore,
the same result holds, with $\alpha(x,t)$ and $\beta(x,t)$ taking the place of $\mathfrak{a}(t)$ and $\mathfrak{b}(t)$ respectively, and the proof of Corollary~\ref{corollary:plane-wave} is complete.

\section{Concluding Remarks}
\label{sec:concluding-remarks}
Here we collect together a few additional remarks, some indicating natural generalizations of our results, and some pointing toward directions for future research.
\begin{remark}
The methodology of the complex phase function as described in \S\ref{sec:g-function} is capable of detecting and describing asymptotic behavior of $\tilde{q}^\eps(x,t)$ that neither corresponds to  the vacuum domain nor the plane-wave domain.  
A typical scenario would be that as one tunes $(x,t)$ through the plane-wave domain by continuing the complex phase function $g$ along some path, at some point $(x_0,t_0)$ one of the inequalities associated with the complex phase function $g=g(k;x,t)$ fails for some $k\in (k_\mathfrak{a},k_\mathfrak{b})$ (a less generic scenario involves the Jacobian of the system \eqref{eq:moments-genus-zero} vanishing).  Further continuation becomes impossible without resorting to the device of increasing $N$, where we recall that $N+1$ is the number of bands in $(k_\mathfrak{a},k_\mathfrak{b})$.  Thus one is witnessing the birth of a new band, void, or saturated region from the point $k$ as one continues beyond $(x_0,t_0)$.  Once $N>0$, instead of an asymptotic formula like \eqref{eq:plane-wave}, one arrives at an asymptotic formula written in terms of hyperelliptic functions of genus $N$, a formula that exhibits rapid variations of the amplitude on space and time scales proportional to $\eps$.  Thus, if $N>0$,
$\tilde{q}^\eps(x,t)$ no longer resembles a slowly-modulated plane wave, but rather has a more complicated microstructure.  The point $(x_0,t_0)$ therefore lies along the common boundary between domains corresponding to $N=0$ and (in the next simplest case) $N=1$.  In general, one expects the quarter plane to be tiled with $\eps$-independent domains corresponding to various values of $N\ge 0$, with the only general statement being that the positive $t$-axis abuts the (plane-wave) domain with $N=0$ and the positive $x$-axis abuts the vacuum domain (which is not associated with any value of $N$).  These phase transitions may in principal be computed, with the details depending on the particular boundary data given at $x=0$ and $t>0$.  We leave such considerations for future studies.  Note, however, that under the assumptions in force in this paper, it is impossible for a dispersive shock wave region with $N>0$ that is generated from a gradient catastrophe in the solution of the dispersionless system \eqref{eq:dispersion-less-NLS} at some time $(x,t)=(x_c,t_c)$ to  reach the boundary $x=0$ in finite time, since such behavior would be inconsistent with Theorem~\ref{theorem:boundary-condition-recover}.
\label{remark-multiphase}
\myendrmk
\end{remark}

\begin{remark}
\label{remark-imaginary-amplify}
We wish to briefly explain the key obstruction in our opinion that arises in the semiclassical analysis of  Riemann-Hilbert Problem~\ref{rhp-original} posed in \S\ref{sec:Introduction} relative to a cross contour $\Sigma=\mathbb{R}\cup \myi\mathbb{R}$, and hence our actual motivation in introducing the modified approach described in \S\ref{sec:Formulation}.

A calculation starting from \eqref{eq:gprime} in the VBV configuration shows that $g'(k;t)\to 0$ as $t\downarrow 0$ (for $x=0$), and hence from \eqref{eq:g-gprime-integrate} we obtain $g(k;0)=0$ (this is consistent with our approach for $x>0$ and $t=0$, which basically assumed $g(k)=0$).  Therefore for $x=0$ and $t>0$ we have
\begin{equation}
g(t;k)=\int_0^tg_t(k;s)\,ds.
\end{equation}
Using \eqref{eq:gt} and assuming that $k$ is purely imaginary gives
\begin{equation}
\Im\{g(t;k)\}=\int_0^t\Im\{g_t(k;s)\}\,ds = \int_0^t\Im\{(U(s)-2k)r(k;s)\}\,ds,\quad k\in \myi\mathbb{R}.
\label{eq:Img}
\end{equation}
Note that
\begin{equation}
(U(t)-2k)r(k;t)=-2k^2+\frac{1}{2}U(t)^2+H(t)^2-H(t)^2U(t)k^{-1}+\mathcal{O}(k^{-2}),\quad k\to\infty,
\end{equation}
which together with \eqref{eq:Img} implies that $\Im\{g(t;k)\}>0$ holds whenever $k$ is positive imaginary and of sufficiently large magnitude, because $H(t)^2U(t)>0$ for $t>0$ by Assumption~\ref{assumption:data}.  

Consider now the effect of introducing $g$ not into Riemann-Hilbert Problem~\ref{rhp-M-tilde} for $\tilde{\mathbf{M}}(k)$ having (a subset of ) $\mathbb{R}$ as the jump contour, but rather into Riemann-Hilbert Problem~\ref{rhp-original} for $\mathbf{M}(k)$ having $\Sigma=\mathbb{R}\cup \myi\mathbb{R}$ as the jump contour.  In the latter problem one has a lower-triangular jump matrix on the positive imaginary axis given by \eqref{eq:exact-jump-positive-imaginary}.  Thus, if one introduces the complex phase function $g(k;t)$ by means of a formula analogous to \eqref{eq:N-from-g}, the resulting jump condition for $\mathbf{N}(k)$ on the positive imaginary axis (oriented toward the origin) would read
\begin{equation}
\mathbf{N}_+(k)=\mathbf{N}_-(k)\begin{bmatrix}1 & 0\\-\Gamma_0(k)e^{2\myi\theta(k;0,t)/\eps}e^{-2\myi g(k;t)/\eps} & 1\end{bmatrix},\quad \arg(k)=\frac{\pi}{2}.
\end{equation}
The factor $e^{2\myi\theta(k;0,t)/\eps}$ is purely oscillatory for imaginary $k$, and by Proposition~\ref{prop:noturningpoints} we have an upper bound for $|\Gamma_0(k)|$ that decays to zero with $\eps$.  However, we now see that the factor $e^{-2\myi g(k;t)/\eps}$ is exponentially large in the limit $\eps\downarrow 0$ for sufficiently large $|k|$ with $\arg(k)=\pi/2$.  Therefore, the same complex phase function $g$ that works so well to control the modified problem for $\tilde{\mathbf{M}}(k)$ produces uncontrollable errors if used to study $\mathbf{M}(k)$ itself.  This difficulty originates in the jump discontinuities along the imaginary axis, which appear to be negligible before $g$ is introduced but that seem impossible to either neglect afterwards or include in a parametrix without explicit beyond-all-orders information about $\Gamma_0(k)$.  
\myendrmk
\end{remark}

\begin{remark}
As Corollary~\ref{corollary:plane-wave} is formulated, the points $t=t_\mathfrak{a}$ and $t=t_\mathfrak{b}$ appear to present an obstruction to continuation of the plane-wave approximation
to positive $x$ from the boundary.  While the details are not easy to explain, the fact is that the complex phase function $g$ can indeed be continued away from the boundary near such points, which become curves $t=t_\mathfrak{a}(x)$ and $t=t_\mathfrak{b}(x)$ for $x>0$ along which one of the band endpoints is fixed to an endpoint of the interval $[k_\mathfrak{a},k_\mathfrak{b}]$ (see, for example, \cite[\S4.3.2]{BuckinghamMMemoirs}).  There still remains, however, a technical issue in that it appears that a new type of inner parametrix is required near the corresponding  point in the $k$-plane, and to our knowledge this has not been worked out.  Although we are therefore prevented from proving convergence to a plane wave for $(x,t)$ near the curves $t=t_\mathfrak{a}(x)$ and $t=t_\mathfrak{b}(x)$, one can check that the plane wave formulae that are indeed valid as given in Corollary~\ref{corollary:plane-wave} on either side of these curves actually match on the curves themselves.  Thus one does not expect any new leading-order asymptotic behavior for $\tilde{q}^\eps(x,t)$ along these curves, which nonetheless present an obstruction to rigorous analysis.  
\myendrmk
\end{remark}

\begin{remark}
The reader will observe that the very slow rate of decay of the error terms in our results (proportional to $(\log(\eps^{-1}))^{-1/2}$) is in all cases due to the truncation introduced in \S\ref{sec:Formulation} of the jump matrix on $\mathbb{R}$ to one whose difference from $\mathbb{I}$ is compactly supported in the interval $[k_\mathfrak{a},k_\mathfrak{b}]$.  Indeed this truncation leads to dominant contributions to the error generated from small neighborhoods of the endpoints of this support interval.  
This suggests that it may be possible to further modify the formula for $\tilde{\Gamma}(k)$ near these two points with the aim of reducing the magnitude of these local contributions to the error without having any other significant effect.  
\myendrmk
\end{remark}

\begin{remark}
\label{remark:nonzero-IC}
In our view, the incorporation of nonzero initial conditions together with a non-linearizable boundary condition presents substantial new difficulties.  Physically, one expects a nontrivial interaction between the initial and boundary data, and from the ``hyperbolic'' point of view of the dispersionless nonlinear Schr\"odinger system \eqref{eq:dispersion-less-NLS} one may expect competing influences at a given point $(x,t)$ from boundary and initial data propagating along characteristics.  Mathematically, the initial and boundary conditions get mixed up in the complicated jump matrix on the negative real axis as indicated in \eqref{eq:exact-jump-negative}.  The fact that at points $k<0$ where both $\gamma(k)$ and $\Gamma(k)$ are nonzero the elements of the jump matrix are differences of oscillatory exponentials in the semiclassical approximation of $\gamma$ and $\Gamma$ makes analysis by steepest descent methods quite challenging, and we hope to consider such issues in the future.
\myendrmk
\end{remark}

\begin{remark}
We may consider several ways to generalize Assumption~\ref{assumption:data} while still maintaining the form \eqref{eq:DirichletDataForm} of the Dirichlet boundary data.  
\begin{itemize}
\item One could drop the condition $U(t)>2H(t)$ in favor of the weaker condition $U(t)>H(t)$.  In this situation the boundary $x=0$ is still a spacelike curve for the hyperbolic dispersionless defocusing nonlinear Schr\"odinger system \eqref{eq:dispersion-less-NLS}, so one has a local solution of this approximate system near the boundary for all $t>0$.  On the other hand, it seems that it is not possible to recover the boundary data from the Riemann-Hilbert problem in this case as pointed out in Remark~\ref{remark:kb-negative}.  Is this a merely technical issue, or does this imply that a dispersive shock wave generated for positive $x$ reaches the boundary at some later time $t$, ruining the local plane-wave microstructure?
\item One could go further and drop the condition $U(t)>H(t)$.  If some bicharacteristics of the hyperbolic system \eqref{eq:dispersion-less-NLS} point outside of the domain at the boundary $x=0$, is it still possible for the approximate Dirichlet-to-Neumann map given in Definition~\ref{definition:D-to-N} to be valid?  Perhaps should one expect a kind of boundary layer to form near $x=0$?
\item Finally, one could even drop the inequality \eqref{eq:Sprimebound}.  In this case the phase gradient $u$ is apparently imaginary at the boundary.  What does this mean?
\end{itemize}
It would also be very interesting to consider the nonlinear Schr\"odinger equation in the focusing case, in which case the analogue of the dispersionless system \eqref{eq:dispersion-less-NLS} is a quasilinear system of elliptic type, so all intuition involving propagation along characteristics is lost.
\myendrmk
\end{remark}

\section{Acknowledgements}
P.~D.~Miller was partially supported by the National Science Foundation under grant number DMS-1206131 and by a fellowship from the Simons Foundation, grant number 267106.  Part of this work was done during two visits Miller made to Fudan University in Shanghai, China, and both authors are grateful for the support Fudan University offered.  The authors' collaboration began while Z.~Qin visited the University of Michigan for an extended period under the support of the China Scholarship Council. Z.~Qin was also supported by a grant from the Key Laboratory of Mathematics for Nonlinear Science, Fudan University.

\appendix

\section{Proof of Proposition~\ref{prop:noturningpoints}}
\label{sec:Appendix-noturningpoints}
We begin with a key lemma.
\begin{lemma}[Bound on $\Re\{\lambda\}$]
\label{lem:eigenvaluebranch}
Suppose that Assumption~\ref{assumption:data} holds and that either $\Im\{k\}>0$ and $\Re\{k\}\le 0$, or that $k<k_\mathfrak{a}$, or that $k_\mathfrak{b}<k\le 0$.  Then there is an eigenvalue $\lambda$ of the matrix $\mathbf{B}(t;k)$ defined by \eqref{eq:B-matrix} that depends smoothly on $t>0$ 
and satisfies $\Re\{\lambda\}\le 0$, $\forall t>0$. 
\end{lemma}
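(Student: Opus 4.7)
The plan is to dispatch the real cases by inspection and, for $k$ strictly in the upper half-plane with $\Re k \le 0$, show that the trajectory $t\mapsto\lambda^2(t;k)$ stays in the slit plane $\mathbb{C}\setminus(-\infty,0]$; a smooth branch of $\lambda(t;k)$ can then be selected and anchored by its asymptotic value as $t\to+\infty$. For real $k$ with $k<k_\mathfrak{a}$ or $k_\mathfrak{b}<k\le 0$, the definitions of $k_\mathfrak{a}$ and $k_\mathfrak{b}$ place $k$ outside the band $[\mathfrak{a}(t),\mathfrak{b}(t)]$ for every $t>0$, so \eqref{eq:lambdasquared} yields $\lambda^2(t;k)<0$; both eigenvalues are then purely imaginary and any smooth branch (available because $\lambda^2\ne 0$ by Lemma~\ref{lem:turningpoints}) satisfies $\Re\lambda=0$ trivially.

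Fix now $k=x+iy$ with $x\le 0$, $y>0$. Separating real and imaginary parts in \eqref{eq:lambdasquared} yields
\begin{equation*}
\Im\lambda^2(t;k)=4y\bigl\{x\bigl[4y^2-4x^2+U(t)^2+2H(t)^2\bigr]-U(t)H(t)^2\bigr\}.
\end{equation*}
For $x=0$, this reduces to $-4yU(t)H(t)^2<0$ for every $t>0$ by Assumption~\ref{assumption:data}, so $\lambda^2$ stays strictly in the open lower half-plane. For $x<0$, the equation $\Im\lambda^2=0$ demands $x\cdot[\text{bracket}]=UH^2>0$; the bracket must therefore be negative, giving $4x^2>4y^2+U^2+2H^2$ and in particular $|x|>U/2$. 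Solving that constraint for $4y^2$ and substituting it into $\Re\lambda^2=ac-bd$, where $a=(2x-U)^2-4y^2$, $b=4y(2x-U)$, $c=H^2-(x+U/2)^2+y^2$, $d=-y(2x+U)$, the expression factors as
\begin{equation*}
\Re\lambda^2=\bigl(4|x|U+2U^2+2H^2+UH^2/|x|\bigr)\,(2|x|-U)\,\bigl(U/2+H^2/(4|x|)\bigr)+4y^2(4x^2-U^2),
\end{equation*}
each piece of which is strictly positive under $|x|>U/2$. Hence $\lambda^2(t;k)\notin(-\infty,0)$ for any $t>0$.

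The slit plane $\mathbb{C}\setminus(-\infty,0]$ being simply connected and $\lambda^2$ being nowhere zero (Lemma~\ref{lem:turningpoints}), the trajectory $t\mapsto\lambda^2(t;k)$ lifts to a smooth $\lambda(t;k)$ on $(0,\infty)$. I fix the sign by demanding $\lambda(t;k)\to-2i(k^2-k_\infty^2)$ as $t\to+\infty$ (from $\mathfrak{a}(t),\mathfrak{b}(t)\to k_\infty$ and $U(t)\to-2k_\infty$); its real part $\Re\{-2i(k^2-k_\infty^2)\}=4xy$ is $\le 0$ since $x\le 0,\ y\ge 0$. A sign change of $\Re\lambda$ at some $t^*\in(0,\infty)$ would make $\lambda(t^*;k)$ purely imaginary and nonzero, i.e., $\lambda^2(t^*;k)\in(-\infty,0)$, which has been excluded. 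Therefore $\Re\lambda(t;k)\le 0$ for all $t>0$, completing the argument.

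The main obstacle is the positivity of $\Re\lambda^2$ at points where $\Im\lambda^2$ vanishes in the open second quadrant: this is not evident from quadrant considerations applied to the factors $(2k-U)^2$ and $(k-\mathfrak{a})(k-\mathfrak{b})$ individually, since each can sit in various half-planes. It becomes manifest only after the vanishing of $\Im\lambda^2$ is used to eliminate $4y^2$, upon which one notices that the very same constraint delivers $|x|>U/2$, the precise inequality that renders every term in the regrouped $\Re\lambda^2$ positive.
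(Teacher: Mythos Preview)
Your argument follows the same route as the paper: handle the real cases directly, and for $\Im k>0$ with $\Re k\le 0$ show that $\lambda^2$ never lands on $(-\infty,0)$, then pick the square-root branch with nonpositive real part. Your computation of $\Im\lambda^2$, the deduction $|x|>U/2$ from $\Im\lambda^2=0$ when $x<0$, and the positivity of $\Re\lambda^2$ under that constraint are all correct; the paper obtains the cleaner closed form $\Re\lambda^2=\tfrac{1}{4x^2}(4x^2-H^2)^2(4x^2-U^2)$ by eliminating $y^2$ completely, but your hybrid form (substituting in $a$ and $c$ while keeping $-bd=4y^2(4x^2-U^2)$ unexpanded) works equally well once $|x|>U/2$ is known.

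There is one small gap in your branch-selection step. When $x=0$ your anchoring limit $-2\myi(k^2-k_\infty^2)$ is purely imaginary, so $\Re\lambda(+\infty)=0$; combined with ``$\Re\lambda$ never vanishes on $(0,\infty)$'' this does not by itself decide whether the constant sign of $\Re\lambda$ is negative or positive. The simpler fix (implicit in the paper) is to bypass the limit altogether: once $\lambda^2(t;k)\in\mathbb{C}\setminus(-\infty,0]$ for all $t>0$ you may set $\lambda:=-\sqrt{\lambda^2}$ with the principal branch, which is automatically smooth in $t$ and has $\Re\lambda<0$ everywhere. No asymptotic anchoring is needed.
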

\begin{proof}
The fact that $\lambda$ can be chosen to be a smooth 
function of $t$ follows from Lemma~\ref{lem:turningpoints} since there are no turning points and $\lambda^2$ is a smooth 
function of $t>0$.  If $k$ is real and $k<k_\mathfrak{a}$ or $k>k_\mathfrak{b}$, then $\lambda^2$ is real and strictly negative for all $t>0$, so both eigenvalue functions $\lambda=\lambda(t)$ will be purely imaginary for $t>0$.  On the other hand, if $\Im\{k\}>0$ but $\Re\{k\}\le 0$, then the lemma can be proven by showing that $\lambda^2$ is \emph{not} negative real for any $t>0$.  
Let $k_\mathrm{r}:=\Re\{k\}$ and $k_\mathrm{i}:=\Im\{k\}$, and suppose that $\lambda^2$ is real
for some $\kr\le 0$, some $\ki>0$, and some $t>0$.  Using \eqref{eq:lambdasquared}, we therefore have
\begin{equation}
\Im\{\lambda^2\} = 4k_\mathrm{i}\left[4k_\mathrm{r}k_\mathrm{i}^2 +[2k_\mathrm{r}-U(t)](H(t)^2-k_\mathrm{r}[2k_\mathrm{r}+U(t)])\right]=0.
\label{eq:Im-lambda-squared}
\end{equation}
If $\kr=0$, then as also $\ki>0$, \eqref{eq:Im-lambda-squared} implies that $U(t)H(t)^2=0$, which is a contradiction with $t>0$ by Assumption~\ref{assumption:data}.
On the other hand, if $\kr<0$, then we may solve \eqref{eq:Im-lambda-squared} for $\ki^2$ and hence from \eqref{eq:lambdasquared} we obtain
\begin{equation}
\begin{split}
\Re\{\lambda^2\}&=\left(\tfrac{1}{2}[2k_\mathrm{r}-U(t)]^2-2k_\mathrm{i}^2\right)\left(2H(t)^2-\tfrac{1}{2}[2k_\mathrm{r}+U(t)]^2+2k_\mathrm{i}^2\right)+4k_\mathrm{i}^2[2k_\mathrm{r}-U(t)][2k_\mathrm{r}+U(t)]\\
&=\frac{1}{4k_\mathrm{r}^2}[4k_\mathrm{r}^2-H(t)^2]^2[4k_\mathrm{r}^2-U(t)^2].
\end{split}
\end{equation}
But, at the same time \eqref{eq:Im-lambda-squared} implies the inequality
\begin{equation}
[2k_\mathrm{r}-U(t)](H(t)^2-k_\mathrm{r}[2k_\mathrm{r}+U(t)])> 0,
\label{eq:inequality-one}
\end{equation}
which can be written in the equivalent form (because $\kr<0$ and $U(t)>0$ by Assumption~\ref{assumption:data})
\begin{equation}
U(t)^2-4k_\mathrm{r}^2<\frac{U(t)-2k_\mathrm{r}}{k_\mathrm{r}}H(t)^2<0.
\label{eq:inequality-two}
\end{equation}
Obviously, \eqref{eq:inequality-two} implies that $\Re\{\lambda^2\}>0$.  Hence $\lambda^2<0$ is not possible for any $t>0$.
\end{proof}

We denote by $\lambda=\Lambda(t;k)$ the eigenvalue branch characterized by Lemma~\ref{lem:eigenvaluebranch}, so that $\Re\{\Lambda(t;k)\}\le 0$ for all $t>0$.  If $k\in \mathcal{Q}^\mathrm{II}_\delta\cap\mathbb{R}$, then $\Lambda(t;k)\in \myi\mathbb{R}$ is ambiguous up to a sign and we choose the sign so that $\Lambda:\mathbb{R}_+\times \mathcal{Q}^\mathrm{II}_\delta\to\mathbb{C}$ is continuous.  The matrix $\mathbf{R}_0(t;k)$ given by 
\begin{equation}
\mathbf{R}_0(t;k):=\left(\frac{1}{2}(2k-U(t))(2k+U(t))+\myi\Lambda(t;k)\right)\myi\sigma_3-(2k-U(t))H(t)\sigma_1
\end{equation}
is an eigenvector matrix for the coefficient matrix $\mathbf{B}(t;k)$ defined by \eqref{eq:B-matrix}, i.e., the identity  $\mathbf{B}(t;k)\mathbf{R}_0(t;k)=
\Lambda(t;k)\mathbf{R}_0(t;k)\sigma_3$ holds.  Since $\Lambda(t;k)\neq 0$ for $k\in\mathcal{Q}^\mathrm{II}_\delta$ and $t>0$ by Lemma~\ref{lem:turningpoints}, the eigenvalues are distinct and therefore $\mathbf{R}_0(t;k)$ is invertible as long as its columns are both nonzero.  But $\Re\{-(2k-U(t))H(t)\}=-(2\Re\{k\}-U(t))H(t)\ge 2U(t)H(t)$
as $\Re\{k\}\le 0$.  It follows from Assumption~\ref{assumption:data} that the off-diagonal entries of $\mathbf{R}_0(t;k)$ are nonzero for all $t>0$ and therefore $\mathbf{R}_0(t;k)$ has nonzero columns for all $k\in\mathcal{Q}^\mathrm{II}_\delta$ and for all $t>0$, so for such $k$ and $t$ we have
$\det(\mathbf{R}_0(t;k))\neq 0$.  

Moreover, $\det(\mathbf{R}_0(t;k))$ is bounded away from zero on $\mathbb{R}_+\times\mathcal{Q}^\mathrm{II}_\delta$.  Indeed, one can show that
$\Lambda(t;k)=-2\myi k^2 + \mathcal{O}(k)$ as $k\to\infty$ uniformly for $\pi/2\le\arg(k)\le\pi$ and $t>0$, and it follows that $\det(\mathbf{R}_0(t;k))=16k^4 + \mathcal{O}(k^3)$ also holds as $k\to\infty$ with the same uniform nature of the error term.  On the other hand if $|k|\le L$ for some (large) $L>0$, then $k$ lies in a compact subset of $\mathcal{Q}^\mathrm{II}_\delta$, so since $\det(\mathbf{R}_0(t;k))$ is a continuous nonzero function of its arguments it follows that it is uniformly bounded away from zero for all such $k$ and $t$ lying in any compact subset of $(0,\infty)$.  It remains to analyze $\det(\mathbf{R}_0(t;k))$ in the limits $t\downarrow 0$ and $t\uparrow +\infty$, in which $H(t)\to 0$ by Assumption~\ref{assumption:data}.  
Since
$\Lambda(t;k)\to -\myi\tfrac{1}{2}(4k^2-U_0^2)$ as $t\downarrow 0$ and $\Lambda(t;k)\to -\myi\tfrac{1}{2}(4k^2-U_\infty^2)$ as $t\uparrow +\infty$, we have $\mathbf{R}_0(t;k)\to\myi(4k^2-U_0^2)\sigma_3$
as $t\downarrow 0$ and $\mathbf{R}_0(t;k)\to \myi (4k^2-U_\infty^2)\sigma_3$ as $t\uparrow +\infty$.
Both limiting values are nonzero for $k\in\mathcal{Q}^\mathrm{II}_\delta$, and the convergence is uniform for $|k|\le L$, so the argument is complete.  

Let $N(t;k)$ be the continuous function defined by $N(t;k)^2=\det(\mathbf{R}_0(t;k))$ and the asymptotic condition (which selects an unambiguous branch of the square root) that $N(t;k)\to 4k^2-U_\infty^2$ as $t\to +\infty$.  By a homotopy argument taking the function $H(t)$ to zero by scaling it is easy to see that $N(0;k)=4k^2-U_0^2$.  Then $N(t;k)^{-1}$ is a uniformly bounded nonvanishing function for $t>0$ and $k\in \mathcal{Q}^\mathrm{II}_\delta$ that satisfies
$N(t;k)^{-1}=\mathcal{O}(k^{-2})$ as $k\to\infty$ in $\mathcal{Q}^\mathrm{II}_\delta$ uniformly for $t>0$.
Let an eigenvector matrix $\mathbf{R}(t;k)$ be defined for $t>0$ and $k\in\mathcal{Q}^\mathrm{II}_\delta$ by
\begin{equation}
\mathbf{R}(t;k):=\frac{1}{N(t;k)}\mathbf{R}_0(t;k).
\end{equation}
Of course $\det(\mathbf{R}(t;k))\equiv 1$.  Also, it is easy to see that
\begin{equation}
\lim_{t\downarrow 0}\mathbf{R}(t;k)=\lim_{t\uparrow +\infty}\mathbf{R}(t;k)=\myi\sigma_3.
\label{eq:Rlimits}
\end{equation}
Let the first column $\mathbf{f}(t;k)$ of the matrix $\mathbf{F}(t;k)$ satisfying \eqref{eq:Fsystem}
and \eqref{eq:Fnorm} be written in the form 
\begin{equation}
\mathbf{f}(t;k)=\mathbf{R}(t;k)\mathbf{g}(t;k)
\end{equation}
for some new unknown vector function $\mathbf{g}(t;k)$.  It is clear from \eqref{eq:Fnorm} and
\eqref{eq:Rlimits} that
\begin{equation}
\lim_{t\to +\infty}\mathbf{g}(t;k)e^{\myi (4k^2t + S(t))/(2\epsilon)}=-\myi\begin{bmatrix}1\\0\end{bmatrix}, 
\end{equation}
and from \eqref{eq:AstarF}--\eqref{eq:BstarF} and \eqref{eq:Rlimits} that $A_0(k^*)^*$ and
$B_0(k^*)^*$ can be expressed in terms of $\mathbf{g}(t;k)$ by
\begin{equation}
A_0(k^*)^*=\myi e^{\myi S(0)/(2\epsilon)}g_1(0;k)\quad\text{and}\quad
B_0(k^*)^*=-\myi e^{-\myi S(0)/(2\epsilon)}g_2(0;k).
\label{eq:AstarBstarg}
\end{equation}
Moreover, a direct calculation using \eqref{eq:Fsystem} shows that $\mathbf{g}(t;k)$ satisfies
the differential equation
\begin{equation}
\eps \frac{d\mathbf{g}}{dt}(t;k)=\Lambda(t;k)\sigma_3\mathbf{g}(t;k) - \eps \mathbf{R}(t;k)^{-1}\frac{d\mathbf{R}}{dt}(t;k)\mathbf{g}(t;k)=\Lambda(t;k)\sigma_3\mathbf{g}(t;k)-\eps
\rho(t;k)\sigma_2\mathbf{g}(t;k),
\end{equation}
where 
\begin{equation}
\rho(t;k):=\frac{1}{\myi N(t;k)^2}\left[R_{0,11}(t;k)\frac{dR_{0,21}}{dt}(t;k)-
R_{0,21}(t;k)\frac{dR_{0,11}}{dt}(t;k)\right].
\end{equation}
Assumption~\ref{assumption:data} implies that 
$H'$ and $U'$ are absolutely integrable on $\mathbb{R}_+$, and this in turn implies that $\rho(\cdot;k)\in L^1(\mathbb{R}_+)$ for each $k\in\mathcal{Q}^\mathrm{II}_\delta$.  Moreover there is a constant $C_\rho$ depending only on the functions $H$ and $U$ satisfying Assumption~\ref{assumption:data} such that
\begin{equation}
\|\rho(\cdot;k)\|_1:=\int_0^{+\infty}|\rho(t;k)|\,dt \le\frac{C_\rho}{1+|k|},\quad k\in\mathcal{Q}^\mathrm{II}_\delta.
\label{eq:rL1}
\end{equation}
Making the rescaling
\begin{equation}
\mathbf{g}(t;k)=-\myi e^{-\myi S(0)/(2\eps)}e^{\myi M(k)/(2\eps)}\mathbf{h}(t;k)\exp\left(\frac{1}{\eps}\int_0^t\Lambda(s;k)\,ds\right),
\end{equation}
where
\begin{equation}
M(k):=\int_0^\infty\left[2i\Lambda(s;k)-4k^2+U(s)^2+2H(s)^2\right]\,ds
\end{equation}
results in the system
\begin{equation}
\eps\frac{d\mathbf{h}}{dt}(t;k)=\begin{bmatrix}0 & 0\\0 & -2\Lambda(t;k)\end{bmatrix}
\mathbf{h}(t;k) -\eps \rho(t;k)\sigma_2\mathbf{h}(t;k),
\label{eq:hsystem}
\end{equation}
and the boundary condition
\begin{equation}
\lim_{t\to +\infty}\mathbf{h}(t;k)=\begin{bmatrix}1\\0\end{bmatrix}.
\label{eq:hnorm}
\end{equation}
Note that $M(k)$ is well-defined due to the asymptotic behavior of $\Lambda(t;k)$ for large positive $t$ and that the condition \eqref{eq:hnorm} is a consequence of \eqref{eq:SprimeUH}.
Also, from \eqref{eq:AstarBstarg} we have that
\begin{equation}
A_0(k^*)^*=e^{\myi M(k)/(2\eps)} h_1(0;k)\quad\text{and}\quad
B_0(k^*)^*=-e^{-\myi S(0)/\eps}e^{\myi M(k)/(2\eps)}h_2(0;k).
\label{eq:AstarBstarh}
\end{equation}

Now we solve \eqref{eq:hsystem} and \eqref{eq:hnorm}.  Writing $h_1(t;k)=1+y(t;k)$ and $h_2(t;k)=z(t;k)$ and introducing an appropriate integrating factor, 
\eqref{eq:hsystem} takes the form
\begin{equation}
\frac{dy}{dt}(t;k)=\myi \rho(t;k)z(t;k)\quad\text{and}\quad
\frac{d}{dt}\left[e^{2\varphi(0,t;k)/\eps}z(t;k)\right]= -\myi e^{2\varphi(0,t;k)/\eps}\rho(t;k)y(t;k)-\myi e^{2\varphi(0,t;k)/\eps}\rho(t;k),
\label{eq:hsystem-rewrite}
\end{equation}
where $\varphi$ is defined as
\begin{equation}
\varphi(t_0,t_1;k):=\int_{t_0}^{t_1}\Lambda(s;k)\,ds.
\end{equation}
Note that since $\Re\{\Lambda(s;k)\}\le 0$ by Lemma~\ref{lem:eigenvaluebranch}, the factor $e^{2\varphi(0,t;k)/\eps}$ remains bounded as $t\to +\infty$, so we may build in the boundary conditions \eqref{eq:hnorm} on $\mathbf{h}(t;k)$ by
integrating  equations \eqref{eq:hsystem-rewrite} from $t$ to $+\infty$:
\begin{equation}
\begin{split}
y(t;k)&=-\myi \int_t^{+\infty}\rho(t_1;k)z(t_1;k)\,dt_1\\
z(t;k)&=\myi\int_t^{+\infty}e^{2\varphi(t,t_1;k)\eps}\rho(t_1;k)y(t_1;k)\,dt_1 + F^\eps(t;k),\quad F^\eps(t;k):=
\myi\int_t^{+\infty}e^{2\varphi(t,t_1;k)/\eps}\rho(t_1;k)\,dt_1.
\end{split}
\label{eq:integralequationsystem}
\end{equation}
This system of integral equations is equivalent to 
the differential equations \eqref{eq:hsystem} and boundary conditions \eqref{eq:hnorm}.

To solve \eqref{eq:integralequationsystem}, substitute the first equation into the second and exchange the order of integration to get a closed equation for $z(t;k)$:
\begin{equation}
z(t;k)=\int_t^{+\infty}K^\eps(t,t_1;k)z(t_1;k)\,dt_1 +F^\eps(t;k),
\end{equation}
where the kernel is given by
\begin{equation}
K^\eps(t,t_1;k):=\rho(t_1;k)\int_t^{t_1} e^{2\varphi(t,t_2;k)/\eps}\rho(t_2;k)\,dt_2.
\end{equation}
Again, since $\Re\{\Lambda(s;k)\}\le 0$, we have the $\eps$-independent estimate
\begin{equation}
|K^\eps(t,t_1;k)|\le |\rho(t_1;k)|\int_t^{t_1} |\rho(t_2;k)|\,dt_2\le \|\rho(\cdot;k)\|_1 |\rho(t_1;k)|,\quad t\le t_1.
\end{equation}
We first seek $z(\cdot;k)$ in the Banach space $X$ of continuous functions for $t\ge 0$ decaying to zero as $t\to +\infty$, equipped with the supremum norm.  Note that $F^\eps(\cdot;k)\in X$ because $\rho(\cdot;k)\in L^1(\mathbb{R}_+)$.  Defining a sequence of iterates by setting $z_0(t;k)\equiv 0$ and then
\begin{equation}
z_{n+1}(t;k):=\int_t^{+\infty}K^\eps(t,t_1;k)z_n(t_1;k)\,dt_1 + F^\eps(t;k),\quad n\ge 0,
\end{equation}
one easily sees that $z_n(t;k)$ takes the form of a sum of multiple integrals:
\begin{equation}
z_n(t;k)=\sum_{m=0}^n S_m(t;k),
\end{equation}
where $S_0(t;k):= F^\eps(t;k)$ and for $m\ge 1$,
\begin{equation}
S_m(t;k):=\int_t^{+\infty}\int_{t_1}^{+\infty}\cdots\int_{t_{m-1}}^{+\infty}K^\eps(t,t_1;k)K^\eps(t_1,t_2;k)\cdots
K^\eps(t_{m-1},t_m;k)F^\eps(t_m;k)\,dt_m\,dt_{m-1}\cdots dt_1.
\end{equation}
Obviously, for all $m\ge 0$ we have
\begin{equation}
\begin{split}
|S_m(t;k)|&\le \|\rho(\cdot;k)\|_1^m\|F^\eps\|_\infty \int_t^{+\infty}\int_{t_1}^{+\infty}\cdots
\int_{t_{m-1}}^{+\infty}|\rho(t_1;k)||\rho(t_2;k)|\cdots |\rho(t_m;k)|\,dt_m\,dt_{m-1}\cdots dt_1\\
&=\frac{\|\rho(\cdot;k)\|_1^m\|F^\eps\|_\infty}{m!}\left[\int_t^{+\infty}|\rho(t_1;k)|\,dt_1\right]^m\\
&\le\frac{\|\rho(\cdot;k)\|_1^{2m}}{m!}\|F^\eps\|_\infty,
\end{split}
\end{equation}
where $\|F\|_\infty$ denotes the norm in $X$:
\begin{equation}
\|F\|_\infty:=\sup_{t\ge 0}|F(t)|.
\end{equation}
It follows by comparison with the exponential series that the infinite series
\begin{equation}
z(t;k)=\lim_{n\to\infty}z_n(t;k)=\sum_{m=0}^\infty S_m(t;k)
\end{equation}
converges uniformly on $\mathbb{R}_+$ and hence as each partial sum vanishes as $t\to +\infty$ it converges in $X$.  Moreover, 
\begin{equation}
\|z(\cdot;k)\|_\infty \le \sum_{m=0}^\infty \frac{\|\rho(\cdot;k)\|_1^{2m}}{m!}\|F^\eps\|_\infty =e^{\|\rho(\cdot;k)\|_1^2} \|F^\eps\|_\infty .
\label{eq:zsup}
\end{equation}
It then follows from the first equation of \eqref{eq:integralequationsystem} that $y(\cdot;k)$ also lies in $X$, and that
\begin{equation}
\|y(\cdot;k)\|_\infty \le \|\rho(\cdot;k)\|_1\|z(\cdot;k)\|_\infty\le \|\rho(\cdot;k)\|_1e^{\|\rho(\cdot;k)\|_1^2}\|F^\eps\|_\infty.
\label{eq:ysup}
\end{equation}
Once it is known that both $y$ and $z$ lie in $X$, it follows directly from the integral equations
\eqref{eq:integralequationsystem} that both functions are continuously differentiable, so $h_1(t;k):=1+y(t;k)$ and $h_2(t;k):=z(t;k)$ constitute the unique classical solution of the differential equations \eqref{eq:hsystem} subject to the boundary conditions \eqref{eq:hnorm}.

It remains to estimate $\|F^\eps\|_\infty$.  The part of Assumption~\ref{assumption:data} that
describes the local behavior of $U$ and $H$ near $t=0$ implies that $\rho(t;k)$ is bounded by $t^{-1/2}$ near $t=0$.  This integrable singularity determines the rate of decay of $F^\eps(t;k)$ for small $t$.  While integration by parts shows that $F^\eps(t;k)=\mathcal{O}(\eps)$ for $t$ bounded away from zero, letting $t\downarrow 0$ results in a slower uniform rate of decay, namely $\mathcal{O}(\eps^{1/2})$.  Therefore, $\|F^\eps\|_\infty=\mathcal{O}(\eps^{1/2})$ under the conditions on $U$ and $H$ in Assumption~\ref{assumption:data}.  Taking into account that $\Lambda$ scales as $|k|^2$ for large $k$, while $\rho$ scales as $|k|^{-1}$ for large $k$, a more precise statement is that
for some constant $C_F>0$, the inequality
\begin{equation}
\|F^\eps\|_\infty\le\frac{C_F\eps^{1/2}}{1+|k|^3}
\label{eq:Fsup}
\end{equation}
holds for $k\in \mathcal{Q}_\delta^\mathrm{II}$ and all $\eps>0$ sufficiently small.

Combining \eqref{eq:rL1}, \eqref{eq:zsup}, \eqref{eq:ysup}, and \eqref{eq:Fsup}, 
we see that (since the exponential factors are bounded),
\begin{equation}
|y(0;k)|=\mathcal{O}\left(\frac{\eps^{1/2}}{1+|k|^4}\right)\quad\text{and}\quad
|z(0;k)|=\mathcal{O}\left(\frac{\eps^{1/2}}{1+|k|^3}\right)
\end{equation}
both hold for $k\in \mathcal{Q}_\delta^\mathrm{II}$ and $\eps>0$.  Then, from $h_1(0;k)=1+y(0;k)$ and $h_2(0;k)=z(0;k)$ and \eqref{eq:AstarBstarh} we have
\begin{equation}
A_0(k^*)^*=e^{\myi M(k)/(2\eps)}\left[1+\mathcal{O}\left(\frac{\eps^{1/2}}{1+|k|^4}\right)\right]\quad\text{and}\quad
B_0(k^*)^*=-e^{-\myi S(0)/\eps}e^{\myi M(k)/(2\eps)}\mathcal{O}\left(\frac{\eps^{1/2}}{1+|k|^3}\right).
\end{equation}
In particular, it follows that $d_0(k):=A_0(k^*)^*$ has no zeros for $k\in\mathcal{Q}_\delta^{\mathrm{II}}$ if $\eps>0$ is sufficiently small, and that for $\eps>0$ fixed any zeros of $d_0(k)$ in $\mathcal{Q}_\delta^\mathrm{II}$ must lie in a bounded subset (depending only on the functions $H$ and $U$ satisfying Assumption~\ref{assumption:data}).  Also, it is clear that \eqref{eq:Gammabound} holds true.  This completes the proof of Proposition~\ref{prop:noturningpoints}.

\section{Proof of Proposition~\ref{prop:two-turning-points}}
\label{sec:Appendix-two-turning-points}
\subsection*{Langer transformation to a perturbed Airy equation}
Returning to \eqref{eq:Fsystem} subject to the boundary condition \eqref{eq:Fnorm}, we first consider making a gauge transformation; that is we consider a given invertible matrix $\mathbf{G}(t;k)$ and transform \eqref{eq:Fsystem} to a new unknown matrix $\mathbf{Y}(t;k)$ by the substitution
\begin{equation}
\mathbf{F}(t;k)=\mathbf{G}(t;k)\mathbf{Y}(t;k),
\label{eq:gauge1}
\end{equation}
which transforms \eqref{eq:Fsystem} into the form
\begin{equation}
\eps\frac{d\mathbf{Y}}{dt}(t;k)=\left[\mathbf{G}(t;k)^{-1}\mathbf{B}(t;k)\mathbf{G}(t;k) - 
\eps\mathbf{G}(t;k)^{-1}\frac{d\mathbf{G}}{dt}(t;k)\right]\mathbf{Y}(t;k).
\end{equation}
The main idea of the method is to try to choose $\mathbf{G}(t;k)$ so that the leading term on the right-hand side takes a particularly simple form that can be the basis for a perturbation expansion.
Unfortunately, conjugation by $\mathbf{G}$ does not allow the determinant to be changed; however an additional scalar factor can be introduced by making a change of the independent variable.  So let $z=z(t)$ be a smooth strictly monotone transformation of the independent variable (hence invertible with smooth inverse map $t=t(z)$) , and let $\overline{\mathbf{Y}}(z;k)=\mathbf{Y}(t(z);k)$.  The desired scalar factor then comes from the chain rule:
\begin{equation}
\eps\frac{d\overline{\mathbf{Y}}}{dz}(z;k)=\left(\frac{dt}{dz}\mathbf{G}(t;k)^{-1}\mathbf{B}(t;k)\mathbf{G}(t;k)-\eps\frac{dt}{dz}\mathbf{G}(t;k)^{-1}\frac{d\mathbf{G}}{dt}(t;k)\right)\overline{\mathbf{Y}}(z;k).
\label{eq:overlineYeqn}
\end{equation}
The target form for the leading matrix coefficient on the right-hand side is (for a problem with a single turning point in the $t$-interval of interest) the coefficient matrix of the first-order form of the Airy equation; therefore one tries to set
\begin{equation}
\frac{dt}{dz}\mathbf{G}(t;k)^{-1}\mathbf{B}(t;k)\mathbf{G}(t;k) =\mathbf{A}(z):=\begin{bmatrix}0 & 1\\z & 0\end{bmatrix}.
\label{eq:leadingterm}
\end{equation}
Taking determinants of both sides of \eqref{eq:leadingterm} leads to a differential equation for $z=z(t)$:
\begin{equation}
z\left(\frac{dz}{dt}\right)^2 =-\det(\mathbf{B}(t;k))=\lambda^2=(2k-U(t))^2(k-\mathfrak{a}(t))(\mathfrak{b}(t)-k).
\label{eq:zODE}
\end{equation}
For $k_\mathfrak{a}<k<k_\mathfrak{b}$ and assuming that $k\neq k_0$ and $k\neq k_\infty$, the right-hand side has simple roots at the turning points $t=t_\pm(k)$. 
We consider building two different transformations, denoted $z=z(t)=z_\pm(t;k)$, mapping neighborhoods of the turning points $t=t_\pm(k)$ to corresponding intervals of $z$.   If $z=z_\pm(t;k)$ is to be a smooth invertible transformation, it is necessary that $z_\pm(t_\pm(k);k)=0$, which fixes
the integration constant in each case.  The solution to \eqref{eq:zODE} is obtained (by separating the variables) as follows:
\begin{equation}
z_\pm(t;k)=\mp\sgn(t-t_\pm(k))\left|\frac{3}{2}\int_{t_\pm(k)}^t(U(s)-2k)\sqrt{|(k-\mathfrak{a}(s))(\mathfrak{b}(s)-k)|}\,ds
\right|^{2/3}.
\label{eq:zplusminus}
\end{equation}
This formula\footnote{The nonlinear mapping $t\mapsto z_\pm(t;k)$ of the independent variable, along with the linear gauge transformation  \eqref{eq:gauge1} and its higher-order correction \eqref{eq:YtoW}, constitute the Langer transformation of the system \eqref{eq:Fsystem} in a neighborhood of the turning point $t=t_\pm(k)$.} defines $z_+(t;k)$  in the interval $t_-(k)<t<+\infty$, and it defines $z_-(t;k)$ in the interval $0<t<t_+(k)$.  Moreover, $z_-(t;k)$ is monotone increasing, while $z_+(t;k)$ is monotone decreasing on their respective domains.  It is obvious that within their intervals of definition, $z_\pm(t;k)$ have exactly one more continuous derivative than do $H(\cdot)$ and $U(\cdot)$, as long as $t$ is bounded away from the corresponding turning point $t_\pm(k)$.  However, a local analysis of $z_\pm(t;k)$ for $t$ near $t=t_\pm(k)$ shows that $z_\pm(t;k)$ has only the same number of continuous derivatives as do $H(\cdot)$ and $U(\cdot)$ at the turning point.  Given that $H$ and $U$ are analytic according to  Assumption~\ref{assumption:data}, we conclude that $z_\pm(t;k)$ are real analytic monotone functions in their respective domains of definition.  Note that even though \eqref{eq:zplusminus} appears to allow $z_+(t;k)$ to be defined also for $t\le t_-(k)$ and for $z_-(t;k)$ to be defined also for $t\ge t_+(k)$, there will be an essential loss of smoothness of $z_\pm(t;k)$ at the ``other'' turning point $t_\mp(k)$,
so in fact our approach will be to cover the positive $t$-axis $t>0$ with the two overlapping intervals $(0,t_+(k))$ and $(t_-(k),+\infty)$ and hence use two different changes of independent variable to obtain the desired smoothness.  (Moreover, it is easy to check that $z_-(t;k)$ as defined by \eqref{eq:zplusminus} fails to satisfy \eqref{eq:zODE} for $t>t_+(k)$ and that $z_+(t;k)$ as defined by \eqref{eq:zplusminus} fails to satisfy \eqref{eq:zODE} for $t<t_-(k)$.)  Finally, we record here that
the image of $(t_-(k),+\infty)$ under $z_+(\cdot;k)$ is the interval $(-\infty,z_+(t_-(k);k))$ where $z_+(t_-(k);k)>0$ while the image of $(0,t_+(k))$ under $z_-(\cdot;k)$ is the interval $(z_-(0;k),z_-(t_+(k);k))$ where $z_-(0;k)<0<z_-(t_+(k);k)$.  Also, $z_+(t;k)\sim -t^{2/3}$ as $t\to +\infty$.

With $z_\pm(t;k)$ determined by \eqref{eq:zplusminus}, we can solve \eqref{eq:leadingterm} for the gauge transformation matrix $\mathbf{G}(t;k)$.  Using the fact that $\mathbf{B}(t;k)^2=\lambda^2\mathbb{I} = -\det(\mathbf{B}(t;k))\mathbb{I}$, we obtain the general solution in the form
\begin{equation}
\mathbf{G}(t;k)=\begin{bmatrix}B_{11}(t;k)p_1(t;k) + B_{12}(t;k)p_2(t;k) & z'(t;k)p_1(t;k)\\
B_{21}(t;k)p_1(t;k) + B_{22}(t;k)p_2(t;k) & z'(t;k)p_2(t;k)\end{bmatrix},
\label{eq:Gform}
\end{equation}
where $p_1(t;k)$ and $p_2(t;k)$ are (at this moment) arbitrary functions of $t$.

Whether $t\in (0,t_+(k))$ and $z=z_-(t;k)$ or whether $t\in (t_-(k),+\infty)$ and $z=z_+(t;k)$, due to \eqref{eq:zplusminus} and \eqref{eq:Gform}, equation \eqref{eq:overlineYeqn} takes the form
\begin{equation}
\eps\frac{d\overline{\mathbf{Y}}}{dz}(z;k)=\left(\mathbf{A}(z)+\eps\mathbf{P}(t;k)\right)\overline{\mathbf{Y}}(z;k),
\label{eq:overlineYeqn-II}
\end{equation}
where
\begin{equation}
\mathbf{P}(t;k):=-\frac{1}{z'(t;k)}\mathbf{G}(t;k)^{-1}\frac{d\mathbf{G}}{dt}(t;k).
\label{eq:Pmatrixformula}
\end{equation}
Of course we think of $\mathbf{P}(t;k)$ as a function of $z$ by means of the invertible transformation defined by \eqref{eq:zplusminus}.

It turns out that the error term $\eps\mathbf{P}(t;k)$ in the coefficient matrix is too large to be controllable directly when $\eps\ll 1$ due to the corresponding factor of $\eps$ on the left-hand side of \eqref{eq:overlineYeqn-II}.  However, the terms proportional to $\eps$ in the coefficient matrix can be removed by an explicit near-identity transformation.  For some $\eps$-independent matrix $\mathbf{H}=\mathbf{H}(t;k)$ to be determined, consider the effect of making the substitution
\begin{equation}
\overline{\mathbf{Y}}(z;k)=\left(\mathbb{I}+\eps\mathbf{H}(t;k)\right)\mathbf{W}(z;k)
\label{eq:YtoW}
\end{equation}
in equation \eqref{eq:overlineYeqn-II}.  In order to obtain an equivalent differential equation for $\mathbf{W}(z;k)$, it will be necessary to invert $\mathbb{I}+\eps\mathbf{H}(t;k)$, and while this can always be accomplished pointwise for sufficiently small $\eps$ by Neumann series, it is especially convenient if the inversion can be carried out explicitly.  Hence we assume at this point that $\mathbf{H}(t;k)$ is a nilpotent matrix of the general form
\begin{equation}
\mathbf{H}(t;k)=\myi\sigma_2\mathbf{h}(t;k)\mathbf{h}(t;k)^{\mathsf{T}} = \begin{bmatrix}
h_1(t;k)h_2(t;k) &h_2(t;k)^2\\ - h_1(t;k)^2 & - h_1(t;k)h_2(t;k)\end{bmatrix}
\label{eq:Hform}
\end{equation}
where $\mathbf{h}(t;k)$ is a general vector function of $t$ and $k$.
Therefore, $\mathbf{H}(t;k)^2=\mathbf{0}$, and it follows that the Neumann series for the inverse truncates:  $(\mathbb{I}+\eps\mathbf{H}(t;k))^{-1}=\mathbb{I}-\eps\mathbf{H}(t;k)$.  Under \eqref{eq:YtoW} and \eqref{eq:Hform}, equation \eqref{eq:overlineYeqn-II} becomes
\begin{equation}
\eps\frac{d\mathbf{W}}{dz}(z;k)=\left(\mathbf{A}(z) +
\eps\mathbf{C}_1(t;k) +\eps^2\mathbf{C}_2(t;k) +\eps^3\mathbf{C}_3(t;k)\right)
\mathbf{W}(z;k).
\end{equation}
where
\begin{equation}
\begin{split}
\mathbf{C}_1(t;k)&:=
[\mathbf{A}(z(t;k)),\mathbf{H}(t,k)]+\mathbf{P}(t;k),\\
\mathbf{C}_2(t;k)&:=
[\mathbf{P}(t,k),\mathbf{H}(t;k)]-\mathbf{H}(t;k)\mathbf{A}(z(t;k))\mathbf{H}(t;k)-
\frac{1}{z'(t;k)}\frac{d\mathbf{H}}{dt}(t;k),\\
\mathbf{C}_3(t;k)&:= 
\frac{1}{z'(t;k)}\mathbf{H}(t;k)\frac{d\mathbf{H}}{dt}(t;k)-\mathbf{H}(t;k)\mathbf{P}(t;k)\mathbf{H}(t;k).
\end{split}
\end{equation}
Here $[\mathbf{A},\mathbf{B}]:=\mathbf{A}\mathbf{B}-\mathbf{B}\mathbf{A}$ denotes the matrix commutator.

The question now arises as to how the arbitrary functions $p_j(t;k)$ and $h_j(t;k)$ for $j=1,2$ can be chosen to ensure that $\mathbf{C}_1(t;k)=\mathbf{0}$.  In fact, the condition $\mathbf{C}_1(t;k)=\mathbf{0}$ is equivalent to the two conditions on the matrix $\mathbf{P}(t;k)$ (really, on the two functions $p_j(t;k)$ for $j=1,2$):
\begin{equation}
P_{21}(t;k)+z(t;k)P_{12}(t;k)=0\quad\text{and}\quad P_{11}(t;k)+P_{22}(t;k)=0
\label{eq:Pconditions}
\end{equation}
and the two equations relating $h_j(t;k)$, $j=1,2$ to the functions $p_j(t;k)$, $j=1,2$ in the matrix $\mathbf{P}(t;k)$:
\begin{equation}
h_1(t;k)h_2(t;k)=\frac{1}{2}P_{12}(t;k)\quad\text{and}\quad h_1(t;k)^2+z(t;k)h_2(t;k)^2=-P_{22}(t;k).
\label{eq:hconditions}
\end{equation}
It turns out to be consistent to assume, in addition to \eqref{eq:Pconditions}
that
\begin{equation}
P_{12}(t;k)=0,
\label{eq:P12zero}
\end{equation}
in which case \eqref{eq:Pconditions} implies that also
\begin{equation}
P_{21}(t;k)=0.
\label{eq:P21zero}
\end{equation}
Then, conditions \eqref{eq:hconditions} require that either $h_1(t;k)=0$ or $h_2(t;k)=0$.  Taking\begin{equation}
h_2(t;k)=0,
\label{eq:h2zero}
\end{equation}
conditions \eqref{eq:hconditions} reduce to
\begin{equation}
h_1(t;k)^2=-P_{22}(t;k)=P_{11}(t;k).
\end{equation}
It follows that the matrix $\mathbf{H}(t;k)$ can be expressed in terms of $\mathbf{P}(t;k)$ as
\begin{equation}
\mathbf{H}(t;k)=\begin{bmatrix}0 & 0\\
P_{22}(t;k) & 0\end{bmatrix},
\label{eq:HformII}
\end{equation}
while $\mathbf{P}(t;k)$ has the form
\begin{equation}
\mathbf{P}(t;k)=-P_{22}(t;k)\sigma_3.
\label{eq:PformDiagonal}
\end{equation}
Assuming that $P_{22}(t;k)$ is differentiable with respect to $t$, it follows from \eqref{eq:HformII} and \eqref{eq:PformDiagonal} that
\begin{equation}
\mathbf{C}_2(t;k)=\begin{bmatrix}0 & 0\\Q(z(t;k);k)&0\end{bmatrix},
\end{equation}
where
\begin{equation}
Q(z(t;k);k):=P_{22}(t;k)^2-\frac{1}{z'(t;k)}\frac{dP_{22}}{dt}(t;k),
\label{eq:QP22define}
\end{equation}
and that 
\begin{equation}
\mathbf{C}_3(t;k)=\mathbf{0}.
\end{equation}
It follows that under the gauge transformation \eqref{eq:YtoW}, the differential equation \eqref{eq:overlineYeqn-II} becomes a perturbed Airy equation:
\begin{equation}
\epsilon\frac{d\mathbf{W}}{dz}(z;k)-\mathbf{A}(z)\mathbf{W}(z;k)=
\begin{bmatrix}0 & 0\\
\epsilon^2Q(z;k) & 0\end{bmatrix}\mathbf{W}(z;k).
\label{eq:perturbed-Airy}
\end{equation}

Let us consider finding $p_j(t;k)$, $j=1,2$, so that $P_{12}(t;k)=0$, $P_{21}(t;k)=0$, and also $P_{11}(t;k)+P_{22}(t;k)=0$.  These calculations are based on the formula \eqref{eq:Pmatrixformula} and the representation \eqref{eq:Gform}.  Assuming for the moment that $\det(\mathbf{G}(t;k))\neq 0$, it is easy to see that the condition $P_{11}(t;k)+P_{22}(t;k)=0$ implies that in fact $\det(\mathbf{G}(t;k))$ must be independent of $t$; as the conditions on the elements of $\mathbf{P}(t;k)$ are all linear in $p_j(t;k)$, $j=1,2$, we will assume that they are chosen so that
$\det(\mathbf{G}(t;k))=1$.  It is then easy to see from \eqref{eq:Pmatrixformula} and \eqref{eq:Gform} that the condition $P_{12}(t;k)=0$ implies that the ratio $p_2(t;k)/p_1(t;k)$ is independent of $t$; we therefore write
\begin{equation}
p_2(t;k)=cp_1(t;k)
\end{equation}
where $c$ is independent of $t$.  With the further assumption that $p_1(t;k)\neq 0$, one checks that because $U(t)-2k>0$ for all $t>0$ when $k\in (k_\mathfrak{a},k_\mathfrak{b})$, the condition $P_{21}(t;k)=0$ implies that
\begin{equation}
\frac{d}{dt}\left(\frac{2H(t)+\myi c(2k+U(t))}{-\myi (2k+U(t))+2cH(t)}\right)=0.
\end{equation}
This equation is obviously solved by choosing either $c=\myi$ or $c=-\myi$.  Finally, we return to the condition $\det(\mathbf{G}(t;k))=1$.  This condition yields the identity
\begin{equation}
p_1(t;k)^2=
\left[2z'(t;k)(U(t)-2k)(H(t)\mp(k+\tfrac{1}{2}U(t)))\right]^{-1},\quad c=\pm\myi.
\end{equation}
Recalling the definitions of $\mathfrak{a}(t)$ and $\mathfrak{b}(t)$, this can be written in the form
\begin{equation}
p_1(t;k)^2=\begin{cases}\left[2z'(t;k)(U(t)-2k)(\mathfrak{b}(t)-k)\right]^{-1},&\quad c=\myi,\\
\left[2z'(t;k)(U(t)-2k)(k-\mathfrak{a}(t))\right]^{-1},&\quad c=-\myi.
\end{cases}
\label{eq:p1-squared}
\end{equation}
In each case, $0<t<t_+(k)$ and $t_-(k)<t<+\infty$, we choose the value of $c=\pm\myi$ so that the turning point contained in the corresponding open interval is \emph{not} a root of the factor $(\mathfrak{b}(t)-k)$ or $(k-\mathfrak{a}(t))$ in the denominator of $p_1(t;k)^2$.    For example, if
$t_-(k)$ satisfies $\mathfrak{b}(t_-(k))=k$, then $(k-\mathfrak{a}(t))$ will be nonzero for $t\in (0,t_+(k))$, so we choose $c=-\myi$.
A calculation using the fact that $\det(\mathbf{G}(t;k))=1$ then shows that
\begin{equation}
P_{22}(t;k)=\begin{cases}
\displaystyle 
-\frac{z''(t;k)}{2z'(t;k)^2}+\frac{1}{2z'(t;k)}\frac{d}{dt}\log((U(t)-2k)(\mathfrak{b}(t)-k)),&\quad c=\myi,\\
\displaystyle
-\frac{z''(t;k)}{2z'(t;k)^2}+\frac{1}{2z'(t;k)}\frac{d}{dt}\log((U(t)-2k)(k-\mathfrak{a}(t))),&\quad c=-\myi.
\end{cases}
\label{eq:P22formula}
\end{equation}
(It is not necessary to resolve the sign of $p_1(t;k)$ to determine $P_{22}(t;k)$ uniquely.)  The corresponding function $Q(z;k)$ is then determined from \eqref{eq:QP22define} and \eqref{eq:zplusminus}.

\subsection*{Solution of the perturbed Airy equations}
\begin{lemma}
Let $k_\mathfrak{a}<k<k_\mathfrak{b}$ with $k\neq k_0$ and $k\neq k_\infty$.  Then there exists a constant $K=K(k)$ such that for $z=z_-(t;k)\in (z_-(0;k),z_-(t_+(k);k))$, $|Q(z;k)|\le K$, while for $z=z_+(t;k)\in (-\infty,z_+(t_-(k);k))$, $Q(z;k)$ satisfies an estimate of the form
\begin{equation}
|Q(z;k)|\le \frac{K}{1+z^2}.
\end{equation}
\label{lemma:Q-bound}
\end{lemma}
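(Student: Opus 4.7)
My plan is to exploit the identity $Q(z;k) = f''(z;k)/f(z;k)$, where
\[
f(z;k) := \biggl[\frac{z'(t(z);k)}{(U(t(z))-2k)(k-\mathfrak{a}(t(z)))}\biggr]^{1/2}
\]
(and the analogous formula with $\mathfrak{b}(t(z))-k$ replacing $k-\mathfrak{a}(t(z))$ when $c=+\myi$).  This identity follows by rewriting \eqref{eq:P22formula} as $P_{22}=-d/dz\log f$ using the chain rule $(1/z')d/dt=d/dz$, substituting into \eqref{eq:QP22define}, and simplifying.  The boundedness of $Q$ then reduces to controlling smoothness and non-vanishing of $f$ on the respective intervals.

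The turning point is the first stop.  Because $\lambda^2$ has a simple zero at $t_\pm(k)$ (Lemma~\ref{lem:turningpoints} together with $k\ne k_\mathfrak{a},k_\mathfrak{b}$), the ODE \eqref{eq:zODE} forces $z_\pm(t;k)$ to be real analytic through $t_\pm(k)$ with $z_\pm'(t_\pm(k);k)=[(\partial_t\lambda^2)(t_\pm(k);k)]^{1/3}\neq 0$.  Since the choice of $c$ specified in the paper keeps the denominator factor in $f$ nonzero at $t_\pm(k)$, $f$ is smooth and nonvanishing on a full neighborhood of $z_\pm(t_\pm(k);k)=0$, so $f''/f$ is bounded there.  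On any compact subinterval disjoint from both endpoints, boundedness is immediate from the real analyticity of $H$ and $U$ on $(0,+\infty)$ in Assumption~\ref{assumption:data}.

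For the $z_+$ case as $t\to+\infty$, the strong decay conditions $t^pU^{(q)}(t),\,t^pH^{(q)}(t)\to 0$ from Assumption~\ref{assumption:data} force the log-derivative contribution to $P_{22}$ to decay faster than any power of $t$.  The remaining geometric piece $-z''/(2z'^2)$ is controlled using $\lambda^2(t;k)\to-4(k^2-k_\infty^2)^2$ together with \eqref{eq:zplusminus}, yielding $|z_+(t;k)|\sim(3|k^2-k_\infty^2|t)^{2/3}$ and $-z''/(2z'^2)\sim 1/(3z_+)$.  Thus $P_{22}^2$ and $dP_{22}/dz$ are both $O(z_+^{-2})$, and a direct estimation of the leading coefficients yields the required bound $|Q(z;k)|\le K/(1+z^2)$.

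The hard part will be the left endpoint $t\to 0$ in the $z_-$ case.  Although the ODE \eqref{eq:zODE} and the algebraic identity $(k-\mathfrak{a}(t))(\mathfrak{b}(t)-k)=H(t)^2-(k+U(t)/2)^2$ (which is real analytic at $t=0$ by Assumption~\ref{assumption:data}) combine to make $z_-(t;k)$ extend analytically with $z_-'(0;k)\neq 0$, the individual factor $(k-\mathfrak{a}(t))=(k-k_0)+h_0\sqrt{t}+\cdots$ in the denominator of $f$ inherits a $\sqrt{t}$ branching from $H(t)\sim h_0\sqrt{t}$.  Transported through the smooth inverse map $t=t(z)$, this branching produces a half-integer expansion $f(z)=c_0+c_1(z-z_-(0;k))^{1/2}+\cdots$ that naively threatens a $(z-z_-(0;k))^{-3/2}$ singularity in $f''/f$.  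My strategy for handling this is to exploit the smoothness of the product: averaging the $c=+\myi$ and $c=-\myi$ expressions for $P_{22}$ simplifies via \eqref{eq:zODE} to $\tfrac14(d/dz)\log\lambda^2+\tfrac{1}{4z}$, which is built entirely from the real analytic data $\lambda^2$, while the difference between the two expressions is a carefully structured half-integer series.  Expressing each $P_{22}$ as this smooth average plus a controlled Puiseux perturbation and carrying the expansion of $f$ to sufficient order near $z=z_-(0;k)$ should reveal that the coefficient of the apparent $(z-z_-(0;k))^{-3/2}$ singularity in $f''/f$ vanishes identically, leaving $Q$ bounded across the entire open interval.
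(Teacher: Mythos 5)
Your identity $Q=f_{zz}/f$ with $f=[z'/((U-2k)F)]^{1/2}$ ($F$ being the factor $\mathfrak{b}-k$ or $k-\mathfrak{a}$ fixed by the choice of $c$) is correct, and your handling of the interior turning point and of the $\mathcal{O}(z^{-2})$ decay as $z\to-\infty$ is essentially the paper's argument (the paper phrases the former as real-analyticity of $P_{22}$ on the open $t$-interval, and the latter via $|P_{22}|\sim t^{-2/3}$, $|P_{22}'|\sim t^{-5/3}$, $z_+\sim -t^{2/3}$). The gap is precisely at the step you postpone: the coefficient of $(z-z_-(0;k))^{-3/2}$ in $f_{zz}/f$ does \emph{not} vanish. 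Your averaging observation is true but does not help. Since $(k-\mathfrak{a})(\mathfrak{b}-k)=H^2-(k+\tfrac12 U)^2$ and $H^2\sim h_0^2t$, the \emph{sum} of the two logarithmic derivatives is indeed regular at $t=0$ --- their singular parts $H'(t)/(k-k_0)$ and $H'(t)/(k_0-k)$ cancel --- but $P_{22}$ contains only \emph{one} of the two logarithms, and which one is not at your disposal: it is forced by the requirement that $F$ be nonvanishing at the interior turning point $t_-(k)$. Hence $P_{22}(t;k)=\tfrac{1}{2z_-'(0;k)}\cdot\tfrac{H'(t)}{|k-k_0|}(1+o(1))=\tfrac{h_0}{4z_-'(0;k)|k-k_0|}t^{-1/2}(1+o(1))$ as $t\downarrow 0$, and then \eqref{eq:QP22define} gives
\[
Q(z;k)=\frac{h_0}{8|k-k_0|\sqrt{z_-'(0;k)}}\,(z-z_-(0;k))^{-3/2}\,(1+o(1)),\qquad z\downarrow z_-(0;k),
\]
with a strictly positive coefficient. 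A fully explicit check: take $U\equiv U_0$ and $H(t)=h_0t^{1/2}$ near $t=0$; then $\lambda^2=(2k-U_0)^2(h_0^2t-(k-k_0)^2)$ is linear in $t$, so $z_-(t;k)$ is exactly linear, $z''\equiv 0$, and $Q$ reduces to an elementary expression that manifestly grows like $t^{-3/2}$. So the Puiseux expansion of $f$ really does begin $f_0+f_1u^{1/2}+\cdots$ with $f_1\propto h_0/(k_0-k)\neq 0$, and carrying the expansion to higher order cannot remove the singularity.

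For comparison, the paper's proof makes no attempt at the finite endpoints: it records that $Q$ is real-analytic on the open intervals (because $z_\pm'\neq 0$ there and the chosen $F$ is nonvanishing at the interior turning point) and verifies only the decay at $z=-\infty$. The same caveat applies at the images of the ``other'' turning points, $z_-(t_+(k);k)$ and $z_+(t_-(k);k)$, where $z_\mp'$ vanishes and $Q$ blows up like the inverse square of the distance; those points are harmless downstream because the Volterra equations are only solved up to $z_\pm(t_0;k)$ with $t_0$ strictly between the turning points. The left endpoint $z_-(0;k)$ is different: the Volterra estimate integrates $|Q(\zeta)|\,|\zeta|^{-1/2}$ all the way down to $\zeta=z_-(0;k)$, and a $u^{-3/2}$ singularity is not integrable there. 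You were right to single out $t\downarrow 0$ as the crux, but the cancellation you rely on is not available; a repair would need a different mechanism --- for instance, retaining only the first-order reduction near $t=0$ so that the error analysis sees the integrable quantity $P_{22}\sim t^{-1/2}$ rather than its $z$-derivative, in the spirit of Appendix~\ref{sec:Appendix-noturningpoints}.
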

\begin{proof}
Since $z_-'(\cdot;k)$ and $z_+'(\cdot;k)$ are nonzero on $(0,t_+(k))$ and $(t_-(k),+\infty)$ respectively, it follows that $P_{22}(t;k)$ is in each case an analytic function on the corresponding interval.  It then follows from \eqref{eq:QP22define} that the same is true of $Q(z(t);k)$.  Since $z=z_\pm(t;k)$ is in each case a real analytic bijection, $Q(z;k)$ is smooth on either $(z_-(0;k),z_-(t_+(k);k))$ or $(-\infty,z_+(t_-(k);k))$.  Therefore it only remains to prove that $Q(z;k)=\mathcal{O}(z^{-2})$ as $z\to -\infty$ in the case when $z=z_+(t;k)$.  But from \eqref{eq:P22formula} and Assumption~\ref{assumption:data} it follows that $|P_{22}(t;k)|\sim t^{-2/3}$ and $|P_{22}'(t;k)|\sim t^{-5/3}$ as $t\to +\infty$, so the desired decay follows from \eqref{eq:QP22define} by composition with $z_+(t;k)\sim -t^{2/3}$ as $t\to +\infty$. 
\end{proof}

We now use the estimate of $Q$ recorded in Lemma~\ref{lemma:Q-bound} to solve the perturbed Airy equation \eqref{eq:perturbed-Airy} separately in the intervals
$z=z_-(t;k)\in (z_-(0;k),z_-(t_+(k);k))$ and $z=z_+(t;k)\in (-\infty,z_+(t_-(k);k))$.  The idea in both cases is the same:  consider the perturbation proportional to $\eps^2$ on the right-hand side as a forcing term, and convert the differential equation into an integral equation with the help of the fundamental matrix
\begin{equation}
\mathbf{W}_0(z):=\begin{bmatrix}\mathrm{Ai}(\eps^{-2/3}z) & \mathrm{Bi}(\eps^{-2/3}z)\\
\eps^{1/3}\mathrm{Ai}'(\eps^{-2/3}z) & \eps^{1/3}\mathrm{Bi}'(\eps^{-2/3}z)\end{bmatrix}
\end{equation}
of the unforced problem (we refer to \cite{DLMF} for the definitions and properties of the Airy functions $\mathrm{Ai}(\cdot)$ and $\mathrm{Bi}(\cdot)$).  Indeed, setting 
\begin{equation}
\mathbf{W}(z;k)=\mathbf{W}_0(z)\mathbf{U}(z;k)
\label{eq:W-to-U}
\end{equation}
in \eqref{eq:perturbed-Airy} where $\mathbf{U}(z;k)$ denotes a new matrix unknown,
we obtain the equivalent equation (after canceling a factor of $\eps$)
\begin{equation}
\frac{d\mathbf{U}}{dz}(z;k)=\eps Q(z;k)\mathbf{W}_0(z)^{-1}\begin{bmatrix}0 & 0\\1 & 0\end{bmatrix}\mathbf{W}_0(z)\mathbf{U}(z;k).
\end{equation}
Using the Wronskian identity $\mathrm{Ai}(\cdot)\mathrm{Bi}'(\cdot)-\mathrm{Bi}(\cdot)\mathrm{Ai}'(\cdot)=1/\pi$ to invert $\mathbf{W}_0(z)$, we may write this system in the form
\begin{equation}
\frac{d\mathbf{U}}{dz}(z;k)=\eps^{2/3}\pi Q(z;k)\begin{bmatrix}
-\mathrm{Ai}(\eps^{-2/3}z)\mathrm{Bi}(\eps^{-2/3}z) & -\mathrm{Bi}(\eps^{-2/3}z)^2\\
\mathrm{Ai}(\eps^{-2/3}z)^2 & \mathrm{Ai}(\eps^{-2/3}z)\mathrm{Bi}(\eps^{-2/3}z)\end{bmatrix}
\mathbf{U}(z;k).
\end{equation}
Assuming $\mathbf{U}(z;k)$ is known at some value $z_0$, we integrate to obtain the Volterra equation
\begin{equation}
\mathbf{U}(z;k)=\mathbf{U}(z_0;k) +\eps^{2/3}\pi\int_{z_0}^z
Q(\zeta;k)
\begin{bmatrix}
-\mathrm{Ai}(\eps^{-2/3}\zeta)\mathrm{Bi}(\eps^{-2/3}\zeta) & -\mathrm{Bi}(\eps^{-2/3}\zeta)^2\\
\mathrm{Ai}(\eps^{-2/3}\zeta)^2 & \mathrm{Ai}(\eps^{-2/3}\zeta)\mathrm{Bi}(\eps^{-2/3}\zeta)\end{bmatrix}\mathbf{U}(\zeta;k)\,d\zeta.
\label{eq:Volterra-U}
\end{equation}
This equation may be analyzed with the help of a weight function $\omega$ defined by
\begin{equation}
\omega(s):=\begin{cases}1,&\quad s\le 0\\
e^{-4s^{3/2}/3},&\quad s>0.\end{cases}
\end{equation}
The weight $\omega$ is strictly positive, continuous, and nonincreasing on $\mathbb{R}$.  Also, there is a constant $C>0$ such that the three inequalities
\begin{equation}
|\mathrm{Ai}(s)\mathrm{Bi}(s)|\le\frac{C}{|s|^{1/2}},\quad
|\mathrm{Ai}(s)|^2\omega(s)^{-1}\le \frac{C}{|s|^{1/2}},\quad\text{and}\quad
|\mathrm{Bi}(s)|^2\omega(s)\le\frac{C}{|s|^{1/2}}\quad\text{hold for all $s\in\mathbb{R}$.}
\label{eq:Airy-Inequalities}
\end{equation}
We now introduce the weight $\omega$ into \eqref{eq:Volterra-U} in two different ways.  First, let
\begin{equation}
\mathbf{U}(z;k)=\begin{bmatrix}\omega(\eps^{-2/3}z)^{-1} & 0\\0 & 1\end{bmatrix}\mathbf{V}_>(z;k)
\label{eq:U-V-greater}
\end{equation}
define a new unknown $\mathbf{V}_>(z;k)$.  In terms of $\mathbf{V}_>(z;k)$, the Volterra equation \eqref{eq:Volterra-U} becomes the equivalent equation
\begin{equation}
(1-\mathcal{K}_>)\mathbf{V}_>(z;k)
=\mathbf{R}_>(z;k),\quad \mathcal{K}_>\mathbf{V}(z):=\int_{z_0}^z
\mathbf{K}_>(z,\zeta;k)\mathbf{V}(\zeta)\,d\zeta
\label{eq:Volterra-greater}
\end{equation}
where the matrix kernel $\mathbf{K}_>(z,\zeta;k)$ is given by
\begin{equation}
\mathbf{K}_>(z,\zeta;k):=\eps^{2/3}\pi Q(\zeta;k)\begin{bmatrix}-\omega(\eps^{-2/3}z)\omega(\eps^{-2/3}\zeta)^{-1}\mathrm{Ai}(\eps^{-2/3}\zeta)\mathrm{Bi}(\eps^{-2/3}\zeta) & -\mathrm{Bi}(\eps^{-2/3}\zeta)^2\omega(\eps^{-2/3}z)\\
\mathrm{Ai}(\eps^{-2/3}\zeta)^2 \omega(\eps^{-2/3}\zeta)^{-1}& \mathrm{Ai}(\eps^{-2/3}\zeta)\mathrm{Bi}(\eps^{-2/3}\zeta)
\end{bmatrix},
\label{eq:K-greater}
\end{equation}
and where the forcing term is the matrix function
\begin{equation}
\mathbf{R}_>(z;k):=\begin{bmatrix}\omega(\eps^{-2/3}z)\omega(\eps^{-2/3}z_0)^{-1} & 0\\0 & 1\end{bmatrix}\mathbf{V}_>(z_0;k).
\label{eq:R-greater}
\end{equation}
We will use this form when $z>z_0$ in which case $\omega(\eps^{-2/3}z)\le\omega(\eps^{-2/3}\zeta)$
holds for the first row of the kernel.
Alternately, let
\begin{equation}
\mathbf{U}(z;k)=\begin{bmatrix}1 & 0\\0 & \omega(\eps^{-2/3}z)\end{bmatrix}\mathbf{V}_<(z;k)
\label{eq:U-V-less}
\end{equation}
define a new unknown $\mathbf{V}_<(z;k)$.  In terms of $\mathbf{V}_<(z;k)$, the Volterra equation \eqref{eq:Volterra-U} becomes
\begin{equation}
(1-\mathcal{K}_<)\mathbf{V}(z;k)
=\mathbf{R}_<(z;k),\quad\mathcal{K}_<\mathbf{V}(z):=\int_z^{z_0}\mathbf{K}_<(z,\zeta;k)\mathbf{V}(\zeta)\,d\zeta
\label{eq:Volterra-less}
\end{equation}
where the matrix kernel $\mathbf{K}_<(z,\zeta;k)$ is given by
\begin{equation}
\mathbf{K}_<(z,\zeta;k):=
-\eps^{2/3}\pi Q(\zeta;k)\begin{bmatrix}-\mathrm{Ai}(\eps^{-2/3}\zeta)\mathrm{Bi}(\eps^{-2/3}\zeta) & -\mathrm{Bi}(\eps^{-2/3}\zeta)^2\omega(\eps^{-2/3}\zeta)\\
\mathrm{Ai}(\eps^{-2/3}\zeta)^2 \omega(\eps^{-2/3}z)^{-1}& \omega(\eps^{-2/3}z)^{-1}\omega(\eps^{-2/3}\zeta)\mathrm{Ai}(\eps^{-2/3}\zeta)\mathrm{Bi}(\eps^{-2/3}\zeta)
\end{bmatrix},
\label{eq:K-less}
\end{equation}
and the matrix-valued forcing term is
\begin{equation}
\mathbf{R}_<(z;k):=\begin{bmatrix}1 & 0\\0 & \omega(\eps^{-2/3}z_0)\omega(\eps^{-2/3}z)^{-1}\end{bmatrix}\mathbf{V}_<(z_0;k).
\label{eq:R-less}
\end{equation}
We will use this form when $z<z_0$ in which case $\omega(\eps^{-2/3}z)^{-1}\le\omega(\eps^{-2/3}\zeta)^{-1}$ holds for the second row of the kernel.
Let $t_0$ be a fixed number in the open interval $(t_-(k),t_+(k))$.  We first use \eqref{eq:U-V-greater}--\eqref{eq:K-greater} along with the mapping $t\mapsto z=z_+(t;k)$ to analyze $\mathbf{F}(t;k)$ for $t\in [t_0,+\infty)$.  Then, we use \eqref{eq:U-V-less}--\eqref{eq:K-less} along with the mapping $t\mapsto z=z_-(t;k)$ to analyze $\mathbf{F}(t;k)$ for $t\in (0,t_0]$.

\subsubsection*{Analysis of $\mathbf{F}(t;k)$ for $t_0\le t<+\infty$}
Under the real-analytic bijection $t\mapsto z=z_+(t;k)$ given by \eqref{eq:zplusminus}, the interval $t_0\le t<+\infty$ is identified with the interval $-\infty<z\le z_+(t_0;k)$ where $z_+(t_0;k)>0$, and the mapping reverses orientation, i.e., $z_+'(t;k)<0$.  We begin by resolving the only remaining indeterminacy in the Langer transformation by first noting that in the formula \eqref{eq:p1-squared} we must take $c=\myi\sgn(k^2-k_\infty^2)$ (because $\mathfrak{a}(t_+(k))=k$ for $k_\mathfrak{a}<k<k_\infty<0$ while $\mathfrak{b}(t_+(k))=k$ for $k_\infty<k<k_\mathfrak{b}<0$).  Taking into account that $z_+'(t;k)<0$, we then choose the sign of the square root to obtain $p_1(t;k)$ as
\begin{equation}
p_1(t;k)^{-1}=\begin{cases}\myi\sqrt{-2z_+'(t;k)(U(t)-2k)(\mathfrak{b}(t)-k)},&\quad k_\mathfrak{a}<k<k_\infty\\
\myi\sqrt{-2z_+'(t;k)(U(t)-2k)(k-\mathfrak{a}(t))},&\quad k_\infty<k<k_\mathfrak{b}.
\end{cases}
\label{eq:p1-greater}
\end{equation}
In both cases, the positive square root is meant, and $p_1(t;k)$ is a purely imaginary nonvanishing analytic function of $t$ in the interval $t_0\le t<+\infty$.  The definition \eqref{eq:p1-greater} unambiguously determines the matrices $\mathbf{G}(t;k)=\mathbf{G}_>(t;k)$ and $\mathbf{H}(t;k)=\mathbf{H}_>(t;k)$.  Then, combining the transformations \eqref{eq:gauge1}, \eqref{eq:YtoW}, and \eqref{eq:W-to-U} with \eqref{eq:U-V-greater}, we obtain the exact relation linking $\mathbf{F}(t;k)$ and $\mathbf{V}_>(z;k)$ for $z=z_+(t;k)$:
\begin{equation}
\mathbf{V}_>(z;k)=\begin{bmatrix}\omega(\eps^{-2/3}z) & 0\\0 & 1\end{bmatrix}
\mathbf{W}_0(z)^{-1}(\mathbb{I}+\eps\mathbf{H}_>(t;k))^{-1}\mathbf{G}_>(t;k)^{-1}
\mathbf{F}(t;k),\quad t\ge t_0.
\label{eq:V-greater-F-exact}
\end{equation}
Since $\omega(\eps^{-2/3}z_+(t;k))=1$ for $t>t_+(k)$ (which implies $z_+(t;k)<0$), 
from the asymptotic normalization condition \eqref{eq:Fnorm} on $\mathbf{F}(t;k)$, we expect the following limit to exist for each $\eps>0$:
\begin{equation}
\mathbf{V}_>(-\infty;k):=\lim_{t\to +\infty}\mathbf{W}_0(z_+(t;k))^{-1}(\mathbb{I}+\eps\mathbf{H}_>(t;k))^{-1}\mathbf{G}_>(t;k)^{-1}e^{-\myi(4k^2t+S(t))\sigma_3/(2\eps)}.
\end{equation}
This limit does indeed exist, and it may be computed with the help of (i) the estimate $\mathbf{H}_>(t;k)=\mathcal{O}(t^{-2/3})$, (ii) the asymptotic relations
\begin{equation}
z_+(t;k)=-3^{2/3}|k^2-k_\infty^2|^{2/3}t^{2/3}(1+\mathcal{O}(t^{-1})),\quad t\to +\infty,
\end{equation}
\begin{equation}
\frac{2}{3}(-z_+(t;k))^{3/2}=2|k^2-k_\infty^2|t +\ell(k) + o(1),\quad t\to +\infty,
\end{equation}
where $\ell(k)$ is defined by \eqref{eq:ell-define}
and
\begin{equation}
z_+'(t;k)=-\frac{2}{3^{1/3}}|k^2-k_\infty^2|^{2/3}t^{-1/3}(1+\mathcal{O}(t^{-1})),\quad t\to +\infty,
\end{equation}
and (iii) known asymptotic formulae for Airy functions and their derivatives for large negative $z$.  The result is:
\begin{equation}
\mathbf{V}_>(-\infty;k)=\myi\eps^{-1/6}\sqrt{\frac{\pi}{2}}e^{\myi\pi\sgn(k^2-k_\infty^2)\sigma_3/4}
\begin{bmatrix}1 & -1\\-1 & -1\end{bmatrix}e^{\myi (\ell(k)\sgn(k^2-k_\infty^2)-\tfrac{1}{2}S_\infty)\sigma_3/\eps},
\label{eq:V-greater-infinity}
\end{equation}
where $S_\infty$ is defined by \eqref{eq:S-infty}.

We now consider the Volterra integral equation \eqref{eq:Volterra-greater} with $z_0=-\infty$, on the space of matrix-valued functions defined on $-\infty<z<z_+(t_0;k)$ equipped with the supremum norm (and based on some matrix norm).  It is a simple consequence of Lemma~\ref{lemma:Q-bound}, the inequality $\omega(\eps^{-2/3}z)\le\omega(\eps^{-2/3}\zeta)$ holding for $z\ge \zeta$ and the estimates \eqref{eq:Airy-Inequalities} that the Volterra integral operator $\mathcal{K}_>$ with matrix kernel $\mathbf{K}_>(z,\zeta;k)$ is bounded on this space with operator norm $\mathcal{O}(\eps)$, because
\begin{equation}
\int_{-\infty}^{z_+(t_0;k)}\frac{d\zeta}{|\zeta|^{1/2}(1+\zeta^2)}<\infty.
\end{equation}
It follows easily that $1-\mathcal{K}_>$ is invertible for sufficiently small $\eps$, and that the operator norm of $(1-\mathcal{K}_>)^{-1}-1$ is $\mathcal{O}(\eps)$.  Since $\omega(\eps^{-2/3}z_0)=1$ for $z_0=-\infty$, it is easy to see that $\mathbf{R}_>(z;k)$ defined by \eqref{eq:R-greater} is a bounded continuous function on $(-\infty,z_+(t_0;k))$.  Therefore we learn that ($\|\cdot\|$ is a matrix norm)
\begin{equation}
\sup_{-\infty<z<z_+(t_0;k)}\|\eps^{1/6}(\mathbf{V}_>(z;k)-\mathbf{R}_>(z;k))\|=\mathcal{O}(\eps),\quad \eps\to 0,
\label{eq:V-greater-control}
\end{equation}
because $\eps^{1/6}\mathbf{R}_>(-\infty;k)$ is bounded independently of $\eps$ according to 
\eqref{eq:V-greater-infinity} and the fact that $0<\omega(\eps^{-2/3}z)\le 1$.
It follows that we may solve \eqref{eq:V-greater-F-exact} for $\mathbf{F}(t;k)$, set
$t=t_0$ and take into account that $z_+(t_0;k)>0$ to obtain an asymptotic formula for $\mathbf{F}(t_0;k)$; here we need to use the fact that $\omega(\eps^{-2/3}z_+(t_0;k))=e^{-4z_+(t_0;k)^{3/2}/(3\eps)}$ along with asymptotic expansions of Airy functions for large positive arguments.
The key point is that
\begin{equation}
\mathbf{W}_0(z_+(t_0;k))\begin{bmatrix}\omega(\eps^{-2/3}z_+(t_0;k))^{-1} & 0\\0 & 1\end{bmatrix}=\frac{\eps^{1/6}}{\sqrt{\pi}}e^{2z_+(t_0;k)^{3/2}/(3\eps)}z_+(t_0;k)^{-\sigma_3/4}\left(\begin{bmatrix}
\tfrac{1}{2} &1\\
-\tfrac{1}{2}&1\end{bmatrix}+\mathcal{O}(\eps)\right),\quad \eps\to 0.
\end{equation}
Combining this with \eqref{eq:V-greater-F-exact}, \eqref{eq:V-greater-infinity}, and \eqref{eq:V-greater-control} gives
\begin{equation}
\mathbf{F}(t_0;k)=e^{2z_+(t_0;k)/(3\eps)}
\left(\mathbf{Z}_>(k)+\mathcal{O}(\eps)\right)e^{\myi (\ell\sgn(k^2-k_\infty^2)-\tfrac{1}{2}S_\infty)\sigma_3/\eps},
\quad\eps\to 0,
\label{eq:F-at-t0}
\end{equation}
where
\begin{equation}
\mathbf{Z}_>(k):=-\frac{\myi}{\sqrt{2}}e^{-\myi\pi\sgn(k^2-k_\infty^2)/4}\mathbf{G}_>(t_0;k)z_+(t_0;k)^{-\sigma_3/4}
\begin{bmatrix}1 & 1\\1 & 1\end{bmatrix}.
\end{equation}

\subsubsection*{Analysis of $\mathbf{F}(t;k)$ for $0<t\le t_0$}
Now, we consider the interval $0<t\le t_0$, which is mapped by the real-analytic bijection $t\mapsto z_-(t;k)$ to the interval $z_-(0;k)<z\le z_-(t_0;k)$ with orientation preserved, i.e., $z_-'(t;k)>0$.  To fully determine the Langer transformation in this interval, we need to take $c=\myi\sgn(k^2-k_0^2)$ (because $\mathfrak{a}(t_-(k))=k$ for $k_\mathfrak{a}<k<k_0$ while $\mathfrak{b}(t_-(k))=k$ for $k_0<k<k_\mathfrak{b}$).  Choosing a sign for the square root of $p_1(t;k)^2$ we then obtain
\begin{equation}
p_1(t;k)^{-1}=\begin{cases}\sqrt{2z_-'(t;k)(U(t)-2k)(\mathfrak{b}(t)-k)},&\quad k_\mathfrak{a}<k<k_0\\
\sqrt{2z_-'(t;k)(U(t)-2k)(k-\mathfrak{a}(t))},&\quad k_0<k<k_\mathfrak{b}.
\end{cases}
\label{eq:p1-less}
\end{equation}
Therefore, $p_1(t;k)$ is a strictly positive nonvanishing analytic function of $t$ for $0<t\le t_0$.  This choice determines the matrices $\mathbf{G}(t;k)$ and $\mathbf{H}(t;k)$, here denoted $\mathbf{G}_<(t;k)$ and $\mathbf{H}_<(t;k)$.  Then, according to \eqref{eq:gauge1}, \eqref{eq:YtoW}, \eqref{eq:W-to-U}, and \eqref{eq:U-V-less}, the exact relation between $\mathbf{F}(t;k)$ and $\mathbf{V}_<(z;k)$ for $z=z_-(t;k)$ is
\begin{equation}
\mathbf{V}_<(z;k)=\begin{bmatrix}1 & 0\\0 & \omega(\eps^{-2/3}z)^{-1}\end{bmatrix}
\mathbf{W}_0(z)^{-1}(\mathbb{I}+\eps\mathbf{H}_<(t;k))^{-1}\mathbf{G}_<(t;k)^{-1}\mathbf{F}(t;k),\quad 0<t\le t_0.
\label{eq:V-less-F-exact}
\end{equation}
If we consider in particular $t=t_0$, then $\mathbf{F}(t_0;k)$ is given by \eqref{eq:F-at-t0}, so using the fact that $z_-(t_0;k)>0$ to simplify the weight $\omega$ and using asymptotic formula for Airy functions of large positive arguments, we obtain the following:
\begin{equation}
\mathbf{V}_<(z_-(t_0;k);k)=\eps^{-1/6}e^{\tfrac{2}{3}(z_-(t_0;k)^{3/2}+z_+(t_0;k)^{3/2})/\eps}\left(\mathbf{Z}_<(t_0;k)+\mathcal{O}(\eps)\right)e^{\myi (\ell(k)\sgn(k^2-k_\infty^2)-\tfrac{1}{2}S_\infty)\sigma_3/\eps},
\label{eq:V-less-t0}
\end{equation}
where
\begin{equation}
\mathbf{Z}_<(t_0;k):=-\myi\sqrt{\frac{\pi}{2}}e^{-\myi\pi\sgn(k^2-k_\infty^2)/4}\begin{bmatrix}1 & -1\\\tfrac{1}{2}&\tfrac{1}{2}\end{bmatrix}
z_-(t_0;k)^{\sigma_3/4}\mathbf{G}_<(t_0;k)^{-1}\mathbf{G}_>(t_0;k)z_+(t_0;k)^{-\sigma_3/4}
\begin{bmatrix}1&1\\1&1\end{bmatrix}.
\end{equation}
We claim that the explicit terms in the formula \eqref{eq:V-less-t0} are independent of $t_0\in (t_-(k),t_+(k))$.  Indeed, we have the following results.
\begin{lemma}
Suppose that $k\in (k_\mathfrak{a},k_\mathfrak{b})$ with $k\neq k_0$ and $k\neq k_\infty$, and that $t_-(k)<t_0<t_+(k)$.  Then:
\begin{equation}
\frac{2}{3}z_-(t_0;k)^{3/2}+\frac{2}{3}z_+(t_0;k)^{3/2}=\tau(k),
\label{eq:tau-define}
\end{equation}
where $\tau:(k_\mathfrak{a},k_\mathfrak{b})\to\mathbb{R}_+$ is defined by \eqref{eq:tau-define-1}, and 
\begin{equation}
\mathbf{Z}_<(t_0;k)=-\sqrt{2\pi}e^{-\myi\pi\sgn(k^2-k_0^2)/4}
\begin{bmatrix}1 & 1\\
0 & 0\end{bmatrix}.
\label{eq:Z-less-formula}
\end{equation}
In particular, $\mathbf{Z}_<$ is independent of $t_0$ and piecewise constant in $k\in (k_\mathfrak{a},k_\mathfrak{b})$.
\label{lemma-t0-independent}
\end{lemma}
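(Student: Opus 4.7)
The plan is to handle the two identities separately. Identity \eqref{eq:tau-define} is immediate from the defining integrals, while \eqref{eq:Z-less-formula} requires a linear-algebraic analysis of the eigenvector matrices $\mathbf{G}_<(t_0;k)$ and $\mathbf{G}_>(t_0;k)$ at a common interior point, followed by explicit computation of a single scalar normalization.

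For \eqref{eq:tau-define}, I would substitute directly into \eqref{eq:zplusminus}. For $t_0\in(t_-(k),t_+(k))$ the integrand $(U(s)-2k)\sqrt{(k-\mathfrak{a}(s))(\mathfrak{b}(s)-k)}$ is positive throughout, so the absolute-value bars drop away and, after resolving the overall signs (as discussed when selecting the square-root branches in \eqref{eq:p1-less}--\eqref{eq:p1-greater}),
\begin{equation*}
\tfrac{2}{3}z_-(t_0;k)^{3/2}=\int_{t_-(k)}^{t_0}(U(s)-2k)\sqrt{(k-\mathfrak{a}(s))(\mathfrak{b}(s)-k)}\,ds,\qquad
\tfrac{2}{3}z_+(t_0;k)^{3/2}=\int_{t_0}^{t_+(k)}(U(s)-2k)\sqrt{(k-\mathfrak{a}(s))(\mathfrak{b}(s)-k)}\,ds,
\end{equation*}
whose sum is exactly $\tau(k)$ by \eqref{eq:tau-define-1}, independent of $t_0$.

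For \eqref{eq:Z-less-formula}, I observe that both $\mathbf{G}_<(t_0;k)$ and $\mathbf{G}_>(t_0;k)$ are unimodular and satisfy $\mathbf{B}(t_0;k)\mathbf{G}_\pm=\mathbf{G}_\pm z_\pm'(t_0;k)\mathbf{A}(z_\pm(t_0;k))$. Since $z_\pm(t_0;k)>0$, $z_-'(t_0;k)>0$, and $z_+'(t_0;k)<0$ in the band, the common positive eigenvalue $\lambda:=z_-'\sqrt{z_-}=-z_+'\sqrt{z_+}$ of $\mathbf{B}$ is attained by two pairs of eigenvectors, and identifying them yields
\begin{equation*}
\mathbf{G}_<(t_0;k)M_-=\mathbf{G}_>(t_0;k)M_+\begin{bmatrix}\mu_+ & 0\\ 0 & \mu_-\end{bmatrix},\quad M_-:=\begin{bmatrix}1 & 1\\ \sqrt{z_-} & -\sqrt{z_-}\end{bmatrix},\quad M_+:=\begin{bmatrix}1 & 1\\ -\sqrt{z_+} & \sqrt{z_+}\end{bmatrix},
\end{equation*}
with $\mu_\pm=\mu_\pm(t_0;k)$ scalars constrained by $\mu_+\mu_-=-\sqrt{z_-/z_+}$ from $\det\mathbf{G}_<=\det\mathbf{G}_>=1$. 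Substituting $\mathbf{G}_<^{-1}\mathbf{G}_>=M_-\,\mathrm{diag}(1/\mu_+,1/\mu_-)\,M_+^{-1}$ into the definition of $\mathbf{Z}_<$, the calculations $z_-^{\sigma_3/4}M_-=z_-^{1/4}\bigl[\begin{smallmatrix}1 & 1\\ 1 & -1\end{smallmatrix}\bigr]$ and $M_+^{-1}z_+^{-\sigma_3/4}=\tfrac{1}{2}z_+^{-1/4}\bigl[\begin{smallmatrix}1 & -1\\ 1 & 1\end{smallmatrix}\bigr]$, followed by the outer multiplications by $\bigl[\begin{smallmatrix}1 & -1\\ 1/2 & 1/2\end{smallmatrix}\bigr]$ and $\bigl[\begin{smallmatrix}1 & 1\\ 1 & 1\end{smallmatrix}\bigr]$, collapse the second row to zero and show that all $t_0$-dependence except through $\mu_-$ cancels:
\begin{equation*}
\mathbf{Z}_<(t_0;k)=\frac{-\myi\sqrt{2\pi}\,(z_-/z_+)^{1/4}\,e^{-\myi\pi\sgn(k^2-k_\infty^2)/4}}{\mu_-(t_0;k)}\begin{bmatrix}1 & 1\\ 0 & 0\end{bmatrix}.
\end{equation*}

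The main obstacle is computing $\mu_-$ explicitly (not just the product $\mu_+\mu_-$). To do so, I would use the representation \eqref{eq:Gform} of $\mathbf{G}_\pm$, whose columns are determined by the scalars $p_{1,<}(t;k)$, $p_{1,>}(t;k)$ given by \eqref{eq:p1-less}, \eqref{eq:p1-greater} together with the choices $c_<=\myi\sgn(k^2-k_0^2)$ and $c_>=\myi\sgn(k^2-k_\infty^2)$. Extracting, for instance, the ratio of first entries of $\mathbf{G}_<(1,\sqrt{z_-})^T$ and $\mu_+\mathbf{G}_>(1,-\sqrt{z_+})^T$ should give $\mu_-$ after careful tracking of the principal branches of $(z_-/z_+)^{1/4}$ and the signs of $p_{1,<}$, $p_{1,>}$; the anticipated answer is $\mu_-=\myi\,(z_-/z_+)^{1/4}\exp\!\bigl(\myi\pi[\sgn(k^2-k_0^2)-\sgn(k^2-k_\infty^2)]/4\bigr)$, whose substitution then produces exactly the asserted constant value of $\mathbf{Z}_<$. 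The calculation naturally splits into the four subcases determined by the relative positions of $k$, $k_0$, and $k_\infty$, but the answer in each case collapses to the single formula displayed in \eqref{eq:Z-less-formula} because of the combined phase $e^{-\myi\pi\sgn(k^2-k_\infty^2)/4}/\mu_-$.
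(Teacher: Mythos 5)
Your proof of \eqref{eq:tau-define} is exactly the paper's (one-line) argument, and your treatment of \eqref{eq:Z-less-formula} is correct but organized differently from the paper's. The paper simply forms $\mathbf{G}_>(t_0;k)z_+(t_0;k)^{-\sigma_3/4}$ and $\mathbf{G}_<(t_0;k)z_-(t_0;k)^{-\sigma_3/4}$ explicitly from \eqref{eq:Gform}, \eqref{eq:p1-greater}, \eqref{eq:p1-less}, simplifies with the positive fourth root of \eqref{eq:zODE}, and multiplies everything out. You instead observe that $\mathbf{G}_<$ and $\mathbf{G}_>$ conjugate the same matrix $\mathbf{B}(t_0;k)$ to multiples of the Airy companion form, so the columns of $\mathbf{G}_<M_-$ and $\mathbf{G}_>M_+$ are eigenvectors of $\mathbf{B}$ for the same ordered pair of eigenvalues $\pm\lambda$; this forces $\mathbf{G}_<M_-=\mathbf{G}_>M_+\,\mathrm{diag}(\mu_+,\mu_-)$ and explains a priori both the rank-one collapse of $\mathbf{Z}_<$ and the cancellation of all $t_0$-dependence except through the single scalar $\mu_-$. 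That is a genuine conceptual gain: in the paper's computation the vanishing of the second row emerges as an unexplained coincidence of the multiplication, while you derive it from eigenvector uniqueness. Your intermediate identities (the forms of $z_-^{\sigma_3/4}M_-$ and $M_+^{-1}z_+^{-\sigma_3/4}$, the determinant relation $\mu_+\mu_-=-\sqrt{z_-/z_+}$, and the displayed formula for $\mathbf{Z}_<$ in terms of $\mu_-$) all check out.

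The one step you do not actually carry out is the evaluation of $\mu_-$, which is precisely where all of the branch and sign bookkeeping of \eqref{eq:p1-less}--\eqref{eq:p1-greater} lives; you only state its ``anticipated'' value. I verified it: comparing second columns gives $\mu_-=p_{1,<}(t_0;k)/p_{1,>}(t_0;k)=\myi(z_-/z_+)^{1/4}$ when $c_<=c_>$, and in the two mixed cases the differing eigenvector representations contribute a factor that combines with the ratio $\sqrt{(\mathfrak{b}(t_0)-k)/(k-\mathfrak{a}(t_0))}$ hidden in $p_{1,<}/p_{1,>}$ to give exactly your $\mu_-=\myi(z_-/z_+)^{1/4}\exp(\myi\pi[\sgn(k^2-k_0^2)-\sgn(k^2-k_\infty^2)]/4)$ in all four cases, whence \eqref{eq:Z-less-formula}. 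Two small slips in the write-up: comparing first entries of $\mathbf{G}_<(1,\sqrt{z_-})^{\mathsf{T}}$ and $\mathbf{G}_>(1,-\sqrt{z_+})^{\mathsf{T}}$ yields $\mu_+$, not $\mu_-$ (one then needs $\mu_+\mu_-=-\sqrt{z_-/z_+}$, or should compare the second columns directly); and the positivity of $z_\pm(t_0;k)$ inside the band follows from the prefactor $\mp\sgn(t-t_\pm(k))$ in \eqref{eq:zplusminus}, not from the square-root branch selections in \eqref{eq:p1-less}--\eqref{eq:p1-greater}.
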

\begin{proof}
The proof of \eqref{eq:tau-define} follows directly from \eqref{eq:zplusminus}.  To prove 
\eqref{eq:Z-less-formula} one first computes the product $\mathbf{G}_>(t_0;k)z_+(t_0;k)^{-\sigma_3/4}$ from \eqref{eq:Gform} using 
$z(t;k)=z_+(t;k)$ and \eqref{eq:p1-greater} along with $p_2=cp_1$ with $c=\myi\sgn(k^2-k_\infty^2)$  and then one computes $\mathbf{G}_<(t_0;k)z_-(t_0;k)^{-\sigma_3/4}$ from \eqref{eq:Gform} using 
$z(t;k)=z_-(t;k)$ and \eqref{eq:p1-less} along with $p_2=cp_1$ with $c=\myi\sgn(k^2-k_0^2)$.  The resulting formulae may be simplified using the positive one-fourth root of \eqref{eq:zODE}.  The result then follows by a direct calculation.
\end{proof}
Henceforth, we will write $\mathbf{Z}_<(k)$ for $\mathbf{Z}_<(t_0;k)$ in light of this calculation.  We now may consider the Volterra integral equation \eqref{eq:Volterra-less} with $z_0=z_-(t_0;k)$ and
$z<z_0$.  Fixing a matrix norm $\|\cdot\|$ and defining the corresponding supremum norm over the interval $z_-(0;k)<z<z_-(t_0;k)$, it follows from Lemma~\ref{lemma:Q-bound}, the inequality $\omega(\eps^{-2/3}z)^{-1}\le \omega(\eps^{-2/3}\zeta)^{-1}$ holding for $z\le\zeta$ and the estimates \eqref{eq:Airy-Inequalities} that the Volterra integral operator $\mathcal{K}_<$ with matrix kernel $\mathbf{K}_<(z,\zeta;k)$ is bounded with operator norm $\mathcal{O}(\eps)$ because
\begin{equation}
\int_{z_-(0;k)}^{z_-(t_0;k)}\frac{d\zeta}{|\zeta|^{1/2}}<\infty.
\end{equation}
Hence $1-\mathcal{K}_<$ is invertible for sufficiently small $\eps$, and the norm of $(1-\mathcal{K}_<)^{-1}-1$ is $\mathcal{O}(\eps)$.  Therefore,
\begin{equation}
\sup_{z_-(0;k)<z<z_-(t_0;k)}\|\eps^{1/6}e^{-\tau(k)/\eps}\left(\mathbf{V}_<(z;k)-\mathbf{R}_<(z;k)\right)\|=\mathcal{O}(\eps),\quad\eps\to 0,
\label{eq:V-less-control}
\end{equation}
because $\eps^{1/6}e^{-\tau(k)/\eps}\mathbf{R}_<(z;k)$ is bounded independently of $\eps$ according to \eqref{eq:V-less-t0} and taking into account the fact that $\omega(\eps^{-2/3}z_-(t_0;k))\omega(\eps^{-2/3}z)^{-1}\le 1$.

Finally, we set $t=0$ and solve \eqref{eq:V-less-F-exact} for $F(0;k)$ taking into account that 
$\omega(\eps^{-2/3}z_-(0;k))=1$ because $z_-(0;k)<0$.  Using asymptotic expansions of Airy functions for large negative arguments, and using the fact that the second row of $\mathbf{R}_<(z_-(0;k);k)$ is exponentially small compared with the first row due to the factor $\omega(\eps^{-2/3}z_-(t_0;k))$ as $z_-(t_0;k)>0$, we obtain
\begin{equation}
\mathbf{F}(0;k)=-e^{\tau(k)/\eps}\left(e^{\tfrac{2}{3}\myi\sgn(k^2-k_0^2)(-z_-(0;k))^{3/2}\sigma_3/\eps}
\begin{bmatrix}1 & 1\\1 & 1\end{bmatrix}e^{\myi(\ell(k)\sgn(k^2-k_\infty^2)-\tfrac{1}{2}S_\infty)\sigma_3/\eps}+\mathcal{O}(\eps)\right),\quad\eps\to 0
\end{equation}
also with the help of the identity
\begin{equation}
\mathbf{G}_<(0;k)(-z_-(0;k))^{-\sigma_3/4}=\frac{1}{\sqrt{2}}\begin{bmatrix} \myi\sgn(k^2-k_0^2) & -1\\1 & -\myi\sgn(k^2-k_0^2)\end{bmatrix}.
\end{equation}

This completes the rigorous asymptotic calculation of $\mathbf{F}(0;k)$ by the method of Langer transformations.  From \eqref{eq:T-and-F} we then obtain $\mathbf{T}_0(0;k)$ for small $\eps$,
and then from \eqref{eq:basic-transforms} and \eqref{eq:conjugate-transforms} we complete the proof of Proposition~\ref{prop:two-turning-points}.

\end{document}